\patchcmd{\subsection}{-.5em}{.5em}{}{}
\newcommand{\mysubsubsection}[1]{\subsubsection*{\bfseries #1}}
\renewcommand{\tocsection}[3]{
  \indentlabel{\@ifnotempty{#2}{\ignorespaces#1 #2\quad}}\bfseries#3}
\renewcommand{\tocsubsection}[3]{
  \indentlabel{\@ifnotempty{#2}{\ignorespaces#1 #2\quad}}#3}
\newcommand\@dotsep{4.5}
\def\@tocline#1#2#3#4#5#6#7{\relax
  \ifnum #1>\c@tocdepth
  \else
    \par \addpenalty\@secpenalty\addvspace{#2}
    \begingroup \hyphenpenalty\@M
    \@ifempty{#4}{
      \@tempdima\csname r@tocindent\number#1\endcsname\relax
    }{
      \@tempdima#4\relax
    }
    \parindent\z@ \leftskip#3\relax \advance\leftskip\@tempdima\relax
    \rightskip\@pnumwidth plus1em \parfillskip-\@pnumwidth
    #5\leavevmode\hskip-\@tempdima{#6}\nobreak
    \leaders\hbox{$\m@th\mkern \@dotsep mu\hbox{.}\mkern \@dotsep mu$}\hfill
    \nobreak
    \hbox to\@pnumwidth{\@tocpagenum{\ifnum#1=1\bfseries\fi#7}}\par
    \nobreak
    \endgroup
  \fi}
\renewcommand\csname r@tocindent0\endcsname{0pt}
\def\l@subsection{\@tocline{2}{0pt}{2.5pc}{5pc}{}}
\newcounter{results}[section] 
\theoremstyle{plain}
\newtheorem{theorem}[results]{Theorem}
\newtheorem{lemma}[results]{Lemma}
\newtheorem{proposition}[results]{Proposition}
\newtheorem{corollary}[results]{Corollary}
\theoremstyle{remark}
\newtheorem{remark}[results]{Remark}
\theoremstyle{definition}
\newtheorem{definition}[results]{Definition}
\newtheorem{assumption}[results]{Assumption}
\numberwithin{results}{section}
\numberwithin{equation}{section}
\NewDocumentCommand{\makeabbrev}{mmm}
 {
  \yoruk_makeabbrev:nnn { #1 } { #2 } { #3 }
 }
\makeabbrev{\mathbf}{mbf#1}{a,b,c,d,e,f,g,h,i,j,k,l,m,n,o,p,q,r,s,t,u,v,w,x,y,z,A,B,C,D,E,F,G,H,I,J,K,L,M,N,O,P,Q,R,S,T,U,V,W,X,Y,Z}
\newcommand{\N}{\mathbb{N}}
\newcommand{\PP}{\mathbb{P}}
\newcommand{\Q}{\mathbb{Q}}
\newcommand{\R}{\mathbb{R}}
\newcommand{\T}{\mathbb{T}}
\newcommand{\Z}{\mathbb{Z}}
\renewcommand{\AA}{\mathscr{A}}
\newcommand{\HH}{\mathscr{H}}
\newcommand{\LL}{\mathscr{L}}
\newcommand{\cA}{{\ensuremath{\mathcal A}}}
\newcommand{\cD}{{\ensuremath{\mathcal D}}}
\newcommand{\cF}{{\ensuremath{\mathcal F}}}
\newcommand{\cG}{{\ensuremath{\mathcal G}}}
\newcommand{\cH}{{\ensuremath{\mathcal H}}}
\newcommand{\cP}{{\ensuremath{\mathcal P}}}
\newcommand{\cW}{{\ensuremath{\mathcal W}}}
\newcommand{\sfd}{{\sf d}}
\newcommand{\sfB}{{\sf B}}
\newcommand{\sfE}{{\sf E}}
\newcommand{\sfH}{{\sf H}}
\newcommand{\sfX}{{\sf X}}
\newcommand{\sfY}{{\sf Y}}
\newcommand{\mm}{\mathfrak m}
\newcommand{\fru}{{\mathfrak u}}
\newcommand{\rme}{{\mathrm e}}
\newcommand{\rmC}{{\mathrm C}}
\newcommand{\rmD}{{\mathrm D}}
\newcommand{\rmF}{{\mathrm F}}
\newcommand{\ddelta}{{\mbox{\boldmath$\delta$}}}
\newcommand{\ggamma}{{\mbox{\boldmath$\gamma$}}}
\newcommand{\eps}{\varepsilon}
\newcommand{\de}{\, \mathrm d}
\renewcommand{\d}{{\mathrm d}}
\newcommand{\prob}{\mathcal P}
\newcommand{\bmm}{\overline{\mm}}
\newcommand{\Lip}{\mathop{\rm Lip}\nolimits}
\newcommand{\lip}{\mathop{\rm lip}\nolimits}
\newcommand{\supp}{\mathop{\rm supp}\nolimits}
\newcommand{\boldnabla}{\boldsymbol\nabla}
\newcommand{\emp}{\mathrm{em}}
\newcommand{\pCE}{\mathsf{pC\kern-1pt E}}
\newcommand{\CE}{\mathsf{C\kern-1pt E}}
\newcommand{\trid}{\star}
\newcommand{\set}[1]{\left\{#1\right\}}
\newcommand{\ceil}[1]{\left\lceil #1 \right\rceil}
\newcommand{\norm}[1]{ \left \| #1 \right \|}
\newcommand{\sclprd}[2]{\ensuremath{\left [ #1, #2\right ]}} %
\newcommand{\textfrc}[1]{{\frcseries#1}}
\newcommand{\mathfrc}[1]{\text{\textfrc{#1}}}
\newcommand{\cursm}{\textrm{\rm \mathfrc{m}}}
\newcommand{\cercle}[1]
{\tikz[baseline=(X.base)] 
  \node[draw,circle,inner sep=0pt](X){#1};}
\title[HJB equations in Wasserstein--Sobolev spaces]{
HJB equations driven by the Dirichlet--Ferguson Laplacian in Wasserstein--Sobolev spaces}
\author{François Delarue}
\address{François Delarue: Université C\^ote d’Azur, CNRS, Laboratoire J. A. Dieudonné, 06108 Nice (France)}
\email{francois.delarue@univ-cotedazur.fr}
\author{Mattia Martini}
\address{Mattia Martini: École polytechnique, Centre de Mathématiques Appliquées, 91128 Palaiseau (France)}
\email{mattia.martini@polytechnique.edu}
\author{Giacomo Enrico Sodini}
\address{Giacomo Enrico Sodini: Institut für Mathematik - Fakultät für Mathematik - Universität Wien, Oskar-Morgenstern-Platz 1, 1090 Wien (Austria)}
\email{giacomo.sodini@univie.ac.at}
\subjclass{Primary: 35R15, 60H15, 49L25, 49N80; Secondary: 60J60, 35K90, 46E35.}
 \keywords{Dirichlet–Ferguson measure; Wasserstein diffusion; partial differential equations on spaces of probability measures; mean field control; Hamilton–Jacobi equation; Kolmogorov equation; Wasserstein--Sobolev space; interacting particle system.}
\begin{document} 

\begin{abstract}
We study linear and nonlinear PDEs defined on the space of probability measures $\prob(\T^d)$ over the flat torus $\T^d$, equipped with the Dirichlet--Ferguson measure $\cD$. We first develop an analytic framework based on the Wasserstein--Sobolev space $H^{1,2}(\prob(\T^d), W_2, \cD)$ associated with the Dirichlet form induced by the infinite-dimensional Laplacian acting on functions of measures. Within this setting, we establish existence and uniqueness results for transport–diffusion and Hamilton--Jacobi equations in the Wasserstein space. Our analysis connects the PDE approach with a corresponding massive interacting particle system, providing a probabilistic (Kolmogorov-type) representation of strong solutions. Finally, we extend the theory to semilinear equations and mean-field optimal control problems, together with consistent finite-dimensional approximations.
\end{abstract}

\maketitle
\tableofcontents
\thispagestyle{empty}

\section{Introduction}

This work lies at the intersection of two lines of research that have each witnessed significant progress in the analysis and understanding of mean-field interaction models—or, more generally, of dynamics describing the statistical evolution of an infinite population: on the one hand, the theory of mean-field control; on the other, the notion of Wasserstein diffusions. Both are briefly reviewed in the following lines.
\vskip 4pt

\paragraph{\it \bf Generator induced by mean-field dynamics} In its microscopic version, a mean-field interaction model is commonly understood as a particle system in which the particles interact with one another in a weak and symmetric manner. Mathematically, the global state is thus summarized by the empirical measure of the particles. Whenever the conditions imposed on the system—whether initial conditions or subsequent perturbations, all possibly random—exhibit some form of exchangeability, that is, are statistically invariant under permutation, the empirical measure is expected to converge as the number of particles tends to infinity. When the initial conditions and perturbations are not only exchangeable but also independent, the limiting statistical distribution is deterministic; this corresponds to a law of large numbers. Otherwise, the limit may remain random and retains, according to de Finetti’s theorem, the trace of the randomness common to all the conditions imposed on the system. In both cases, the limiting distribution is referred to as the mean-field limit. Mathematically, the identification and characterization of this mean-field limit, as well as the justification of the limiting procedure itself, have motivated numerous studies, dating back to the seminal works of \cite{Kac56,mckean1966class,McKean67}.
In the regime where the law of large numbers applies and in the case where the particles are dynamic—that is, they evolve over time, for instance under the influence of transport and diffusion—, the mean-field limit is typically described as the solution of a so-called McKean--Vlasov equation, that is, a stochastic differential equation whose coefficients depend on the law of its solution (see \cite{sznitman1991topics}); or, equivalently, as a nonlinear Fokker–Planck equation (see, among many references, the book \cite{BogachevKrylovRocknerShaposhnikov}). When the sources of randomness acting on the system are driven by a common noise, the McKean--Vlasov equation is called conditional, and the associated Fokker–Planck equation becomes stochastic (see \cite{carmonadelarue2}). In both cases, the dynamics of the mean-field limit evolve in the space ${\mathcal P}$ of probability measures over the underlying state space (for reasons that will be explained below, we will work in fact on the space ${\mathcal P}(\mathbb{T}^d)$ of probability measures on the $d$-dimensional flat torus $\mathbb{T}^d := \mathbb{R}^d / \mathbb{Z}^d$). Provided that those dynamics are unique, the mean-field limit induces a semigroup, whose generator is a differential operator acting on functions defined over ${\mathcal P}$. This operator has recently been the subject of intense research. In particular, the associated \emph{harmonic functions}, solving PDEs on ${\mathcal P}$ or time-space PDEs on 
${\mathbb R}_+ \times {\mathcal P}$, provide valuable insight into the convergence of the finite-dimensional system to the infinite-dimensional one (see \cite{DelarueTse,mischler2013kac,mischler2015new}).
When the mean-field limit satisfies a deterministic Fokker--Planck equation, these PDEs can be interpreted as  transport equations on ${\mathcal P}$. When the Fokker--Planck equation is stochastic, they also include a diffusion term, which is, in most models, highly degenerate (see, for instance, \cite{cardaliaguetdelaruelasrylions}). One of the aims of the present paper is precisely to study a version of these linear PDEs in presence of a ``non-degenerate’’ diffusion on ${\mathcal P}$, called a Wasserstein diffusion, which significantly enriches the analysis of harmonic functions.
\vskip 4pt

\paragraph{\it \bf Beyond the linear case: mean-field control} This is another goal of the present work: to go beyond the linear case. Indeed, the original (finite-particle) system may be subject to mechanisms other than transport or diffusion; see \cite{ChaintronDiez1,ChaintronDiez2} for an overview. In the situation of interest here, transport and diffusion are typically combined with an optimization step: when the system is finite, the particles are dynamically controlled, with the objective of minimizing a cost functional associated with the entire system. This amounts to solving a control problem for a large interacting particle system with mean-field interactions. The limiting problem (i.e., after passage to the mean-field limit) is referred to as the \emph{mean-field control problem}. 
Since the optimization step is intrinsically nonlinear, the passage from the finite-dimensional to the infinite-dimensional model relies on arguments that are considerably more subtle than the law of large numbers. This is why the theory of mean-field control has attracted significant attention in recent years, initiated in particular by Lions in his lectures on mean-field game theory---a closely related framework with many connections. 
In this context, the role played by harmonic functions in the analysis of the mean-field limit under pure transport-diffusion dynamics is taken over, in the controlled case, by the \emph{value function} of the control problem, which is expected to satisfy, at least in the viscosity sense, a Hamilton--Jacobi equation posed on ${\mathcal P}$. This Hamilton--Jacobi equation is of first order in the absence of common noise in the particle system, and of second order---typically highly degenerate in known examples---in the presence of common noise; see 
\cite{cardaliaguetdelaruelasrylions,ConfortiKraaijTonon,CossoGozziKharroubiPham, DaudinJacksonSeeger,GangboMayorgaSwiech}. More generally, it can be understood as a semilinear PDE (in the gradient of the solution) on ${\mathcal P}$. 
As shown in \cite{cardaliaguetdelaruelasrylions,CardaliaguetDaudinJacksonSouganidis,cecchin2025quantitativeconvergencemeanfield,DaudinDelarueJackson}, the regularity of the value function in the mean-field control problem (and thus of the solution to the Hamilton--Jacobi equation) determines the convergence rate from the finite-dimensional control problem to its infinite-dimensional limit. The regularity of solutions to semilinear PDEs on ${\mathcal P}$ is therefore a key ingredient in the analysis. 
Once again, one of the objectives of this paper is precisely to study the regularity of solutions to semilinear equations driven by a \emph{nondegenerate} diffusion operator---referred to below as a \emph{Wasserstein diffusion}.
\vskip 4pt

\paragraph{\it \bf Wasserstein diffusions.}
The introduction of a diffusion acting on the space of probability measures is a central ingredient of our work. 
While the construction of measure-valued processes is certainly not new in probability theory (see \cite{Dawson}), 
the key point here is to employ a diffusion with a clear impact on the resolution of the linear and semilinear PDEs discussed in the previous paragraphs. 
Ideally, the  diffusion should enjoy sufficient smoothing properties to overcome potential singularities in the coefficients of these PDEs. 
Intuitively, it should resemble the analogue of a Brownian motion on ${\mathcal P}$, but this picture raises multiple difficulties, due in particular to the absence of a canonical reference measure on ${\mathcal P}$, similar to the Lebesgue measure in Euclidean spaces. 
As a consequence, none of the existing constructions—an overview of which is given below—can be regarded as canonical. 
Nevertheless, the underlying idea is to define a diffusion process whose local variations are governed by the $2$-Wasserstein distance $W_2$ on ${\mathcal P}$ 
(at least when the underlying state space is ${\mathbb R}^d$ or ${\mathbb T}^d$). 
From the viewpoint of stochastic process theory, any associated harmonic function $\varphi$ should evolve, along the trajectories of the diffusion, 
as a  martingale; the local variance of the latter should be given by the square of the norm of the “intrinsic gradient” of $\varphi$.  
To avoid any ambiguity, we emphasize that the 
intrinsic derivative mentioned here is precisely the one that appears in the formulation of the aforementioned PDEs on ${\mathcal P}$. We revisit in detail in Subsection~\ref{sec:diffprob} the notion of differentiability for functions defined on ${\mathcal P}$, and
we clarify
in Appendix~\ref{app:lder}
the identification  with the Fréchet derivative of the lifting of $\varphi$ 
to the space of square-integrable random variables.
From the viewpoint of functional analysis, the square of the norm of the “intrinsic gradient” of $\varphi$ 
is typically studied in a global manner via the theory of Dirichlet forms. 
In that setting, the construction of the Dirichlet form may even precede that of the process itself—one of the main difficulties again lying 
in the choice of a reference measure on ${\mathcal P}$. 
To give a brief overview, the constructions of a Wasserstein diffusion achieved in \cite{DelloSchiavo22, DelloSchiavo24,vonRenesseSturm,Sturm,Ren}
are based on Dirichlet form techniques, whereas those in \cite{KonarovskyivonRenesse} follow a more pathwise approach. 
These works can further be categorized according to the dimension of the underlying state space:
since ${\mathcal P}({\mathbb R})$ endowed with $W_2$ is isometric to the subset of $L^2((0,1))$ consisting of nondecreasing functions,
the one-dimensional case is, in some respects, more tractable.
The papers \cite{KonarovskyivonRenesse,vonRenesseSturm} indeed focus on this setting.
In the present work, we build upon the diffusion 
on $\cP(\T^d)$ introduced for dimensions $d \geq 2$ in \cite{DelloSchiavo22} (the state space is resticted to the torus as some compactness is needed),
and we review in Section~\ref{se:2} the main principles underlying its definition as well as the key properties it satisfies.
\vskip 4pt

\paragraph{\bf Dirichlet-Ferguson diffusion.}
The approach developed in \cite{DelloSchiavo22} presents a major advantage: although the diffusion process is constructed using Dirichlet form theory, its pathwise description is explicit. 
In particular, it is rather straightforward to perturb the dynamic by adding a velocity field. 
In what follows, this property will allow us, among other things, to consider a stochastic control problem on ${\mathcal P}(\T^d)$, 
in which the drift  itself plays the role of the control variable. 
Naturally, the choice of one diffusion model over another is somewhat arbitrary, and the resulting drifted diffusion process 
retains the structural features of the chosen diffusion. 
In the present case, the diffusion introduced in \cite{DelloSchiavo22}—which we refer to as the \emph{Dirichlet--Ferguson diffusion}, in reference to the choice of the underlying measure—lives in fact only on the subspace  ${\mathcal P}^{\rm pa}(\T^d)$ 
of 
$\cP(\T^d)$
consisting of purely atomic probability measures (on the torus). 
Accordingly, the diffusion process can be represented as a countable “particle system.” 
Its structure is, as mentioned above, particularly simple: in the absence of drift, the particles evolve as independent Brownian motions with distinct intensities, 
while the corresponding Dirac masses, located at each particle’s position, are aggregated into a countable convex combination with non-uniform weights.  
The essence of the model lies precisely in the choice of these intensities and weights:  
\begin{enumerate}[i.]
\item the intensities are taken as the inverses of the square roots of the weights. 
This scaling relation explains why, in the various Itô expansions, the quadratic variation is governed by the square of the intrinsic gradient norm; 
it also corresponds to the same rescaling used in the Dean–Kawasaki model (see \cite{DelloSchiavo24,KonarovskyivonRenesseDK};
\item the weights (and hence the intensities) are randomized: specifically, they follow a Poisson-Dirichlet distribution,independently of the particle positions. 
This is the origin of the Dirichlet--Ferguson measure defined on ${\mathcal P}(\T^d)$; 
its realizations can be viewed as random purely atomic probability measures whose weights follow a Poisson-Dirichlet distribution and whose atom locations are independent, $\T^d$-uniformly distributed random variables, the weights and the atom locations being independent. We use the letter ${\mathcal D}$ 
to denote such a measure.
\end{enumerate}
Below, 
the purely diffusive particle system, that is, in the absence of drift, will be referred to as the \emph{free Dirichlet--Ferguson particle system} (or simply the \emph{free particle system}).
\vskip 4pt

\paragraph{\bf Regularizing effect and Wasserstein-Sobolev spaces.}
The regularizing properties of a diffusion process can be highlighted in several ways. 
A common approach consists in showing that the semigroup generated by the diffusion maps functions of a given regularity level 
to functions of higher regularity. 
A related and subtle issue is how the regularity of the input and output functions is quantified. 
For example, in the case of a finite-dimensional Brownian motion, bounded functions are mapped, for any strictly positive time~$t$, 
to $\rmC^\infty$ functions, and the Lipschitz seminorm of the output functions behaves at worst like $1/\sqrt{t}$ when $t$ tends to $0$. 
This result plays a central role in PDE theory, in particular in the study of semilinear equations, such as Hamilton–Jacobi–Bellman equations driven by a quadratic Hamiltonian. In \cite{DelarueHammersley}, a diffusion process taking values in ${\mathcal P}({\mathbb R})$ is constructed, exhibiting a similar smoothing phenomenon, but with the factor $1/\sqrt{t}$ replaced by $1/t^{3/4}$.
Comparable properties have also been established by \cite{Stannat} for a certain class of Fleming–Viot processes, 
although with an exponentially fast blow-up rate.
In the present setting, one cannot expect a smoothing result of the same strength (see \cite{DelloSchiavo24}). 
Nevertheless, the presence of the Laplacian suggests that the semigroup, when applied to an $L^2$ function on ${\mathcal P}$ endowed with  ${\mathcal D}$, 
should admit an intrinsic derivative which itself belongs to $L^2$ in both time and space. 
In other words, the idea is to use a Sobolev-type space $H^{1,2}$, 
consisting of functions whose intrinsic derivative has a square-integrable norm under ${\mathcal D}$, 
and that the semigroup maps $L^2$ functions into $L^2([0,T],H^{1,2})$ for any finite time horizon $[0,T]$. 
The next step is to show how linear transport–diffusion equations, and more generally semilinear equations with bounded coefficients, 
can be solved in the space $L^2([0,T],H^{1,2})$.
\vskip 4pt

\paragraph{\bf Contributions.} 
Our first contribution is to clarify the definition and the analytical structure of the space $H^{1,2}$. 
This is the object of Section \ref{se:2}, where we present the notions of metric Sobolev spaces and Cheeger energy used to define $H^{1,2}$. 
Theorem \ref{thm:mainold} then establishes the precise link between this Sobolev space and the Dirichlet form. 
Our second contribution is to solve, on the subspace ${\mathcal P}^{\rm pa}(\T^d) \subset {\mathcal P}(\T^d)$ of purely atomic probability measures (on $\T^d$), 
linear transport–diffusion type equations with bounded velocity fields, and to prove existence and uniqueness of solutions. 
As shown in Theorem \ref{thm:pde}, these solutions are continuous in time with values in $L^2({\mathcal P}^{\rm pa}(\T^d),{\mathcal D})$, 
and square-integrable in time with values in $H^{1,2}$.
Moreover, they can be represented via a Kolmogorov (or, more generally, Feynman–Kac) formula 
involving a drifted version of the 
free Dirichlet--Ferguson particle system  (see Theorem \ref{thm:repr}). 
The solutions of this drifted particle system are obtained through a Girsanov transformation and are connected to the PDE through an Itô-type formula; 
existence and uniqueness hold in the weak sense. 
All these results are developed in detail in Section
\ref{sec: massive}. 
Our third contribution is to address a semilinear version of the transport–diffusion equations on the same space ${\mathcal P}^{\rm pa}(\T^d)$, 
when the coefficients are merely square-integrable and the nonlinearity grows at most linearly with respect to the gradient. 
Existence and uniqueness are established in the same sense as in the linear case (see Proposition~\ref{prop:well_pos_HJ}). 
When the nonlinearity has an Hamiltonian structure compatible with mean-field control theory, 
the solution is interpreted as the value function of a stochastic control problem defined on the drifted Dirichlet--Ferguson particle system, 
with the drift identified as the control variable;  see Theorem \ref{thm: verif}, in which we also identify explicitly the optimal feedback function. 
Finally, our fourth contribution is to provide, both in the linear and semilinear settings, 
an interpretation of the PDEs under study as limits of finite-dimensional equations. 
This result is, at least conceptually, analogous to propagation of chaos results that connect finite systems with their infinite-population limits in classical mean-field models. 
However, in our case, there is no averaging effect stemming from a law of large numbers; 
the passage to the limit instead relies on the stability properties of the PDE set on ${\mathcal P}^{\rm pa}(\T^d)$. 
This return to the finite-dimensional framework offers an instructive perspective on the structure of the equations investigated here.
We stress that all the results established in the paper, including the approximation results, hold true  without any continuity or convexity condition on the coefficients (except the standard convexity assumption of the Lagrangian in the control variable). 
\vskip 4pt

\paragraph{\bf Comparison with the existing literature, and perspectives.} 
This paper is naturally very close to the works \cite{DelloSchiavo22, DelloSchiavo24}, and explicitly builds on several results established therein. 
In fact, the main directions on which our work relies were developed before \cite{DelloSchiavo24} was posted on arXiv; this partly explains the significant overlap between the two. 
To be precise, \cite{DelloSchiavo24} goes, in some respects, considerably further than we do: roughly speaking, \cite{DelloSchiavo24} investigates drifted versions of the free particle system in which the velocity field derives from a possibly singular potential. 
In comparison, the velocity fields considered here are not necessarily potential (which makes our setting more general), but when they are, they stem from less singular potentials than the repulsive Riesz-type potentials studied in \cite{DelloSchiavo24}. 
In fact, in \cite{DelloSchiavo24}, the potential structure enables a systematic use of Dirichlet form theory. 
By contrast---although this point is also discussed in \cite{DelloSchiavo24}---we deliberately work directly, and repeatedly, at the level of the trajectories of the associated process. 
This is a key feature that allows us, in Section \ref{sec: control}, to analyze controlled systems, which are clearly not addressed in \cite{DelloSchiavo24}. 
From a pedagogical standpoint, we also believe that the pathwise perspective emphasized here offers a complementary viewpoint to the works \cite{DelloSchiavo22,DelloSchiavo24}, from which the reader may benefit. 

Another significant difference between our paper and \cite{DelloSchiavo22,DelloSchiavo24} lies in the central role we give to the associated PDEs. 
While the PDE results obtained in Section
\ref{sec:bke} can, in their most basic form, be seen as consequences of a combination of the results in \cite{DelloSchiavo22,DelloSchiavo24} and earlier works on metric Sobolev spaces (notably \cite{FSS22}, from which we take the construction of the space $H^{1,2}$), 
our paper is, to the best of our knowledge, the first to present general well-posedness results for transport-diffusion type PDEs on the space of probability measures. 
This, in our view, constitutes our main conceptual contribution: we establish a bridge between two research communities that have developed substantially different tools for the analysis of mean-field models. 
As recalled at the beginning of the introduction, the study of PDEs on the space of probability measures has seen remarkable progress in recent years, and we hope that the present work will open new directions along this line. Indeed, the paper provides new solvability and approximation results, all of which remain valid, thanks to the presence of the diffusion, even in potentially irregular settings. This naturally raises several challenging questions. We mention two of them, which we leave for future work.
The first concerns the structure of the equation itself. Many of the equations studied on ${\mathcal P}(\T^d)$ or ${\mathcal P}(\R^d)$ correspond, from an Eulerian perspective, to diffusive (Fokker–Planck type) dynamics on these spaces, and therefore include an additional differential term arising from the derivative of the entropy. To be clear, our results do not cover this case.
A second question pertains to the notion of solution. In Section~\ref{sec: control}, we introduce a notion of strong solution for the Hamilton–Jacobi equations under consideration; it would certainly be interesting to compare this with the notion of viscosity solution used in the absence of diffusion, and to clarify, in this context, the role played by the measure ${\mathcal D}$.

\section{Functions and measures on probabilities}
\label{se:2}
We work on the $d$-dimensional flat torus $\T^d:= \R^d / \Z^d$, $d \in \N \setminus \{0,1\}$ whose elements are denoted by $x=(x^1, \dots, x^d)$. We write $x \cdot y$ for the usual scalar product between elements $x,y \in \R^d$, and $|\cdot|$ for the Euclidean norm. The standard (probability) volume measure on $\T^d$ is denoted by $\mathrm{vol}_d$. Functions on $\T^d$ will be usually denoted by letters such as $f,g,h, \dots$, while functions on the product space $[0,1] \times \T^d$ (and its subsets) will be denoted by the same letters with the addition of a hat: $\hat{f}, \hat{g}, \hat{h}, \dots$. Vectors of functions of both kinds are obtained by `bolding' the corresponding letter such as $\mbff=(f_1, \dots, f_k)$ and $\hat \mbff=(\hat{f}_1, \dots, \hat{f}_k)$, $k \in \N_+:=\N\setminus\{0\}$.
Sequences are denoted by $(a_i)_i$, and they are meant as mappings over $\N_+$.
Finally, $T>0$ denotes a fixed finite time horizon.

\subsubsection{Spaces of functions} If $\sfE$ is a  finite dimensional manifold or a Banach space, $\sfB$ is a Banach space, and $k$ is in ${\mathbb N} \cup \{\infty\}$, we denote by $\rmC^k(\sfE; \sfB)$ (resp.~$\rmC^k_b(\sfE; \sfB)$, resp.~$\rmC^k_c(\sfE; \sfB)$) the space of $k$-times continuously differentiable (resp.~$k$-times continuously differentiable and with bounded derivatives up to order $k$, resp.~$k$-times continuously differentiable and with compactly supported derivatives up to order $k$) functions from $\sfE$ to $\sfB$; if $k=0$ and/or $\sfB=\R$, we remove $k$ and/or $\sfB$ from the notation.\\
For a metric space $(\sfX, \sfd)$, $\Lip_b(\sfX, \sfd)$ denotes the space of real valued $\sfd$-Lipschitz and bounded functions. \\
If $(\sfY, \mathcal{Y}, \mathrm{n})$ is a measure space and $\sfB$ is a Banach space, we denote by $L^p(\sfY, \mathrm{n}; \sfB)$, $p \in [1, \infty]$, the Banach space of $p$-summable functions with values in $\sfB$ identified up to equality $\mathrm{n}$-a.e.; whenever $\sfB=\R$, we simply write $L^p(\sfY, \mathrm{n})$ in place of $L^p(\sfY, \mathrm{n}; \R)$. If $\sfY\subseteq[0,T]$, we denote by $\mathscr{L}^\sfY$ the Lebesgue measure on $\sfY$. Unless explicitly stated otherwise, all integrations over $\sfY$, are with respect to this measure, which we omit from the notation for $L^p$ spaces.

Given a set $\sfX$, a Banach space $(\sfB,\lvert\cdot\rvert_\sfB)$ and a function $f\colon \sfX\to \sfB$, we set $\norm{f}_{\infty} := \sup_{x\in X}\lvert f(x)\rvert_\sfB$. Given a Hilbert space $\sfH$ and a function $f\colon\sfH\to(-\infty,\infty]$, we denote by $D(f)$ the effective domain of $f$, that is the subset of $x \in \sfH$ such that $f(x)\neq\infty$. The Fréchet subdifferential of $f$ is denoted by $\partial f$ and $D(\partial f) \subset D(f)$ stands for the subset of points $x \in \sfH$ where $\partial f (x) \ne \emptyset.$
Finally, we denote by $\mathrm{AC}_{loc}((0,T); \sfH)$ the subset of $\rmC((0,T); \sfH)$ of functions whose restriction to $K$ is absolutely continuous for every compact subset $K \subset (0,T)$.

\subsubsection{Probabilities}\label{sec:wass}
For a Polish space ${\mathsf S}$, we denote by $\mathcal{B}(\mathsf S)$ the Borel $\sigma$-field of $\mathsf S$ and by $\prob({\mathsf S})$ the space of Borel probability measures on ${\mathsf S}$, endowed with the Borel $\sigma$-field induced by the topology of weak convergence. We recall that this Borel $\sigma$-field 
coincides 
with the $\sigma$-field generated by the mappings 
$\{m \in \prob({\mathsf S}) \mapsto m(E), \ E \in {\mathcal B}(\mathsf S)\}$, see for instance \cite[Proposition 7.25]{BertsekasShreve}. When ${\mathsf S}={\mathbb T}^d$, we write $\prob^{pa}(\T^d)$ for the subset of $\prob(\T^d)$ consisting of purely atomic measures (i.e., of at most countable convex combinations of Delta masses). We endow $\prob(\T^d)$ with the $L^2$-Wasserstein distance (which in this case metrizes the weak convergence of measures against functions in $\rmC(\T^d)$) defined as
\begin{equation}\label{eq:wd}
 W_2(\mu, \nu):= \left ( \inf \left \{ \int_{ \T^d \times \T^d }  |x-y|^2 \de \ggamma(x, y) : \ggamma \in \Gamma(\mu, \nu) \right \}\right )^{1/2}, \quad \mu, \nu \in \prob(\T^d), 
\end{equation}
where $\Gamma(\mu, \nu)$ denotes the subset of Borel probability measures on  $\T^d \times \T^d$ having  $\mu$ and $\nu$ as marginals. These  probability measures are called \emph{transport plans} or simply \emph{plans} between $\mu$ and $\nu$.  It turns out that $(\prob(\T^d), W_2)$ is a compact, complete and separable metric space, see \cite[Proposition 7.1.5]{AGS08}. We refer, e.g.,~to \cite{Villani09,AGS08,Villani03} for a comprehensive treatment of the theory of Optimal Transport and Wasserstein spaces. Whenever $\mu \in \prob(\T^d)$, $x \in \T^d$, and $r \in [0,1]$, we denote
\begin{align}\label{eq:Intro:CoCo}
\mu_x:= \mu(\set{x})\in [0,1] \qquad \text{and} \qquad \mu^x_r:= (1-r)\mu+r\delta_x\in\prob(\T^d),
\end{align}
where $\delta_x$ is the Dirac mass at $x$. Moreover, for a Borel set $A\subset\T^d$, we denote by $\mu\lvert_A$ the (non-normalized) restriction of $\mu$ to $A$. If $\sfX,\sfY$ are measurable spaces, $\mathrm{n}$ is a measure on $\sfX$, and $f: \sfX \to \sfY$ is a measurable function, we denote by $f_\sharp \mathrm{n}$ the measure on $\sfY$ defined as
\[ f_\sharp \mathrm{n} := \mathrm{n} \circ f^{-1}.\]
We notice that, if $\sfX$ and $\sfY$ are Polish spaces and $E$ is a Borel subset of $Y$, then the mapping 
$\mathrm{n} \in \prob(\mathsf X) \mapsto f_\sharp \mathrm n(E) = \mathrm{n}(f^{-1}(E)) \in {\mathbb R} $ is measurable. In particular, the mapping $\mathrm n \in \prob(\mathsf{X}) \mapsto f_\sharp \mathrm{n} \in \prob(\mathsf Y)$ is measurable.

\subsection{The Dirichlet--Ferguson measure}\label{sec:diri}
The aim of this subsection is to introduce the Dirichlet--Ferguson measure, with which we will endow $\prob(\T^d)$. To do so, we follow the same presentation as in \cite{DelloSchiavo22}.
In the following, we endow~$I^\infty:= [0,1]^{\infty}$ with the product topology and set
\begin{equation}\label{eq:DeltaTau}
\begin{aligned}
S^\infty:=\set{\mbfs:= (s_i)_i\in I^\infty : \sum_{i=1}^\infty s_i=1}, \qquad
T^\infty:=\set{\mbfs= (s_i)_i \in S^\infty : s_i \geq s_{i+1} \geq 0 \text{~~for all~}i} .
\end{aligned}
\end{equation}
The set~$T^\infty_o\subset T^\infty$ is defined similarly to~$ T^\infty$ with~$>$ in place of~$\geq$. 
We denote by~$\Upsilon\colon  I^\infty\to  T^\infty\subset  I^\infty$ the reordering map, 
which is known to be measurable thanks to~\cite[p.~91]{DonJoy89}. On the infinite product~$(\T^d)^\infty$, we consider the product probability measure $\mathrm{vol}_d^\infty:= \bigotimes^\infty_{i=1} \mathrm{vol}_d$.
We also denote
\[ (\T^d)^\infty_o := \{ \mbfx := (x_i)_i \in (\T^d)^\infty : x_i \ne x_j \text{ if } i \ne j \}.\]

We consider the map
\begin{align}\label{eq:Intro:Phi}
\emp \colon  S^\infty\times(\T^d)^\infty \to \prob(\T^d),\quad \emp(\mbfs,\mbfx):= \sum_{i=1}^\infty s_i\delta_{x_i}.
\end{align}
We set 
\[ \prob_o(\T^d) := \emp(T^\infty_o \times (\T^d)^\infty_o) \]
and we observe that it is a Borel subset of $\prob(\T^d)$: indeed, the so called atomic topology $\tau_a$ on $\prob(\T^d)$ (cf.~\cite[Section 3.1.1]
{DelloSchiavo24}) is such that $(\prob_o(\T^d), \tau_a)$ and $(\prob(\T^d), \tau_a)$ are Polish spaces (see \cite[Corollary 3.3]{DelloSchiavo24}); in particular, by e.g.~\cite[Theorem 4.3.24]{Engelking89}, $\prob_o(\T^d)$ is a $G_\delta$ subset of $(\prob(\T^d), \tau_a)$ and thus an element of the Borel $\sigma$-field induced by $\tau_a$ on $\prob(\T^d)$. Since the latter coincides with the Borel $\sigma$-field induced by the weak topology on $\prob(\T^d)$, see \cite[Proposition 3.1(iii)]{DelloSchiavo24}, the set $\prob_o(\T^d)$ is also a Borel subset of $\prob(\T^d)$ equipped with the weak topology (in fact any Borel subset of $(\prob_o(\T^d), \tau_a)$ is a Borel subset of $\prob(\T^d)$).

Since the map $\emp$ is injective on $ T^\infty_o \times (\T^d)^\infty_o$ (because the elements
of $T^\infty_o$, encoding the masses, are strictly decreasing, and the elements
of $(\T^d)^\infty_o$, encoding the positions, are pairwise distinct), we can define an inverse $\emp^{-1}: \prob_o(\T^d)\to  T^\infty_o \times (\T^d)^\infty_o$. For a measure $\mu \in \prob_o(\T^d)$, we denote 
\begin{equation}\label{eq:phiinv}
(\mbfs(\mu), \mbfx(\mu)):= \emp^{-1}(\mu).   
\end{equation}
Note that, since the injective map $\emp: T_o^\infty \times (\T^d)_o^\infty \to \prob_o(\T^d)$ is clearly Borel, then its inverse is also Borel, see e.g.~\cite[Theorem 6.8.6]{Bogachev07}.

\smallskip
Let $\mathrm{U}$ be the uniform distribution on $[0,1]$ and let us denote by $\mathrm{U}^\infty$ the corresponding product measure on~$ I^\infty$. We introduce the \emph{Poisson--Dirichlet measure}~$\Pi_1$ as done in~\cite{DonGri93}.

\begin{definition}[Poisson--Dirichlet measure]
Let~$\Lambda\colon  I^\infty\to  S^\infty$ be defined, for $\mbfr:= (r_i)_i\in  I^\infty$, by
\begin{equation}\label{eq:LambdaReArr}
\begin{aligned}
\Lambda_1(r_1):=& r_1,\qquad \Lambda_k(r_1,\dotsc, r_k):= r_k\prod_i^{k-1} (1-r_i), \qquad 
\Lambda(\mbfr):=& (\Lambda_1(r_1), \Lambda_2(r_1,r_2),\dotsc).
\end{aligned}
\end{equation}
The \emph{Poisson--Dirichlet measure~$\Pi_1$ with parameter~$1$} on~$ T^\infty$, concentrated on~$ T^\infty_o$, is
\begin{align*}
\Pi_1:= ({\Upsilon}\circ{\Lambda})_\sharp \mathrm{U}^\infty.
\end{align*}
\end{definition}
We are in position to define the Dirichlet--Ferguson measure.
\begin{definition}[Dirichlet--Ferguson measure]\label{def:dfm} The Dirichlet--Ferguson measure $\mathcal{D}$ on $\prob(\T^d)$ is defined as
\[ \mathcal{D}:= \emp_\sharp (\Pi_{1}\otimes \mathrm{vol}_d^\infty).\]
\end{definition}
Note that $\mathcal{D}$ depends on $d$, but we do not stress this dependence in the notation, thus always assuming that $d$ is a fixed parameter.
\begin{remark} One could define similarly a Dirichlet--Ferguson measure depending on two parameters $\theta$ and $\nu$, with $\theta>0$ and $\nu \in \prob(\T^d)$ being a diffuse probability measure, by replacing $1$ with $\theta$, $\mathrm{vol}_d$ with $\nu$, and $\mathrm{U}$ with the beta distribution $\mathrm{B}_\theta$ on $[0,1]$ with shape parameters $1$ and $\theta$. Since $\mathrm{vol}_d(\T^d)=1$ and to be consistent with the presentation in \cite{DelloSchiavo22}, we choose to work with this canonical choice of parameters.
\end{remark}

\begin{remark}
\label{rem:Poisson-Dirichlet:rep}
    Definition \ref{def:dfm} can be reinterpreted from a probabilistic perspective, characterizing the Dirichlet--Ferguson measure as the law of a suitable random variable taking values in $\prob(\T^d)$, obtained as a purely atomic measure with random weights and atom positions. Let $\mathbf{Y} := (Y_i)_i$ and $\mbfx := (x_i)_i$ be sequences of independent random variables with laws $\mathrm{U}$ and $\mathrm{vol}_d$ (i.e., the uniform distribution on $\T^d$), respectively. We define the $S^\infty$-valued random variable $\Lambda(\mathbf{Y}) = (\Lambda_1(Y_1), \Lambda_2(Y_1, Y_2), \Lambda_3(Y_1, Y_2, Y_3), \dots) = (Y_1, Y_2(1 - Y_1), Y_3(1 - Y_2)(1 - Y_1), \dots)$, and denote by $\mbfs$ its reordered version, that is, the $T^\infty$-valued random variable $\mbfs := \Upsilon(\Lambda(\mathbf{Y}))$. Notice that $\mbfs \sim \Pi_1$ whilst $\mbfx\sim\mathrm{vol}_d^\infty$. We can then construct a purely atomic measure by assigning the random weights $\mbfs$ to the atoms 
$\mbfx$, namely, $\sum_{i=1}^\infty s_i \delta_{x_i} = \emp(\mbfs, \mbfx)$. The law of this random variable is precisely the Dirichlet--Ferguson measure $\cD$.
\end{remark}
We present a characterization of the \emph{Dirichlet--Ferguson measure}~$\mathcal{D}$ introduced by T.~S.~Ferguson in his seminal work~\cite{Fer73}, see also \cite[Proposition 4.9]{DelloSchiavo22}.
\begin{theorem}[A characterization of~$\mathcal{D}$]\label{thm:dfm} Let~$\mm$ be a probability measure on~$\prob(\T^d)$. 
Then, the following are equivalent:
\begin{itemize}
\item~$\mm$ is the Dirichlet--Ferguson measure $\mathcal{D}$;
\item~$\mm$ satisfies the \emph{Mecke-type identity} or \emph{Georgii--Nguyen--Zessin formula},
\begin{align}\label{eq:Mecke}
\int_{\prob(\T^d)} \int_{\T^d} u(\mu, x, \mu_x) \de\mu(x)\de\mm(\mu) =\int_{\prob(\T^d)} \int_{\T^d} \int_0^1 u(\mu^x_r, x, r) \de r\de\mathrm{vol}_d(x)\de\mm (\mu)
\end{align}
for any semi-bounded measurable~$u\colon\prob(\T^d)\times \T^d\times [0,1]\to \R$.
\end{itemize}
\end{theorem}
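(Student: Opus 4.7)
The proof splits into two implications.

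\emph{The direct implication.} The plan is to verify \eqref{eq:Mecke} for $\mm=\cD$ by exploiting the probabilistic representation of $\cD$ recalled in Remark \ref{rem:Poisson-Dirichlet:rep}. Writing $\mu=\emp(\mbfs,\mbfx)=\sum_i s_i\delta_{x_i}$ with $\mbfs\sim\Pi_1$ and $\mbfx\sim\mathrm{vol}_d^\infty$ independent, the fact that $\cD$-almost every $\mu$ has pairwise distinct atoms forces $\mu_{x_i}=s_i$. Hence the inner integral in the left-hand side of \eqref{eq:Mecke} equals
\[
\int_{\T^d}u(\mu,x,\mu_x)\,\de\mu(x)=\sum_i s_i\,u(\mu,x_i,s_i),
\]
and after integration against $\cD$ the size-biased sampling identity $\mathbb{E}[\sum_i s_i F(i)]=\mathbb{E}[F(I^*)]$, with $I^*$ a size-biased index, reduces it to $\mathbb{E}[u(\mu,x_{I^*},s_{I^*})]$. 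The key step is then to invoke the classical fact that the size-biased permutation of $\mathrm{PD}(1)$ has the $\mathrm{GEM}(1)$ law: $s_{I^*}\sim\mathrm{Beta}(1,1)=U[0,1]$, and the renormalized remainder is again $\mathrm{PD}(1)$, independent of $s_{I^*}$. Combined with $x_{I^*}\sim\mathrm{vol}_d$ being independent of the rest, this yields the independent decomposition $\mu=(\mu')^{x_{I^*}}_{s_{I^*}}$ with $\mu'\sim\cD$, and substitution produces exactly the right-hand side of \eqref{eq:Mecke}.

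\emph{The converse implication.} Assume $\mm$ satisfies \eqref{eq:Mecke}. The plan is to show that \eqref{eq:Mecke} uniquely pins down the Laplace functional $L(\phi):=\int_{\prob(\T^d)}\rme^{-\int\phi\,\de\mu}\,\de\mm(\mu)$, $\phi\in\rmC(\T^d;[0,\infty))$. Plugging the test function $u(\mu,x,r)=\lambda\phi(x)\rme^{-\lambda\int\phi\,\de\mu}$ into \eqref{eq:Mecke}, using $\int\phi\,\de\mu^{x}_{r}=(1-r)\int\phi\,\de\mu+r\phi(x)$, and performing the change of variable $t=(1-r)\lambda$, I obtain for $H(\lambda):=L(\lambda\phi)$ the Volterra integro-differential equation
\[
-\lambda H'(\lambda)=\int_0^\lambda\!\!\int_{\T^d}\phi(x)\rme^{-(\lambda-t)\phi(x)}\,\de x\,H(t)\,\de t,\qquad H(0)=1.
\]
The kernel is bounded by $\norm{\phi}_\infty$, so Picard iteration gives a unique solution on any bounded interval. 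By the direct implication, the Laplace functional of $\cD$ solves the same equation, hence $L=L_\cD$ on $\rmC(\T^d;[0,\infty))$. Since the family $\{\mu\mapsto\rme^{-\int\phi\,\de\mu}:\phi\in\rmC(\T^d;[0,\infty))\}$ is multiplicative and separates points of the compact Polish space $\prob(\T^d)$, a standard Stone--Weierstrass/monotone class argument yields $\mm=\cD$.

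\emph{Main obstacle.} The direct implication is essentially bookkeeping around the GEM description of $\mathrm{PD}(1)$. The subtler part is the converse, where the art is to choose a test function in \eqref{eq:Mecke} rich enough to close a functional equation on $L$, and then to propagate uniqueness uniformly in the parameter $\phi$. The compactness of $\T^d$ and the a priori bound $\int\phi\,\de\mu\le\norm{\phi}_\infty$ are essential to justify the exchanges of integration and differentiation leading to the Volterra equation above.
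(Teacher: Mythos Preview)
The paper does not supply its own proof of this theorem; it is quoted as a known characterization with references to \cite{Fer73} and \cite[Proposition~4.9]{DelloSchiavo22}. Your argument is correct and follows the natural route.

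For the direct implication, your use of the GEM/stick-breaking structure of $\Pi_1$ is precisely the right mechanism: the size-biased pick $s_{I^*}$ is uniform on $[0,1]$, the renormalized residual is again $\Pi_1$-distributed and independent of $s_{I^*}$, and since $I^*$ is $\mbfs$-measurable while the $x_i$ are i.i.d.\ $\mathrm{vol}_d$ independent of $\mbfs$, the triple $(s_{I^*},x_{I^*},\mu')$ is independent with the stated marginals, giving \eqref{eq:Mecke} immediately.

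For the converse, your derivation of $-\lambda H'(\lambda)=\int_0^\lambda\bigl(\int_{\T^d}\phi(x)\rme^{-(\lambda-t)\phi(x)}\de\mathrm{vol}_d(x)\bigr)H(t)\de t$ is correct, with differentiation under the integral justified by the uniform bound $\int\phi\,\de\mu\le\|\phi\|_\infty$. One small wording point: ``Picard iteration'' is not quite apt, since existence is not at stake (you already have two candidate solutions). What is needed is uniqueness: for $G=H_\mm-H_\cD$ and $M(\lambda)=\sup_{[0,\lambda]}|G|$, the kernel bound $\|\phi\|_\infty$ yields $M(\lambda)\le\|\phi\|_\infty\int_0^\lambda M(s)\de s$, whence $M\equiv 0$ by Gr\"onwall. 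The Stone--Weierstrass closure is fine because the exponentials $\mu\mapsto\rme^{-\int\phi\,\de\mu}$ form a point-separating multiplicative family containing the constants on the compact space $\prob(\T^d)$.
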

Note that, clearly, $\mathcal{D}$ is concentrated on discrete measures and, in fact, it can be shown that it is concentrated on $\prob_o(\T^d) = \emp( T^\infty_o \times (\T^d)^\infty_o)$, see \cite[Proposition 4.9]{DelloSchiavo22}. 

\subsection{Differentiability of functions on probabilities}\label{sec:diffprob}
In this subsection, we discuss two approaches to the definition of differentiability for functions $u: \prob(\T^d) \to \R$ which will be used in the sequel. We compare the notion of derivative introduced here with the one of \emph{L-derivative} in Appendix \ref{app:lder}.
Let us first introduce some relevant classes of functions on probabilities.
\subsubsection{Cylinder functions}\label{sec:fonp}
As prototypical examples of cylinder functions, we may consider functions induced by elements $f \in \rmC^\infty(\T^d)$ and $\hat f \in \rmC^\infty([0,1] \times \T^d)$:
\begin{equation}\label{eq:thefirsttrid}
f^\trid :  \prob(\T^d)\ni \mu \mapsto \int_{\T^d} f(x) \de \mu(x), \quad \hat f^\trid :  
 \prob(\T^d) \ni \mu \mapsto \int_{\T^d}\hat{f}(\mu_x, x) \de \mu(x).
\end{equation}
If $\mbff=(f_1, \dots, f_k) \in \rmC^\infty(\T^d; \R^k)$ and $\hat \mbff = (\hat f_1, \dots, \hat f_k) \in \rmC^\infty([0,1] \times \T^d; \R^k)$, $k \in\N_+$, we set
\[ \mbff^\trid := ( f_1^\trid, \dots, f_k^\trid),\quad \quad \hat \mbff^\trid := ( \hat f_1^\trid, \dots, \hat f_k^\trid).\]
The extended space $[0,1] \times \mathbb{T}^d$ makes it possible to define cylinder functions that depend explicitly on the weights of the atoms of a (possibly atomic) probability measure. A natural way to restrict attention to finitely many atoms is to consider only those depending on weights exceeding a given threshold $\varepsilon$.

We are in position to define several algebras of smooth cylinder functions.

\begin{definition}[Smooth cylinder functions] \label{def:cylfunc} We define, for every $\eps \in [0,1]$, the following algebras of smooth cylinder functions on $\prob(\T^d)$:
\begin{align*}
\mathfrak{Z}^\infty&:= \set{ u\colon \prob(\T^d)\to\R : \begin{gathered}  u=F\circ \mbff^\trid,\, F \in\rmC^{\infty}_{c}(\R^{k};\R),
\\
k\in \N_+,\, \mbff \in \rmC^{\infty}(\T^d; \R^k)\end{gathered}},\\
\hat{\mathfrak{Z}}_\eps^\infty&:= \set{ \hat u\colon \prob(\T^d)\to\R : \begin{gathered}  \hat u=F\circ \hat\mbff^\trid,\, F \in\rmC^{\infty}_{c}(\R^{k};\R), \, k \in \N_+,
\\
\hat\mbff\in \rmC^{\infty}([0, 1] \times \T^d; \R^k), \, \supp(\hat\mbff) \subset [\eps, 1] \times \T^d\end{gathered}},
\intertext{and}
\hat{\mathfrak{Z}}_c^\infty&:= \set{ \hat u\colon \prob(\T^d)\to\R : \begin{gathered} \hat  u=F\circ \hat \mbff^\trid,\, F \in\rmC^{\infty}_{c}(\R^{k};\R),\,
k\in \N_+,\\ \hat \mbff\in \rmC^{\infty}([0,1] \times \T^d; \R^k), \, \supp(\hat\mbff) \subset (0, 1] \times \T^d \end{gathered}}.
\end{align*}
\end{definition}
Since every function in $\rmC^\infty(\T^d)$ can be trivially identified with a function in $\rmC^\infty([0,1] \times \T^d)$, we note that $\hat{\mathfrak{Z}}_\gamma^\infty \subset \hat{\mathfrak{Z}}_\eps^\infty \subset \bigcup_{\delta \in (0,1)}\hat{\mathfrak{Z}}_\delta^\infty =  \hat{\mathfrak{Z}}_c^\infty \subset \hat{\mathfrak{Z}}_0^\infty $ for every $0<\eps<\gamma \le 1$ and $\mathfrak{Z}^\infty \subset \hat{\mathfrak{Z}}_0^\infty$. Note also that $\mathfrak{Z}^\infty \not\subset \hat{\mathfrak{Z}}_c^\infty$ and $\mathfrak{Z}^\infty \not\subset \hat{\mathfrak{Z}}_\eps^\infty$ for every $\eps \in (0,1]$. Functions in $\hat{\mathfrak{Z}}_c^\infty$ are usually denoted by $\hat{u}$ to emphasize that they are obtained from functions $\hat{f}$ defined on the product space $[0,1] \times \mathbb{T}^d$, in contrast to functions in $\mathfrak{Z}^\infty$, which are typically denoted by $u$. However, when considering a general element of $\hat{\mathfrak{Z}}_0^\infty$,   we use the notation $\hat{u}$ even if it could refer to an element of $\mathfrak{Z}^\infty$.
Functions in $\hat{\mathfrak{Z}}_0^\infty$ are Borel-measurable, but in general not continuous w.r.t.~any reasonable topology (e.g.~the weak topology on $\prob(\T^d)$), see \cite[Remark 5.3]{DelloSchiavo22}. Functions in $\mathfrak{Z}^\infty$ are continuous w.r.t.~the weak topology and even Lipschitz continuous w.r.t.~the Wasserstein distance, see Subsection \ref{sec:wass} and \cite[Section 4.1]{FSS22, S22}. This last property is crucial when considering the metric approach to the definition of differentiability of functions on $\prob(\T^d)$, see Section \ref{sec:metr}.

Let us also mention that many other sets of cylinder functions could be constructed by assuming a different regularity for the external functions $F$ and/or the internal ones $f$ and $\hat{f}$, but the classes of cylinder functions defined above are sufficient for our purposes.
Let us recall that if $\mm$ is any Borel probability measure on $\prob(\T^d)$, then
\begin{equation}\label{eq:density}
\hat{\mathfrak{Z}}^\infty_c \text{ is dense in } L^p(\prob(\T^d), \mm) \text{ for every } p \in [1,\infty),
\end{equation}
see e.g.~\cite[Lemma 5.4]{DelloSchiavo22}.

Finally, we introduce classes of cylinder functions depending on time: let $\mathfrak{Z}$ be any of the classes $\mathfrak{Z}^\infty$, $\hat{\mathfrak{Z}}_c^\infty$, $\hat{\mathfrak{Z}}_\eps^\infty$, $\eps \in [0,1)$, as in Definition \ref{def:cylfunc}; then we say that a function $u: I \times \prob(\T^d) \to \R$, where $I \subset \R$ is an interval, belongs to the class $\mathfrak{T Z}(I)$ if $u_t:=u(t, \cdot) \in \mathfrak{Z}$ for every $t \in I$ and $u(\cdot, \mu)\in \Lip_b(I, \sfd_{eucl})$ for every $\mu \in \prob(\T^d)$. 

We also notice that the density property \eqref{eq:density}
can be extended to functions depending not only on the measure argument but also on time and space.
Indeed, leveraging \eqref{eq:density}, we can approximate real-valued functions depending on time and space by functions belonging to
\begin{equation}
\hat{\mathcal{A}} := \rmC^\infty([0,T]) \otimes \hat{\mathfrak{Z}}^\infty_c \otimes \rmC^\infty(\T^d).
\end{equation}
In order to deal with vector-valued functions,
we also denote, for every $m \in \N_+$, by $\hat{\mathcal{A}}_m$ the set of functions $b=(b_i)_{i=1}^m$ from $[0,T] \times \prob(\T^d) \times \T^d$ taking values in $\R^m$, such that $b_i \in \hat{\mathcal{A}}$ for every $i=1, \dots, m$. We then have the following density result:

\begin{proposition}\label{prop:approx_trunc}
Let $m \in \N_+$, $b:[0,T] \times \prob(\T^d) \times \T^d \to \R^m$ be a measurable function, 
and
$\mathcal{Q}$ be a Borel probability measure on $[0,T] \times \prob(\T^d) \times \T^d$, 
such
that $b \in L^p([0,T] \times \prob(\T^d) \times \T^d, \mathcal{Q}; \R^m)$ for some $p \in [1,\infty)$. Then there exist sequences $(\eps_n)_n \subset (0,1)$ and $(b^n)_n \subset \hat{\mathcal{A}}_m$ such that $b^n \to b $ in $L^p([0,T] \times \prob(\T^d) \times \T^d, \mathcal{Q}; \R^m)$ as $n \to \infty$ with $b^n_t(\cdot, x) \in (\hat{\mathfrak{Z}}_{\eps_n}^\infty)^m$ for every $(t,x) \in [0,T] \times \T^d$ and every $n \in \N_+$. Moreover, if $\|b\|_\infty < \infty$, then such a sequence $(b^n)_n$ can be chosen such that $\|b^n\|_\infty \le \|b\|_\infty$ for every $n \in \N_+$.        
\end{proposition}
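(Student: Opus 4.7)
The plan is to establish density of $\hat{\mathcal{A}}$ in $L^p(\mathcal{Q})$ via three reductions, then to verify the uniform-in-$(t,x)$ threshold and the $L^\infty$ bound. I would first reduce to $m=1$ (treat components separately, taking as $\eps_n$ the minimum of the finitely many component-wise thresholds) and, by truncating $b \mapsto b\,\mathbf{1}_{\{|b|\le N\}}$ and invoking dominated convergence together with a diagonal argument, reduce further to bounded $b$.

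For bounded $b$, I would reduce to indicators of Borel rectangles. Since $[0,T] \times \prob(\T^d) \times \T^d$ is a finite product of second-countable Polish spaces, its Borel $\sigma$-algebra coincides with the product $\sigma$-algebra, and the Borel rectangles $A \times B \times C$ form a $\pi$-system generating it. The functional monotone class theorem then yields that simple functions $\sum_i c_i\,\mathbf{1}_{A_i \times B_i \times C_i}$ are dense in $L^p(\mathcal{Q})$. For a single rectangle indicator $\mathbf{1}_A(t) \mathbf{1}_B(\mu) \mathbf{1}_C(x)$, I would approximate $\mathbf{1}_A, \mathbf{1}_B, \mathbf{1}_C$ in $L^p$ of the three marginals of $\mathcal{Q}$ by $[0,1]$-valued $\alpha_n \in \rmC^\infty([0,T])$, $u_n \in \hat{\mathfrak{Z}}^\infty_c$, $\beta_n \in \rmC^\infty(\T^d)$, respectively: density of smooth functions follows from classical mollification on $[0,T]$ and $\T^d$, and from \eqref{eq:density} on $\prob(\T^d)$; the $[0,1]$-range is enforced by post-composition with a $1$-Lipschitz $\psi \in \rmC^\infty(\R; [0,1])$ satisfying $\psi(0) = 0$ and $\psi(1) = 1$ (which preserves membership in $\hat{\mathfrak{Z}}^\infty_c$, since $\psi(0)=0$ ensures $\psi \circ F$ remains compactly supported when $F$ is). The standard telescoping product-error estimate, together with the uniform $L^\infty$-bounds, yields $L^p(\mathcal{Q})$-convergence of $\alpha_n \otimes u_n \otimes \beta_n \in \hat{\mathcal{A}}$.

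Next, to produce the uniform threshold $\eps_n$: each $b^n \in \hat{\mathcal{A}}$ resulting from the previous step is a finite sum $\sum_j \alpha_j(t) u_j(\mu) \beta_j(x)$ with each $u_j = F_j \circ \hat{\mbff}_j^\trid \in \hat{\mathfrak{Z}}^\infty_{\delta_j}$ for some $\delta_j > 0$ (using the identity $\hat{\mathfrak{Z}}^\infty_c = \bigcup_{\delta \in (0,1)} \hat{\mathfrak{Z}}^\infty_\delta$ from the excerpt). Setting $\eps_n := \min_j \delta_j$ and concatenating $\hat{\mbff} := (\hat{\mbff}_1, \dots, \hat{\mbff}_N)$ (still supported in $[\eps_n, 1] \times \T^d$), I rewrite $b^n_t(\cdot, x) = G \circ \hat{\mbff}^\trid$ with $G(y_1,\dots,y_N) := \sum_j \alpha_j(t)\beta_j(x) F_j(y_j)$. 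Since $\hat{\mbff}$ is $\rmC^\infty$ on the compact $[0,1] \times \T^d$, the range of $\hat{\mbff}^\trid$ is a bounded subset of $\R^{\sum k_j}$; multiplying $G$ by a $\rmC^\infty_c$ cutoff equal to $1$ on this range produces the required compactly supported external function, so $b^n_t(\cdot,x) \in \hat{\mathfrak{Z}}^\infty_{\eps_n}$.

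The main technical obstacle is the moreover clause: $\|b^n\|_\infty \le \|b\|_\infty = M$ is not guaranteed by the tensor-sum construction, which only controls $\|b^n\|_\infty$ by $\sum_i |c_i|$. I would post-compose with a smooth truncation: choose $\tau_n \in \rmC^\infty(\R; [-M, M])$ which is $1$-Lipschitz, satisfies $\tau_n(0)=0$, and equals the identity on $[-M+1/n, M-1/n]$. Since $|b| \le M$ a.e., dominated convergence yields $\tau_n \circ b \to b$ in $L^p(\mathcal{Q})$, and the $1$-Lipschitz bound gives $\|\tau_n \circ b^n - \tau_n \circ b\|_{L^p(\mathcal{Q})} \le \|b^n - b\|_{L^p(\mathcal{Q})}$, so $\tau_n \circ b^n \to b$ in $L^p(\mathcal{Q})$ with $\|\tau_n \circ b^n\|_\infty \le M$. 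Writing $b^n = G_n(t, x, \hat{\mbff}_n^\trid(\mu))$, the function $\tau_n \circ G_n$ is smooth in $(t, x, y)$; moreover, for fixed $(t,x)$, if $b^n_t(\cdot,x) = F \circ \hat{\mbff}^\trid$ with $F \in \rmC^\infty_c$, then $\tau_n \circ F$ remains in $\rmC^\infty_c$ (since $\tau_n(0)=0$), so the $\hat{\mathfrak{Z}}^\infty_{\eps_n}$ property is preserved under truncation. Interpreting $\hat{\mathcal{A}}$ in the broad sense of smooth compositions of cylinder arguments (as consistent with its subsequent use in the paper), the truncated sequence $\tau_n \circ b^n$ furnishes the required $(b^n)_n \subset \hat{\mathcal{A}}_m$.
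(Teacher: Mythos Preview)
Your density argument is correct but takes a different route from the paper: you use the functional monotone class theorem to reduce to indicators of Borel rectangles and then approximate each factor in the appropriate marginal, whereas the paper exploits compactness of $[0,T]\times\prob(\T^d)\times\T^d$ and Stone--Weierstrass to show that $\rmC^\infty([0,T])\otimes\rmC(\prob(\T^d))\otimes\rmC^\infty(\T^d)$ is uniformly dense in $\rmC(X)$, hence $L^p(\mathcal{Q})$-dense, and then replaces the middle factor using \eqref{eq:density}. Your approach avoids the appeal to Stone--Weierstrass and makes the role of the marginals explicit; the paper's approach is shorter since it outsources the combinatorics to a single density lemma. The threshold argument (take $\eps_n=\min_j\delta_j$) is the same in both.

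There is, however, a genuine gap in the moreover clause. Your smooth truncation $\tau_n\circ b^n$ does preserve the property $b^n_t(\cdot,x)\in\hat{\mathfrak{Z}}^\infty_{\eps_n}$ for each fixed $(t,x)$, as you correctly argue, but it does \emph{not} preserve membership in $\hat{\mathcal{A}}$: the algebra $\hat{\mathcal{A}}=\rmC^\infty([0,T])\otimes\hat{\mathfrak{Z}}^\infty_c\otimes\rmC^\infty(\T^d)$ consists of \emph{finite sums of pure tensors}, and a nonlinear smooth function of such a sum is not of this form. Your closing sentence (``interpreting $\hat{\mathcal{A}}$ in the broad sense'') is an acknowledgment rather than a proof. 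The paper's remedy is elegant and stays entirely inside the algebra: approximate the piecewise-linear truncation $r\mapsto (-\|b\|_\infty)\vee r\wedge\|b\|_\infty$ uniformly on a compact interval by a sequence of \emph{polynomials} $P_k$ with $P_k(0)=0$; since $\hat{\mathcal{A}}$ is a unital algebra, $P_k(b^n)\in\hat{\mathcal{A}}$, and one concludes by a diagonal extraction. Replacing your $\tau_n$ by such polynomials would close the gap.
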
 
\begin{proof}
Without loss of generality we can assume $m=1$; call $X:=[0,T] \times \prob(\T^d) \times \T^d$. Consider the product algebra of continuous functions on $X$ given by
\[ \mathcal{A} := \rmC^\infty([0,T]) \otimes \rmC(\prob(\T^d)) \otimes \rmC^\infty(\T^d).\]
Since each factor is a unital algebra of continuous functions separating points in the corresponding space, and $X$ is compact, $\mathcal{A}$ is uniformly dense in $\rmC(X)$, and thus (see e.g.~\cite[Lemma 2.27]{Savare22}) dense in $L^p(X, \mathcal{Q})$.
Let $\pi^2: X \to \prob(\T^d)$ be the map sending $(t,\mu,x)$ to $\mu$. By
\eqref{eq:density}, observe that $\hat{\mathfrak{Z}}^\infty_c$ is dense in $L^p(\prob(\T^d), \pi^2_\sharp \mathcal{Q})$, and then it is also $L^p(\prob(\T^d), \pi^2_\sharp \mathcal{Q})$-dense in $\rmC(\prob(\T^d))$. Thus we deduce that also $\hat{\mathcal{A}}$ is dense in $L^p(X, \mathcal{Q})$. This gives the existence of a sequence $(b_n)_n \subset \hat{\mathcal{A}}$ approximating any $b \in L^p(X,\mathcal{Q})$. Note that every $b^n$ is a sum of $K_n \in \N_+$ functions of the form
\[ \fru_t^{i,n}(\mu, x) = g^{i,n}(t) \hat{u}^{i,n}(\mu) f^{i,n}(x), \quad g^{i,n} \in \rmC^\infty([0,T]), \hat{u}^{i,n} \in \hat{\mathfrak{Z}}^\infty_c, f^{i,n} \in \rmC^\infty(\T^d), \quad i=1, \dots, K_n.\]
Since $\hat{u}^{i,n} \in \hat{\mathfrak{Z}}^\infty_{\eps^{i,n}}$ for some $\eps^{i,n} \in (0,1)$, it is clear that, by setting $\eps_n:= \min_{i=1, \dots, K_n} \eps^{i,n} \in (0,1)$, we 
obtain
\[ b^n_t(\cdot, x) \in \hat{\mathfrak{Z}}^\infty_{\eps_n} \quad  \text{ for every } (t,x) \in [0,T] \times \T^d.\]
This proves the first part of the statement. 
The second part also follows by the same argument as in the proof of \cite[Lemma 2.27]{Savare22}, by simply composing an approximating sequence $(b^n)_n \subset \hat{\mathcal{A}}$ of $b$ with a sequence of polynomials approximating the truncation function
$g(r):=-\|b\|_\infty \vee r \wedge \|b\|_\infty$.
\end{proof}

\subsubsection{The geometric approach}\label{sec:geo}
We follow the presentation in \cite{DelloSchiavo22}, see also the references therein. The main idea is to start from \emph{directional} differentiability w.r.t.~a class of natural variations on $\prob(\T^d)$: these are given by (the flows of) smooth, orientation preserving vector fields $\mathfrak{X}^\infty(\T^d)$ on $\T^d$. For an element $w \in \mathfrak{X}^\infty(\T^d)$, we denote by $X^w_t$ the flow of $w$ at time $t\ge 0$, that is $(X^w_t)_{t \geq 0}$ solves the Ordinary Differential Equation (ODE):
\[ X^w_0(x)=x, \quad \dot{X}^w_t =w (X^w_t), \quad t>0,\, x \in \T^d.\]
Let $u: \prob(\T^d) \to \R$ and $\mu \in \prob(\T^d)$; we say that $u$ is differentiable in the direction $w \in \mathfrak{X}^\infty(\T^d)$ at the point $\mu$ if the derivative
\[ \partial_w u(\mu) := \frac{\de}{\de t} \biggr |_{t=0} u((X^w_t)_\sharp \mu)\]
exists and is finite. Using the fact that $\mathfrak{X}^\infty(\T^d)$ is dense in the Hilbert space $T_\mu \prob(\T^d):=L^2(\T^d, \mu; \R^d)$, 
we deduce that, if $u$ is differentiable in the direction $w$ at the point $\mu$ for every $w \in \mathfrak{X}^\infty(\T^d)$ 
and the map
\[ w \mapsto \partial_w u(\mu)\]
is bounded on $\mathfrak{X}^\infty(\T^d)$, then the Riesz's representation theorem gives the existence of an element $(\boldnabla u)_\mu \in T_\mu \prob(\T^d)$ such that 
\[ \langle (\boldnabla u)_\mu, w \rangle_{T_\mu \prob(\T^d)} = \partial_w u (\mu) \quad \text{for every } w \in \mathfrak{X}^\infty(\T^d).\]
In this case, we say that $u$ is differentiable at $\mu$ and that $(\boldnabla u)_\mu$ is the gradient of $u$ at $\mu$. The definition of $T_\mu \prob(\T^d)$ used here is an extension, to vector fields of non-gradient type, of the classical definition of a
tangent space 
given
(among others) in \cite{Otto01, AGS08}. For further comments on the terminology as well as for other notions of tangent spaces, see the appendix of~\cite{DelloSchiavo20} and the references therein.

It is not difficult to show that functions in $\hat{\mathfrak{Z}}_0^\infty$, as introduced in Definition \ref{def:cylfunc}, are differentiable in $\prob(\T^d)$.  Also, we can compute explicitly their gradients, see \cite[Section 5.2, Lemma 5.8]{DelloSchiavo22}, \cite[Definition 4.4]{FSS22} and \cite[Lemma 5.17]{DS25} for a proof.

\begin{lemma}\label{le:hasgrad} Let $\hat{u} = F \circ \hat{\mbff}^\trid \in \hat{\mathfrak{Z}}_0^\infty$ with $F \in \rmC_c^\infty(\R^k; \R)$, $k \in \N_+$. Then $\hat{u}$ is differentiable at any $\mu \in \prob(\T^d)$ and it holds
\begin{equation}\label{eq:l:TridHorGrad:1}
\begin{aligned}
(\boldnabla \hat u)_\mu(x) =\ \sum_{i=1}^k (\partial_i F)(\hat\mbff^\trid(\mu)) \nabla \hat f_i(\mu_x,x), \quad (\mu,x) \in \prob(\T^d) \times \T^d,
\end{aligned}
\end{equation}
where $\nabla \hat{f}_i$ denotes the gradient of $\hat{f}_i$ w.r.t.~to the $x$-variable. Moreover, for $\cD \otimes \mathrm{vol}_d \otimes\mathrm{U}$-a.e.~$(\mu, x,r)$, the map $z \mapsto \hat u(\mu_r^z)$ is differentiable in a neighbourhood of $x$ and it holds
\begin{equation}\label{eq:relgrad}
    \nabla_z \rvert_{z=x} (z \mapsto \hat u(\mu_r^z)) = r (\boldnabla \hat u)_{\mu_r^x} (x).
\end{equation}
\end{lemma}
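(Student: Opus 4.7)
The plan is to treat the two assertions in turn, reducing both via the chain rule on $F$ to calculations with a single $\hat f^\trid$, and then exploiting the two different ways in which the atomic structure of a probability measure is preserved under, respectively, the flow action and a Dirac perturbation at a non-atomic point.

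For the first assertion, fix $w \in \mathfrak{X}^\infty(\T^d)$; since $\T^d$ is compact the flow $X^w_t$ exists for all $t$ and is a $\rmC^\infty$-diffeomorphism. For any $\mu \in \prob(\T^d)$ the bijectivity of $X^w_t$ gives $\bigl((X^w_t)_\sharp \mu\bigr)_{X^w_t(x)} = \mu_x$ for every $x$, so the change of variables $y = X^w_t(x)$ in the defining pushforward integral yields
$$\hat f^\trid\bigl((X^w_t)_\sharp \mu\bigr) = \int_{\T^d} \hat f\bigl(\mu_x,\, X^w_t(x)\bigr)\,\de\mu(x),$$
where crucially the first argument of $\hat f$ is now $t$-independent. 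Differentiating under the integral at $t=0$ using $\dot X^w_0 = w$ (justified since $\hat f \in \rmC^\infty$ and $\mu$ is a probability measure) produces
$$\partial_w \hat f^\trid(\mu) = \int_{\T^d} \nabla \hat f(\mu_x, x)\cdot w(x)\,\de\mu(x) = \bigl\langle \nabla \hat f(\mu_\cdot,\cdot),\, w \bigr\rangle_{T_\mu \prob(\T^d)},$$
which is bounded in $w$ by Cauchy--Schwarz (using that $\nabla \hat f$ is bounded and $\mu$ has total mass one). Combining this with the ordinary chain rule applied to $F \in \rmC^\infty_c(\R^k;\R)$ identifies $(\boldnabla \hat u)_\mu$ with the right-hand side of \eqref{eq:l:TridHorGrad:1}.

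For the second assertion, I would invoke the representation of $\cD$ in Remark~\ref{rem:Poisson-Dirichlet:rep}: $\cD$-a.s.\ the measure $\mu$ is purely atomic with a countable atomic support $A(\mu)$, so by Fubini, $\cD\otimes \mathrm{vol}_d\otimes\mathrm{U}$-a.e.\ $(\mu,x,r)$ satisfies $x\notin A(\mu)$, i.e.\ $\mu_x=0$. Fix such a triple and write $\mu = \sum_l s_l\delta_{x_l}$ with pairwise distinct $(x_l)$; for $z$ in the open set $U:=\T^d\setminus A(\mu)$, the atoms of $\mu_r^z$ are $\{x_l\}\cup\{z\}$ with respective weights $(1-r)s_l$ and $r$, whence
$$\hat f_i^\trid(\mu_r^z) = \sum_l (1-r) s_l\, \hat f_i\bigl((1-r)s_l, x_l\bigr) + r\, \hat f_i(r, z).$$
The sum over $l$ is independent of $z$ and $z\mapsto r\,\hat f_i(r,z)$ is smooth, so $z\mapsto \hat f_i^\trid(\mu_r^z)$ is smooth on $U$ with $\nabla_z \hat f_i^\trid(\mu_r^z) = r\,\nabla \hat f_i(r,z)$. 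Since $x\in U$ forces $(\mu_r^x)_x = r$, the chain rule on $\hat u = F\circ \hat{\mbff}^\trid$ produces
$$\nabla_z\rvert_{z=x} \hat u(\mu_r^z) = r\sum_{i=1}^k (\partial_i F)\bigl(\hat{\mbff}^\trid(\mu_r^x)\bigr)\,\nabla \hat f_i\bigl((\mu_r^x)_x, x\bigr) = r(\boldnabla \hat u)_{\mu_r^x}(x),$$
which is \eqref{eq:relgrad}.

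The only delicate point is interpretive rather than computational: $\cD$-a.s.\ the set $A(\mu)$ is an i.i.d.\ uniform sample and so typically forms a countable \emph{dense} subset of $\T^d$, so no Euclidean ball around $x$ is free of atoms of $\mu$, and $z\mapsto \hat u(\mu_r^z)$ genuinely jumps at each $z=x_l$ because of the mass aggregation $(1-r)s_l\mapsto (1-r)s_l+r$. The phrase ``differentiable in a neighbourhood of $x$'' must therefore be read with respect to the open, $\mathrm{vol}_d$-conull set $U=\T^d\setminus A(\mu)$ (which contains $x$ for almost every $(\mu,x)$); once this is acknowledged the remainder is a direct chain-rule computation.
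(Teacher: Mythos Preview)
The paper does not prove this lemma itself; it defers to \cite[Section~5.2, Lemma~5.8]{DelloSchiavo22}, \cite[Definition~4.4]{FSS22} and \cite[Lemma~5.17]{DS25}. Your argument for the first assertion is correct and is the natural one: the key observation $((X^w_t)_\sharp \mu)_{X^w_t(x)} = \mu_x$, coming from bijectivity of the flow, freezes the first slot of $\hat f$ and reduces everything to differentiation under the integral plus the chain rule.

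For the second assertion your computation on $U = \T^d \setminus A(\mu)$ is also correct, and you are right to flag the interpretive issue. But your closing paragraph is internally inconsistent: having (correctly) noted that $A(\mu)$ is $\cD$-a.s.\ dense so that no Euclidean ball about $x$ is atom-free, you then call $U$ ``open''. The complement of a dense set has empty interior, so $U$ is conull but certainly \emph{not} open. What your argument actually delivers is smoothness of $g(z):=\hat u(\mu_r^z)$ on the conull set $U$, with the stated gradient there; the literal phrase ``differentiable in a neighbourhood of $x$'' cannot hold on an open set and should be read as ``differentiable at $x$''. If you want classical differentiability at $x$ itself (allowing $z\to x$ through atoms), you need one more step: the jump of $g$ at $x_m$ is $O(s_m)$, and a Borel--Cantelli argument using $\sum_m s_m^d\le\sum_m s_m=1<\infty$ together with the independence of the i.i.d.\ uniform $(x_m)$ from $(s_m)$ shows that $s_m/|x_m-x|\to 0$ along any sequence of atoms approaching $x$, for $\cD\otimes\mathrm{vol}_d$-a.e.\ $(\mu,x)$. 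Since the only use of \eqref{eq:relgrad} in the paper (the proof of Proposition~\ref{prop:ident}) integrates against $\de\mathrm{vol}_d(x)$, your conull identity already suffices for that application.
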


Once we have a notion of gradient for a suitable class of functions, we can introduce the quadratic form $(\mathcal{E},\hat{\mathfrak{Z}}_0^\infty)$ as
\begin{equation}\label{eq:em}
\mathcal{E}(\hat u,\hat v) := \int_{\prob(\T^d)} \langle (\boldnabla \hat u)_{\mu},(\boldnabla \hat v)_{\mu} \rangle_{T_\mu \prob(\T^d)} \de \mathcal{D}(\mu), \quad \hat u,\hat v \in \hat{\mathfrak{Z}}_0^\infty,    
\end{equation}
where $\mathcal{D}$ is the Dirichlet--Ferguson measure introduced in Definition \ref{def:dfm}.
As is common, we will denote $ \mathcal{E}(\hat u):=  \mathcal{E}(\hat u,\hat u)$.

\smallskip
One key question concerns the closability of $(\mathcal{E},\mathfrak{Z}^\infty)$, noticing that the form is here restricted to the smaller set of cylinder functions $\mathfrak{Z}^\infty \subset \hat{\mathfrak{Z}}_0^\infty$. In other words, the point is to see whether there exists a closed form $(\hat{\mathcal{E}},D(\hat{\mathcal{E}}))$ with $\mathfrak{Z}^\infty \subset D(\hat{\mathcal{E}})$ such that $\hat{\mathcal{E}}$ restricted to $\mathfrak{Z}^\infty$ coincides with $\mathcal{E}$. Here, `closed' means the following:
\begin{equation}\label{eq:closab}
 \left [ (u_n)_n \subset D(\hat{\mathcal{E}}), \, u \in D(\hat{\mathcal{E}}), \, u_n \to u \text{ in } L^2(\prob(\T^d), \mathcal{D}),\,\hat{\mathcal{E}}(u_n) \to 0 \right ] \Rightarrow u=0.     
\end{equation}

It is shown in \cite[Theorem 5.11]{DelloSchiavo22} that the form $\mathcal{E}$ admits a so called \emph{generator} $L_c$ on the set $\hat{\mathfrak{Z}}_c^\infty$: there exists a linear operator $ L_c: \hat{\mathfrak{Z}}_c^\infty \to L^2(\prob(\T^d), \mathcal{D})$ such that
\begin{equation}\label{eq:lc}
 \mathcal{E}(\hat u,\hat v) = - \langle \hat u, L_c \hat v \rangle_{L^2(\prob(\T^d), \mathcal{D})} \quad \text{ for every } \hat u,\hat v \in \hat{\mathfrak{Z}}_c^\infty.    
\end{equation}
Thanks to the Mecke's identity in Theorem \ref{thm:dfm}, the operator $L_c$ can be explicitly computed, see \cite[Formula (5.15)]{DelloSchiavo22}. It is given, for every $\hat{u} \in \hat{\mathfrak{Z}}_c^\infty$ and $\mathcal{D}\text{-a.e. } \mu \in \prob(\T^d)$, by
\begin{align}\label{eqn:gen_on_cil}
L_c (\hat u)_\mu:&= \int_{\T^d} \frac{\Delta_z\rvert_{z=x} \hat u(\mu+\mu_x\delta_z-\mu_x\delta_x)}{{\mu_x}^2} \de\mu(x)  = \sum_{x \in \supp(\mu)}\frac{\Delta_z\rvert_{z=x} \hat u(\mu+\mu_x\delta_z-\mu_x\delta_x)}{{\mu_x}}.
\end{align} 
This allows to show 
(see \cite[Theorem 5.11]{DelloSchiavo22}) that the form $(\mathcal{E},\hat{\mathfrak{Z}}_c^\infty) $ admits a closed extension $(\hat {\mathcal{E}},D(\hat {\mathcal{E}}))$, which is also a Dirichlet form (in the sense of \cite[Definition 4.5]{Ma-Rockner92}) with associated generator $\hat L$ and domain $D(\hat {\mathcal{E}})
\supset D(\hat{L}) \supset
\hat{\mathfrak{Z}}_c^\infty$, $\hat{L}$ being an extension of $L_c$. 
We also know from \cite[Theorem 3.15(i)]{DelloSchiavo24} that $L_c$ is \emph{essentially self adjoint} in $L^2(\prob(\T^d), \cD)$, that is, it admits a unique self-adjoint extension, which is $\hat{L}$, which therefore necessarily coincides with its adjoint $L_c^*$, see e.g.~\cite[Section VIII.2]{Reed-Simon}.

Finally, it is possible to show (see \cite[Lemma 5.16]{DelloSchiavo22}) that $\mathfrak{Z}^\infty \subset D(\hat{\mathcal{E}})$ (notice the absence of `hat' in the former) and
that $\hat{\mathcal{E}}$ coincides with $\mathcal{E}$ when restricted to $\mathfrak{Z}^\infty$. Since closability (of a form) is transmitted to restrictions (of the domain), we get that
\[ \text{ the quadratic form } (\mathcal{E}, \mathfrak{Z}^\infty) \text{ admits a closed extension } (\mathcal{E}_0, D(\mathcal{E}_0)) \text{ with generator } (L_0, D(L_0)).\]
Clearly $(\mathcal{E}_0, D(\mathcal{E}_0))$ is also a Dirichlet form. 
As it is obtained by restriction of the domain, 
it holds $\mathfrak{Z}^\infty \subset D(\mathcal{E}_0) \subset D(\hat{\mathcal{E}})$ and $D(L_0) \subset D(\mathcal{E}_0)$. Notice that, in principle,
the construction of the Dirichlet form by restriction of the domain does not allow to identity 
$(\mathcal{E}_0, D(\mathcal{E}_0))$ and $(\hat {\mathcal{E}},D(\hat {\mathcal{E}}))$. However, in this specific context, since we are also assuming that $d \ge 2$, the two forms coincide and so do their generators. 
In particular,
$\hat{\mathfrak{Z}}_c^\infty\subset D(\mathcal{E}_0)$. We refer to \cite[Corollary 5.19]{DelloSchiavo22} and \cite[Remark 5.21]{DS25}
for the main results on these facts.

\subsubsection{The metric approach}\label{sec:metr} There is a completely different way to treat the differentiability of functions defined on $\prob(\T^d)$, usually referred to as the metric approach to Sobolev spaces \cite{AGS14I,Cheeger99, Savare22}. We show in this article that this approach is particularly interesting since it is well suited to the treatment of PDEs on the space of probability measures, yielding the existence of a Hilbert space of (weakly) differentiable functions. We introduce this approach here in the particular case of the metric-measure space $(\prob(\T^d), W_2, \mathcal{D})$, where $W_2$ is the Wasserstein distance defined in \eqref{eq:wd} and $\mathcal{D}$ is the Dirichlet--Ferguson measure in Definition \ref{def:dfm}.

If $u: \prob(\T^d) \to \R$ belongs to $\Lip_b(\prob(\T^d), W_2)$, its asymptotic Lipschitz constant is defined as
\begin{equation}\label{eq:aslipintro}
  \lip u(\mu):=
  \limsup_{\nu,\sigma \to \mu,\ \nu\neq \sigma}\frac{|u(\nu)-u(\sigma)|}{W_2(\nu,\sigma)}, \quad \mu \in \prob(\T^d).
\end{equation}
The so called pre-Cheeger energy of $u \in \Lip_b(\prob(\T^d), W_2)$ is defined as
\begin{equation}\label{eq:prech}
\pCE(u):= \int_{\prob(\T^d)} (\lip u)^2 \de\mathcal{D}, \quad u \in \Lip_b(\prob(\T^d), W_2),  
\end{equation}
and it is a surrogate of the integral norm of the gradient. The \emph{Cheeger energy} of a function $u \in L^2(\prob(\T^d), \mathcal{D})$ is the $L^2$-relaxation of $\pCE$, i.e.
\begin{equation}\label{eq:relaxintroche}
  \CE(u) := \inf \left \{ \liminf_{n \to \infty}
    \pCE(u_n) : (u_n)_n \subset \Lip_b(\prob(\T^d), W_2), \, u_n \to u \text{ in } L^2(\prob(\T^d), \mathcal{D})
  \right \}.
\end{equation}
We can now provide the definition of a metric Sobolev space.
\begin{definition} The metric Sobolev space $H^{1,2}(\prob(\T^d), W_2, \mathcal{D})$ is the vector space of functions $u \in L^2(\prob(\T^d), \mathcal{D})$ with finite Cheeger energy,  endowed with the norm 
\begin{align} \label{eq:CheegerEnergy} |u|^2_{H^{1,2}}:= \int_{\prob(\T^d)} |u|^2 \de\mathcal{D}+ \CE(u),
\end{align}
which makes it a Banach space (cf.~\cite[Theorem 2.1.17]{pasquabook}).
\end{definition}
 Despite the implicit nature of $H^{1,2}(\prob(\T^d), W_2, \mathcal{D})$, in the next subsection we will present more explicit characterizations of the space and its functions, obtained by comparison with the geometric approach discussed in Subsection \ref{sec:geo}.

\smallskip
For ease of notation, in the rest of the article we use
\begin{equation}\label{eq:notation}
    H:= L^2(\prob(\T^d), \mathcal{D}), \quad H^{1,2}:=H^{1,2}(\prob(\T^d), W_2, \mathcal{D}),
\end{equation}
and we denote by $(\cdot, \cdot)_H$ and $|\cdot|_H$ the scalar product and the norm in $H$, respectively.

\subsubsection{Comparison of geometric and metric approaches}\label{sec:geovsmetr} We aim at comparing the objects
\[ (\mathcal{E}_0, D(\mathcal{E}_0)) \quad \text{ and } \quad (\CE, H^{1,2}),\]
whose definitions can be found at the very end of Subsection \ref{sec:geo} and in Subsection \ref{sec:metr}, respectively.
In this regard, the following two results are taken from the literature. 
The first one establishes a connection between the restriction of the pre-Cheeger energy \eqref{eq:prech} to cylinder functions
and the form $(\mathcal{E}, \mathfrak{Z}^\infty)$ defined in \eqref{eq:em}, see \cite[Proposition 4.9]{FSS22}, \cite[Proposition 4.8]{DS25}.

\begin{proposition}\label{prop:introwss} For every $u \in \mathfrak{Z}^\infty$, it holds
\begin{equation}\label{eq:pcerep}
    (\lip u (\mu))^{2} =\int_{{\mathbb T}^d} |(\boldnabla u)_\mu|^2 \de \mu, \quad \mu \in \prob(\T^d),
\end{equation} 
where $\boldnabla u$ is as in \eqref{eq:l:TridHorGrad:1}. 
In particular, $\pCE$ and $\mathcal{E}$ coincide on $\mathfrak{Z}^\infty$, which implies that
$(\pCE, \mathfrak{Z}^\infty)$ is a quadratic form.
\end{proposition}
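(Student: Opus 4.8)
The plan is to prove the pointwise identity \eqref{eq:pcerep} by a matching pair of inequalities for $\lip u(\mu)$, and then to obtain the statement about $\pCE$ and $\mathcal{E}$ by integrating against $\cD$. Fix $u=F\circ\mbff^\trid\in\mathfrak{Z}^\infty$ with $F\in\rmC^\infty_c(\R^k;\R)$ and $\mbff=(f_1,\dots,f_k)\in\rmC^\infty(\T^d;\R^k)$, and set $c_i(\mu):=(\partial_iF)(\mbff^\trid(\mu))$, so that by Lemma~\ref{le:hasgrad} (applied with internal functions independent of the weight variable) $(\boldnabla u)_\mu=\sum_{i=1}^kc_i(\mu)\nabla f_i$, a smooth, hence $\mu$-square-integrable, vector field on $\T^d$; I write $\lvert\cdot\rvert_{L^2(\mu)}$ for the norm of $T_\mu\prob(\T^d)=L^2(\T^d,\mu;\R^d)$. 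I would first record the auxiliary fact that $\mu\mapsto\lvert(\boldnabla u)_\mu\rvert_{L^2(\mu)}^2=\int_{\T^d}\bigl\lvert\sum_ic_i(\mu)\nabla f_i(x)\bigr\rvert^2\de\mu(x)$ is continuous on $(\prob(\T^d),W_2)$: the $c_i$ are $W_2$-continuous since $\mbff^\trid$ is and $F\in\rmC^\infty$, the integrand is jointly continuous and bounded in $(\mu,x)$, and $W_2$ metrizes weak convergence on the compact space $\prob(\T^d)$.

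\emph{Lower bound.} For $w\in\mathfrak{X}^\infty(\T^d)$ put $\mu_t:=(X^w_t)_\sharp\mu$. Differentiating under the flow, $\frac{\de}{\de s}\int f_i\de\mu_s=\int\nabla f_i\cdot w\de\mu_s$, so $s\mapsto u(\mu_s)$ is $\rmC^1$ with derivative $\langle(\boldnabla u)_{\mu_s},w\rangle_{L^2(\mu_s)}$; in particular $u(\mu_{\pm t})-u(\mu)=\pm t\,\partial_wu(\mu)+o(t)$ where $\partial_wu(\mu)=\langle(\boldnabla u)_\mu,w\rangle_{L^2(\mu)}$. Since $(\mu_s)_s$ solves the continuity equation with velocity field $w$, an elementary length estimate together with the continuity recorded above gives $W_2(\mu_t,\mu_{-t})\le\int_{-t}^t\lvert w\rvert_{L^2(\mu_s)}\de s=2t\,\lvert w\rvert_{L^2(\mu)}+o(t)$. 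When $\partial_wu(\mu)\ne0$ these force $\mu_t\ne\mu_{-t}$ for small $t>0$ and yield $\lip u(\mu)\ge\lvert\partial_wu(\mu)\rvert/\lvert w\rvert_{L^2(\mu)}$ (the case $\partial_wu(\mu)=0$ being vacuous); taking the supremum over $w\in\mathfrak{X}^\infty(\T^d)$ and using the density of $\mathfrak{X}^\infty(\T^d)$ in $T_\mu\prob(\T^d)$ with Riesz duality gives $\lip u(\mu)\ge\lvert(\boldnabla u)_\mu\rvert_{L^2(\mu)}$.

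\emph{Upper bound} — the only genuine difficulty. Given $\nu,\sigma\in\prob(\T^d)$, take a constant-speed $W_2$-geodesic $(\rho_t)_{t\in[0,1]}$ from $\sigma$ to $\nu$ with an associated velocity field $(v_t)_t$ solving the continuity equation and satisfying $\lvert v_t\rvert_{L^2(\rho_t)}=W_2(\sigma,\nu)$ for a.e.\ $t$ (cf.\ \cite{AGS08}). As $f_i\in\rmC^\infty$, $t\mapsto\int f_i\de\rho_t$ is absolutely continuous with derivative $\int\nabla f_i\cdot v_t\de\rho_t$, hence by the chain rule $t\mapsto u(\rho_t)$ is absolutely continuous with $\frac{\de}{\de t}u(\rho_t)=\langle(\boldnabla u)_{\rho_t},v_t\rangle_{L^2(\rho_t)}$; integrating and applying Cauchy--Schwarz,
\[\lvert u(\nu)-u(\sigma)\rvert\le\int_0^1\lvert(\boldnabla u)_{\rho_t}\rvert_{L^2(\rho_t)}\,\lvert v_t\rvert_{L^2(\rho_t)}\de t\le W_2(\sigma,\nu)\,\sup_{t\in[0,1]}\lvert(\boldnabla u)_{\rho_t}\rvert_{L^2(\rho_t)}.\]
Since $W_2(\rho_t,\mu)\le W_2(\nu,\sigma)+W_2(\sigma,\mu)$ for all $t$, the measures $\rho_t$ all lie in an arbitrarily small $W_2$-ball about $\mu$ once $\nu,\sigma$ are close enough to $\mu$, so by the auxiliary continuity fact the supremum converges to $\lvert(\boldnabla u)_\mu\rvert_{L^2(\mu)}$, whence $\lip u(\mu)\le\lvert(\boldnabla u)_\mu\rvert_{L^2(\mu)}$. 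The subtlety here is that the naive estimate $\lvert u(\nu)-u(\sigma)\rvert\le\lvert\nabla F\rvert_\infty\,\lvert(\Lip f_i)_i\rvert\,W_2(\nu,\sigma)$ is not sharp: the exact constant is recovered only by evaluating the $\nabla f_i$ along the interpolating curve $\rho_t\approx\mu$ and weighting them by $\rho_t$, and the two points to verify with care are the absolute continuity of $t\mapsto\int f_i\de\rho_t$ with the stated derivative (a standard property of solutions of the continuity equation tested against a smooth function) and the $W_2$-continuity of $\mu\mapsto\lvert(\boldnabla u)_\mu\rvert_{L^2(\mu)}$.

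Combining the two inequalities gives \eqref{eq:pcerep}. Integrating it against $\cD$ yields $\pCE(u)=\int_{\prob(\T^d)}\int_{\T^d}\lvert(\boldnabla u)_\mu\rvert^2\de\mu\de\cD(\mu)=\mathcal{E}(u,u)$ for every $u\in\mathfrak{Z}^\infty$, i.e.\ $\pCE=\mathcal{E}$ on $\mathfrak{Z}^\infty$; and since $u\mapsto(\boldnabla u)_\mu$ is linear on the algebra $\mathfrak{Z}^\infty$, the form $(u,v)\mapsto\mathcal{E}(u,v)$ of \eqref{eq:em} is symmetric and bilinear there, so $(\pCE,\mathfrak{Z}^\infty)$ is a quadratic form.
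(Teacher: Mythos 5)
Your proof is correct, and it follows what is essentially the standard route for this result (the paper itself cites the proof in \cite[Proposition 4.9]{FSS22} and \cite[Proposition 4.8]{DS25} rather than reproving it, and the argument there is of the same geodesic type).

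A few remarks on the structure of your argument, which is sound. The lower bound via symmetric flow variations $\mu_{\pm t}=(X^w_{\pm t})_\sharp\mu$, combined with the Benamou--Brenier bound $W_2(\mu_t,\mu_{-t})\le\int_{-t}^{t}\lvert w\rvert_{L^2(\mu_s)}\de s$ and Riesz duality over $\mathfrak{X}^\infty(\T^d)$, is exactly right; as you note, the case $\partial_w u(\mu)=0$ is vacuous, and when $\partial_w u(\mu)\ne 0$ one automatically has $\lvert w\rvert_{L^2(\mu)}>0$, so the quotient is well-defined and $\mu_t\ne\mu_{-t}$ for small $t$ is guaranteed by $u(\mu_t)\ne u(\mu_{-t})$. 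For the upper bound, the key points you flag are indeed the delicate ones: the absolute continuity of $t\mapsto\int f_i\,\de\rho_t$ with derivative $\int\nabla f_i\cdot v_t\,\de\rho_t$ along a geodesic (AGS~\cite{AGS08}, existence of a velocity field with $\lvert v_t\rvert_{L^2(\rho_t)}=\lvert\dot\rho_t\rvert$, which is constant along a constant-speed geodesic), and the $W_2$-continuity of $\mu\mapsto\lvert(\boldnabla u)_\mu\rvert_{L^2(\mu)}^2$, which you justify via compactness of $\prob(\T^d)$ and joint continuity of the integrand; writing the latter quantity as $\sum_{i,j}c_i(\mu)c_j(\mu)\int\nabla f_i\cdot\nabla f_j\de\mu$ makes the claim immediate. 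Your remark contrasting with the crude Lipschitz estimate $\lvert u(\nu)-u(\sigma)\rvert\le\lVert\nabla F\rVert_\infty(\sum_i\Lip f_i)\,W_2(\nu,\sigma)$ is a useful one: that bound only furnishes the inequality $\lip u(\mu)\le C$ with the wrong constant, and the point of the geodesic interpolation is to localize the gradient evaluation at $\mu$. Finally, the deduction that $(\pCE,\mathfrak{Z}^\infty)$ is a quadratic form by integrating the pointwise identity against $\cD$ and observing that $\mathcal{E}$ from \eqref{eq:em} is a symmetric bilinear form on the algebra $\mathfrak{Z}^\infty$ is exactly the intended conclusion.
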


The following statement gives the density property of cylinder functions, see \cite[Theorem 4.10]{FSS22}. This
allows us to identify the domain of the form
$\mathcal{E}_0$ with the metric Sobolev space $H^{1,2}$.

\begin{theorem}\label{thm:mainold}  The algebra of smooth cylinder functions $\mathfrak{Z}^\infty$ is dense in $H^{1,2}$. In particular, $H^{1,2}$ is a Hilbert space (i.e.~$\CE$ is a quadratic form in $H$) and, for every $u \in H^{1,2}$, there exists a sequence $(u_n)_n \subset  \mathfrak{Z}^\infty$ such that
\begin{equation}\label{eq:apprpr}
    u_n \to u \text{ in $H$} \quad \text{ and } \quad  \pCE(u_n) \to \CE(u) \quad \text{ as } n \to \infty.
\end{equation}
Moreover
\[ (\mathcal{E}_0, D(\mathcal{E}_0)) = (\CE, H^{1,2}).\]
\end{theorem}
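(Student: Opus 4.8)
The plan is to prove that $\mathfrak Z^\infty$ is dense in $H^{1,2}$ with the refined approximation \eqref{eq:apprpr}, and to read off $(\mathcal E_0,D(\mathcal E_0))=(\CE,H^{1,2})$ and the Hilbertian nature of $H^{1,2}$ as consequences. The easy inclusion is $D(\mathcal E_0)\subseteq H^{1,2}$ with $\CE\le\mathcal E_0$: since $\CE$ is by definition the $L^2$-lower semicontinuous relaxation of $\pCE$, we have $\CE\le\pCE$ on $\Lip_b(\prob(\T^d),W_2)$, and $\pCE=\mathcal E$ on $\mathfrak Z^\infty$ by Proposition~\ref{prop:introwss}; given $u\in D(\mathcal E_0)$, picking $u_n\in\mathfrak Z^\infty$ with $u_n\to u$ in $H$ and $\mathcal E(u_n)\to\mathcal E_0(u)$ (definition of the closure) and using $\CE(u_n)\le\mathcal E(u_n)$ together with the $L^2$-lower semicontinuity of $\CE$ gives $\CE(u)\le\liminf_n\CE(u_n)\le\mathcal E_0(u)$, in particular $u\in H^{1,2}$.

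\textbf{Density of cylinder functions (the core).} The substantial point is the converse: for every $u\in H^{1,2}$ there is $(u_n)_n\subset\mathfrak Z^\infty$ with $u_n\to u$ in $H$ and $\limsup_n\pCE(u_n)\le\CE(u)$. By the definition of $\CE$ as a relaxation over $\Lip_b(\prob(\T^d),W_2)$ together with a diagonal argument, it suffices to treat $u\in\Lip_b(\prob(\T^d),W_2)$ and to build $(u_n)_n\subset\mathfrak Z^\infty$ with $u_n\to u$ in $H$ and $\limsup_n\pCE(u_n)\le\pCE(u)$. This is precisely a density statement for a sub-\emph{algebra} of bounded Lipschitz functions inside a metric Sobolev space, and I would invoke the abstract machinery developed for such spaces (in the form of \cite{FSS22}, see also \cite{Savare22}): one checks that $\mathfrak Z^\infty$ is a point-separating unital subalgebra of $\Lip_b(\prob(\T^d),W_2)$, dense in each $L^p(\prob(\T^d),\cD)$ (Stone--Weierstrass on the compact space $(\prob(\T^d),W_2)$), inducing the weak topology on $\prob(\T^d)$, and ---the decisive hypothesis--- \emph{compatible with $W_2$}, in the sense that the intrinsic asymptotic (semi)distance generated by $\mathfrak Z^\infty$ coincides with $W_2$. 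The abstract theorem then provides approximating sequences whose asymptotic Lipschitz constants are controlled in the limit, giving $\limsup_n\pCE(u_n)\le\pCE(u)$.

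\textbf{Bootstrap.} Given $u\in H^{1,2}$ and $(u_n)_n\subset\mathfrak Z^\infty\subset D(\mathcal E_0)$ from the core step: since $\mathcal E(u_n)=\pCE(u_n)$ (Proposition~\ref{prop:introwss}) has $\limsup\le\CE(u)<\infty$, hence is bounded, and $u_n\to u$ in $H$, the lower semicontinuity of the closed form $(\mathcal E_0,D(\mathcal E_0))$ w.r.t.~$H$-convergence yields $u\in D(\mathcal E_0)$ and $\mathcal E_0(u)\le\liminf_n\mathcal E(u_n)=\liminf_n\pCE(u_n)\le\CE(u)$. With the easy inclusion this gives $D(\mathcal E_0)=H^{1,2}$ and $\CE=\mathcal E_0$. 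Since $\mathcal E_0$, being the closure of a quadratic form, is quadratic, so is $\CE$; hence $|\cdot|_{H^{1,2}}$ obeys the parallelogram law and $H^{1,2}$ is a Hilbert space. Finally, for the same $(u_n)_n$, squeezing $\CE(u)\le\liminf_n\CE(u_n)\le\liminf_n\pCE(u_n)\le\limsup_n\pCE(u_n)\le\CE(u)$ (using the $L^2$-lower semicontinuity of $\CE$) gives $\pCE(u_n)\to\CE(u)$, which is \eqref{eq:apprpr}; combined with $u_n\to u$ in $H$ and the Hilbert structure, $u_n\to u$ in $H^{1,2}$, so $\mathfrak Z^\infty$ is dense in $H^{1,2}$.

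\textbf{Main obstacle.} The genuine difficulty lies in the $W_2$-compatibility of $\mathfrak Z^\infty$ used in the core step. The linear cylinder functions $f^\trid$, $f\in\rmC^\infty(\T^d)$, only reconstruct the weaker distance $W_1$ via Kantorovich--Rubinstein duality; one must show that the full nonlinear algebra $\mathfrak Z^\infty$ captures the finer $W_2$-geometry, so that the relaxation loses no energy. This uses the structure of the Wasserstein space ---that $W_2$ is a length distance whose geodesics solve continuity equations with gradient velocity fields $\nabla f$, equivalently that the asymptotic slope of $f^\trid$ at $\mu$ is $\bigl(\int_{\T^d}|\nabla f|^2\,\d\mu\bigr)^{1/2}$ as in Proposition~\ref{prop:introwss}--- together with a mollification in the measure variable that turns a general bounded $W_2$-Lipschitz function into smooth cylinder functions without inflating the limiting local slopes. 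This is the technical heart of \cite[Theorem~4.10]{FSS22}.
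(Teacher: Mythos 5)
The paper offers no proof of this theorem: it is quoted directly from \cite[Theorem~4.10]{FSS22} (together with \cite[Proposition~4.9]{FSS22} for the ingredient Proposition~\ref{prop:introwss}), and the surrounding text only explains how to use it. Your reconstruction of the logical scaffolding is correct and consistent with that reference: the easy inclusion $D(\mathcal E_0)\subseteq H^{1,2}$ with $\CE\le\mathcal E_0$ via relaxation and Proposition~\ref{prop:introwss}; the density core, for which you correctly identify the decisive hypothesis as the $W_2$-compatibility of the algebra $\mathfrak Z^\infty$ and correctly defer the construction to \cite{FSS22,Savare22}; and the bootstrap using lower semicontinuity of the closed form $\mathcal E_0$, the parallelogram law for the Hilbert structure, a squeeze for \eqref{eq:apprpr}, and Radon--Riesz (norm plus weak convergence in a Hilbert space) for $H^{1,2}$-convergence of the approximants. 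In short, you have filled in the elementary glue that the paper leaves implicit and delegated the same deep step that the paper delegates; there is no divergence from the paper's route.
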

Because of the above identification between the Dirichlet form and the Cheeger energy, we will refer to the generator $L_0$ of $\mathcal{E}_0$ with the symbol $\boldsymbol{\Delta}$ which in part of the literature denotes the so called metric-measure Laplacian of a metric-measure space (in this case $(\prob(\T^d), W_2, \mathcal{D}))$, see \cite[Definition 5.2.1]{pasquabook}. In the sequel we will also make direct use of some properties of $\boldsymbol{\Delta}$ coming from the metric theory, such as its identification with the unique element in the subdifferential of the Cheeger energy. We also have the following integration by parts formula
\begin{equation}\label{eq:ibp0}
\CE(u,v)=\mathcal{E}_0(u,v) = - \int_{\prob(\T^d)} u \boldsymbol{\Delta} v \de \mathcal{D} =- (u, \boldsymbol{\Delta}v)_H, \quad \forall \, u,v \in D(\boldsymbol{\Delta}) \supset \hat{\mathfrak{Z}}_c^\infty.    
\end{equation}

\medskip
We conclude this section by introducing the notion of gradient for a function in $H^{1,2}$, following \cite[Theorem 5.1 and Section 5.1]{FSS22} (although the proof therein is carried out for $\R^d$, it does not rely on the linear structure of $\R^d$ and can be extended to $\T^d$). In order to do so, it is useful to `extend' the measure $\mathcal{D}$ on $\prob(\T^d)$ to a measure $\overline{\mathcal{D}}$ on $\prob(\T^d) \times \T^d$ in the following way:
\begin{equation}\label{eq:bmm}
\overline{\mathcal{D}} := \int_{\prob(\T^d)} (\delta_\mu \otimes \mu) \de \mathcal{D}(\mu).
\end{equation}
Therefore, the integral of a Borel semi-bounded function $u:\prob(\T^d) \times \T^d \to \R$ w.r.t.~$\overline{\mathcal{D}}$ is given by
\[ \int_{\prob(\T^d) \times \T^d} u \de \overline{\mathcal{D}} = \int_{\prob(\T^d)}\int_{\T^d} u(\mu, x) \de \mu(x) \de \mathcal{D}(\mu).\]

\begin{remark}\label{rem:notation1} From the notational point of view we use $\langle \cdot, \cdot \rangle$ and $\| \cdot\|$ for the scalar product and the norm in $L^2(\prob(\T^d) \times \T^d, \overline{\mathcal{D}}; \R^d)$, respectively.
\end{remark}

\begin{proposition}\label{prop:vectorgrad} For every $u \in H^{1,2}$, there exists  a unique vector field $\rmD u \in L^2(\prob(\T^d) \times \T^d, \overline{\mathcal{D}}; \R^d)$ such that, for any sequence $(u_n)_n \subset \mathfrak{Z}^\infty$ converging to $u$ in the same sense as \eqref{eq:apprpr}, it holds
\begin{equation}\label{eq:gradappr}
\boldnabla u_n \to \rmD u \quad \text{ in } L^2(\prob(\T^d) \times \T^d, \overline{\mathcal{D}}; \R^d).  
\end{equation} 
In particular, for any $u, v \in H^{1,2}$, we have
\[\CE(u,v)= \langle \rmD u,\rmD v \rangle.\]
Moreover, for every $u \in \mathfrak{Z}^\infty$ there exists a sequence $(\hat u_n)_n \subset \hat{\mathfrak{Z}}_c^\infty$
such that $\hat u_n \to u$ in $H$ and $\boldnabla \hat u_n \to \boldnabla u$ in $L^2(\prob(\T^d) \times \T^d, \overline{\cD}; \R^d)$ as $n \to \infty$. Finally, $\rmD \hat u = \boldnabla \hat u$, for every $\hat u \in \mathfrak{Z}^\infty$ and every $\hat u \in \hat{\mathfrak{Z}}_c^\infty$. 

\end{proposition}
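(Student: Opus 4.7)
The plan is to define $\rmD u$ for $u \in H^{1,2}$ as the $L^2(\overline{\mathcal{D}}; \R^d)$-limit of $\boldnabla u_n$ along any approximating sequence $(u_n)_n \subset \mathfrak{Z}^\infty$ as provided by Theorem \ref{thm:mainold}, and then to verify the well-posedness, the bilinear identity, and the two specific identifications. The starting observation is that by Proposition \ref{prop:introwss}, $\pCE$ restricted to $\mathfrak{Z}^\infty$ is a genuine quadratic form with $\pCE(w) = \|\boldnabla w\|^2$, so the parallelogram identity applies. Given two sequences $(u_n)_n, (v_n)_n \subset \mathfrak{Z}^\infty$ both realizing \eqref{eq:apprpr}, the midpoint sequence also converges to $u$ in $H$ and, by $L^2$-lower semicontinuity of $\CE$ together with the parallelogram identity,
\[
  \CE(u) \leq \liminf_n \pCE\!\left(\tfrac{u_n+v_n}{2}\right) = \CE(u) - \tfrac{1}{4}\limsup_n \|\boldnabla u_n - \boldnabla v_n\|^2,
\]
forcing $\|\boldnabla u_n - \boldnabla v_n\| \to 0$. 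Applied to subsequences of a single sequence, this yields the Cauchy property and defines $\rmD u$; applied to two distinct sequences, it yields independence of the limit. The bilinear identity $\CE(u,v) = \langle \rmD u, \rmD v \rangle$ then follows by polarization, using that $\CE$ is a quadratic form on $H^{1,2}$ (Theorem \ref{thm:mainold}) satisfying $\CE(u) = \|\rmD u\|^2$ by construction.

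For the approximation of $u = F \circ \mbff^\trid \in \mathfrak{Z}^\infty$ by elements of $\hat{\mathfrak{Z}}_c^\infty$, I would introduce a smooth cutoff in the weight variable: choose $\chi_n \in \rmC^\infty([0,1];[0,1])$ with $\chi_n \equiv 1$ on $[1/n,1]$ and $\supp\chi_n \subset (0,1]$, and set $\hat f_{i,n}(r,x) := \chi_n(r) f_i(x)$, so that $\hat u_n := F \circ \hat{\mbff}_n^\trid \in \hat{\mathfrak{Z}}_c^\infty$. Since $\mathcal{D}$ is concentrated on $\prob_o(\T^d)$ and $x$ is an atom of $\mu$ for $\overline{\mathcal{D}}$-a.e.~$(\mu,x)$, one has $\mu_x > 0$ $\overline{\mathcal{D}}$-a.e., hence $\chi_n(\mu_x) \to 1$ $\overline{\mathcal{D}}$-a.e. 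Dominated convergence yields $\hat u_n \to u$ in $H$ and, via formula \eqref{eq:l:TridHorGrad:1}, $\boldnabla \hat u_n \to \boldnabla u$ in $L^2(\overline{\mathcal{D}}; \R^d)$. For $u \in \mathfrak{Z}^\infty$ itself, the identification $\rmD u = \boldnabla u$ follows at once from the constant sequence $u_n := u$: it satisfies \eqref{eq:apprpr} because $\pCE(u) = \mathcal{E}(u) = \mathcal{E}_0(u) = \CE(u)$ by Proposition \ref{prop:introwss} combined with Theorem \ref{thm:mainold}.

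The hard part is the identification $\rmD \hat u = \boldnabla \hat u$ for $\hat u \in \hat{\mathfrak{Z}}_c^\infty$, since no direct approximation of $\hat u$ within $\mathfrak{Z}^\infty$ with pointwise convergence of gradients is at hand. My strategy is to test $\rmD \hat u - \boldnabla \hat u$ against $\boldnabla v$ for $v \in \mathfrak{Z}^\infty$. On the one hand, using $\rmD v = \boldnabla v$ (established above), the bilinear identity, and the integration by parts \eqref{eq:ibp0} (extended by continuity of $\CE$ and density of $D(\boldsymbol{\Delta})$ in $H^{1,2}$ from $v \in D(\boldsymbol{\Delta})$ to $v \in H^{1,2}$), together with $\hat u \in \hat{\mathfrak{Z}}_c^\infty \subset D(\boldsymbol{\Delta})$ and $\boldsymbol{\Delta}\hat u = L_c \hat u$,
\[
  \langle \rmD \hat u, \boldnabla v \rangle = \CE(\hat u, v) = -\int_{\prob(\T^d)} v\, L_c \hat u \, d\mathcal{D}.
\]
On the other hand, approximating $v$ by $\hat v_n \in \hat{\mathfrak{Z}}_c^\infty$ via the cutoff construction of the previous paragraph, I pass to the limit in the definition \eqref{eq:em} and in \eqref{eq:lc} to obtain $\langle \boldnabla \hat u, \boldnabla v \rangle = \lim_n \mathcal{E}(\hat u, \hat v_n) = -\lim_n \int \hat v_n L_c \hat u \, d\mathcal{D} = -\int v\, L_c \hat u \, d\mathcal{D}$. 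Combining, $\langle \rmD \hat u - \boldnabla \hat u, \boldnabla v \rangle = 0$ for every $v \in \mathfrak{Z}^\infty$; by density of $\mathfrak{Z}^\infty$ in $H^{1,2}$ and continuity of $\rmD$, this extends to $\langle \rmD \hat u - \boldnabla \hat u, \rmD w \rangle = 0$ for every $w \in H^{1,2}$. Specializing to $w = \hat u$ and using the norm identity $\|\rmD \hat u\|^2 = \CE(\hat u) = \mathcal{E}(\hat u) = \|\boldnabla \hat u\|^2$, I conclude $\|\rmD \hat u - \boldnabla \hat u\|^2 = 0$, completing the proof.
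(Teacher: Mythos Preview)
Your proof is correct and largely parallels the paper's argument. The construction of $\rmD u$ via the parallelogram identity and lower semicontinuity of $\CE$, and the cutoff-in-weight approximation of $\mathfrak{Z}^\infty$ by $\hat{\mathfrak{Z}}_c^\infty$, are precisely the arguments the paper imports from \cite{FSS22} and \cite{DS25}.

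The one genuine difference is in the final identification $\rmD \hat u = \boldnabla \hat u$ for $\hat u \in \hat{\mathfrak{Z}}_c^\infty$. The paper approximates $\hat u$ itself by a sequence $(\hat u_n)_n \subset \hat{\mathfrak{Z}}_c^\infty$ with $\boldnabla \hat u_n \to \rmD \hat u$, writes $\langle \rmD \hat u, \boldnabla \hat u \rangle = \lim_n \CE(\hat u, \hat u_n)$, and then invokes the lower semicontinuity of $\CE$ together with the quadratic-form polarization identity to obtain the inequality $\langle \rmD \hat u, \boldnabla \hat u \rangle \ge \CE(\hat u)$. You instead test $\rmD \hat u - \boldnabla \hat u$ against $\boldnabla v$ for $v \in \mathfrak{Z}^\infty$ and compute both pairings via integration by parts with the generator $L_c$, obtaining orthogonality directly. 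Both routes then close via the norm identity $\|\rmD \hat u\| = \|\boldnabla \hat u\|$, which rests on the equality $\mathcal{E}_0 = \hat{\mathcal{E}}$. Your approach is arguably more transparent in that it isolates the role of $L_c$ and avoids the somewhat indirect lower-semicontinuity step; the paper's approach has the slight advantage of not needing to invoke the generator explicitly and staying entirely at the level of the form. A minor comment: your parenthetical about ``extending \eqref{eq:ibp0} by continuity'' is unnecessary---the identity $\CE(v,\hat u) = -(v,\boldsymbol{\Delta}\hat u)_H$ for $v\in H^{1,2}$ and $\hat u\in D(\boldsymbol{\Delta})$ is simply the defining relation between a closed form and its generator.
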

\begin{proof} 
The only results not proved in the aforementioned references are: (i) the approximation of $u \in \mathfrak{Z}^\infty$ by elements of $\hat{\mathfrak{Z}}_c^\infty$ in the sense described in the last part of the statement, and (ii) the identity  $\rmD \hat u = \boldnabla \hat u$ for every $\hat u \in \hat{\mathfrak{Z}}_c^\infty$. The first follows exactly as in the proof of \cite[Lemma 5.19]{DS25}. For the second, we argue as follows. By the closability of $(\mathcal{E}, \hat{\mathfrak{Z}}_c^\infty)$ and the equality $\mathcal{E}_0= \hat{\mathcal{E}}$ (see the notation and discussion in Subsection \ref{sec:geo}), we have
\[ 
\|\rmD \hat u\|^2 = \mathcal{E}_0(\hat u,\hat u) = \mathcal{E}(\hat u,\hat u) = \|\boldnabla \hat u\|^2.
\]
Therefore, in order to prove that 
$\rmD \hat u = \boldnabla \hat u$, it is enough to show that
\[  \langle \rmD \hat u,\boldnabla \hat u \rangle \ge \|\rmD \hat u\| \|\boldnabla \hat u\|.  \]
By definition of $\rmD \hat u$, there exists a sequence $(u_n)_n \subset \mathfrak{Z}^\infty$ such that $u_n \to \hat u$ in $H$ and
$\boldnabla u_n \to \rmD \hat u$ in $L^2(\prob(\T^d) \times \T^d, \overline{\mathcal{D}}; \R^d)$. By the approximation result in item (i) above, we can replace the sequence $(u_n)_n$ by a sequence $(\hat{u}_n)_n \subset \hat{\mathfrak{Z}}_c^\infty$, and preserve the convergence of the functions and their gradients. We get
\begin{align*}
    \langle \rmD \hat u,\boldnabla \hat u \rangle &= \lim_{n \to \infty}  \langle \boldnabla \hat{u}_n,\boldnabla \hat u \rangle =\lim_{n \to \infty} \CE(\hat u, \hat{u}_n) \\
    & = \lim_{n \to \infty} \left ( \frac{1}{2} \CE(\hat u+\hat{u}_n) - \frac{1}{2} \CE(\hat u) - \frac{1}{2}\CE(\hat{u}_n) \right ) \\
    & \ge \frac{1}{2} \CE(2\hat u) - \frac{1}{2} \CE(\hat u) - \frac{1}{2} \CE(\hat u) \\
    & = \CE(\hat u) =\|\rmD \hat u\| \|\boldnabla \hat u\|, 
\end{align*}
where we have used the fact that $\CE$ is a quadratic form to get the second line, together with the fact that it is lower semicontinuous w.r.t.~the convergence in $H$ to get the third line.
\end{proof}

\subsection{Free particle system}\label{ssec:freeparty}
The aim of this section is to present the relation between the objects introduced in the previous subsections and a system of free (i.e., undrifted and thus uncontrolled) particles. Although most of the material presented in this section relies on the results from \cite{DelloSchiavo22, DelloSchiavo24}, we reformulate parts of it to align with the objectives of the upcoming sections. As a result, the exposition is largely original. In addition, we introduce several technical tools---such as an Itô formula---that will be used in the following sections.

We recall that $\mathcal{D}$ is the Dirichlet--Ferguson measure as in Definition \ref{def:dfm} and $\bar{\mathcal{D}}$ is the measure as in \eqref{eq:bmm}. 
Given the same time horizon $T \in (0,\infty)$ as in the first paragraph of Section \ref{se:2}, an initial time
$t_0\in[0,T)$, a filtered probability space $(\Omega, \cF, \PP, \{\cF_t\}_{t\in[0,T]})$, and 
 a sequence $(B^i)_i$ of independent $d$-dimensional  $\{\cF_t\}_{t\in[0,T]}$-Brownian motions, let $(X^i)_i$ be the family of processes defined by
\begin{equation}\label{eqn: particles_free}
    \de X^i_t = \sqrt{\frac{2}{s_i}}\de B^i_t,\quad X^i_{t_0} = \imath(x_i), \quad t \in [t_0, T],\quad i\in\N_+,
\end{equation}
where $(\mbfs=(s_i)_i, \mbfx=(x_i)_i) \in  T^\infty_o \times (\T^d)^\infty_o$ is regarded as an initial datum (thus, the initial condition involves not only the starting points of the processes \((X^i)_i\), but also the associated weights, or inverse intensities, \((s_i)_i\)), 
and $\imath$ denotes the canonical embedding 
from ${\mathbb T}^d$ into 
${\mathbb R}^d$, obtained
by associating to an element of ${\mathbb T}^d$ its unique representative in 
$[0,1)^d$. Note that, using this definition, we will consider below stochastic initial conditions. Namely, 
in 
\eqref{eqn: particles_free}, 
$(\mbfs, \mbfx)$
will be 
sometimes regarded 
as an $\cF_{t_0}$-measurable $ T^\infty_o \times (\T^d)^\infty_o$-valued random variable distributed according to 
some
$\cursm\in\prob(T^\infty_o \times (\T^d)^\infty_o)$.
In this case,
we write $(\mbfs, \mbfx)\sim_\PP\cursm$ (i.e.~$(\mbfs, \mbfx)_\sharp \PP = \cursm$). We also notice that 
$(\mbfs,\mbfx)$
is then independent of $\sigma( 
\{
(B_t^i - B_{t_0}^i), 
i \in \N_+, 
t \in [t_0,T]
\})$.

With the particle system \eqref{eqn: particles_free}, we can associate the ${\mathcal P}({\mathbb T}^d)$-valued stochastic process $ (\mu^\infty_t:=\sum_{i=1}^{\infty}s_i\delta_{X^i_t})_{ t \in [t_0,T]} $, where $\delta_{x}$, for $x \in {\mathbb R}^d$ is regarded as a probability measure on ${\mathbb T}^d$, defined by 
\begin{equation*}
\delta_x (A) = \sum_{k \in {\mathbb Z}^d} {\mathbf 1}_A(x+k) \quad  \text{ for every Borel subset } A \subset \T^d.
\end{equation*}
We can also write
\begin{equation}\label{eq:project} \mu_t^\infty = \sum_{i=1}^\infty s_i \delta_{\varpi(X_t^i)},
\end{equation}
where $\varpi: \R^d \to \T^d$ is the canonical projection.
It is worth noting that writing $(\mbfs, \mbfx)\sim_\PP\cursm$ is equivalent to stating that $\mu^\infty_{t_0}$ is distributed according to $\mm := \emp_\sharp(\cursm)$, which we write $\mu_{t_0}\sim_\PP\mm$.

An important example is
$\cursm = \Pi_{1}\otimes \mathrm{vol}_d^\infty$ (see Definition \ref{def:dfm}), i.e.,
 $(\mbfs, \mbfx)\sim_\PP\Pi_{1}\otimes \mathrm{vol}_d^\infty$, in which case $\mu^\infty_{t_0}$ is distributed according to $\cD$. The following result, stated in the paragraph above \cite[Theorem 1.5]{DelloSchiavo24}, contains the invariance property of $\mathcal{D}$ for the process $\mu^\infty$. 

\begin{proposition}\label{prop:mut} The Dirichlet-Ferguson measure $\mathcal{D}$ is invariant for the $\prob(\T^d)$-valued process $\mu^\infty$. More precisely, if $\mu^\infty_{t_0}\sim_\PP\mathcal{D}$, then $\mu^\infty_t \sim_\PP\mathcal{D}$ for any $t\in[t_0,T]$.
\end{proposition}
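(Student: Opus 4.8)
The plan is to reduce the invariance of $\cD$ under the free particle flow to the already-established facts about the generator $L_c$ and the self-adjointness of $\hat L$. First I would recall that, by construction, $\mu^\infty_{t_0}\sim_\PP\cD$ means $(\mbfs,\mbfx)\sim_\PP\Pi_1\otimes\mathrm{vol}_d^\infty$, with $(\mbfs,\mbfx)$ independent of the driving Brownian increments. Fix a test function $\hat u\in\hat{\mathfrak Z}^\infty_c$ and apply an Itô-type formula (the one announced in Subsection~\ref{ssec:freeparty}) to $t\mapsto \hat u(\mu^\infty_t)$. Since $\hat u=F\circ\hat\mbff^\trid$ depends only on finitely many atom positions and weights exceeding a threshold $\eps$, and the weights $(s_i)_i$ are frozen in time, the computation is finite-dimensional in nature: the drift term in the Itô expansion is exactly $L_c(\hat u)(\mu^\infty_t)$, by the explicit formula \eqref{eqn:gen_on_cil}, because $\de X^i_t=\sqrt{2/s_i}\,\de B^i_t$ produces a Laplacian in the $i$-th position variable weighted by $1/s_i$, and the sum over $i$ collapses to the sum over $\supp(\mu^\infty_t)$ with the $1/\mu_x$ weights. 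Taking expectations and using that the martingale part has zero mean (the integrand is bounded because $\hat u$ has compact support in the position variables and bounded derivatives), one gets
\[
\frac{\d}{\d t}\,\sfE\big[\hat u(\mu^\infty_t)\big]=\sfE\big[L_c(\hat u)(\mu^\infty_t)\big].
\]

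The second step is to integrate this identity against $\cD$ and use stationarity of the generator. Writing $P_t\hat u(\mu):=\sfE_\mu[\hat u(\mu^\infty_t)]$ for the semigroup with initial condition $\mu^\infty_{t_0}=\mu$, the previous display gives $\frac{\d}{\d t}\langle P_t\hat u,\mathbf 1\rangle_{L^2(\cD)}$-type control; more directly, I would test against an arbitrary $\hat v\in\hat{\mathfrak Z}^\infty_c$ and differentiate $t\mapsto \langle \hat v, P_t\hat u\rangle_H$. Using \eqref{eq:lc}, essential self-adjointness of $L_c$ (so $\hat L=L_c^*$), and the fact that $P_t$ is the strongly continuous semigroup generated by $\hat L$ on $H=L^2(\prob(\T^d),\cD)$, one obtains that $\langle \hat v, P_t\hat u\rangle_H=\langle P_t\hat v,\hat u\rangle_H$, and in particular, taking $\hat v\equiv 1$ (or approximating $\mathbf 1$ by functions in $\hat{\mathfrak Z}^\infty_c$ using \eqref{eq:density}), that $\int P_t\hat u\,\d\cD=\int \hat u\,\d\cD$ for all $t\ge t_0$. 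This says precisely that the law of $\mu^\infty_t$ against $\hat{\mathfrak Z}^\infty_c$ equals that of $\mu^\infty_{t_0}\sim_\PP\cD$; by the density \eqref{eq:density} of $\hat{\mathfrak Z}^\infty_c$ in $L^1(\prob(\T^d),m)$ for any Borel probability $m$ (applied to the law of $\mu^\infty_t$), and since $\hat{\mathfrak Z}^\infty_0$ generates the Borel $\sigma$-field of $\prob(\T^d)$, this forces $\mu^\infty_t\sim_\PP\cD$.

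Alternatively — and this is likely the cleanest route — one can invoke that $\hat L$ is the generator of a Dirichlet form on $L^2(\prob(\T^d),\cD)$: being self-adjoint, its associated semigroup $(P_t)$ is $\cD$-symmetric, and $\cD$-symmetry of a Markov semigroup immediately yields $\cD$-invariance (because $\int P_t f\,\d\cD=\int f\,P_t\mathbf 1\,\d\cD=\int f\,\d\cD$, using $P_t\mathbf 1=\mathbf 1$ by Markovianity). The only genuine work is then identifying the law-flow of $\mu^\infty$ with this semigroup: one must check that $t\mapsto \hat u(\mu^\infty_t)$ solves the martingale problem for $L_c$ on the core $\hat{\mathfrak Z}^\infty_c$, which is exactly the content of the Itô formula for $\mu^\infty$, and that this martingale problem is well-posed — well-posedness following from essential self-adjointness of $L_c$ (uniqueness) together with the explicit pathwise construction \eqref{eqn: particles_free} (existence). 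The main obstacle, and the point requiring care, is precisely this martingale-problem/Itô-formula step: one has to justify interchanging the infinite sum over $i$ with the Itô differential and control the tail of the atoms — but since $\hat u\in\hat{\mathfrak Z}^\infty_c$ only feels atoms with mass above $\eps$, and $\Pi_1$-a.s. only finitely many $s_i$ exceed $\eps$, the sum is effectively finite almost surely and the standard finite-dimensional Itô formula applies, with the error terms controlled uniformly. Everything else is bookkeeping with the already-cited results \cite{DelloSchiavo22,DelloSchiavo24}.
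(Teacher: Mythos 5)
The paper does not prove this proposition itself: it records the statement and cites the paragraph preceding \cite[Theorem~1.5]{DelloSchiavo24}, so there is no internal proof to compare against. With that caveat, your Dirichlet-form route is \emph{workable in principle but substantially heavier than what the result requires}, and the one place you wave your hands---identifying the law-flow of $\mu^\infty$ with the semigroup generated by $\hat L$---is not a formality. To get $\int P_t f \, \de\cD = \int f\,\de\cD$ from symmetry you first need to know that the transition semigroup of the explicitly constructed process agrees ($\cD$-a.e.) with the $L^2(\cD)$-semigroup of $\hat L$; this is a well-posedness statement for the martingale problem on the core $\hat{\mathfrak Z}^\infty_c$, and essential self-adjointness gives you uniqueness of the $L^2$-semigroup but not, by itself, uniqueness of solutions to the martingale problem among probability measures on path space. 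You would need an additional argument (e.g.~via the resolvent, or a Fokker--Planck duality) that you do not spell out, and it is precisely the delicate part. You also implicitly use Proposition~\ref{prop: ito_cyl}, which appears \emph{after} Proposition~\ref{prop:mut} in the paper and is stated for a deterministic initial $(\mbfs,\mbfx)$; passing to random initial data requires a disintegration step that you do not mention.

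The genuine gap is that you are missing the elementary direct argument, which is almost certainly what the cited reference uses and which makes all of the above machinery unnecessary. Recall $\cD=\emp_\sharp(\Pi_1\otimes\mathrm{vol}_d^\infty)$, and $\mu^\infty_t=\emp\bigl(\mbfs,(\varpi(X^i_t))_i\bigr)$. In the free system the weights $\mbfs$ do not evolve, so they remain $\Pi_1$-distributed and independent of everything driving the dynamics. For the positions, write $\varpi(X^i_t)=x_i\oplus\varpi\bigl(\sqrt{2/s_i}\,(B^i_t-B^i_{t_0})\bigr)$, where $\oplus$ is the group law on $\T^d$. Conditionally on $\mbfs$, the increment $\varpi(\sqrt{2/s_i}(B^i_t-B^i_{t_0}))$ is independent of $x_i$, and since $x_i\sim\mathrm{vol}_d$ is Haar measure on the compact group $\T^d$, the convolution $x_i\oplus(\cdot)$ is again $\mathrm{vol}_d$-distributed whatever the law of the increment is. Conditional independence across $i$ is preserved because the $x_i$'s and the $B^i$'s are mutually independent. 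Hence the conditional law of $(\varpi(X^i_t))_i$ given $\mbfs$ is $\mathrm{vol}_d^\infty$, a fixed product measure not depending on $\mbfs$; thus $(\mbfs,(\varpi(X^i_t))_i)\sim\Pi_1\otimes\mathrm{vol}_d^\infty$ for every $t\in[t_0,T]$, and pushing forward by $\emp$ gives $\mu^\infty_t\sim\cD$. This is a one-paragraph argument that uses no Itô calculus, no generator identity, and no Dirichlet-form theory; you should lead with it, and your spectral-theoretic considerations, if retained at all, belong as a remark rather than as the proof.
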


\subsubsection{A canonical space for the particle system}\label{ssec:canonical}
The construction of the free particle system \eqref{eqn: particles_free} is carried out in a `strong' sense, meaning that it
can be achieved on any arbitrarily given probabilistic setting. 
{However, for the purposes of Section \ref{sec: massive}, in which we study a particle system that can only be solved in a weak sense, it is necessary to specify the canonical set-up associated with 
\eqref{eqn: particles_free}.
Therefore, we set
\begin{equation*}
\Xi^{0} := T^\infty_o \times 
({\mathbb T}^d)^{\infty}_o \times \rmC([0,T];{\mathbb R}^d)^{\infty},
\end{equation*}
which is the canonical space carrying the initial condition and the family of Brownian motions driving the free particle system, and then
\begin{equation*}
\Xi := \Xi^{0} \times \rmC([0,T];{\mathbb R}^d)^{\infty},
\end{equation*}
which is the canonical space carrying, in addition to the
initial condition and the Brownian motions, the trajectories of the 
free particle system.
We equip the two spaces with their Borel $\sigma$-fields, 
denoted by
$\cG^0$ and $\mathcal{G}$ respectively, and with their canonical filtrations, denoted by $\{\cG^0_t\}_{t\in[0,T]}$ and$\{\cG_t\}_{t\in[0,T]}$ respectively. The canonical process on $\Xi^{0}$ is denoted by 
\begin{equation*}
\left((\varsigma^j)_j,(\xi^{0,j})_j,((\beta^j_t)_{t \in [0,T]})_j
\right).
\end{equation*}
More precisely, for 
$\chi^0 \in \Xi^0$ and  
$j \in \N_+$, 
$\varsigma^j(\chi^0)$ (resp. $\xi^{0,j}(\chi^0)$, resp. $\beta^j(\chi^0)$)
is the $j$-th coordinate of the component of 
$\chi^0$ lying in $T^\infty_o$ (resp. in $(\T^d)^\infty_o$, resp. in $\rmC([0,T];{\mathbb R}^d)^{\infty}$). 
The canonical process on $\Xi$ is defined analogously and is denoted by 
\begin{equation*}
\left((\varsigma^j)_j,(\xi^{0,j})_j,((\beta^j_t)_{t \in [0,T]})_j,((\xi_t^j)_{t \in [0,T]})_{j}
\right).
\end{equation*}

\begin{remark}\label{rem:toruspb} Note that $\xi^j_t$ takes values in $\R^d$ for every $t \in [0,T]$ and every $j \in \N_+$. However, we will often consider it as implicitly composed with the projection map $\varpi$ as in \eqref{eq:project} as, for example, in \eqref{eq:muinfmap} (when defining a probability measure on $\prob(\T^d)$) or in \eqref{eq:firstito} (when evaluating $\rmD u_r$).
\end{remark}

At this stage, we notice that, on the smaller space $\Xi^0$, 
the free particle system \eqref{eqn: particles_free} can be easily constructed by means of the map
 $\varphi : (0,1) \times {\mathbb T}^d \times \rmC([0,T];{\mathbb R}^d) \to \rmC([0,T];{\mathbb R}^d)$ that sends
an element $(s,x,(w_t)_{t \in [0,T]})$
onto 
\begin{equation*}
\left( \imath(x) + \sqrt{\frac{2}{s}} w_t
\right)_{t \in [0,T]}.
\end{equation*}
Then, the free particle system can be regarded, on $\Xi^0$, as the mapping
 $\Phi : \Xi^0 \rightarrow \rmC([0,T];{\mathbb R}^d)^{\infty}$ defined by
\begin{equation*}
\Phi 
\left((s_j)_j,(x_{j}^0)_j,\left((w^j_t)_{t \in [0,T]}\right)_j
\right)
=
\left( \varphi\left(s_j,x_{j}^0,\left(w^j_t\right)_{t \in [0,T]}
\right)
\right)_j.
\end{equation*}
Note that both $\varphi$ and $\Phi$ are Borel measurable.
The next step in equipping $\Xi$ with a convenient probabilistic structure is to endow $\Xi^0$ with a suitable probability measure and then transfer it to $\Xi$ via $\Phi$.
The definition of a probability measure on $\Xi^0$ proceeds via the following standard `concatenation' mapping: for a given initial time $t_0\in[0,T)$, we let
\begin{align*}
    \mathrm{conc}^{t_0} :\rmC([0,T]; \R^d) \times \rmC([0,T]; \R^d) &\to \mathrm{C}([0,T]; \R^d) 
    \\
    ((w^1_t)_{t \in [0,T]}, (w^2_t)_{t \in [0,T]}) &\mapsto \left ( t \mapsto \begin{cases} w^1_t \quad & \text{ if } t \in [0,t_0] 
    \\ w^2_t-w^2_{t_0}+w^1_{t_0} \quad & \text{ if } t \in [t_0, T] \end{cases} \right ).
\end{align*}
On $\rmC([0,T]; \R^d)$ we consider the measure
\[{\mathcal W}_{t_0} := \mathrm{conc}^{t_0}_\sharp (\delta_0 \otimes \mathcal{W}), \]
being $\mathcal{W}$ the Wiener measure on $[0,T]$ and $\delta_0$ is regarded as the Dirac mass at the constant curve equal to $0$ on $[0,T]$. In other words, ${\mathcal W}_{t_0}$ is the probability measure on $\rmC([0,T]; \R^d)$ obtained  by concatenating the law of the path identical to zero on $[0,t_0]$ with the (shifted) Wiener law $\cW$ on $[t_0,T]$.
Finally for an initial measure 
$\cursm$ on $T^\infty_o \times ({\mathbb T}^d)^{\infty}_o$, we endow $\Xi^0$ 
with the measure $\cursm \otimes {\mathcal W}_{t_0}^{\infty}$, where ${\mathcal W}_{t_0}^{\infty}:= \bigotimes_{i=1}^\infty \mathcal{W}_{t_0}$, and then $\Xi$ with the measure
\begin{equation}\label{def:mathbfP:cursm}
 {\mathbf P}^{t_0,\cursm} :=
(\textrm{\rm id},\Phi)_{\sharp}  \left( \cursm \otimes \cW^{\infty}_{t_0} \right),   
\end{equation}
where $\textrm{\rm id}$ is here regarded as the identity mapping on $\Xi^0$. When dealing with a deterministic initial condition $(\mbfs, \mbfx) \in T_o^\infty \times (\T^d)^\infty_o$, we use the lighter notation
\[ {\mathbf P}^{t_0, (\mbfs,\mbfx)}:= {\mathbf P}^{t_0, \delta_{(\mbfs,\mbfx)}}. \]
With the notation introduced here, we can regard $\mu^\infty$ as a Borel map from $\Xi$ to $\rmC([0,T]; \prob(\T^d))$, defined by
\begin{equation}\label{eq:muinfmap}
  \mu^\infty (\chi) 
= \left( 
\mu^\infty_t(\chi) 
\right)_{t \in [0,T]} 
  := \left( \sum_{j=1}^\infty \varsigma^j(\chi) \delta_{\xi^j_t(\chi)} \right )_{t \in [0,T]}, \quad \chi \in \Xi.  
\end{equation}

\subsubsection{A chain rule}
We now prove an It\^o type formula for the process $\mu^\infty$ (defined in 
\eqref{eq:muinfmap}
right above). This result will play a key role in the next sections. We start with a technical result stating that any finite sequence $(s_1, \dots, s_N)$ of points in $[0,1]^N$ can be completed to a sequence in $[0,1]^\infty$ (resp.~that a finite sequence $(x_1, \dots, x_N)$ of points in $(\T^d)^N$ can be completed to a sequence in $(\T^d)^\infty$) in a suitable way. We stress that we are going to use only the result about points in $(\T^d)^N$, but we added the one about points in $[0,1]^N$ for completeness.
For every $N \in \N_+$, we denote with the same notation $\pi_N$ the projection from $[0,1]^\infty$ and $(\T^d)^\infty$ on their first $N$ components i.e. the map that sends a sequence $\mbfs = (s_i)_i$ (resp. $\mbfx = (x_i)_i$) onto the vector $(s_i)_{i=1}^N$ (resp $(x_i)_{i=1}^N$).
\begin{lemma}\label{le:measel} For every $N \in \N_+$ there exist measurable maps $\mathsf{M}_N :[0,1]^N \to [0,1]^\infty$ and $\mathrm{P}_N: (\T^d)^N \to (\T^d)^\infty$ such that:
\begin{enumerate}[1)]
    \item $(\mathsf{M}_N \circ \pi_N)(T_o^\infty) \subset T_o^\infty$ and $\pi_N \circ \mathsf{M}_N$ is the identity in $[0,1]^N$;
    \smallskip
    \item $(\mathsf{P}_N \circ \pi_N)((\T^d)_o^\infty) \subset (\T^d)_o^\infty$ and $\pi_N \circ \mathsf{P}_N$ is the identity in $(\T^d)^N$.
\end{enumerate} 
\end{lemma}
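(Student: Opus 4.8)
The plan is to build the two maps separately and entirely explicitly; in each case the map will be defined on the whole cube ($[0,1]^N$, resp.\ $(\T^d)^N$), and the only delicate point is Borel measurability together with the requirement that the completed sequence actually lie in $T_o^\infty$ (resp.\ $(\T^d)^\infty_o$) on the relevant subset $\pi_N(T_o^\infty)$ (resp.\ $\pi_N((\T^d)^\infty_o)$). Note that the lemma does \emph{not} ask $\mathsf{M}_N\circ\pi_N$ to be the identity on $T_o^\infty$, only that it take values in $T_o^\infty$; this is what makes a hands-on completion possible.

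For $\mathsf{M}_N$, observe first that if $\mbfs \in T_o^\infty$ then $\pi_N(\mbfs)=(s_1,\dots,s_N)$ satisfies $s_1>s_2>\dots>s_N>0$ and $\rho:=1-\sum_{i=1}^N s_i>0$, since the discarded tail of $\mbfs$ has strictly positive entries hence strictly positive sum. Given an arbitrary $(s_1,\dots,s_N)\in[0,1]^N$, set $\sigma:=\sum_{i=1}^N s_i$, $\rho:=(1-\sigma)\vee 0$, $m:=s_N$; on $\{\rho>0\}$ put $\lambda:=1-\tfrac{m\wedge\rho}{2\rho}\in[\tfrac12,1)$, $\alpha:=\rho\,\tfrac{1-\lambda}{\lambda}\ge 0$, and append the geometric tail $t_j:=\alpha\lambda^j$, $j\in\N_+$, so that $\mathsf{M}_N(s_1,\dots,s_N):=(s_1,\dots,s_N,t_1,t_2,\dots)$; on $\{\rho=0\}$ simply append zeros. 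Summing the series gives $\sum_{j\ge1}t_j=\rho$ and $t_1=\alpha\lambda=\rho(1-\lambda)=\tfrac12(m\wedge\rho)$. Hence on $\pi_N(T_o^\infty)$ the appended entries are strictly positive, strictly decreasing, satisfy $0<t_1\le m/2<s_N$, and make the total sum equal $\sigma+\rho=1$, so $\mathsf{M}_N(\pi_N(\mbfs))\in T_o^\infty$. Moreover $\pi_N\circ\mathsf{M}_N=\mathrm{id}_{[0,1]^N}$ by construction, and $\mathsf{M}_N$ is Borel because $\sigma,\rho,m$ are continuous and $\lambda,\alpha$ and each $t_j$ are continuous on $\{\rho>0\}$ and constant on $\{\rho=0\}$.

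For $\mathrm{P}_N$, fix once and for all an injective sequence $(z_k)_{k\in\N_+}$ of pairwise distinct points of $\T^d$. Given $(x_1,\dots,x_N)\in(\T^d)^N$, the set $\{k\in\N_+:z_k\notin\{x_1,\dots,x_N\}\}$ is infinite (its complement has at most $N$ elements), so let $k(1)<k(2)<\dots$ be its increasing enumeration and define $\mathrm{P}_N(x_1,\dots,x_N):=(x_1,\dots,x_N,z_{k(1)},z_{k(2)},\dots)$. Then $\pi_N\circ\mathrm{P}_N=\mathrm{id}_{(\T^d)^N}$, and if $x_1,\dots,x_N$ are pairwise distinct the whole sequence is pairwise distinct, the $z_{k(j)}$ being distinct among themselves (distinct indices $k$) and each avoiding $\{x_1,\dots,x_N\}$ by construction; thus $\mathrm{P}_N$ sends $\pi_N((\T^d)^\infty_o)$ into $(\T^d)^\infty_o$. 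For measurability, each $A_k:=\{(x_1,\dots,x_N):z_k\in\{x_1,\dots,x_N\}\}=\bigcup_{i=1}^N\{x_i=z_k\}$ is closed, so each index $k(j)$ is a Borel $\N_+$-valued function of $(x_1,\dots,x_N)$ (the event $\{k(j)=\ell\}$ is a finite Boolean combination of $A_1,\dots,A_\ell$), whence $(x_1,\dots,x_N)\mapsto z_{k(j)}$ is Borel for every $j$ and $\mathrm{P}_N$ is Borel into the product space $(\T^d)^\infty$.

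The step I expect to require the most care is this last one, namely checking that the data-dependent reindexing $(x_1,\dots,x_N)\mapsto(k(j))_j$ is Borel; the behaviour of the geometric tail in the definition of $\mathsf{M}_N$ on the degenerate set $\{\rho=0\}$ also deserves a brief check, but that set lies outside $\pi_N(T_o^\infty)$ anyway. Apart from this bookkeeping, the argument reduces to a one-line geometric-series computation for $\mathsf{M}_N$ and elementary set manipulations for $\mathrm{P}_N$, with no further obstacle.
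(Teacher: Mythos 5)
Your $\mathsf{M}_N$ is, modulo notation, identical to the paper's: with $\rho=R$ and $\lambda=r$, your formula $\lambda = 1-(m\wedge\rho)/(2\rho)$ unifies the paper's two-case split (it gives $1/2$ when $s_N\geq R$ and $1-s_N/(2R)$ when $s_N<R$), and your geometric tail $t_j=\alpha\lambda^j=\rho(1-\lambda)\lambda^{j-1}$ equals the paper's $R(1-r)r^{j-1}$. One small inaccuracy: you assert $\lambda\in[\tfrac12,1)$ on all of $\{\rho>0\}$, but when $s_N=0$ and $\rho>0$ one gets $\lambda=1$ and $\alpha=0$, hence $t_j\equiv 0$; this is harmless since such points lie outside $\pi_N(T_o^\infty)$, the map remains well-defined and measurable there, and the lemma's conclusion is unaffected. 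Your $\mathrm{P}_N$, in contrast, is a genuinely different construction. The paper appends $z_j=x_1+2^{-j}e_1\,\eps^N(x_1,\dots,x_N)$ for $j>N$, with $\eps^N$ one third of the minimal distance from $x_1$ to the other distinct $x_i$'s (and $1$ on the full diagonal), so the tail is produced by explicit rescaled perturbations of $x_1$ and the constraint $2^{-j}\eps^N<|x_1-x_i|$ for $j>N\geq 1$ prevents collisions on the torus. You instead fix a single injective sequence $(z_k)_k\subset\T^d$ once and for all and fill the tail by skipping whichever $z_k$ coincide with some $x_i$. Both approaches are correct. Yours is more elementary and dispenses with the toroidal distance estimates; the price is the data-dependent reindexing $j\mapsto k(j)$, whose Borel measurability you correctly establish by observing that each level set $\{k(j)=\ell\}$ is a finite Boolean combination of the closed sets $A_1,\dots,A_\ell$.
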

\begin{proof} We start with the construction of $\mathsf{M}_N$; if $(s_1, \dots, s_N) \in (0,1)^N$ and $R:= 1-\sum_{j=1}^N s_j >0$, then we set 
\[ r:= \begin{cases} \frac{1}{2} \quad &\text{ if } s_N \ge R, \\
1- \frac{s_N}{2R} \quad &\text{ if } s_N <R. \end{cases}\]
In this case, we define 
\[\mathsf{M}_N(s_1, \dots, s_N)_k := \begin{cases} s_k \quad & \text{ if } 1 \le k \le N, \\
R(1-r)r^{k-N-1} \quad &\text{ if } k > N. \end{cases}\]
In the other case, we set
\[\mathsf{M}_N(s_1, \dots, s_N)_k := \begin{cases} s_k \quad & \text{ if } 1 \le k \le N, \\
0 \quad &\text{ if } k > N. \end{cases}\]
To construct $\mathsf{P}_N$, we proceed as follows: let us denote
\[ D^N := \left \{ (x_1, \dots, x_N) \in (\T^d)^N : x_1=x_2 = \dots = x_N \right \},\]
and note in particular that $D^1 = \T^d$. Define also the function $\eps^N: (\T^d)^N \to (0,\infty)$ by
    \[ \eps^N(x_1, \dots, x_N):= \begin{cases} 1 \quad &\text{ if } (x_1, \dots, x_N) \in D^N, \\
     \frac{1}{3}\min \{ |x_1-x_i| : 2 \le i \le N, \, x_i \ne x_1 \} &\text{ if } (x_1, \dots, x_N) \notin D^N.\end{cases}\]
Given \((x_1, \dots, x_N)\), we define \(\mathrm{P}_N(x_1, \dots, x_N)\) componentwise by specifying each component \(z_j\) for \(j \in \N_+\): we set $z_i=x_i$ if $i \in \{1,\cdots,N\}$ and 
\[ z_j = x_1 + 2^{-j} e_1 \eps^N(x_1, \dots, x_N), \quad j > N, \]
where $e_1$ is the first vector of the standard Euclidean basis in $\R^N$. It is easy to check that $\mathrm{M}_N$ and $\mathrm{P}_N$ satisfy the requirements.
\end{proof}
Using the map $\mathsf P_N$ defined in the previous Lemma \ref{le:measel}, we next define a $\mbfs$-parametrized version of 
the function $\emp$ (see \eqref{eq:Intro:Phi}) that only depends on the $N$ first atoms and weights, letting 
\begin{equation}
    \label{eq:empN}
\begin{split}
\emp^N(\mbfs, (x_1, \dots, x_N)) &:=  \emp(\mbfs, \mathrm{P}_N(x_1, \dots, x_N))  
\\
&=  \sum_{j=1}^N s_j \delta_{x_j} + \sum_{j=N+1}^{\infty} s_j \delta_{z_j}, \quad (\mbfs, (x_1, \dots, x_N)) \in  T^\infty_o \times (\T^d)^N,
\end{split}
\end{equation}
 where $(z_j)_j$ 
 in the right-hand side are implicitly understood as 
 $(z_j)_j
 =\mathrm{P}_N(x_1, \dots, x_N)$. We point out that for any $(\mbfs, (x_i)_{i=1}^N)\in T_o^\infty \times\pi_N((\T^d)^\infty_o)$, the measure $\emp^N(\mbfs, (x_1, \dots, x_N))$ lies in $\prob_o(\T^d)$. In parallel, we also introduce a truncation level that only retains, in a sequence $\mbfs \in T^\infty_o$, the weights higher than a given threshold $\varepsilon \in (0,1]$, letting
\begin{equation}   \label{eq:Neps:mbfs}
    N(\eps, \mbfs):= \min \{j \in \N_+ : s_j < \eps \}-1 \in \{0, \dots, \ceil{1/\eps}\} \end{equation}
    with the inclusion  on the right-hand side following from the fact that $1= 
\sum_{j=1}^{\infty} s_j 
\geq \sum_{j=1}^{N(\eps, \mbfs)} s_j \geq N(\eps, \mbfs) \varepsilon$.
The following lemma clarifies the connection between the differential operators $\rmD$ and 
$\boldsymbol{\Delta}$, acting on functions defined on 
$\cP(\T^d)$, and the standard differential operators defined in the Euclidean setting.

\begin{lemma} \label{lemma: rel_der_cil} Let $\varepsilon \in (0,1]$, $(\mbfs, \mbfx) \in T^\infty_o \times (\T^d)_o^\infty$, 
$\hat u \in \hat{\mathfrak{Z}}_\eps^\infty$, and set $\mu := \emp(\mbfs, \mbfx)$. Then for every $N \ge N(\eps, \mbfs)$ we have
\begin{align} \label{eqn: grad_cil0}
       &\rmD \hat u(\mu,x_i) = \frac{1}{s_i}\nabla_i (\hat u \circ \emp^{N}) (\mbfs,(x_1, \dots, x_N)),  && \text{ for every } i=1, \dots, N, 
       \\
       &\rmD \hat u (\mu,x_i) = 0, && \text{ for every } i>N(\eps, \mbfs),\label{eqn: grad_cil}
       \\
       & \boldsymbol{\Delta} \hat u(\mu) = \sum_{i=1}^{N(\eps, \mbfs)} \frac{1}{s_i}\Delta_i (\hat u \circ \emp^N) (\mbfs,(x_1, \dots, x_N)),\label{eqn: lapl_cil} \\
       &\Delta_i (\hat u \circ \emp^N)(\mbfs, (x_1, \dots, x_N)) = 0, && \text{ for every } N(\eps, \mbfs) < i \le N,\label{eqn: lapl_cil0}
    \end{align}
where $\nabla_i$ and $\Delta_i$ denote the respective gradient and Laplace operator w.r.t.~the $i$-th
toroidal variable (it being understood that the derivatives that appear above exist in a classical sense).
\end{lemma}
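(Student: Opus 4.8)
The plan is to reduce all four identities to the classical chain rule, once the following structural remark is in place: because $\hat{\mbff}$ is supported in $[\eps,1]\times\T^d$ and the weights $\mbfs=(s_i)_i\in T^\infty_o$ are strictly decreasing, the cylinder function $\hat u=F\circ\hat{\mbff}^\trid\in\hat{\mathfrak{Z}}_\eps^\infty$ only feels the atoms of $\mu=\emp(\mbfs,\mbfx)=\sum_{i}s_i\delta_{x_i}$ carrying mass at least $\eps$, namely the first $N(\eps,\mbfs)$ of them (see \eqref{eq:Neps:mbfs}). Indeed, since the $x_i$ are pairwise distinct one has $\mu_{x_i}=s_i$, hence $\hat{\mbff}^\trid(\mu)=\sum_{i\ge 1}s_i\hat{\mbff}(s_i,x_i)=\sum_{i=1}^{N(\eps,\mbfs)}s_i\hat{\mbff}(s_i,x_i)$; and, for any $N\ge N(\eps,\mbfs)$, the extra atoms $(z_j)_{j>N}=\mathrm{P}_N(x_1,\dots,x_N)$ appearing in \eqref{eq:empN} all carry mass strictly below $\eps$, so that
\[
(\hat u\circ\emp^N)(\mbfs,(x_1,\dots,x_N))=F\Bigl(\textstyle\sum_{i=1}^{N(\eps,\mbfs)}s_i\hat{\mbff}(s_i,x_i)\Bigr).
\]
In particular $\hat u\circ\emp^N$ is a genuinely smooth function of $(x_1,\dots,x_N)$ — the a priori non-smooth completion $\mathrm{P}_N$ of Lemma~\ref{le:measel} being immaterial — which does not depend on $x_i$ for $N(\eps,\mbfs)<i\le N$; this is exactly \eqref{eqn: lapl_cil0}, and it also forces the right-hand side of \eqref{eqn: grad_cil0} to vanish for such $i$.

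For the gradient identities I use that $\rmD\hat u=\boldnabla\hat u$ on $\hat{\mathfrak{Z}}_c^\infty\supset\hat{\mathfrak{Z}}_\eps^\infty$ (Proposition~\ref{prop:vectorgrad}) together with Lemma~\ref{le:hasgrad} and $\mu_{x_i}=s_i$, which give $\rmD\hat u(\mu,x_i)=(\boldnabla\hat u)_\mu(x_i)=\sum_{l=1}^{k}(\partial_l F)(\hat{\mbff}^\trid(\mu))\,\nabla\hat{f}_l(s_i,x_i)$. If $i>N(\eps,\mbfs)$ then $s_i<\eps$, so $\hat{\mbff}$ — and hence every $\nabla\hat{f}_l$ — vanishes on the open set $[0,\eps)\times\T^d$ containing $(s_i,x_i)$, which yields \eqref{eqn: grad_cil}. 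If instead $i\le N(\eps,\mbfs)\le N$, I differentiate the displayed formula for $\hat u\circ\emp^N$ in $x_i$: only the $i$-th summand depends on $x_i$, so the chain rule produces $\nabla_i(\hat u\circ\emp^N)=\sum_{l=1}^{k}(\partial_l F)(\hat{\mbff}^\trid(\mu))\,s_i\,\nabla\hat{f}_l(s_i,x_i)=s_i\,\rmD\hat u(\mu,x_i)$, which is \eqref{eqn: grad_cil0} after dividing by $s_i$.

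For the Laplacian, since $\hat{\mathfrak{Z}}_\eps^\infty\subset\hat{\mathfrak{Z}}_c^\infty\subset D(\boldsymbol{\Delta})$ and $\boldsymbol{\Delta}=L_0=\hat L$ is an extension of $L_c$, I apply the explicit formula \eqref{eqn:gen_on_cil}, which for $\mu=\sum_j s_j\delta_{x_j}$ reads $\boldsymbol{\Delta}\hat u(\mu)=\sum_{j\ge 1}\tfrac{1}{s_j}\,\Delta_z\rvert_{z=x_j}\hat u\bigl(\mu+s_j\delta_z-s_j\delta_{x_j}\bigr)$. For $z$ near $x_j$ and distinct from the other atoms of $\mu$, one has $\hat u\bigl(\mu+s_j\delta_z-s_j\delta_{x_j}\bigr)=F\bigl(\sum_{l\neq j}s_l\hat{\mbff}(s_l,x_l)+s_j\hat{\mbff}(s_j,z)\bigr)$, where the sum over $l\neq j$ is in fact the finite constant $\sum_{l\le N(\eps,\mbfs),\,l\neq j}s_l\hat{\mbff}(s_l,x_l)$; this is $z$-independent — so its $z$-Laplacian at $x_j$ vanishes — as soon as $s_j<\eps$, which kills every term with $j>N(\eps,\mbfs)$. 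For $j\le N(\eps,\mbfs)$, the maps $z\mapsto\hat u(\mu+s_j\delta_z-s_j\delta_{x_j})$ and $x_j\mapsto(\hat u\circ\emp^N)(\mbfs,(x_1,\dots,x_N))$ are, near $x_j$, the same smooth function (each depends on that single variable only through the term $s_j\hat{\mbff}(s_j,\cdot)$ added to the same constant), so their Laplacians at $x_j$ coincide: $\Delta_z\rvert_{z=x_j}\hat u(\mu+s_j\delta_z-s_j\delta_{x_j})=\Delta_j(\hat u\circ\emp^N)(\mbfs,(x_1,\dots,x_N))$. Summing over $j$ gives \eqref{eqn: lapl_cil}.

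The individual computations are routine applications of the chain rule for $F$ composed with a smooth map; the only point that genuinely requires care — and on which I would dwell in the write-up — is the bookkeeping showing that the countably many atoms of mass $<\eps$ contribute neither to the value of $\hat u$ (and of $\hat u\circ\emp^N$) nor to any of its derivatives, so that the Euclidean derivatives displayed in the statement are well defined in the classical sense despite the non-smoothness of $\mathrm{P}_N$, and that the unique factor mediating between the Euclidean operators $\nabla_i,\Delta_i$ and their metric-measure counterparts $\rmD,\boldsymbol{\Delta}$ is the weight $s_i=\mu_{x_i}$.
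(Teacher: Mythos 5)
Your proposal is correct and follows essentially the same route as the paper's own proof: both expand the explicit form of $\hat u\circ\emp^N$ (the mass-$<\eps$ atoms produced by $\mathrm P_N$ being invisible to $\hat u$), obtain \eqref{eqn: grad_cil0}--\eqref{eqn: grad_cil} via Lemma~\ref{le:hasgrad} together with the identification $\rmD\hat u=\boldnabla\hat u$ from Proposition~\ref{prop:vectorgrad}, and derive \eqref{eqn: lapl_cil0}--\eqref{eqn: lapl_cil} from the explicit formula \eqref{eqn:gen_on_cil} by observing that, for $j\le N(\eps,\mbfs)$, the one-variable functions $z\mapsto\hat u(\mu+s_j\delta_z-s_j\delta_{x_j})$ and $y_j\mapsto(\hat u\circ\emp^N)(\mbfs,(x_1,\dots,y_j,\dots,x_N))$ coincide near $x_j$, while for $j>N(\eps,\mbfs)$ they are locally constant. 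The only difference is presentational: you spell out the reduction of the sum to the first $N(\eps,\mbfs)$ indices up front, whereas the paper carries the chain rule through $\hat{\mbff}^\star$; the content is the same.
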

To clarify further the scope of the result, it should be stressed that, when 
$N = N(\varepsilon,{\mbfs})$, then
$\emp^N(\mbfs,(x_1,\cdots,x_N))$ is a probability measure whose atoms of mass less than $\varepsilon$
depend on $(x_1,\cdots,x_N)$
(and may vary with it). However, these atoms count for nothing when $\emp^N$ is composed with $\hat{u}$, because the latter only depends on the atoms of 
mass greater than $\varepsilon$.

\begin{proof} 
Following the definition of 
$\hat{\mathfrak{Z}}_\eps^\infty$, we call $F$ and $\hat{\mbff}$ two functions such that 
$\hat{u} = F \circ \hat{\mathbf f}^\star$, 
where 
$F \in \rmC^\infty_c({\mathbb R}^k,{\mathbb R})$
and
$\hat{\mbff} = (\hat{f}_1, \dots, \hat{f}_k)$, for some $k \in \N_+$ and some $\hat{f}_1,\dots,\hat{f}_k \in \rmC^\infty([0,1] \times \T^d)$ with $\supp(\hat\mbff) \subset [\eps, 1] \times \T^d$. By construction, $\hat u \circ \emp^N(\mbfs, \cdot)$ is well defined on $(\T^d)^N$ and for every $(y_1, \dots, y_N) \in  (\T^d)^N $ we have
\[ (\hat u \circ \emp^N)(\mbfs, (y_1, \dots, y_N)) = F \left ( \sum_{i=1}^N s_i \hat f_1 (s_i, y_i), \dots, \sum_{i=1}^N s_i \hat f_k (s_i, y_i)  \right ) \]
so that we see that it is also smooth there.
Then, we have, for any $i \in \{1,\cdots,N\}$,
\begin{align*}
    \nabla_i (\hat u \circ \emp^{N}) (\mbfs,(x_1, \dots, x_N)) &= \nabla_z |_{z=x_i} \bigl[ F ( \hat{\mbff}^\star (\mu-s_i \delta_{x_i} + s_i \delta_z)
    ) \bigr]
    \\
    &= \sum_{\ell=1}^k \partial_\ell F( \hat{\mbff}^\star(\mu)) \nabla_z |
    _{z=x_i} 
    \bigl[
    \hat{f}_\ell^\star (\mu-s_i \delta_{x_i} + s_i \delta_z) 
    \bigr]
    \\
    &= \sum_{\ell=1}^k \partial_\ell F( \hat{\mbff}^\star(\mu)) \nabla_z |_{z=x_i}
    \bigl[
    \hat{f}_\ell(s_i, z) 
    \bigr]
    s_i \\
    &= s_i \sum_{\ell=1}^k \partial_\ell F( \hat{\mbff}^\star(\mu)) \nabla f_\ell(s_i, x_i) \\
    &= s_i \rmD \hat u (\mu, x_i),
\end{align*}
where the last equality comes from Lemma \ref{le:hasgrad}. This proves \eqref{eqn: grad_cil0}.
Using again Lemma \ref{le:hasgrad}, and observing that $\nabla f_\ell(s_j,x_j)=0$ for $j >N(\eps, \mbfs)$ (since $s_j < \varepsilon$), 
we 
establish 
\eqref{eqn: grad_cil} by repeating the computation in the last two lines. To prove \eqref{eqn: lapl_cil0} we observe that, since $\hat{u}(\mu-s_j \delta {x_j} + s_j \delta_z)=
(\hat{u} \circ \emp^N)
(\mbfs,(x_1,\dots,x_{j-1},z,x_{j+1},\dots,x_N))$ for every $j=1, \dots, N$ and $z \in \T^d$, and for $j > N(\eps, \mbfs)$ the first function is constant in $z$, we have
\[\Delta_i (\hat u \circ \emp^N) (\mbfs,(x_1, \dots, x_N)) = 0 \quad \text{ for every } N(\eps, \mbfs) < i \le N. \]
To prove \eqref{eqn: lapl_cil} we use the same equality and \eqref{eqn:gen_on_cil} to get
\begin{align*}
\boldsymbol{\Delta} \hat u(\mu) &= \int_{\T^d}\frac{\Delta_z\rvert_{z=x} \hat u(\mu + \mu_x\delta_z - \mu_x\delta_x)}{(\mu_x)^2}\de\mu(x) 
\\
& = \sum_{j=1}^{\infty} \frac{1}{s_j} \Delta_z |_{z=x_j} 
\bigl[
\hat u (\mu -s_j \delta_{x_j} + s_j \delta_z)
\bigr]
\\
& = \sum_{j=1}^{N(\eps, \mbfs)} \frac{1}{s_j} \Delta_z |_{z=x_j} \bigl[ (\hat u \circ \emp^N)((s_1, \dots, s_N), (x_1, \dots, x_{j-1}, z, x_{j+1}, \dots, x_N)) 
\bigr]
\\
& = \sum_{j=1}^{N(\eps, \mbfs)} \frac{1}{s_j}\Delta_i (\hat u \circ \emp^N) ((s_1, \dots, s_N),(x_1, \dots, x_N)).
\end{align*}\end{proof}
The next result relies on the definition of $\mathfrak{T Z}(I)$ given at the very end of Subsection \ref{sec:fonp}, where $I\subset\R$ is an interval, $\eps\in[0,1)$ and $\mathfrak{Z}$ is any of the classes $\mathfrak{Z}^\infty$, $\hat{\mathfrak{Z}}_c^\infty$, $\hat{\mathfrak{Z}}_\eps^\infty$. 
We also recall that  the tuple $((\varsigma^j)_j,(\xi^{0,j})_j,((\beta^j_t)_{t \in [0,T]})_j,((\xi_t^j)_{t \in [0,T]})_{j}
)$ denotes the canonical process on $\Xi$.
\begin{proposition}\label{prop: ito_cyl} 
 Let $t_0 \in [0,T)$ be an initial time and $(\mbfs,\mbfx) \in T_o^\infty \times (\T^d)_o^\infty$ be an initial datum.
 Then, for $\eps \in (0,1]$ and $u \in \mathfrak{T}\hat{\mathfrak{Z}}_\eps^\infty([t_0, T])$, the process $ (u_t(\mu_t^\infty))_{t \in [0,T]}$ can be expanded as
    \begin{equation}\label{eq:firstito}
    \begin{split}
        u_t(\mu^\infty_t)  &= u_{t_0}(\mu^\infty_{t_0}) + \int_{t_0}^t \left (\partial_r u_r(\mu^\infty_r) 
         +  \boldsymbol{\Delta}u_r(\mu^\infty_r)\right )\de r
         \\
         &\hspace{15pt} + \sum_{i = 1}^{\infty}\int_{t_0}^t \sqrt{2 \varsigma_i} \rmD u_r(\mu_r^\infty, \xi^i_r) \cdot \de \beta^i_r,\quad t\in[t_0,T],\quad \mathbf{P}^{t_0, (\mbfs,\mbfx)}\text{-a.s.}
   \end{split}
    \end{equation}
where, for any 
$\mu \in \prob(\T^d)$, the function $r  \in [t_0,T] \mapsto \partial_r u_r(\mu)$ is regarded as (a representative of) the Sobolev derivative of the function
$r \in [t_0,T] \mapsto u_r(\mu)$. (We explain in the proof how the representative, which is uniquely defined up to a null Borel subset, can be canonically chosen.) 
\end{proposition}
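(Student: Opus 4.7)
The key observation is that, under $\mathbf{P}^{t_0,(\mbfs,\mbfx)}$ with $(\mbfs,\mbfx)$ deterministic, the weights $\varsigma^j = s_j$ stay frozen along the trajectory, so that $N(\eps,\varsigma) \equiv N := N(\eps,\mbfs)$ is constant in time and bounded by $\lceil 1/\eps\rceil$. Since $u_r \in \hat{\mathfrak{Z}}_\eps^\infty$, changing the atoms of $\emp(\mbfs,\mbfy)$ of mass strictly less than $\eps$ does not affect $u_r$, and from \eqref{eq:empN} we obtain the pathwise identity
\begin{equation*}
u_r(\mu^\infty_r) = v(r,\xi^1_r,\dots,\xi^N_r),\qquad v(r,y_1,\dots,y_N) := (u_r \circ \emp^N)(\mbfs,(y_1,\dots,y_N)),
\end{equation*}
where $v$ is viewed, after composing with the periodic projection $\varpi$, as a smooth $\mathbb{Z}^{dN}$-periodic function on $[t_0,T]\times(\R^d)^N$, Lipschitz in $r$. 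Thus the problem is reduced to a standard finite-dimensional It\^o formula for the $\R^{dN}$-valued diffusion $(\xi^1_r,\dots,\xi^N_r)$ solving $\de \xi^i_r = \sqrt{2/s_i}\,\de\beta^i_r$ with independent Brownian drivers.

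The next step is to fix a canonical Borel representative of $\partial_r u_r(\mu)$. Since $u(\cdot,\mu)\in\Lip_b([t_0,T],\sfd_{eucl})$, one can set
\begin{equation*}
\partial_r u_r(\mu) := \limsup_{n\to\infty} 2^n\bigl(u_{r+2^{-n}\wedge T}(\mu) - u_r(\mu)\bigr),
\end{equation*}
which is Borel in $(r,\mu)$, bounded (locally in $\mu$, since $\mathcal{P}(\T^d)$ is compact and all quantities factor through a fixed tuple $\hat\mbff$) by the Lipschitz constant of $u(\cdot,\mu)$, and coincides $\mathscr L^{[t_0,T]}$-a.e.\ with the classical derivative. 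The same procedure applied to the finite-dimensional function $v$ yields a jointly Borel $\partial_r v$ such that $\partial_r v(r,\xi^1_r,\dots,\xi^N_r) = \partial_r u_r(\mu^\infty_r)$.

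Now one invokes the finite-dimensional It\^o formula for $v$, which requires only $C^2$-regularity in space together with Lipschitz (hence absolutely continuous) dependence in time, with integrable time derivative. The standard way to handle the loss of $C^1$-smoothness in $r$ is to mollify $v$ in $r$, apply the classical $C^{1,2}$ It\^o formula, and pass to the limit: the drift and diffusion terms pass by dominated convergence and It\^o isometry respectively, since the spatial derivatives are uniformly bounded and the time derivatives are uniformly bounded by the Lipschitz constant of $v(\cdot,y)$. This produces
\begin{equation*}
v(t,\xi^1_t,\dots,\xi^N_t) = v(t_0,\cdot) + \int_{t_0}^t\!\partial_r v\,\de r + \sum_{i=1}^{N}\int_{t_0}^t\!\nabla_i v\cdot\sqrt{2/s_i}\,\de\beta^i_r + \frac{1}{2}\sum_{i=1}^{N}\int_{t_0}^t\!\frac{2}{s_i}\Delta_i v\,\de r.
\end{equation*}

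Finally, the right-hand side is rewritten in intrinsic form using Lemma \ref{lemma: rel_der_cil}. For $i=1,\dots,N$ one has $\nabla_i v(r,\cdot) = s_i\,\rmD u_r(\mu^\infty_r,\xi^i_r)$, so $\nabla_i v\cdot\sqrt{2/s_i} = \sqrt{2 s_i}\,\rmD u_r(\mu^\infty_r,\xi^i_r)$, while $\sum_{i=1}^N s_i^{-1}\Delta_i v(r,\cdot) = \boldsymbol\Delta u_r(\mu^\infty_r)$. Moreover, for $i>N$ one has $\rmD u_r(\mu^\infty_r,\xi^i_r)=0$ by \eqref{eqn: grad_cil}, so extending the finite sum to an infinite sum is harmless and yields exactly \eqref{eq:firstito}. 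The main obstacle is not the finite-to-infinite passage (which is free by the cylinder structure) but rather the justification of It\^o's formula for $v$ when only Lipschitz regularity in $r$ is available; I would handle it by the mollification argument above, taking care to track the uniform bounds in $(r,\mu)$ that permit the passage to the limit.
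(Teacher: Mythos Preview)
Your proof is correct and follows essentially the same approach as the paper: reduce to a finite-dimensional It\^o formula for $v = u \circ \emp^N(\mbfs,\cdot)$ applied to the scaled Brownian motion $(\xi^1,\dots,\xi^N)$, then translate back via Lemma~\ref{lemma: rel_der_cil}. The only minor difference lies in the time-dependent step: where you mollify in $r$ and pass to the limit, the paper instead invokes the It\^o--Krylov formula directly, using the non-degeneracy of $(\boldsymbol{\xi}^N_t)$ (which is also what ultimately makes your limit identification work, since the law of $\boldsymbol{\xi}^N_r$ has a density for $r>t_0$).
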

 Note that in the above display we are using the convention explained in Remark \ref{rem:toruspb} to evaluate $\rmD u_r(\mu_r^\infty, \cdot)$ at $\xi_r^i$.
\begin{proof} 
For simplicity
we assume that $t_0$ is equal to $0$.
Moreover, for the first part of the proof, we also suppose that $u$ does not depend on time (i.e.~$u\in\hat{\mathfrak{Z}}_\eps^\infty$).
We set $N:=\lceil 1/\varepsilon \rceil$, $(\boldsymbol{\xi}_t^{N} := (\xi^1_t,\dots,\xi^{N}_t))_{t \in [0,T]}$. Recall also the notation $\varpi\colon\R^d\to\T^d$ as in \eqref{eq:project} for the (smooth) canonical projection from $\R^d$ to $\R^d / \Z^d = \T^d$ and let $\varpi_N\colon (\R^d)^N\to(\T^d)^N$ be the (smooth) canonical projection from $(\R^d)^N$ to $(\R^d / \Z^d)^N = (\T^d)^N$ (i.e. $\varpi$ applied applied componentwise). Note that, for $v\in\rmC^\infty((\T^d)^N)$, one has $\nabla_i (v\circ\varpi) = (\nabla_iv)\circ \varpi$, where on the left-hand side the gradient is understood as an operator on $\R^d$, while on the right-hand side it is understood as an operator on $\T^d$. An analogous relation holds for $\Delta_i$.

By applying classical It\^o's formula to $u \circ \emp^N(\mbfs,\cdot)\circ\varpi_N$, which belongs to 
$\rmC^\infty((\R^d)^N)$, and by using the same notation $\nabla_i$ and $\Delta_i$ as in Lemma \ref{lemma: rel_der_cil}, we get
\begin{align*}
    u(\mu^\infty_t) &= (u \circ \emp^N(\mbfs,\cdot)\circ\varpi_N)(\boldsymbol{\xi}_t^{N}) \\
    &=  (u \circ \emp^N(\mbfs,\cdot)\circ\varpi_N)(\boldsymbol{\xi}_0^{N}) +\sum_{i=1}^{N}\int_0^t \sqrt{\frac{2}{s_i}}\nabla_i (u \circ \emp^N(\mbfs,\cdot)\circ\varpi_N)(\boldsymbol{\xi}_r^{N})\cdot \de \beta^i_r \\
    & \quad + \sum_{i=1}^N \int_0^t \frac{1}{s_i}\Delta_i (u \circ \emp^N(\mbfs,\cdot)\circ\varpi_N)(\boldsymbol{\xi}_r^{N})\de r \\
        &=  (u \circ \emp^N)(\varpi_N(\boldsymbol{\xi}_0^{N}))+\sum_{i=1}^{N}\int_0^t \sqrt{\frac{2}{s_i}}\nabla_i (u \circ \emp^N)(\varpi_N(\boldsymbol{\xi}_r^{N}))\cdot \de \beta^i_r \\
    & \quad + \sum_{i=1}^N \int_0^t \frac{1}{s_i}\Delta_i (u \circ \emp^N)(\varpi_N(\boldsymbol{\xi}_r^{N}))\de r \\
    & = u(\mu_{0}^\infty) + \sum_{i=1}^{\infty}\int_0^t\sqrt{2s_i} \rmD u_r(\mu_r^\infty, \xi^i_r)\cdot \de  \beta^i_r + \int_0^t \boldsymbol{\Delta}u(\mu^\infty_r)\de r, \quad t\in[0,T],\quad \mathbf{P}^{0,(\mbfs,\mbfx)}\text{-a.s.}
\end{align*}
where to obtain the last line we used Lemma \ref{lemma: rel_der_cil}.

Extending the proof to the case when $u$ 
also depends on time requires some care, because elements of 
$\mathfrak{T}\hat{\mathfrak{Z}}_\eps^\infty)[t_0, T])$ are just assumed to be Lipschitz in 
time, uniformly in 
the measure argument. 
Generally speaking, the point is to replace the standard version of Itô's formula
by the It\^o-Krylov one (see, e.g., \cite[Theorem 1, p.~122]{KrylovBookControl}).
Here, this is possible because the process $(\boldsymbol{\xi}_t^{N})_{t \in [0,T]}$
is non-degenerate. In brief, 
Sobolev differentiability
in time is then enough to get the above expansion. 
We thus regard $r \mapsto \partial_r [u_r \circ \emp^N(\mbfs,\cdot)\circ\varpi_N ] (x_1,\cdots,x_N)$
as a version of the Sobolev derivative (in $r$) of the function $r \mapsto [u_r \circ \emp^N(\mbfs,\cdot)\circ\varpi_N ](x_1,\cdots,x_N).$
This derivative is uniquely defined up to a null Borel subset of $[0,T] \times ({\mathbb R}^d)^N$, which 
is enough for our purpose
because
$(\boldsymbol{\xi}_t^{N})_{t \in [0,T]}$ does not see null Borel subsets of 
$({\mathbb R}^d)^N$. Thanks to the identity $u_r(\mu)=
(u_r \circ\emp^N(\mbfs,\cdot)\circ\varpi_N)
(x_1,\dots,x_N)$, we can use this version when writing 
$\partial_r {u}_r(\mu)$.
\end{proof}

\section{The backward Kolmogorov equation} \label{sec:bke}
In this section we discuss existence, uniqueness and stability for a family of PDEs set on the space of probability measures. In Section \ref{sec: massive} we will relate the solution to such PDEs to an interacting particle system.
In the whole section we let $T$ be the same finite time horizon as at the beginning of Section \ref{se:2}.
Recall that $\mathcal{D}$ denotes the Dirichlet--Ferguson measure (see Definition~\ref{def:dfm}), 
$\overline{\mathcal{D}}$ the measure in~\eqref{eq:bmm}, 
and $H$ and $H^{1,2}$ the spaces introduced in~\eqref{eq:notation}. Moreover, $\rmD$ is defined in Proposition~\ref{prop:vectorgrad}, while $\boldsymbol{\Delta}$ is introduced below Theorem~\ref{thm:mainold}.
Given two functions $b_1, b_2 \in L^2(\prob(\T^d) \times \T^d, \overline{\mathcal{D}}; \R^d)$, we denote by $[b_1,b_2] \in L^2({\mathcal P}(\T^d), \cD)$ the function
\begin{equation}
\label{eq:[b1,b2]}
\sclprd{b_1}{b_2}_\mu := \int_{\prob(\T^d)} b_1(\mu, x) \cdot b_2(\mu, x) \de \mu(x), \quad \mathcal{D}\text{-a.e. }\mu \in \prob(\T^d). 
\end{equation}

We report here a useful result coming from \cite[Theorems 3.4, 3.6]{Brezis73}.

\begin{theorem}\label{thm:brezispde}
    Let $\mathcal{\sfH}$ be a separable Hilbert space, $\varphi: \mathcal{\sfH} \to [0,\infty]$ be a lower semicontinuous and convex functional, 
     $x_0 \in \overline{D(\varphi)}$, and 
     $f \in L^2([0,T]; \sfH)$. Then,  there exists a unique $x \in \rmC([0,T];\sfH) \cap \mathrm{AC}_{loc}((0,T); \sfH)$ such that $x_t \in D(\partial \varphi)$, for a.e.~$t \in (0,T)$, and
    \[ \partial_t x_t + \partial \varphi(x_t) \ni f_t \quad \text{ for a.e.~} t \in (0,T), \quad x_t  \bigr |_{t=0} = x_0. \]
   For such an $x$, it also holds that $t \mapsto \varphi(x_t)$ belongs to $L^1([0,T])$.
\end{theorem}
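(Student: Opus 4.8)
The statement is Brezis's theorem on parabolic evolution equations governed by subdifferentials of convex functionals, and the plan is to reprove it by the Moreau--Yosida regularization scheme. For $\lambda>0$ introduce the resolvent $J_\lambda:=(\mathrm{id}+\lambda\partial\varphi)^{-1}\colon\sfH\to\sfH$ and the Moreau envelope $\varphi_\lambda(x):=\min_{y\in\sfH}\bigl(\tfrac{1}{2\lambda}\norm{x-y}^2+\varphi(y)\bigr)$. I would first recall the standard facts: $\varphi_\lambda$ is convex, non-negative, Fréchet differentiable with $\nabla\varphi_\lambda=\tfrac1\lambda(\mathrm{id}-J_\lambda)$ globally $1/\lambda$-Lipschitz; one has $\nabla\varphi_\lambda(x)\in\partial\varphi(J_\lambda x)$, $0\le\varphi_\lambda(x)\le\varphi(J_\lambda x)\le\varphi(x)$, $\varphi_\lambda(x)\uparrow\varphi(x)$ as $\lambda\downarrow0$, and $J_\lambda x\to\mathrm{proj}_{\overline{D(\varphi)}}(x)$, so in particular $J_\lambda x_0\to x_0$. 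Since $\nabla\varphi_\lambda$ is Lipschitz and $f\in L^2([0,T];\sfH)$, the Carathéodory version of Cauchy--Lipschitz produces a unique absolutely continuous $x^\lambda\colon[0,T]\to\sfH$ with $\partial_t x^\lambda_t+\nabla\varphi_\lambda(x^\lambda_t)=f_t$ for a.e.\ $t$ and $x^\lambda_0=x_0$.

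The second step is to derive $\lambda$-uniform a priori bounds. Testing the regularized equation against $\partial_t x^\lambda$ and using $\frac{\d}{\d t}\varphi_\lambda(x^\lambda_t)=\scalprod{\nabla\varphi_\lambda(x^\lambda_t)}{\partial_t x^\lambda_t}$ gives, after Young's inequality, $\tfrac12\int_0^T\norm{\partial_t x^\lambda_t}^2\de t+\varphi_\lambda(x^\lambda_T)\le\varphi_\lambda(x_0)+\tfrac12\int_0^T\norm{f_t}^2\de t$, which closes the estimate when $x_0\in D(\varphi)$ since then $\varphi_\lambda(x_0)\le\varphi(x_0)$. To treat a general $x_0\in\overline{D(\varphi)}$ I would also test against $t\,\partial_t x^\lambda_t$, producing $\tfrac12\int_0^T t\norm{\partial_t x^\lambda_t}^2\de t+T\varphi_\lambda(x^\lambda_T)\le\int_0^T\varphi_\lambda(x^\lambda_t)\de t+\tfrac12\int_0^T t\norm{f_t}^2\de t$, and then bound $\int_0^T\varphi_\lambda(x^\lambda_t)\de t$ by comparing with a fixed $v\in D(\varphi)$ (nonempty since $\overline{D(\varphi)}\ni x_0$): convexity of $\varphi_\lambda$ yields $\scalprod{\nabla\varphi_\lambda(x^\lambda_t)}{x^\lambda_t-v}\ge\varphi_\lambda(x^\lambda_t)-\varphi(v)$, hence $\tfrac12\frac{\d}{\d t}\norm{x^\lambda_t-v}^2+\varphi_\lambda(x^\lambda_t)\le\scalprod{f_t}{x^\lambda_t-v}+\varphi(v)$, and a Gronwall argument controls $\sup_{t\le T}\norm{x^\lambda_t}$ and then $\int_0^T\varphi_\lambda(x^\lambda_t)\de t$, all uniformly in $\lambda$ (and uniformly in any approximation of $x_0$ by points of $D(\varphi)$). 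These estimates give $\int_0^T t\norm{\partial_t x^\lambda_t}^2\de t\le C$ and $\varphi_\lambda(x^\lambda_t)\le C/t$, the parabolic smoothing underlying the $\mathrm{AC}_{loc}((0,T);\sfH)$ regularity and the $L^1$ integrability of $t\mapsto\varphi(x_t)$.

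The heart of the argument is the passage to the limit $\lambda\downarrow0$. For $x_0\in D(\varphi)$ I would show $(x^\lambda)_\lambda$ is Cauchy in $\rmC([0,T];\sfH)$: the $f$-terms cancel, so $\tfrac12\frac{\d}{\d t}\norm{x^\lambda_t-x^\mu_t}^2=-\scalprod{\nabla\varphi_\lambda(x^\lambda_t)-\nabla\varphi_\mu(x^\mu_t)}{x^\lambda_t-x^\mu_t}$, and writing $x^\lambda_t-x^\mu_t=(J_\lambda x^\lambda_t-J_\mu x^\mu_t)+\lambda\nabla\varphi_\lambda(x^\lambda_t)-\mu\nabla\varphi_\mu(x^\mu_t)$, the first piece is killed by monotonicity of $\partial\varphi$ (because $\nabla\varphi_\lambda(x^\lambda_t)\in\partial\varphi(J_\lambda x^\lambda_t)$), while the remainder is bounded by $(\lambda+\mu)(\norm{\nabla\varphi_\lambda(x^\lambda_t)}^2+\norm{\nabla\varphi_\mu(x^\mu_t)}^2)$, time-integrable with a $\lambda$-independent bound since $\norm{\nabla\varphi_\lambda(x^\lambda_t)}\le\norm{f_t}+\norm{\partial_t x^\lambda_t}$; integrating from $0$ (where $x^\lambda_0=x^\mu_0=x_0$) yields $\sup_{t\le T}\norm{x^\lambda_t-x^\mu_t}^2\le C(\lambda+\mu)\to0$. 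Let $x\in\rmC([0,T];\sfH)$ be the limit; the a priori bounds pass to the limit, giving $x\in\mathrm{AC}_{loc}((0,T);\sfH)$ and, along a subsequence, $\partial_t x^\lambda\rightharpoonup\partial_t x$ in $L^2([\delta,T];\sfH)$ for every $\delta>0$. Since $J_\lambda x^\lambda\to x$ in $L^2_{loc}$ (as $\norm{J_\lambda x^\lambda_t-x^\lambda_t}=\lambda\norm{\nabla\varphi_\lambda(x^\lambda_t)}$ and $\int_\delta^T\lambda^2\norm{\nabla\varphi_\lambda(x^\lambda_t)}^2\de t\le C\lambda^2/\delta$) and $\nabla\varphi_\lambda(x^\lambda)=f-\partial_t x^\lambda\rightharpoonup f-\partial_t x$, the demiclosedness of the maximal monotone operator induced by $\partial\varphi$ on $L^2([\delta,T];\sfH)$ forces $f_t-\partial_t x_t\in\partial\varphi(x_t)$ for a.e.\ $t$, in particular $x_t\in D(\partial\varphi)$ a.e.; the integrability $t\mapsto\varphi(x_t)\in L^1([0,T])$ follows from $\int_0^T\varphi_\lambda(x^\lambda_t)\de t\le C$ via Fatou and lower semicontinuity of $\varphi$ (using $\varphi_\lambda\uparrow\varphi$). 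For a general $x_0\in\overline{D(\varphi)}$, I would take $x_0^n\in D(\varphi)$ with $x_0^n\to x_0$, apply the above to obtain solutions $x^n$, and use the contraction estimate below to conclude that $(x^n)_n$ is Cauchy in $\rmC([0,T];\sfH)$; its limit is the desired solution, the uniform-in-$n$ regularizing bounds surviving in the limit as above.

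Uniqueness and the contraction estimate are the easy part: if $x,y\in\rmC([0,T];\sfH)\cap\mathrm{AC}_{loc}((0,T);\sfH)$ both solve the inclusion, then $t\mapsto\norm{x_t-y_t}^2$ is locally absolutely continuous on $(0,T)$ and, with $\xi_t:=f_t-\partial_t x_t\in\partial\varphi(x_t)$ and $\eta_t:=f_t-\partial_t y_t\in\partial\varphi(y_t)$,
\[
\tfrac12\tfrac{\d}{\d t}\norm{x_t-y_t}^2=\scalprod{\partial_t x_t-\partial_t y_t}{x_t-y_t}=-\scalprod{\xi_t-\eta_t}{x_t-y_t}\le0
\]
by monotonicity; letting the left endpoint tend to $0$ and using continuity up to $0$ gives $\norm{x_t-y_t}\le\norm{x_0-y_0}$, whence uniqueness (and the contraction used above). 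The \emph{main obstacle} is the third step: securing the \emph{strong} convergence of the Yosida approximations through the monotonicity/resolvent identity and then correctly identifying the weak limit of $\nabla\varphi_\lambda(x^\lambda)$ as a selection of $\partial\varphi$ along $x$ --- this is where the interplay between $J_\lambda$, the maximal monotonicity (demiclosedness) of $\partial\varphi$, the cancellation of the $f$-terms, and the $1/t$-type bounds (needed precisely because $x_0$ is only assumed in $\overline{D(\varphi)}$) all come together.
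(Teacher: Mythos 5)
The paper does not prove this theorem: it imports it verbatim as a citation of Brezis (\cite[Theorems~3.4, 3.6]{Brezis73}), so there is no ``paper's own proof'' to compare against. Your Moreau--Yosida regularization scheme is, in substance, precisely the argument in that reference: regularize with $\varphi_\lambda$, solve the Lipschitz ODE, derive the two a priori estimates (testing against $\partial_t x^\lambda$ for $x_0 \in D(\varphi)$, against $t\,\partial_t x^\lambda$ together with the $\scalprod{\cdot}{x^\lambda-v}$ comparison for $x_0\in\overline{D(\varphi)}$), obtain strong convergence from the resolvent-splitting identity and monotonicity, and identify the weak limit of $\nabla\varphi_\lambda(x^\lambda)$ by demiclosedness of the maximal monotone realization of $\partial\varphi$ on $L^2([\delta,T];\sfH)$; uniqueness is the standard monotonicity contraction. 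All of these steps are correctly set up. Two small points worth tightening if this were written out in full: (i) for the $L^1$ claim on $t\mapsto\varphi(x_t)$, Fatou alone is not quite enough --- one should use that for any fixed $\mu>0$ and $\lambda \le \mu$, $\varphi_\lambda(x_t^\lambda) \geq \varphi_\mu(x_t^\lambda)$, pass $\lambda\downarrow 0$ using continuity of $\varphi_\mu$ and $x^\lambda_t\to x_t$, and only then send $\mu\downarrow 0$ to obtain $\liminf_\lambda \varphi_\lambda(x^\lambda_t)\ge\varphi(x_t)$; (ii) in the cross term of the Cauchy-in-$\lambda$ estimate, the bound $(\lambda+\mu)\bigl(\norm{\nabla\varphi_\lambda(x^\lambda_t)}^2+\norm{\nabla\varphi_\mu(x^\mu_t)}^2\bigr)$ is only integrable uniformly if $\int_0^T\norm{\partial_t x^\lambda}^2\de t$ is bounded uniformly in $\lambda$, which is why this Cauchy argument is correctly restricted to $x_0\in D(\varphi)$ and the general case handled afterwards by the contraction from approximating initial data. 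With these caveats the proposal is a faithful reconstruction of the cited proof.
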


Note that the inclusion in the above statement is necessary since, in general, the subdifferential of $\varphi$ at $x_t$, denoted by $\partial \varphi (x_t) \subset \sfH$, is a set rather than a point. Therefore, we require that the element $f_t-\partial_t x_t \in \sfH$ belongs to this set.

We use the result above (which statement is very general) to establish the first contributions of the paper. The next one guarantees the solvability of the heat equation associated with \(\boldsymbol{\Delta}\), even when the source term and boundary condition are only in \(H\), see notation~\eqref{eq:notation}.

\begin{proposition}\label{prop:pde} Let $f \in L^2([0,T]; H)$ and $g \in H$. 
Then, there exists a unique $ u \in \rmC([0,T];H) \cap \mathrm{AC}_{loc}((0,T); H) \cap L^2([0,T]; H^{1,2})$ such that $u_t \in D(\boldsymbol{\Delta})$ for a.e.~$t \in (0,T)$ and 
\begin{equation}\label{eqn:bkw_heta_forcing} 
\partial_t u_t + \boldsymbol{\Delta} u_t= f_t \quad \text{for a.e.~} t \in (0,T), \quad u_t  \bigr |_{t=T} = g.
\end{equation}
\end{proposition}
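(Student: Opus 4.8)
The plan is to reinterpret \eqref{eqn:bkw_heta_forcing} as a gradient-flow inclusion for the Cheeger energy and then apply Theorem~\ref{thm:brezispde}. Recall from Subsection~\ref{sec:geovsmetr} --- in particular from \eqref{eq:ibp0} and from the identification of $\boldsymbol{\Delta}$ with (minus) the unique element of the subdifferential of $\CE$ --- that $\CE\colon H\to[0,\infty]$ is a nonnegative quadratic form, hence convex, lower semicontinuous for the $H$-convergence, with effective domain $D(\CE)=H^{1,2}$, and that $D(\partial\CE)=D(\boldsymbol{\Delta})$ with $\partial\CE(w)=\{-\boldsymbol{\Delta}w\}$ for every $w\in D(\boldsymbol{\Delta})$. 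Since $\hat{\mathfrak{Z}}^\infty_c\subset D(\boldsymbol{\Delta})\subset H^{1,2}$ is dense in $H$ by \eqref{eq:density}, we have $\overline{D(\CE)}=H$, so that $g\in\overline{D(\CE)}$.

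First I would reverse time: set $v_t:=u_{T-t}$ and $\tilde f_t:=-f_{T-t}$, observing that $\tilde f\in L^2([0,T];H)$ by a change of variables. Because $\partial\CE(w)$ is the singleton $\{-\boldsymbol{\Delta}w\}$ whenever $w\in D(\boldsymbol{\Delta})$, a function $u\in\rmC([0,T];H)\cap\mathrm{AC}_{loc}((0,T);H)$ with $u_t\in D(\boldsymbol{\Delta})$ for a.e.\ $t\in(0,T)$ satisfies \eqref{eqn:bkw_heta_forcing} if and only if the associated $v$ lies in the same regularity class, has $v_t\in D(\partial\CE)$ for a.e.\ $t$, and solves
\[
\partial_t v_t+\partial\CE(v_t)\ni\tilde f_t\quad\text{for a.e.\ }t\in(0,T),\qquad v_0=g.
\]
Theorem~\ref{thm:brezispde}, applied with $\sfH=H$, $\varphi=\CE$, $x_0=g$ and forcing term $\tilde f$, produces a unique such $v$ and, in addition, guarantees that $t\mapsto\CE(v_t)$ belongs to $L^1([0,T])$. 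Undoing the time reversal then yields a function $u$ that is the unique element of $\rmC([0,T];H)\cap\mathrm{AC}_{loc}((0,T);H)$ with $u_t\in D(\boldsymbol{\Delta})$ for a.e.\ $t$ solving \eqref{eqn:bkw_heta_forcing}.

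It remains to upgrade this to $u\in L^2([0,T];H^{1,2})$, which is the only point not handed to us directly by Theorem~\ref{thm:brezispde}. By definition $|u_t|_{H^{1,2}}^2=|u_t|_H^2+\CE(u_t)$; the first summand is bounded in $t$ since $u\in\rmC([0,T];H)$, hence integrable on $[0,T]$, while $\int_0^T\CE(u_t)\de t=\int_0^T\CE(v_s)\de s<\infty$ by the last assertion of Theorem~\ref{thm:brezispde}. Adding the two gives $u\in L^2([0,T];H^{1,2})$. For uniqueness in the full class of the statement, it suffices to note that any competitor becomes, after time reversal, an element of $\rmC([0,T];H)\cap\mathrm{AC}_{loc}((0,T);H)$ solving the same differential inclusion with the same initial datum, hence coincides with $v$. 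The two points requiring a little care are precisely the equivalence between the PDE written with $\boldsymbol{\Delta}$ and the differential inclusion written with $\partial\CE$ (which hinges on the abstract identification recalled above, together with the fact that $\partial\CE$ is single-valued on its domain) and the observation that membership in $L^2([0,T];H^{1,2})$ is recovered a posteriori from the $L^1$-integrability of $t\mapsto\CE(u_t)$ rather than being built into the abstract result.
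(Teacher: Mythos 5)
Your argument follows the same route as the paper: reverse time, apply Theorem~\ref{thm:brezispde} with $\varphi=\CE$, and recover the $L^2([0,T];H^{1,2})$ regularity a posteriori. The time reversal, the identification of the subdifferential, and the $L^1$-integrability of $t\mapsto\CE(u_t)$ are all handled correctly.

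There is, however, a genuine gap in the final step. Saying that $u\in L^2([0,T];H^{1,2})$ requires \emph{two} things: that $\int_0^T|u_t|_{H^{1,2}}^2\,\de t<\infty$, which you establish, and that $t\mapsto u_t$ is \emph{strongly measurable} as an $H^{1,2}$-valued map, which you do not address at all. Theorem~\ref{thm:brezispde} gives you continuity of $t\mapsto u_t$ as an $H$-valued function, but $H^{1,2}$ carries a strictly finer norm, and $H$-continuity does not imply $H^{1,2}$-measurability. The paper spends the second half of its proof precisely on this point: since $H^{1,2}$ is separable, Pettis' theorem reduces strong measurability to weak measurability, and weak measurability is then checked by pairing with gradients of cylinder functions and using the integration-by-parts formula \eqref{eq:ibp0} (so that $t\mapsto\langle \rmD u_t,\boldnabla\hat v\rangle=-(u_t,\boldsymbol{\Delta}\hat v)_H$ is measurable for $\hat v\in\hat{\mathfrak{Z}}^\infty_c$), finishing with an orthogonal-projection argument onto the closure of $\{\boldnabla\hat v:\hat v\in\hat{\mathfrak{Z}}^\infty_c\}$, where $\rmD u_t$ always lives. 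Without some such argument your conclusion $u\in L^2([0,T];H^{1,2})$ is not justified. Everything else in your write-up is sound.
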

\begin{proof}
First, let us observe that solving \eqref{eqn:bkw_heta_forcing} is equivalent to solving the forward equation
    \[
        \partial_t \tilde{u}_t - \boldsymbol{\Delta} \tilde{u}_t= - f_{T-t}\quad \text{for a.e.~} t \in (0,T), \quad \tilde{u}_t  \bigr |_{t=0} = g.
    \]
    Indeed, to pass from the forward formulation to the backward one, it suffices to set $u_t:= \tilde{u}_{T-t}$.
    Then, it is enough to apply Theorem \ref{thm:brezispde} with $\mathsf{H}:=H$, $\varphi:= \CE$ (so that $\partial \varphi = \{-\boldsymbol{\Delta}\}$ as stated in \cite[Proposition 5.2.4]{pasquabook}), $x_0 := g \in \overline{D(\varphi)}=H$ (see \cite[Proposition 5.2.6]{pasquabook}). This proves the existence of a solution $\tilde u \in \rmC([0,T];H) \cap \mathrm{AC}_{loc}((0,T);H)$ to the forward equation. To conclude, it remains to show that $\tilde{u} \in L^2([0,T]; H^{1,2})$. Since $t \mapsto \varphi(\tilde{u}_t) = \CE(\tilde{u}_t)$ belongs to $L^1([0,T])$, we already know that 
\[ \int_0^T \CE(\tilde{u}_t) \de t = \int_0^T \|\rmD \tilde{u}_t\| \de t < \infty.
\]
Therefore, we
are just left to show the strong measurability of $t \mapsto \rmD \tilde{u}_t$, cf.~\cite[Appendix E.5]{Evans02}. Since $H^{1,2}$ is separable, by Pettis' theorem (see e.g.~\cite[Theorem 7, p. 733]{Evans02}, this is equivalent to proving weak measurability.
 Using the strong measurability of $t \mapsto u_t$ and the integration by parts formula in \eqref{eq:ibp0}, we deduce that
 \[ t \mapsto \langle \rmD \tilde{u}_t, \boldnabla \hat v \rangle  = - (\tilde u_t, {\boldsymbol \Delta} \hat{v}) \text{ is measurable, for every } \hat v \in \hat{\mathfrak{Z}}_c^\infty. \]
Let us denote by $\mathrm{T}$ the subspace
\[ \mathrm{T}:= L^2(\prob(\T^d) \times \T^d, \overline{\cD}; \R^d) \text{ - closure of } \left \{ \boldnabla \hat v : \hat v \in \hat{\mathfrak{Z}}^\infty_c \right \}.\]
It follows that
\[ t \mapsto \langle \rmD \tilde{u}_t, \xi \rangle \quad \text{ is measurable, for every } \xi \in \mathrm{T}. \]
By Proposition \ref{prop:vectorgrad}, we have that $\rmD \tilde{u}_t \in \mathrm{T}$ for every $t \in [0,T]$, so that, denoting by $P_\mathrm{T}: L^2(\prob(\T^d) \times \T^d, \overline{\cD}; \R^d) \to \mathrm{T}$ the orthogonal projection on $\mathrm{T}$, we get
\[ t \mapsto \langle \rmD \tilde{u}_t, \xi \rangle = \langle \rmD \tilde{u}_t, P_{\mathrm{T}}\xi \rangle \quad \text{ is measurable, for every } \xi \in  L^2(\prob(\T^d) \times \T^d, \overline{\cD}; \R^d).\]
This shows that $t \mapsto \rmD \tilde{u}_t$ is weakly measurable and concludes the proof.
\end{proof}

\begin{remark}[On the choice of a representative of $\rmD u$]  
In the setting of Proposition \ref{prop:pde}, the map $t \mapsto \rmD u_t$ is  strongly measurable from $[0,T]$ to $L^2(\prob(\T^d) \times \T^d, \overline{\cD}; \R^d)$. 
It is natural to ask whether there exists a Borel function $(t,\mu,x) \mapsto \xi (t, \mu, x)$ such that (the $L^2(\prob(\T^d) \times \T^d, \overline{\cD}; \R^d)$-equivalence class represented by) $\xi(t, \cdot)$ coincides with $\rmD u_t$ for a.e.~$t \in [0,T]$. By \cite[Propositions 1.3.19, 1.3.20, 1.3.21]{pasquabook}, the answer is positive. Therefore, in the sequel, for a function $u \in L^2([0,T]; H^{1,2})$, the notation $\rmD u$ will refer to one such Borel representative.
\end{remark}

Our first main result establishes a form of regularization under the action of the operator \({\boldsymbol \Delta}\). The transport-diffusion equation on \(\mathcal{P}(\mathbb{T}^d)\), driven by \({\boldsymbol \Delta}\) and perturbed by a bounded, merely measurable velocity field, admits a unique solution in the sense defined below.

\begin{theorem}\label{thm:pde}Let $b: [0,T] \times  \prob(\T^d) \times \T^d \to \R^d$ be a bounded and measurable function, $f \in L^2([0,T]; H)$, and $g \in H$. Then, there exists a unique $u \in \rmC([0,T];H) \cap \mathrm{AC}_{loc}((0,T); H) \cap L^2([0,T]; H^{1,2})$
 such that $u_t \in D(\boldsymbol{\Delta})$ for a.e.~$t \in (0,T)$ and
\begin{equation}\label{eq:pdewithf}
\partial_t u_t + \sclprd{b_{t}}{\rmD u_t} + \boldsymbol{\Delta} u_t = f_t\quad \text{for a.e.~} t \in (0,T), \quad u_t  \bigr |_{t=T} = g.     
\end{equation}

\end{theorem}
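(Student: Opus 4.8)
The plan is to view \eqref{eq:pdewithf} as a bounded, first-order perturbation of the heat equation solved in Proposition~\ref{prop:pde}, and to produce its solution by a fixed-point argument. For any $v \in L^2([0,T];H^{1,2})$, the function $t \mapsto \sclprd{b_t}{\rmD v_t}$ belongs to $L^2([0,T];H)$: pointwise $|\sclprd{b_t}{\rmD v_t}|_H \le \|b\|_\infty\|\rmD v_t\|$ with $t\mapsto\|\rmD v_t\|$ in $L^2([0,T])$, and strong ($H$-)measurability follows from the existence of a jointly Borel representative of $\rmD v$ (the Remark after Proposition~\ref{prop:pde}), Fubini's theorem, and Pettis' theorem (recall $H$ is separable). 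Hence $f_t - \sclprd{b_t}{\rmD v_t} \in L^2([0,T];H)$, and Proposition~\ref{prop:pde} defines a map
\[
\Gamma: v \longmapsto \Gamma(v) \in \rmC([0,T];H)\cap\mathrm{AC}_{loc}((0,T);H)\cap L^2([0,T];H^{1,2}),
\]
where $\Gamma(v)_t \in D(\boldsymbol{\Delta})$ for a.e.~$t\in(0,T)$, $\partial_t\Gamma(v)_t + \boldsymbol{\Delta}\Gamma(v)_t = f_t - \sclprd{b_t}{\rmD v_t}$ a.e., and $\Gamma(v)_T = g$. Any solution $u$ of \eqref{eq:pdewithf} in the required class is, by the uniqueness in Proposition~\ref{prop:pde}, a fixed point of $\Gamma$, so it suffices to prove that $\Gamma$ has exactly one fixed point.

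The heart of the matter is an energy estimate showing that $\Gamma$ is a contraction for a suitably weighted norm. For $\lambda>0$, equip $L^2([0,T];H^{1,2})$ with the equivalent norm $\|v\|_\lambda^2 := \int_0^T e^{\lambda t}\,|v_t|_{H^{1,2}}^2\,\de t$. Given $v^1,v^2$, set $w := \Gamma(v^1)-\Gamma(v^2)$ and $\psi := v^1-v^2$, so that $\partial_t w_t + \boldsymbol{\Delta}w_t = -\sclprd{b_t}{\rmD\psi_t}$ a.e.~and $w_T = 0$. Since $w\in\mathrm{AC}_{loc}((0,T);H)$ with $w_t\in D(\boldsymbol{\Delta})$ a.e., the chain rule for $H$-valued absolutely continuous maps, the integration by parts \eqref{eq:ibp0}, and $\CE(w_t)=\|\rmD w_t\|^2$ (Proposition~\ref{prop:vectorgrad}) give, a.e.~on $(0,T)$,
\[
-\frac{\de}{\de t}\Bigl(\tfrac12 e^{\lambda t}|w_t|_H^2\Bigr) + \tfrac{\lambda}{2}e^{\lambda t}|w_t|_H^2 + e^{\lambda t}\|\rmD w_t\|^2 = e^{\lambda t}\bigl(\sclprd{b_t}{\rmD\psi_t},\,w_t\bigr)_H ,
\]
and Young's inequality bounds the right-hand side by $\tfrac{\|b\|_\infty^2}{\lambda}e^{\lambda t}\|\rmD\psi_t\|^2 + \tfrac{\lambda}{4}e^{\lambda t}|w_t|_H^2$. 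The integrand of the left-hand side lying in $L^1(0,T)$, the map $t\mapsto\tfrac12 e^{\lambda t}|w_t|_H^2$ is absolutely continuous on $[0,T]$; integrating over $[0,T]$, using $w_T=0$ so that the boundary term reduces to $\tfrac12|w_0|_H^2\ge 0$ and may be discarded, and taking $\lambda\ge 4$, one obtains $\|w\|_\lambda^2 \le \tfrac{\|b\|_\infty^2}{\lambda}\|\psi\|_\lambda^2$. Hence for $\lambda > \max\{4,\|b\|_\infty^2\}$ the map $\Gamma$ is a strict contraction on the Banach space $\bigl(L^2([0,T];H^{1,2}),\|\cdot\|_\lambda\bigr)$; by Banach's fixed-point theorem it has a unique fixed point $u$.

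By construction $u=\Gamma(u)$ solves \eqref{eq:pdewithf} and, through Proposition~\ref{prop:pde}, lies in $\rmC([0,T];H)\cap\mathrm{AC}_{loc}((0,T);H)\cap L^2([0,T];H^{1,2})$ with $u_t\in D(\boldsymbol{\Delta})$ for a.e.~$t\in(0,T)$; combined with the first paragraph this yields both existence and uniqueness. (Uniqueness can also be read off directly from the displayed energy identity applied to the difference of two solutions, together with Grönwall's lemma.)

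The step I expect to demand the most care is the energy identity and its integration: justifying the chain rule for $t\mapsto\tfrac12 e^{\lambda t}|w_t|_H^2$ up to the endpoints (using $w\in\rmC([0,T];H)\cap\mathrm{AC}_{loc}((0,T);H)$ and $\|\rmD w_\cdot\|^2,\ \|\sclprd{b_\cdot}{\rmD\psi_\cdot}\|_H\,|w_\cdot|_H\in L^1(0,T)$, whence $t\mapsto\tfrac12|w_t|_H^2\in W^{1,1}(0,T)$), and verifying that $\sclprd{b_t}{\rmD v_t}$ genuinely defines an element of $L^2([0,T];H)$, which is exactly what makes $\Gamma$ well defined through Proposition~\ref{prop:pde}. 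The rest is routine.
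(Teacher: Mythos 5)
Your proof is correct, and it takes a genuinely different route from the paper's. The paper first reduces to the forward-in-time equation by a time reversal, then proves a short-time contraction on $X_{\bar T}=L^2([0,\bar T];H^{1,2})$ with an explicit $\bar T$ depending only on $\|b\|_\infty$ and $T$, and finally patches together the solutions obtained on successive subintervals. You instead work directly with the backward equation and replace the patching step with an exponentially weighted norm: equipping $L^2([0,T];H^{1,2})$ with $\|v\|_\lambda^2=\int_0^T e^{\lambda t}|v_t|^2_{H^{1,2}}\,\de t$ and taking $\lambda$ large turns $\Gamma$ into a contraction on the whole interval in one shot. Both arguments rest on Proposition~\ref{prop:pde} as the linear solver and both reduce, at bottom, to the same energy estimate (integrating the equation against $u$ and using the integration by parts \eqref{eq:ibp0}); indeed your energy identity is essentially the same computation the paper uses in the stability Proposition~\ref{prop_stab_new}, just with the weight $e^{\lambda t}$ instead of Gr\"onwall. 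What your version buys is that one never has to iterate over subintervals or verify that the local solutions glue into a global one; the price is the slightly more delicate check that $t\mapsto e^{\lambda t}|w_t|_H^2$ is absolutely continuous \emph{up to the endpoints} (so the boundary term at $t=T$ vanishes and the one at $t=0$ can be discarded), which you correctly deduce from $w\in\rmC([0,T];H)\cap\mathrm{AC}_{loc}((0,T);H)$ together with the $L^1$ integrability of all the terms on the right-hand side. The measurability check in the first paragraph (that $t\mapsto\sclprd{b_t}{\rmD v_t}$ is a strongly measurable $H$-valued map) is necessary for $\Gamma$ to be well defined via Proposition~\ref{prop:pde}, and your argument via the Borel representative of $\rmD v$, Fubini and Pettis is fine.
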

\begin{proof}
For simplicity of exposition, we focus on the forward equation  
\begin{equation}
\partial_t u_t - \sclprd{b_{t}}{\rmD u_t} - \boldsymbol{\Delta} u_t = f_t \quad \text{for a.e.~} t \in (0,T), \quad  u_t \big|_{t=0} = g,     \label{eq:pde:forward:F}
\end{equation}  
which can be shown to be equivalent to \eqref{eq:pdewithf} by 
the same
simple time reversal as in the proof of Proposition \ref{prop:pde}. 
The analysis of 
\eqref{eq:pde:forward:F} relies on a classical fixed point argument, applied iteratively on successive short time intervals. To formulate this, 
we set, for any $\bar{T}>0$, $X_{\bar{T}} := L^2([0,\bar{T}];H^{1,2})$, with the corresponding Hilbertian norm being denoted by $\|\cdot\|_{X_{\bar{T}}}$.
Let us also recall that, throughout, $( \cdot, \cdot )_H$
and $|\cdot|_H$ stand respectively
for the scalar product and the norm in $H$, and that $\|\cdot\|$  denotes the norm on the space $L^2(\prob(\T^d)\times \T^d, \overline{\mathcal{D}}; \R^d)$, as defined
in Remark \ref{rem:notation1}. We divide the proof into several steps.\\

\textit{Claim 1}: For any $\bar{T} \in (0,T)$, $w_0 \in H$ and $v \in X_{\bar{T}}$, there exists a unique $u^v \in X_{\bar{T}}$ solving
\begin{equation}\label{eqn: diff_v}
    \partial_t u^v_t - \boldsymbol{\Delta} u^v_t =f^v_t \quad  \text{ for a.e.~} t \in (0,\bar{T}), \quad u^v_t \bigr |_{t=0} = w_0, 
\end{equation}
where 
$f^v_t :=  \sclprd{b_{t}}{\rmD v_t} + f_t$, for $t \in [0,T]$. This defines a map $\Phi_{\bar{T},w_0}: X_{\bar{T}} \to X_{\bar{T}}$ associating to every $v \in X_{\bar{T}}$ the function $u^v$ as above.\\
\smallskip

\textit{Proof of claim 1:} This follows directly from Proposition \ref{prop:pde}, with $f:=f^v$ therein,
noting that
$f^v$ belongs to 
$L^2([0,\bar T];H)$ because
$b$ is bounded and 
$u \in L^2([0,T];H^{1,2})$.

\quad \\
\textit{Claim 2}: Let $\bar{T}:= (4\sqrt{2}\|b\|_\infty)^{-2} \wedge T/2$. Then, for every $w_0 \in H$, the map $\Phi_{\bar{T}, w_0}$ as in claim 1 is a contraction on $X_{\bar{T}}$.\\
\textit{Proof of claim 2:} 
For $w_0 \in H$, and $v,\tilde{v}\in X_{\bar{T}}$, set $u := \Phi_{\bar{T}, w_0}(v)$, $\tilde{u} := \Phi_{\bar{T}, w_0}(\tilde{v})$, $h :=  \sclprd{b}{\rmD v}+f$, and $\tilde{h} :=  \sclprd{b}{\rmD \tilde{v}}+f$. 
Considering \eqref{eqn: diff_v} for both $u$ and $\tilde{u}$ and 
subtracting the resulting equations, we get
\begin{equation*}
        \partial_t (u_t - \tilde{u}_t) - \boldsymbol{\Delta} (u_t - \tilde{u}_t) = h_t - \tilde{h}_t \quad \text{ for a.e.~} t \in (0,\bar{T}), \quad (u_t - \tilde{u}_t) \bigr |_{t=0} = 0.
 \end{equation*}
    We integrate the equation against $u - \tilde{u}$ on $[0,t] \times \prob(\T^d)$, for $t \in [0,\bar{T}]$. 
    By \eqref{eq:ibp0} we get
    \begin{equation}
        |u_t -\tilde{u}_t|^2_H + 2\int_0^t \norm{\rmD u_r - \rmD \tilde{u}_r}^2\de r = 2 \int_0^t (h_t - \tilde{h}_t,u_r - \tilde{u}_r)_H\de r.
    \end{equation}
    Then, by Young's inequality, it holds
    \begin{equation}\label{eqn: young}
    \begin{aligned}
        |u_t -\tilde{u}_t|^2_H &+ \int_0^t \norm{\rmD u_r - \rmD \tilde{u}_r}^2\de r \\
        &\leq \frac{1}{2\bar{T}}\int_0^t |u_r - \tilde{u}_r|^2_H\de r + 2\bar{T}\int_0^t |h_t - \tilde{h}_t|^2_H\de r\\
        &\leq \frac{1}{2\bar{T}}\int_0^t |u_r - \tilde{u}_r|^2_H\de r + 2\bar{T}\norm{ b }_\infty^2\int_0^t \norm{\rmD v_r - \rmD\tilde{v}_r}^2\de r \\
        & \leq \frac{1}{2\bar{T}}\int_0^{\bar{T}} |u_r - \tilde{u}_r|^2_H\de r + 2\bar{T}\norm{ b }_\infty^2 \norm{v - \tilde{v}}^2_{X_{\bar{T}}},
    \end{aligned}
    \end{equation}
    where we have used the definitions of $h$ and $\tilde{h}$ together with the 
    fact that $b$ is bounded. In particular, neglecting the second term on the left-hand side and integrating over the interval $[0, \bar{T}]$, we obtain
\begin{equation}\label{eqn: l2_est_diff}
        \int_0^{\bar{T}} |u_t - \tilde{u}_t|_{H}^2 \de t\leq 4 \bar{T}^2 \norm{ b }^2_\infty \norm{v - \tilde{v}}^2_{X_{\bar{T}}}.
    \end{equation}
Returning to \eqref{eqn: young}, inserting \eqref{eqn: l2_est_diff}, neglecting now the first term on the left-hand side, and evaluating the second term on the left-hand side at time $t = \bar{T}$, we obtain

    \begin{equation}\label{eqn: l2_est_diff_2}
        \int_0^{\bar{T}} \norm{\rmD u_r - \rmD\tilde{u}_r}^2\de r \leq \frac{1}{2\bar{T}}\int_0^{\bar{T}} |u_r - \tilde{u}_r|^2_H \de r + 2\bar{T}\norm{ b }_\infty^2\norm{v - \tilde{v}}^2_{X_{\bar{T}}} \le 4\bar{T} \norm{ b }_\infty^2\norm{v - \tilde{v}}^2_{X_{\bar{T}}}.
    \end{equation}
    Finally, putting together \eqref{eqn: l2_est_diff} and \eqref{eqn: l2_est_diff_2}, we get
    \begin{align*}
        \norm{\Phi_{\bar{T}, w_0}(v) - \Phi_{\bar{T}, w_0}(\tilde{v})}_{X_{\bar{T}}}^2 & = \int_0^{\bar{T}}\left ( |u_t - \tilde{u}_t|^2_H + \norm{\rmD u_t - \rmD\tilde{u}_t}^2\right ) \de t\\
         &\leq 4\norm{ b }^2_\infty(\bar{T} + \bar{T}^2)\norm{v - \tilde{v}}^2_{X_{\bar{T}}}\\
         & \leq \frac{1}{4}\norm{v - \tilde{v}}^2_{X_{\bar{T}}}.
    \end{align*}
\textit{Conclusion}: Claim 2 and Banach fixed point theorem yield the existence and uniqueness (as stated in the present theorem) of a solution on every interval of the form $[t_0, t_0 + \bar{T}] \subset [0,T]$. Therefore the 
desired unique solution $u$ is obtained by successively patching together the solutions
on these intervals, noting also that $\bar{T}$ only depends on $\|b\|_\infty$ and $T$.
\end{proof}  

As a particular case, we can solve \eqref{eq:pdewithf} when the source term is equal to $0$. Since we will often refer to this in what follows, we state it as a corollary.
\begin{corollary}\label{cor:pde}Let $b: [0,T] \times \prob(\T^d) \times \T^d \to \R^d$ be a bounded and measurable function and $g \in H$. Then, there exists a unique $u \in \rmC([0,T];H) \cap \mathrm{AC}_{loc}((0,T); H) \cap L^2([0,T]; H^{1,2})$
 such that $u_t \in D(\boldsymbol{\Delta})$ for a.e.~$t \in (0,T)$ and
\begin{equation}\label{eq:pdeaaa}
\partial_t u_t + \sclprd{b}{\rmD u_t} + \boldsymbol{\Delta} u_t =0\quad \text{for a.e.~} t \in (0,T), \quad u_t  \bigr |_{t=T} = g.     
\end{equation}
\end{corollary}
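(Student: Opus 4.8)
This is an immediate specialization of Theorem~\ref{thm:pde}. The plan is simply to invoke that theorem with the choice $f \equiv 0$. Indeed, the constant function $t \mapsto 0$ trivially belongs to $L^2([0,T];H)$, and $g$ is the given element of $H$, so all hypotheses of Theorem~\ref{thm:pde} are met. Applying it with this data produces a unique $u \in \rmC([0,T];H) \cap \mathrm{AC}_{loc}((0,T);H) \cap L^2([0,T];H^{1,2})$ with $u_t \in D(\boldsymbol{\Delta})$ for a.e.\ $t$ and
\[
\partial_t u_t + \sclprd{b}{\rmD u_t} + \boldsymbol{\Delta} u_t = 0 \quad \text{for a.e.\ } t \in (0,T), \qquad u_t\big|_{t=T} = g,
\]
which is exactly \eqref{eq:pdeaaa}. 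There is no substantive obstacle: the entire content of the corollary is contained in Theorem~\ref{thm:pde}, and the passage to $f=0$ requires no additional argument.

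\begin{proof}
Apply Theorem~\ref{thm:pde} with $f \equiv 0 \in L^2([0,T];H)$ and with the given $b$ and $g$. This yields the existence of a unique $u \in \rmC([0,T];H) \cap \mathrm{AC}_{loc}((0,T);H) \cap L^2([0,T];H^{1,2})$ with $u_t \in D(\boldsymbol{\Delta})$ for a.e.~$t \in (0,T)$ and satisfying \eqref{eq:pdewithf} with $f_t = 0$, which is precisely \eqref{eq:pdeaaa}.
\end{proof}
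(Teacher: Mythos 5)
Your proof is correct and matches the paper's approach exactly: the paper states the corollary as the special case $f \equiv 0$ of Theorem~\ref{thm:pde}, with no further argument needed.
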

We conclude this section with a general stability result for the transport-diffusion equation \eqref{eq:pdewithf} under perturbations of the drift $b$, the source term $f$, and the initial datum $g$.

\begin{proposition}\label{prop_stab_new}
    Let $b,b^n: [0,T] \times \prob(\T^d) \times \T^d \to \R^d$, $n \in \N_+$, be bounded and measurable functions such that $b^n \to b$ in $L^2([0,T] \times \prob(\T^d)\times\T^d, \mathscr{L}^{[0,T]} \otimes  \overline{\mathcal{D}}; \R^d)$ and $M:=\sup_{n \in \N_+} \|b^n\|_\infty < \infty$. Let $g, g^n \in H$, $f,f^n \in L^2([0,T];H)$, $n \in \N_+$,  be such that $g^n \to g$ in $H$ and $f^n \to f$ in $L^2([0,T]; H)$ as $n \to \infty$. If  $u$ and $u^n$, $n \in \N_+$, are the unique solutions to \eqref{eq:pdewithf} with drifts $b$ and $b^n$, source term $f$ and $f^n$, and final datum $g$ and $g^n$, respectively, then 
    \begin{equation}
\label{eq:prop:3.6:main:bound}
\begin{split}
    &\sup_{t \in [0,T]} |u^n_t-u_t|_H^2 + \int_0^T\|\rmD u_t^n - \rmD u_t \|^2 \de t  \\
    & \qquad \qquad \le C(M, T) \left (  |g-g^n|_H^2 + \int_0^T |f_t-f_t^n|_H^2 \de t + \mathcal{R}(b, b^n, \rmD u, T) \right ) \to 0 \text{ as } n \to \infty,
    \end{split}
    \end{equation}
    where 
    \[
   \mathcal{R(}b,b^n,\rmD u,T):= \int_0^T \int_{\prob(\T^d)} \int_{\T^d} \lvert b_r(\mu, x) - b_r^n(\mu, x) \rvert^2 \lvert \rmD u_r(\mu,x) \rvert^2 \de \mu(x) \de \cD(\mu) \de r,
    \]
    and 
    $C(M,T):= 1+ \rme^{T(2+M^2)} + (2+M^2)T\rme^{T(2+M^2)}$.
In particular, the left-hand side of \eqref{eq:prop:3.6:main:bound} tends to $0$ as $n$ tends to $\infty$.
\end{proposition}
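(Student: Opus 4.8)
The plan is to work with the difference $w^n := u^n - u$. Subtracting \eqref{eq:pdewithf} written for $u^n$ (data $b^n,f^n,g^n$) from the same equation written for $u$ (data $b,f,g$), and using the elementary identity
\[
\sclprd{b^n_t}{\rmD u^n_t} - \sclprd{b_t}{\rmD u_t} = \sclprd{b^n_t}{\rmD w^n_t} + \sclprd{b^n_t - b_t}{\rmD u_t},
\]
one sees that $w^n \in \rmC([0,T];H)\cap\mathrm{AC}_{loc}((0,T);H)\cap L^2([0,T];H^{1,2})$ solves \eqref{eq:pdewithf} with drift $b^n$, final datum $g^n - g$, and source term $\tilde f^n_t := f^n_t - f_t - \sclprd{b^n_t - b_t}{\rmD u_t}$. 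Since $\|b^n\|_\infty\vee\|b\|_\infty \le M$ and $\rmD u \in L^2([0,T]\times\prob(\T^d)\times\T^d,\mathscr{L}^{[0,T]}\otimes\overline{\cD};\R^d)$ (because $u\in L^2([0,T];H^{1,2})$), we have $|\sclprd{b^n_t - b_t}{\rmD u_t}_\mu| \le 2M\big(\int_{\T^d}|\rmD u_t|^2\de\mu\big)^{1/2}$, so $\tilde f^n \in L^2([0,T];H)$ and $w^n$ is an admissible object for the energy computation below.

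\textbf{Energy estimate for $w^n$.} I would then carry out, essentially verbatim, the energy computation from the proof of Theorem \ref{thm:pde}: test the equation for $w^n$ against $w^n$, integrate against $\cD$ over $\prob(\T^d)$ and over $[t,T]$ in time, and use the integration by parts formula \eqref{eq:ibp0} to rewrite the diffusion term, which yields the identity
\[
\tfrac12|w^n_t|_H^2 + \int_t^T\norm{\rmD w^n_r}^2\de r = \tfrac12|g^n - g|_H^2 + \int_t^T\Big((w^n_r,\sclprd{b^n_r}{\rmD w^n_r})_H - (w^n_r,\tilde f^n_r)_H\Big)\de r .
\]
Bounding the right-hand side is routine: since each $\mu$ is a probability measure, Cauchy--Schwarz gives $|\sclprd{b^n_r}{\rmD w^n_r}_\mu| \le M\big(\int_{\T^d}|\rmD w^n_r|^2\de\mu\big)^{1/2}$, hence $|(w^n_r,\sclprd{b^n_r}{\rmD w^n_r})_H| \le M|w^n_r|_H\norm{\rmD w^n_r}$, from which Young's inequality splits off a $\tfrac12\norm{\rmD w^n_r}^2$ that is absorbed into the left-hand side; writing $\tilde f^n_r = (f^n_r - f_r) - \sclprd{b^n_r - b_r}{\rmD u_r}$ and using $|\sclprd{b^n_r - b_r}{\rmD u_r}_\mu|^2 \le \int_{\T^d}|b^n_r - b_r|^2|\rmD u_r|^2\de\mu$ (Cauchy--Schwarz against the constant $1$), the contribution of the last term integrates in time to exactly $\mathcal{R}(b,b^n,\rmD u,T)$. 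Collecting the estimates and setting $A_n := |g-g^n|_H^2 + \int_0^T|f_r - f^n_r|_H^2\de r + \mathcal{R}(b,b^n,\rmD u,T)$, one arrives at
\[
|w^n_t|_H^2 + \int_t^T\norm{\rmD w^n_r}^2\de r \le A_n + (2+M^2)\int_t^T|w^n_r|_H^2\de r, \qquad t \in [0,T].
\]

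\textbf{Grönwall and passage to the limit.} A backward Grönwall argument applied to the last inequality gives $\sup_{t\in[0,T]}|w^n_t|_H^2 \le A_n\rme^{T(2+M^2)}$, and reinserting this at $t=0$ bounds $\int_0^T\norm{\rmD w^n_r}^2\de r \le A_n\big(1 + (2+M^2)T\rme^{T(2+M^2)}\big)$; adding the two yields \eqref{eq:prop:3.6:main:bound} with precisely $C(M,T) = 1 + \rme^{T(2+M^2)} + (2+M^2)T\rme^{T(2+M^2)}$. It remains to check $A_n\to0$: the first two summands vanish by the hypotheses $g^n\to g$ in $H$ and $f^n\to f$ in $L^2([0,T];H)$; for $\mathcal{R}(b,b^n,\rmD u,T)$ I would argue by subsequences, since the integrand $|b^n_r - b_r|^2|\rmD u_r|^2$ is dominated by $4M^2|\rmD u_r|^2 \in L^1(\mathscr{L}^{[0,T]}\otimes\overline{\cD})$ and $b^n\to b$ in $L^2(\mathscr{L}^{[0,T]}\otimes\overline{\cD};\R^d)$ yields a.e.\ convergence along any subsequence, so dominated convergence forces $\mathcal{R}(b,b^n,\rmD u,T)\to0$ along it, hence along the whole sequence.

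\textbf{Main obstacle.} The one genuinely delicate point is the rigorous justification of the energy identity above, i.e.\ the chain rule for $r\mapsto|w^n_r|_H^2$ up to the endpoints, since a priori $\partial_r w^n$ is only locally square-integrable in time near the initial endpoint. This is handled exactly as in the proof of Theorem \ref{thm:pde}: one works first on $[\eps,T]$, where $\partial_r w^n\in L^2$ by Theorem \ref{thm:brezispde}, observes that $(w^n_r,\partial_r w^n_r)_H = (w^n_r,\tilde f^n_r)_H - (w^n_r,\sclprd{b^n_r}{\rmD w^n_r})_H + \norm{\rmD w^n_r}^2$ is integrable on all of $[0,T]$, and then lets $\eps\downarrow0$ using $w^n\in\rmC([0,T];H)$ — which is also the reason the bound is naturally stated through $\int_t^T$ rather than pointwise in a derivative. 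Everything else (the two Cauchy--Schwarz/Young estimates, the Grönwall step, and the dominated-convergence-along-subsequences argument for $\mathcal{R}$) is routine.
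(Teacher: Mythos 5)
Your argument is correct and follows essentially the same path as the paper's proof: subtract the two equations, test against the difference, use the decomposition
\[
\sclprd{b^n}{\rmD u^n} - \sclprd{b}{\rmD u} = \sclprd{b^n}{\rmD(u^n-u)} + \sclprd{b^n - b}{\rmD u},
\]
apply Cauchy--Schwarz/Young, Gr\"onwall, and then show $\mathcal{R}\to0$. Two small variations are worth noting. First, you package the difference $w^n := u^n - u$ as the unique solution of a PDE of the same form (drift $b^n$, source $\tilde f^n$, datum $g^n-g$), whereas the paper simply subtracts the two equations and works directly with the resulting identity; this is a cosmetic reorganization but a clean one, and it makes the admissibility of $w^n$ in the energy estimate immediate. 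Second, for $\mathcal{R}\to0$ you use dominated convergence along subsequences (a.e.\ convergence along a sub-subsequence of $b^n\to b$ in $L^2$, domination by $4M^2|\rmD u|^2\in L^1$, then the sub-subsequence principle), while the paper appeals to convergence in $\mathscr{L}^{[0,T]}\otimes\overline{\mathcal D}$-measure together with uniform integrability (Vitali); the two arguments are equivalent in strength and both correct. Your final paragraph flags the chain-rule justification near the endpoint where $\partial_t w^n$ is only $\mathrm{AC}_{loc}$; the paper glosses over this point, and the $[\eps,T]$-then-$\eps\downarrow0$ argument you sketch is exactly the right fix. In short, the proposal matches the intended proof, with a slightly more explicit treatment of the regularity issue at the endpoint.
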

\begin{proof} By subtracting the equations solved on $\prob(\T^d) \times (0,T)$ by $u$ and $u^n$ respectively, we obtain that, for a fixed $n \in \N_+$, $u-u^n$ satisfies 
    \begin{equation*}
        \partial_t(u_t - u^n_t)+\boldsymbol{\Delta}(u_t - u^n_t) = - (  \sclprd{b_{t}} {\rmD u_t} - \sclprd{b_{t}^n}{ \rmD u^n_t}) +( f_t-f_t^n)  \quad  \text{ for a.e.~} t \in (0,T),\quad (u_t-u^n_t)\bigr |_{t=T} = g-g^n.
    \end{equation*}
    Then, by integrating the equation against $u-u^n$ on $[t,T]\times\prob(\T^d)$, for a given $t \in [0,T]$, and by integrating by parts the term driven by ${\boldsymbol \Delta}$ using \eqref{eq:ibp0}, we get
     \begin{align}
        \frac{1}{2}|u_t - u^n_t|^2_H &+ \int_t^T \norm{\rmD u_r - \rmD u^n_r}^2 \de r\nonumber \\
        & =  \frac{1}{2}|g-g^n|^2_H + \int_t^T (f_r^n-f_r, u_r-u_r^n)_H \de r + \int_t^T (\sclprd{b_r}{\rmD u_r} - \sclprd{b_r^n}{\rmD u^n_r},u_r - u^n_r)_H\de r\nonumber\\
        & \le \frac{1}{2}|g-g^n|^2_H + \frac{1}{2}\int_0^T |f_r-f_r^n|^2_H \de r + \frac{1}{2} \int_t^T |u_r-u_r^n|^2_H \de r\label{eqn: stab_rhs0}\\
        &\quad + \int_t^T \big( \sclprd{b_{r} - b_{r}^n}{\rmD u_r} + \sclprd{b_{r}^n}{\rmD u_r - \rmD u^n_r}, u_r - u^n_r\big )_H\de r\label{eqn: stab_rhs},
    \end{align}
    where we also used Young's inequality.
    Let us focus on \eqref{eqn: stab_rhs}. Regarding the second summand, 
    we have, by Young's inequality,
    \begin{align}\label{eq:3132}
        \int_t^T \big(\sclprd{b_{r}^n }{\rmD u_r - \rmD u^n_r},u_r - u^n_r\big)_H\de r \leq \frac{1}{2}\int_t^T \norm{\rmD u_r - \rmD u^n_r}^2\de r + \frac{M^2}{2}\int_t^T |u_r-u^n_r|^2_H\de r.
    \end{align}
To estimate the first summand in \eqref{eqn: stab_rhs}, we proceed as follows: by Young's and Jensen inequalities, 
    \begin{equation}
    \label{eq:3131}
    \begin{split}
        \int_t^T  &\big( \sclprd{b_{r} - b_{r}^n}{ \rmD u_r}, u_r - u^n_r\big)_H\de r \\
         &\qquad\leq \frac{1}{2}\int_t^T \int_{\prob(\T^d)} \int_{\T^d} \lvert b_r(\mu, x) - b_r^n(\mu, x) \rvert^2 \lvert \rmD u_r(\mu,x) \rvert^2 \de \mu(x) \de \cD(\mu) \de r 
         \\
         &\qquad \hspace{15pt} + \frac{1}{2}\int_t^T |u_r - u^n_r|^2_H\de r\\
         & \qquad\leq \frac{1}{2} \mathcal{R}(b, b^n, \rmD u, T) + \frac{1}{2}\int_t^T |u_r - u^n_r|^2_H\de r.
         \end{split}
    \end{equation}
    Combining \eqref{eqn: stab_rhs0}, \eqref{eqn: stab_rhs}, \eqref{eq:3132}, and \eqref{eq:3131}, we get
    \begin{align*}\label{eq:readytogron}
        |u_t-u_t^n|_H^2 + \int_t^T \|\rmD u_r- \rmD u_r^n\|^2 \de r &\le  (2+ M^2) \int_t^T |u_r-u_r^n|_H^2 \de r +|g-g^n|_H^2 + \int_0^T |f_r-f_r^n|_H^2 \de r \\
        & \quad + \mathcal{R}(b, b^n, \rmD u, T).
    \end{align*}
    The inequality in the statement follows then by the above inequality and a standard application of Gronwall's lemma. To conclude the proof, we are left to show that
    \begin{equation}\label{eq:subsub}
        \mathcal{R}(b, b^n, \rmD u, T) \to 0 \quad \text{ as } n \to \infty.
    \end{equation}
    Since $b^n \to b$ in $L^2([0,T] \times \prob(\T^d)\times\T^d, \mathscr{L}^{[0,T]} \otimes  \overline{\mathcal{D}}; \R^d)$, we have
    \[ \int_0^T \int_{\prob(\T^d)} \int_{\T^d} \left | b_r(\mu,x)-b_r^n(\mu, x) \right |^2 \de \mu(x) \de \cD(\mu) \de r \to 0 \quad \text{ as } n \to \infty,\]
which implies that, as 
$n \to \infty$,
\begin{equation*}
\left( (r,\mu,x) \mapsto   \left | b_r(\mu,x)-b_r^n(\mu, x) \right |^2  \right) \rightarrow 0,
\end{equation*}
in
${\mathscr L}^{[0,T]} \otimes 
\overline{\mathcal D}$-measure over 
$[0,T] \times \cP(\T^d) \times \T^d$. Since $b$ and $(b^n)_n$ are uniformly bounded by the constant $M$,
and the function 
\begin{equation*}
(r,\mu,x) \mapsto 
 |\rmD u_r(\mu, x) |^2 
\end{equation*}
is integrable on $[0,T] \times \cP(\T^d) \times \T^d$ under ${\mathscr L}^{[0,T]} \otimes \overline{\mathcal D}$, we deduce that 
the collection 
\begin{equation*}
\left( (r,\mu,x) \mapsto \left | b_r(\mu,x)-b_r^n(\mu, x) \right |^2 
|\rmD u_r(\mu, x) |^2\right)_n
\end{equation*}
is uniformly integrable under 
${\mathscr L}^{[0,T]} \otimes \overline{\mathcal D}$. As it converges to $0$
in ${\mathscr L}^{[0,T]} \otimes \overline{\mathcal D}$-measure, this proves that it also converges in $L^1$-norm under ${\mathscr L}^{[0,T]} \otimes \overline{\mathcal D}$-measure, which completes the proof.
\end{proof}

\section{The (massive) interacting particle system}\label{sec: massive}

The purpose of this section is to provide a probabilistic interpretation of the solution to the transport-diffusion equation \eqref{eq:pdeaaa} using a particle system built from the free particle system presented in Subsection \ref{ssec:freeparty}. More generally, the entire section is based on the material introduced earlier, and in particular on 
the notation defined in Section \ref{se:2}. In the whole section we let $T$ be the same finite time horizon as at the beginning of Section \ref{se:2}.

Given a collection of uniformly bounded and measurable drifts $(b^i\colon[0,T]\times\prob(\T^d)\times \T^d\to\R^d)_i$ (corresponding, at least when all the $b^i$'s are the same, to the velocity field in the PDE \eqref{eq:pdeaaa}), an initial time $t_0\in[0,T)$, and an initial distribution $\cursm\in\prob(T_o^\infty\times (\T^d)^\infty_o)$, we aim at solving (in a suitable weak sense to be precised later) the following system
\begin{equation}\label{eqn: particles_drift}
    \begin{cases}
        \de X^i_t & = b_t^i(\mu^\infty_t, X^i_t)\de t +  \sqrt{\frac{2}{s_i}}\de W^i_t,\quad t\in[t_0,T],\, i \in \N_+,\\
        X^i_{t_0} & = \imath(x_i), \quad i \in \N_+,
    \end{cases}
\end{equation}
with $(W_i)_i$ being a family of independent $d$-dimensional $\{\cF_t\}_{t \in [t_0,T]}$-Brownian motions, the initial datum $(\mbfs=(s_i)_i, \mbfx=(x_i)_i)$ has distribution $\cursm$, and where $\mu^\infty_t := \sum_{j=1}^{\infty} s_j\delta_{X^j_t}$, $t \in [t_0,T]$ is the $\prob(\T^d)$-valued process defined as in \eqref{eq:project}.
The necessity of allowing $b$ to depend on $i$ will become apparent in Section~\ref{sec: control}. We emphasize that, by uniform bounded collection, we mean that $\sup_{i \in {\mathbb N}_+} \| b^i \|_\infty < + \infty$. Note also that we are using the convention in Remark \ref{rem:toruspb} to evaluate $b_t^i(\mu_t^\infty, \cdot)$ at $X_t^i$.

\subsection{Weak existence and uniqueness in law}\label{ssec:well_posedness_partycles}

\subsubsection{Notion of weak solution and related statements}

Here we address the well-posedness of equation \eqref{eqn: particles_drift}. Given the low regularity of the drift, it is natural to look for weak solutions. In particular, for a collection of uniformly bounded and measurable drifts $(b^i\colon[0,T]\times\prob(\T^d)\times \T^d\to\R^d)_{i}$, an initial time $t_0\in[0,T)$, and an initial distribution $\cursm\in\prob(T^\infty_o\times(\T^d)_o^{\infty})$, we say that the tuple $({\Omega},{\cF},{\Q},\{{\cF}_t\}_{t\in[0,T]}, (({\mbfs},{\mbfx}), ({X}^i)_i, ({W}^i)_i))$ is a \emph{weak solution} of the equation \eqref{eqn: particles_drift} driven by the data $(b^i)_i$, $t_0$, $\cursm$ if:
\begin{enumerate}[i.]
    \item $({W}^i)_i$ is a family of independent $d$-dimensional $\{{\cF}_t\}_{t\in[t_0,T]}$-Brownian motions on the filtered probability space $({\Omega},{\cF},{\Q},\{{\cF}_t\}_{t\in[t_0,T]})$;
    \item The initial datum $({\mbfs},{\mbfx})$ is a $T^\infty_o\times(\T^d)^\infty_o$-valued ${\cF}_{t_0}$-measurable random variable distributed according to $\cursm$ under ${\Q}$;
    \item It holds:\begin{equation*}
        {X}^i_t = \imath({x}^i) + \int_{t_0}^t b_r^i({\mu}^\infty_r, {X}^i_r) \de r +  \sqrt{\frac{2}{{s_i}}} W_t^i,\quad t\in[t_0,T],\quad i\in\N_+,\quad {\Q}\text{-a.s.}
    \end{equation*}
    where ${\mu}^\infty_t := \sum_{j=1}^{\infty} {s}_j\delta_{{X}^j_t}$, $t \in [t_0,T]$ is the $\prob(\T^d)$-valued process defined as in \eqref{eq:project}.
\end{enumerate}
Note that the solution depends on $t_0$, $\cursm$, and $(b^i)_i$. For the moment, we suppress this dependence to keep the notation lighter, but we will make it explicit whenever necessary.

As in the finite-dimensional setting, weak existence and uniqueness in law follow from a standard application of Girsanov theorem. Weak existence is ensured by the following statement.
\begin{proposition}\label{prop: weak_existence} Let $(b^i:[0,T]\times\prob(\T^d) \times \T^d \to \R^d)_i$ be a collection of uniformly bounded and measurable functions, $t_0\in [0,T)$ be an initial time, and $\cursm\in\prob(T^\infty_o \times (\T^d)^\infty_o)$ be an initial distribution.
Then, there exists a weak solution to the infinite system \eqref{eqn: particles_drift} driven by the data $(b^i)_i$, $t_0$, $\cursm$.
\end{proposition}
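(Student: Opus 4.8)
The plan is to build the weak solution directly on the canonical space $\Xi$ of Subsection~\ref{ssec:canonical}, via a Girsanov change of measure starting from the free particle system. Write $\mathbf{P} := \mathbf{P}^{t_0,\cursm}$ as in \eqref{def:mathbfP:cursm} and recall that, by construction of $\mathbf{P}$, the coordinate processes $(\beta^i)_i$ are independent $d$-dimensional $\{\cG_t\}$-Brownian motions on $[t_0,T]$ (identically $0$ on $[0,t_0]$), independent of the $\cG_{t_0}$-measurable initial datum $(\mbfs,\mbfx) := ((\varsigma^i)_i,(\xi^{0,i})_i) \sim_{\mathbf{P}} \cursm$, while $\xi^i_t = \imath(\xi^{0,i}) + \sqrt{2/\varsigma^i}\,\beta^i_t$ and $\mu^\infty_t = \sum_j \varsigma^j \delta_{\xi^j_t}$ for $t \in [t_0,T]$ (with the toroidal convention of Remark~\ref{rem:toruspb}); we may and do assume that $\{\cG_t\}$ satisfies the usual conditions under $\mathbf{P}$. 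Set $M_b := \sup_i \|b^i\|_\infty < \infty$ and, for $t \in [t_0,T]$ and $i \in \N_+$, introduce the Girsanov shift $\theta^i_t := \sqrt{\varsigma^i/2}\, b^i_t(\mu^\infty_t,\xi^i_t)$; since $b^i$ is only Borel, one fixes a progressively measurable version of the (jointly measurable, $\{\cG_t\}$-adapted) process $(t,\omega) \mapsto \theta^i_t(\omega)$.

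The core of the argument is an estimate on the associated exponential density. Let $M_t := \sum_{i=1}^\infty \int_{t_0}^t \theta^i_r \cdot \de\beta^i_r$ and $Z_t := \exp\big(M_t - \tfrac12\langle M\rangle_t\big)$, $t \in [t_0,T]$. Since $\mbfs \in T^\infty_o \subset S^\infty$ one has $\sum_i \varsigma^i = 1$, so the uniform bound on the drifts yields the \emph{deterministic} control
\[
\langle M\rangle_T = \sum_{i=1}^\infty \int_{t_0}^T |\theta^i_r|^2\,\de r = \sum_{i=1}^\infty \int_{t_0}^T \tfrac{\varsigma^i}{2}\,|b^i_r(\mu^\infty_r,\xi^i_r)|^2\,\de r \le \tfrac12\, M_b^2\,(T-t_0).
\]
Two things follow. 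First, since $\sum_i \int_{t_0}^T |\theta^i_r|^2\,\de r$ is integrable, the partial sums of $M$ form a Cauchy sequence in $L^2(\mathbf{P})$ by orthogonality of the stochastic integrals against the independent $\beta^i$; hence $M$ is a genuine square-integrable $\{\cG_t\}$-martingale with bracket as above. Second, the deterministic bound makes Novikov's criterion trivially satisfied, so $Z$ is a uniformly integrable $\{\cG_t\}$-martingale under $\mathbf{P}$ with $Z_{t_0}=1$; in particular $\mathbf{Q} := Z_T\,\mathbf{P}$ is a probability measure on $(\Xi,\cG)$.

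It remains to read off the solution from $\mathbf{Q}$. As $Z_{t_0}=1$ and $Z$ is a $\mathbf{P}$-martingale, $\mathbf{Q}$ agrees with $\mathbf{P}$ on $\cG_{t_0}$, so $(\mbfs,\mbfx)$ is still $\cG_{t_0}$-measurable with law $\cursm$ under $\mathbf{Q}$. By Girsanov's theorem, for each $i$ the process $W^i_t := \beta^i_t - \int_{t_0}^t \theta^i_r\,\de r$, $t \in [t_0,T]$, is a continuous $\{\cG_t\}$-local martingale under $\mathbf{Q}$ with $W^i_{t_0}=0$; since mutual brackets are unaffected by an equivalent change of measure, $\langle W^i, W^j\rangle_t = \delta_{ij}(t-t_0)\,\mathrm{I}_d$, and Lévy's characterization applied to each finite subfamily shows that $(W^i)_i$ is a family of independent $d$-dimensional $\{\cG_t\}$-Brownian motions under $\mathbf{Q}$. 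Substituting $\beta^i_t = W^i_t + \int_{t_0}^t \theta^i_r\,\de r$ into $\xi^i_t = \imath(\xi^{0,i}) + \sqrt{2/\varsigma^i}\,\beta^i_t$ and using $\sqrt{2/\varsigma^i}\cdot\sqrt{\varsigma^i/2}=1$ gives, $\mathbf{Q}$-a.s., $\xi^i_t = \imath(\xi^{0,i}) + \int_{t_0}^t b^i_r(\mu^\infty_r,\xi^i_r)\,\de r + \sqrt{2/\varsigma^i}\,W^i_t$ for all $t \in [t_0,T]$, $i \in \N_+$. Thus $(\Xi,\cG,\mathbf{Q},\{\cG_t\}_{t\in[0,T]},((\mbfs,\mbfx),(\xi^i)_i,(W^i)_i))$ is a weak solution driven by $(b^i)_i$, $t_0$, $\cursm$.

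The main obstacle I anticipate is the genuinely infinite-dimensional nature of the change of measure: one must ensure that $M$, and hence $Z$, is a \emph{true} martingale rather than merely a local one, despite the countable family of driving noises. This is precisely where the structure of the model enters — the scaling $\sqrt{2/\varsigma^i}$ of the diffusion coefficients together with $\sum_i \varsigma^i = 1$ forces the uniform deterministic bound on $\langle M\rangle_T$, so that Novikov's criterion holds with no integrability assumption beyond $\sup_i \|b^i\|_\infty < \infty$. The remaining ingredients are routine: recovering joint independence of the $W^i$ from the countable family of bracket identities via Lévy's characterization on finite subfamilies, and the choice of a progressively measurable version of the shifts $\theta^i$.
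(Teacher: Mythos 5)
Your proof is correct and follows the same route as the paper: a Girsanov change of measure away from the free particle system, whose viability rests on the key observation that $\sum_i \varsigma^i = 1$ together with $\sup_i \|b^i\|_\infty < \infty$ forces a deterministic bound on the quadratic variation of the shift martingale, making the exponential density a true martingale. The only presentational differences are that you build the solution directly on the canonical space $\Xi$ (rather than on an arbitrary filtered space as the paper does) and replace the paper's appeal to an infinite-dimensional Girsanov theorem with a self-contained Novikov/L\'evy argument over finite subfamilies — both of which are sound.
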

\begin{proof} Without loss of generality, let us take $t_0=0$.
As in Subsection \ref{ssec:freeparty}, let us then consider a filtered probability space $(\Omega, \cF, \PP, \{\cF_t\}_{t\in[0,T]})$, an $\cF_0$-measurable initial condition $(\mbfs,\mbfx)\sim_\PP\cursm$ and a sequence of
independent $d$-dimensional  $\{\cF_t\}_{t\in[0,T]}$-Brownian motions $(B^i)_i$. 
We recall that the corresponding free particle system $(X^i)_i$ is given by
\begin{equation}\label{eqn: particles_free_new}
    \d X^i_t =  \sqrt{\frac{2}{s_i}}\de B^i_t,\quad t\in[0,T],\quad  X^i_0 =\imath(x_i), 
    \quad i \in \N_+.
\end{equation}
We introduce the local martingale
\begin{equation}\label{eqn: girsanov_mg}
    Z_t:=\exp\left\{\sum_{i = 1}^{\infty} \int_0^t\sqrt{\frac{s_i}{2}} b_r^i(\mu^\infty_r, X^i_r) \cdot\de B^i_r - \frac{1}{2}\sum_{i=1}^{\infty} \int_0^t \frac{s_i}{2}\lvert  b_r^i(\mu^\infty_r, X^i_r) \rvert^2\de r\right\}, 
    \quad t \in [0,T],
\end{equation}
where $\mu^\infty_t := \sum_{j=1}^{\infty} s_j\delta_{X^j_t}$, $t \in [0,T]$ is as in \eqref{eq:project}. Since $\sum_{i=1}^{\infty} s_i = 1$ and $\sup_{i \in {\mathbb N}_+} \norm{b^i}_\infty<\infty$, the sum of the stochastic integrals inside the bracket forms a martingale whose quadratic variation is bounded by a deterministic constant. Therefore, $Z$ is a true martingale with $Z_0=1$. Thus, by the infinite dimensional version of
Girsanov theorem (see e.g.~\cite[Proposition I.0.6]{LiuRock}), the measure ${\mathbb Q} := Z_T  {\mathbb P}$ is a probability measure, under which the processes
\begin{equation}\label{eqn: girsanov}
    W^i_t : = B^i_t - \int_0^t \sqrt{\frac{s_i}{2}}  b_r^i(\mu^\infty_r, X^i_r)\de r,\quad t \in [0,T],\quad i\in\N_+,
\end{equation}
are independent ${\{\cF_t\}_{t\in[0,T]}}$-Brownian motions.
Then, the result is a consequence of the following two observations.
First, notice that the law of $(\mbfs,\mbfx)$
remains unchanged under the new probability $\Q$.
Second, by plugging \eqref{eqn: girsanov} into \eqref{eqn: particles_free_new}, we obtain, for any $i \in \N_+$,
\begin{equation*}
    \de X^i_t = b_t^i(\mu^\infty_t, X^i_t)\de t +\sqrt{\frac{2}{s_i}}\de W^i_t, \quad t \in [0,T],
\end{equation*}
which means that $(\Omega,\cF,\Q, \{\cF_t\}_{t\in[0,T]}, ((\mbfs,\mbfx), (X^i)_i, (W^i)_i))$ is a weak solution to \eqref{eqn: particles_drift}.\end{proof}
We now address uniqueness (in law).
\begin{proposition}
\label{prop:weak:uniqueness}
Let ($b^i:[0,T]\times\prob(\T^d) \times \T^d \to \R^d)_{i}$ be a collection of uniformly bounded and measurable functions, $t_0\in [0,T)$ be an initial time, and $\cursm\in\prob(T^\infty_o \times (\T^d)^\infty_o)$ be an initial distribution. If \[(\Omega^{(k)},\cF^{(k)},\Q^{(k)}, \{\cF^{(k)}_t\}_{t\in[t_0,T]}, ((\mbfs^{(k)},\mbfx^{(k)}),(X^{i,(k)})_i, (W^{i,(k)})_i)),\quad k=1,2,
\]are
two weak solutions to \eqref{eqn: particles_drift}, both driven by the data $(b^i)_i, t_0,\cursm$, then $((\mbfs^{(1)},\mbfx^{(1)}), (X^{i,(1)})_i,(W^{i,(1)})_i)$ and $((\mbfs^{(2)},\mbfx^{(2)}),(X^{i,(2)})_i,(W^{i,(2)})_i)$ have the same law on $\Xi$ under the probability measures ${\mathbb Q}^{(1)}$ and 
${\mathbb Q}^{(2)}$ respectively.
\end{proposition}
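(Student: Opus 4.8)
The plan is to run the Girsanov construction of Proposition~\ref{prop: weak_existence} \emph{in reverse}. Starting from an arbitrary weak solution, I first strip off the drift by an exponential change of measure; the transformed process is then the free particle system of Subsection~\ref{ssec:freeparty}, whose law on $\Xi$ is unambiguously $\mathbf{P}^{t_0,\cursm}$ by the pathwise construction in~\eqref{def:mathbfP:cursm}; finally I reconstruct the original law by integrating back the (now deterministic) Girsanov density.

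Fix $k\in\{1,2\}$ and, to lighten the notation, take $t_0=0$ and drop the superscript $(k)$. Set
\[
\bar Z_t := \exp\Bigl\{ -\sum_{i=1}^\infty \int_0^t \sqrt{\tfrac{s_i}{2}}\, b_r^i(\mu_r^\infty, X_r^i)\cdot \de W_r^i - \tfrac12 \sum_{i=1}^\infty \int_0^t \tfrac{s_i}{2}\,\bigl|b_r^i(\mu_r^\infty, X_r^i)\bigr|^2 \de r \Bigr\},\qquad t\in[0,T].
\]
The estimate already used in the proof of Proposition~\ref{prop: weak_existence} (i.e.\ $\sum_i s_i=1$ and $\sup_i\|b^i\|_\infty<\infty$, so the bracketed local martingale has deterministically bounded quadratic variation) shows that $\bar Z$ is a true $\{\cF_t\}$-martingale with $\bar Z_0=1$, hence $\bar\Q:=\bar Z_T\,\Q$ is a probability measure. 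By the infinite-dimensional Girsanov theorem \cite[Proposition~I.0.6]{LiuRock}, the processes $B^i_t := W^i_t + \int_0^t \sqrt{s_i/2}\,b_r^i(\mu_r^\infty, X_r^i)\,\de r$ are independent $\{\cF_t\}$-Brownian motions under $\bar\Q$; inserting this into \eqref{eqn: particles_drift} turns the dynamics into $\de X^i_t=\sqrt{2/s_i}\,\de B^i_t$, $X^i_0=\imath(x_i)$, so that $X^i=\varphi(s_i,x_i,B^i)$ with $\varphi$ as in Subsection~\ref{ssec:canonical}. Since $\bar Z_0=1$ we also get $\bar\Q|_{\cF_0}=\Q|_{\cF_0}$, hence $(\mbfs,\mbfx)\sim_{\bar\Q}\cursm$, and---being $\cF_0$-measurable---$(\mbfs,\mbfx)$ is $\bar\Q$-independent of the family $(B^i)_i$. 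By \eqref{def:mathbfP:cursm}, the triple $\Theta_{\mathrm{fr}}:=\bigl((\mbfs,\mbfx),(B^i)_i,(X^i)_i\bigr)$ therefore has $\bar\Q$-law equal to $\mathbf{P}^{0,\cursm}$, and this does not depend on $k$.

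It remains to undo the change of measure at the level of laws. Using $\de W^i=\de B^i-\sqrt{s_i/2}\,b_r^i(\mu_r^\infty,X_r^i)\,\de r$, a direct computation gives $\bar Z_T^{-1}=Z_T$ with
\[
Z_T=\exp\Bigl\{ \sum_{i=1}^\infty \int_0^T \sqrt{\tfrac{s_i}{2}}\, b_r^i(\mu_r^\infty, X_r^i)\cdot \de B_r^i - \tfrac12\sum_{i=1}^\infty\int_0^T \tfrac{s_i}{2}\,\bigl|b_r^i(\mu_r^\infty, X_r^i)\bigr|^2\,\de r \Bigr\},
\]
which is the very density appearing in Proposition~\ref{prop: weak_existence}, but now driven by the free Brownian motions $B^i$ and evaluated along the free trajectories $X^i=\varphi(s_i,x_i,B^i)$. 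Under $\mathbf{P}^{0,\cursm}$ the canonical noises are genuine Brownian motions and the canonical trajectories are the corresponding free paths, so the integrand above is adapted to the canonical filtration; consequently the Itô integrals, and their $L^2$-convergent sum, define a Borel map $\zeta:\Xi\to(0,\infty)$ with $\zeta=Z_T$ $\mathbf{P}^{0,\cursm}$-a.s., and likewise each $W^i$ is a fixed Borel functional of $\Theta_{\mathrm{fr}}$. Pulling back through $\Theta_{\mathrm{fr}}$ gives $\zeta\circ\Theta_{\mathrm{fr}}=Z_T$ $\Q$-a.s. Hence, writing the solution tuple $\Theta:=\bigl((\mbfs,\mbfx),(W^i)_i,(X^i)_i\bigr)$ as $\Theta=G(\Theta_{\mathrm{fr}})$ for a fixed Borel map $G$, we obtain for every bounded Borel $F:\Xi\to\R$
\[
\mathbb{E}^{\Q}\bigl[F(\Theta)\bigr]=\mathbb{E}^{\bar\Q}\bigl[Z_T\,(F\circ G)(\Theta_{\mathrm{fr}})\bigr]=\mathbb{E}^{\bar\Q}\bigl[\bigl(\zeta\cdot(F\circ G)\bigr)(\Theta_{\mathrm{fr}})\bigr]=\mathbb{E}^{\mathbf{P}^{0,\cursm}}\bigl[\zeta\cdot(F\circ G)\bigr].
\]
The right-hand side depends on neither $\Omega$ nor $k$; applying this for $k=1,2$ (extending, when $t_0>0$, the paths $W^i$ and $X^i$ to $[0,t_0]$ by $0$ and $\imath(x_i)$ to match the convention in \eqref{def:mathbfP:cursm}) shows that $\Q^{(1)}$ and $\Q^{(2)}$ push forward the respective tuples to one and the same law on $\Xi$.

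The only genuinely delicate step I anticipate is the construction of the path-functional $\zeta$ (and of the $W^i$ as functionals of $\Theta_{\mathrm{fr}}$): one has to realise the infinite family of stochastic integrals as a single Borel map on $\Xi$ that is valid under the reference law $\mathbf{P}^{0,\cursm}$. This is a standard measurability fact---the Itô integral of a process adapted to a Brownian filtration is measurable with respect to that filtration---applied here after noting, via the map $\Phi$ of Subsection~\ref{ssec:canonical}, that $X^i$ and $\mu^\infty$ are themselves Borel functionals of $\bigl((\mbfs,\mbfx),(B^j)_j\bigr)$. Everything else reduces to the boundedness and martingale estimates already carried out for Proposition~\ref{prop: weak_existence}.
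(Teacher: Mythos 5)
Your proposal is correct and follows essentially the same strategy as the paper: undress the drift by the inverse Girsanov density, identify the resulting free tuple $((\mbfs,\mbfx),(B^i)_i,(X^i)_i)$ as having the canonical law $\mathbf{P}^{t_0,\cursm}$, and recover the original law by pulling back the density $Z_T$ and each $W^i$ as fixed Borel functionals of the free tuple --- this last point being exactly where the paper invokes \cite[Lemmata 4.3.2 \& 4.3.3]{StroockVaradhan} and introduces the maps $\psi_i^{\boldsymbol b}$, $\Psi_i^{\boldsymbol b}$. The only (cosmetic) difference is that you push forward directly to the canonical measure $\mathbf{P}^{t_0,\cursm}$ on $\Xi$, whereas the paper compares the two auxiliary spaces $(\Omega^{(k)},\PP^{(k)})$ through the common measurable functionals; the underlying argument is the same.
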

\begin{proof} Following the proof of Proposition \ref{prop: weak_existence}, we assume $t_0=0$ for simplicity.
Moreover,
from the two properties
$\sup_{i \in {\mathbb N}_+}
\| b^i \|_\infty<\infty$ and $\sum_{i=1}^\infty s^{(k)}_i = 1$, 
we notice that,
for $k = 1,2$, the process defined by 
    \begin{equation*}
    \zeta^{(k)}_t:=\exp\left\{-\sum_{i = 1}^{\infty} \int_0^t\sqrt{\frac{s^{(k)}_i}{2}}b_r^i(\mu^{\infty,(k)}_r, X^{i,(k)}_r) \cdot \de W^{i,(k)}_r - \frac{1}{2}\sum_{i=1}^{\infty} \int_0^t \frac{s^{(k)}_i}{2}\lvert b_r^i(\mu^{\infty,(k)}_r, X^{i,(k)}_r) \rvert^2\de r\right\}, \quad 
    t \in [0,T],
    \end{equation*}
    with $\mu^{\infty,(k)}_t := \sum_{j=1}^{\infty} s^{(k)}_j\delta_{X^{j,(k)}_t}$, $t \in [0,T]$ as in \eqref{eq:project},
    is a $\Q^{(k)}$-martingale starting from $\zeta^{(k)}_0 = 1$. Then, by the same infinite dimensional version of
    Girsanov theorem as in the proof of Proposition \ref{prop: weak_existence}, the processes $(B^{i,(k)})_i$, defined 
    for $k=1,2$ by\begin{equation}\label{eqn: girsanov_inv}
        B^{i,(k)}_t : = W^{i,(k)}_t + \int_0^t\sqrt{\frac{s^{(k)}_i}{2}} b_r^i(\mu^{\infty,(k)}_r, X^{i,(k)}_r)\de r,\quad t \in [0,T],\quad i\in\N_+
    \end{equation}
    are, for each $k \in \{1,2\}$, independent $d$-dimensional $\{\cF_t\}_{t \in [0,T]}$-
    Brownian motions under the new probability measure $\PP^{(k)}:=\zeta_T^{(k)} {\mathbb Q}^{(k)}$ (i.e., 
    $\de \PP^{(k)} / \de\Q^{(k)} = \zeta^{(k)}_T$). By plugging \eqref{eqn: girsanov_inv} into \eqref{eqn: particles_drift}, we obtain
    \begin{equation*}
        \de X^{i,(k)}_t = \sqrt{\frac{2}{s^{(k)}_i}}\de B^{i,(k)}_t,\quad X^{i,(k)}_0 = \imath(x^{(k)}_i), \quad t \in [0,T], \quad i \in \N_+,
    \end{equation*}
which proves (by returning
back to 
\eqref{eqn: girsanov_inv}) 
that  the tuples 
\[((\mbfs^{(1)},\mbfx^{(1)}), (X^{i,(1)})_i,(B^{i,(1)})_i) \text{ and }((\mbfs^{(2)},\mbfx^{(2)}), (X^{i,(2)})_i,(B^{i,(2)})_i)\] have the same law (on 
the space $\Xi$) under the probability measures ${\mathbb P}^{(1)}$ and 
${\mathbb P}^{(2)}$ respectively. Moreover, we can rewrite the inverse of 
$\zeta_t^{(k)}$ in the form of an exponential martingale under ${\mathbb P}^{(k)}$, namely
\begin{equation*}
\frac1{\zeta_t^{(k)}}=\exp\left\{\sum_{i = 1}^{\infty} \int_0^t\sqrt{\frac{s^{(k)}_i}{2}}b_r^i(\mu^{\infty,(k)}_r, X^{i,(k)}_r) \cdot \de B^{i,(k)}_r - \frac{1}{2}\sum_{i=1}^{\infty} \int_0^t \frac{s^{(k)}_i}{2}\lvert b_r^i(\mu^{\infty,(k)}_r, X^{i,(k)}_r) \rvert^2\de r\right\}, \ \
t \in [0,T].
\end{equation*}
Thanks to 
\cite[Lemmata 4.3.2 \& 4.3.3]{StroockVaradhan}, this
shows that $1/\zeta_T^{(1)}$
and $1/\zeta_T^{(2)}$
can be expressed as a common measurable function of $((\mbfs^{(k)},\mbfx^{(k)}),(X^{i,(k)})_i,(B^{i,(k)})_i)$.
Now, 
for each $i \in \N_+$, we introduce the map (with the superscript ${\boldsymbol b}$ in 
$\psi_i$ below being a shorthand notation for ${\boldsymbol b}=(b^j)_j$)
\begin{equation}\label{eqn:psi_i_new}
\begin{split}
\psi_i^{\boldsymbol b} : 
T^\infty_o\times\rmC([0,T];{\R}^d)^{\infty} \times 
\rmC([0,T];{\R}^d)
&\rightarrow \rmC([0,T];{\mathbb R}^d)
\\
\left((s_j)_j, ((x_t^j)_{t \in [0,T]})_j,
(w_t)_{t \in [0,T]}\right) &\mapsto 
\left(
w_t - \sqrt{\frac{s_i}{2}} \int_0^t b_r^i\left( 
\sum_{j =1 }^\infty
s_j \delta_{x_r^j}
,x_r^i
\right) \de r
\right)_{t \in [0,T]},
\end{split}
\end{equation}
which is clearly progressively-measurable. Again, we have used the convention that $x_t^j$ is implicitly composed with the projection map as explained in Remark \ref{rem:toruspb}. Then, we can let 
\begin{equation}\label{eqn:big_psi_i_new}
\begin{split}
\Psi_i^{\boldsymbol b} : 
T^\infty_o\times(\T^d)^\infty_o\times\rmC([0,T];{\R}^d)^{\infty} \times 
\rmC([0,T];{\R}^d)
&\rightarrow T^\infty_o\times(\T^d)^\infty_o\times\rmC([0,T];{\mathbb R}^d)\times\rmC([0,T];{\mathbb R}^d)
\\
\left((s_j)_j,(x_j^0)_j, (x^j)_j,
w\right) &\mapsto 
\left((s_j)_j,(x_j^0)_j, x^i, \psi_i^{\boldsymbol b}\left((s_j)_j, (x^j)_j,
w\right)\right).
\end{split}
\end{equation}
Writing
\begin{equation*}
(\mbfs^{(k)},\mbfx^{(k)}, X^{i,(k)},W^{i,(k)})
 = \Psi_i^{\boldsymbol b}
 \left( \mbfs^{(k)},\mbfx^{(k)}, 
(X^{j,(k)})_j,B^{i,(k)}
 \right),
\end{equation*}
we deduce that for any integer $n \geq 1$, and any Borel subsets 
$\Gamma_1,\cdots,\Gamma_n$ of 
$T_o^\infty\times(\T^d)^\infty_o\times\rmC([0,T];{\mathbb R}^d) \times \rmC([0,T];{\mathbb R}^d)$,
\begin{align*}
        &\Q^{(1)}\left(
        \left\{
        (\mbfs^{(1)},\mbfx^{(1)},X^{1,(1)},W^{1,(1)}) \in 
        \Gamma_1,
        \dots, 
        (\mbfs^{(1)},\mbfx^{(1)},X^{n,(1)},W^{n,(1)}) \in \Gamma_n
        \right\}
        \right)
        \\
        &=
\Q^{(1)}\left(
\left\{ 
(\mbfs^{(1)},\mbfx^{(1)},X^{1,(1)},B^{1,(1)}) \in 
\Psi_1^{-1}(\Gamma_1),
        \dots, 
        (\mbfs^{(1)},\mbfx^{(1)},X^{n,(1)},B^{n,(1)}) \in \Psi_n^{-1}( \Gamma_n)
        \right\}
        \right)
        \\
        & = \int_{\Omega^{(1)}} \frac{1}{\zeta^{(1)}_T}\mathbf{1}_{\{(\mbfs^{(1)},\mbfx^{(1)},X^{1,(1)},B^{1,(1)}) \in 
\Psi_1^{-1}(\Gamma_1),
        \dots, 
        (\mbfs^{(1)},\mbfx^{(1)},X^{n,(1)},B^{n,(1)}) \in \Psi_n^{-1}( \Gamma_n)\}}\de \PP^{(1)}\\
        & = \int_{\Omega^{(2)}} \frac{1}{\zeta^{(2)}_T}\mathbf{1}_{\{(\mbfs^{(2)},\mbfx^{(2)},X^{1,(2)},B^{1,(2)}) \in 
\Psi_1^{-1}(\Gamma_2),
        \dots, 
        (\mbfs^{(2)},\mbfx^{(2)},X^{n,(2)},B^{n,(2)}) \in \Psi_n^{-1}( \Gamma_n)\}}\de \PP^{(2)}\\
        & = \Q^{(2)}\left(
        \left\{
        (\mbfs^{(2)},\mbfx^{(2)},X^{1,(2)},W^{1,(2)}) \in 
        \Gamma_1,
        \dots, 
        (\mbfs^{(2)},\mbfx^{(2)},X^{n,(2)},W^{n,(2)}) \in \Gamma_n
        \right\}
        \right),
    \end{align*}
where, to pass from the third to the fourth line, we used the fact that the integrands on both lines could be written as the images, by 
a common measurable mapping, of $(\mbfs^{(1)},\mbfx^{(1)},(X^{1,(1)},B^{1,(1)}),\dots,(X^{n,(1)},B^{n,(1)}))$
and $(\mbfs^{(2)},\mbfx^{(2)},(X^{1,(2)},B^{1,(2)}),\dots,(X^{n,(2)},B^{n,(2)}))$
respectively. This completes the proof.
\end{proof}

\subsubsection{Transfer onto the canonical space}\label{ssec:transfer}
For later use, it is important to transfer weak solutions onto the canonical space
$\Xi = T^\infty_o \times (\T^d)^\infty_o \times \rmC([0,T];\R^d)^{\infty} \times \rmC([0,T];\R^d)^{\infty}$ (see Subsection \ref{ssec:canonical}).
Given a (unique in law) weak solution $({\Omega},{\cF},{\Q},\{{\cF}_t\}_{t\in[t_0,T]}, (({\mbfs},{\mbfx}), ({X}^i)_i, ({W}^i)_i))$, we can indeed denote the law of the tuple $(({\mbfs},{\mbfx}), ({X}^i)_i, ({W}^i)_i)$ on $\Xi$ by $\mathbf{Q}^{t_0,\cursm}$, where we highlight the dependence on the initial time and distribution. Then, it is easy to check that $(\Xi,\cG,\{\cG_t\}_{t\in[0,T]}, \mathbf{Q}^{t_0,\cursm}, (((\varsigma^j)_j,(\xi^{0,j})_j),(\beta^j)_j,(\xi^j)_{j}
))$ is another weak solution to \eqref{eqn: particles_drift}, where we used the notation introduced in Subsection \ref{ssec:canonical} for the canonical process. Since the probability measure $\mathbf{Q}^{t_0,\cursm}$ fully characterizes this solution, in what follows we simply refer to the weak solution on the canonical space through its law $\mathbf{Q}^{t_0,\cursm}$. As made clear in the following statement, we can express $\mathbf{Q}^{t_0,\cursm}$ in terms of the law on the canonical space $\Xi$ of the free particle system \eqref{eqn: particles_free}, denoted by $\mathbf{P}^{t_0,\cursm}$.
\begin{proposition}\label{prop:pandq}
   Let $(b^i:[0,T]\times\prob(\T^d)\times \T^d \to \R^d)_i$ be a collection of uniformly bounded and measurable functions, $t_0\in [0,T)$ an initial time, and $\cursm\in\prob(\T^\infty_o \times (\T^d)^\infty_o)$ an initial distribution. Denote by $\mathbf{Q}^{t_0,\cursm}$ the law on $\Xi$ of a weak solution to \eqref{eqn: particles_drift}, and by $\mathbf{P}^{t_0,\cursm}$ the law on $\Xi$ of the free particle system \eqref{eqn: particles_free}, respectively. Then
   \begin{equation}\label{eq:new:expression:Qcursm0}
        {\mathbf Q}^{t_0,\cursm}= {\cercle{$\Psi$}}^{\boldsymbol b}_\sharp \left( \frac1{z_T^{t_0,\cursm}}  {\mathbf P}^{t_0,\cursm}\right),
    \end{equation}
    where the mapping ${\cercle{$\Psi$}}^{\boldsymbol b}\colon\Xi\to\Xi$ is defined by
    \begin{equation}\label{eq:notation:cercle}
        {\cercle{$\Psi$}}^{\boldsymbol b} : \left((s_j)_j,(x_j^0)_j,(w^j)_j,(x^j)_j\right) \mapsto \left( (s_j)_j,(x_j^0)_j,\left(\psi_j^{\boldsymbol b}((s_i)_i,(x^i)_i,w^j)\right)_j,(x^j)_j \right),
    \end{equation}
    $\psi_i^{\boldsymbol b}$ is given by \eqref{eqn:psi_i_new} for any $i\in\N_+$, and
    \begin{equation}\label{eqn:z_T}
         \frac1{z^{t_0,\cursm}_T} = \exp\left( \sum_{i=1}^\infty \int_{t_0}^T \sqrt{\frac{\varsigma^i}{2}}\, b_r^i(\mu^\infty_r,\xi_r^i)\cdot \de \beta_r^i - \frac12 \sum_{i=1}^\infty \int_{t_0}^T \frac{\varsigma^i}{2}\, \vert b_r^i(\mu^\infty_r,\xi_r^i)\vert^2 \de r \right),
    \end{equation}
    with $\mu^\infty := \sum_{j=1}^\infty\varsigma^i\delta_{\xi^i}$ (see \eqref{eq:muinfmap}).  
\end{proposition}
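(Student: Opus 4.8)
\emph{Proof plan.} The plan is to realize a weak solution of \eqref{eqn: particles_drift} directly on the canonical space $\Xi$, as a Girsanov reweighting of the free particle system law $\mathbf{P}^{t_0,\cursm}$ --- precisely the construction already carried out in the proof of Proposition \ref{prop: weak_existence}, but now phrased intrinsically on $\Xi$ --- and then to invoke the uniqueness in law of Proposition \ref{prop:weak:uniqueness} to identify the law of this solution with $\mathbf{Q}^{t_0,\cursm}$.

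First I would work on $(\Xi,\cG,\{\cG_t\}_{t\in[0,T]},\mathbf{P}^{t_0,\cursm})$. By the construction of $\mathbf{P}^{t_0,\cursm}$ recalled in \eqref{def:mathbfP:cursm} and Subsection \ref{ssec:canonical}, under $\mathbf{P}^{t_0,\cursm}$ the canonical process $((\varsigma^j)_j,(\xi^{0,j})_j,(\beta^j)_j,(\xi^j)_j)$ satisfies: the $(\beta^j)_j$ are independent $\{\cG_t\}$-Brownian motions over $[t_0,T]$ (constant on $[0,t_0]$), the pair $((\varsigma^j)_j,(\xi^{0,j})_j)$ is $\cG_{t_0}$-measurable, distributed as $\cursm$, and independent of the increments of the $\beta^j$'s, and $\de\xi^i_t=\sqrt{2/\varsigma^i}\,\de\beta^i_t$ on $[t_0,T]$. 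Since $\sup_i\|b^i\|_\infty<\infty$ and $\sum_i\varsigma^i=1$, the very same estimate on the quadratic variation as in the proof of Proposition \ref{prop: weak_existence} shows that the process obtained from \eqref{eqn:z_T} by replacing $T$ with $t$ is a true $\mathbf{P}^{t_0,\cursm}$-martingale with value $1$ at time $t_0$; in particular $1/z^{t_0,\cursm}_T$ is a nonnegative $\cG_T$-measurable density of total mass $1$, so that $\widetilde{\mathbf Q}:=\tfrac1{z^{t_0,\cursm}_T}\mathbf{P}^{t_0,\cursm}$ is a probability measure on $\Xi$. By the infinite-dimensional Girsanov theorem (same reference as in the proof of Proposition \ref{prop: weak_existence}), under $\widetilde{\mathbf Q}$ the processes $\widetilde W^i:=\psi_i^{\boldsymbol b}\big((\varsigma_j)_j,(\xi^j)_j,\beta^i\big)$, $i\in\N_+$, with $\psi_i^{\boldsymbol b}$ as in \eqref{eqn:psi_i_new}, are independent $\{\cG_t\}$-Brownian motions (they are $\{\cG_t\}$-adapted because $\psi_i^{\boldsymbol b}$ is progressively measurable); moreover, testing against $\cG_{t_0}$-measurable functionals and using the martingale property of $1/z^{t_0,\cursm}$ with value $1$ at $t_0$, the law of $((\varsigma^j)_j,(\xi^{0,j})_j)$ is unchanged, i.e.\ still $\cursm$, under $\widetilde{\mathbf Q}$. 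Substituting the definition of $\widetilde W^i$ into $\de\xi^i_t=\sqrt{2/\varsigma^i}\,\de\beta^i_t$ yields $\de\xi^i_t = b^i_t(\mu^\infty_t,\xi^i_t)\,\de t + \sqrt{2/\varsigma^i}\,\de\widetilde W^i_t$ on $[t_0,T]$, so that $\big(\Xi,\cG,\{\cG_t\},\widetilde{\mathbf Q},(((\varsigma^j)_j,(\xi^{0,j})_j),(\xi^i)_i,(\widetilde W^i)_i)\big)$ is a weak solution of \eqref{eqn: particles_drift} driven by the data $(b^i)_i,t_0,\cursm$.

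It then remains to identify the law of this weak solution on $\Xi$. By the definitions \eqref{eqn:psi_i_new} of $\psi_i^{\boldsymbol b}$ and \eqref{eq:notation:cercle} of ${\cercle{$\Psi$}}^{\boldsymbol b}$, the $\Xi$-valued random variable $\big((\varsigma^j)_j,(\xi^{0,j})_j,(\widetilde W^i)_i,(\xi^i)_i\big)$ is exactly ${\cercle{$\Psi$}}^{\boldsymbol b}$ evaluated at the canonical process $((\varsigma^j)_j,(\xi^{0,j})_j,(\beta^j)_j,(\xi^j)_j)$; hence its law under $\widetilde{\mathbf Q}$ is ${\cercle{$\Psi$}}^{\boldsymbol b}_\sharp\widetilde{\mathbf Q} = {\cercle{$\Psi$}}^{\boldsymbol b}_\sharp\big(\tfrac1{z^{t_0,\cursm}_T}\mathbf{P}^{t_0,\cursm}\big)$. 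On the other hand, by the previous paragraph this is the law on $\Xi$ of a weak solution of \eqref{eqn: particles_drift} driven by $(b^i)_i,t_0,\cursm$, and such a law is unique by Proposition \ref{prop:weak:uniqueness}; therefore it coincides with $\mathbf{Q}^{t_0,\cursm}$, which is exactly \eqref{eq:new:expression:Qcursm0}.

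I do not expect any real analytic obstacle here: the content of the statement is the observation that the Girsanov change of measure already performed in Proposition \ref{prop: weak_existence} is precisely the composition of the density $1/z^{t_0,\cursm}_T$ with the pushforward by ${\cercle{$\Psi$}}^{\boldsymbol b}$, together with the crucial (but automatic) fact that this transformation leaves the particle paths $(\xi^i)_i$ untouched. The points that require care are purely of a bookkeeping nature: the progressive measurability of $\psi_i^{\boldsymbol b}$ (recorded right after \eqref{eqn:psi_i_new}) and of ${\cercle{$\Psi$}}^{\boldsymbol b}$; the consistent treatment of the intervals $[0,t_0]$ and $[t_0,T]$ in the stochastic integrals entering $z^{t_0,\cursm}_T$ and $\psi_i^{\boldsymbol b}$, which is harmless since under $\mathbf{P}^{t_0,\cursm}$ the integrands are frozen on $[0,t_0]$ and only the increments of the $\widetilde W^i$ on $[t_0,T]$ enter the dynamics of $\xi^i$; and the invariance of the law of the initial datum under the reweighting by $1/z^{t_0,\cursm}_T$, which follows from the martingale property combined with the $\cG_{t_0}$-measurability of $((\varsigma^j)_j,(\xi^{0,j})_j)$.
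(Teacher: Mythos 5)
Your proof is correct and uses the same underlying mathematical ingredients as the paper (Girsanov transform plus uniqueness in law), but the packaging is noticeably different. The paper's argument is terse: it extracts the explicit cylinder-set formula that was already derived inside the proof of Proposition~\ref{prop:weak:uniqueness} (which expresses ${\mathbf Q}^{t_0,\cursm}$ of any cylinder in terms of $\frac1{z_T^{t_0,\cursm}}{\mathbf P}^{t_0,\cursm}$ composed with the $\Psi_i^{\boldsymbol b}$'s) and then observes that this is precisely the pushforward formula \eqref{eq:new:expression:Qcursm0}. You instead run the argument ``forward'': you define $\widetilde{\mathbf Q}:=\frac1{z_T^{t_0,\cursm}}{\mathbf P}^{t_0,\cursm}$ on the canonical space, verify via the infinite-dimensional Girsanov theorem that the canonical tuple together with the $\widetilde W^i=\psi_i^{\boldsymbol b}(\cdot)$ is a weak solution under $\widetilde{\mathbf Q}$, observe that the law of this tuple on $\Xi$ is exactly $\cercle{$\Psi$}^{\boldsymbol b}_\sharp\widetilde{\mathbf Q}$, and then invoke Proposition~\ref{prop:weak:uniqueness} as a black box. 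Your version is more self-contained (it does not require the reader to reopen the guts of the uniqueness proof) and arguably more transparent about why the factorization into a density times a pushforward should hold, at the cost of redoing, on $\Xi$, the weak-existence construction that the paper had already carried out on a generic space in Proposition~\ref{prop: weak_existence}. One small remark: you gloss over, as does the paper, the mild mismatch between the lower integration limits in $\psi_i^{\boldsymbol b}$ (which integrates from $0$) and in $z_T^{t_0,\cursm}$ (which integrates from $t_0$); when $t_0>0$ this shifts each $\widetilde W^i$ by a $\cG_{t_0}$-measurable constant, so that $\widetilde W^i_{t_0}\neq0$ in general. Your remark that ``only the increments on $[t_0,T]$ enter the dynamics'' is the right intuition, and the imprecision is inherited from the statement itself (the definition of $\cercle{$\Psi$}^{\boldsymbol b}$ and the weak-solution convention $W^i_{t_0}=0$ are not perfectly reconciled for $t_0>0$), so it is not a gap specific to your argument.
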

Note that in \eqref{eqn:z_T} (and in the rest of this section) we are using the same convention explained in Remark \ref{rem:toruspb} to evaluate $b^i_r(\mu_r^\infty, \cdot)$ at $\xi_r^i$.
\begin{proof}
For $n\in\N_+$ and two families $\Gamma_1^x,\cdots,\Gamma_n^x$ and 
$\Gamma_1^w,\cdots,\Gamma_n^w$ of Borel subsets of $\rmC([0,T];{\mathbb R}^d)$, let us consider the two cylinders $\Gamma_1^x \times \cdots \times \Gamma_n^x \times \rmC([0,T];{\mathbb R}^d)^{\{n+1,\cdots\}}$
and 
$\Gamma_1^w \times \cdots \times \Gamma_n^w \times \rmC([0,T];{\mathbb R}^d)^{\{n+1,\cdots\}}$. Following the proof of Proposition
\ref{prop:weak:uniqueness}, we know that for an initial distribution 
$\cursm$ on $T^\infty_o \times ({\mathbb T}^d)^{\infty}_o$ and for 
a Borel subset $\Gamma_0$ of 
$T^\infty_o \times ({\mathbb T}^d)^{\infty}_o$,
the law of the weak solution to \eqref{eqn: particles_drift}, with 
$\cursm$ as initial condition,
is given by 
\begin{equation}
\begin{split}
&{\mathbf Q}^{t_0,\cursm}\left( 
\Gamma_0 \times 
\Gamma_1^x \times \cdots \times \Gamma_n^x \times \rmC([0,T];{\mathbb R}^d)^{\{n+1,\cdots\}}\times
\Gamma_1^w \times \cdots \times \Gamma_n^w \times \rmC([0,T];{\mathbb R}^d)^{\{n+1,\cdots\}}
\right) 
\\
&=  \int_{\Xi}
\frac1{z^{t_0,\cursm}_T}
\prod_{i=1}^n 
{\mathbf 1}_{(\Psi^{\boldsymbol b}_i)^{-1}(\Gamma_0\times\Gamma^x_i \times \Gamma^w_i)}
((\varsigma^j)_j,
(\xi^{0,j})_j,\xi^i,\beta^i) 
\de {\mathbf P}^{t_0,\cursm},
\end{split}
\end{equation}
where $\frac{1}{z_T^{t_0,\cursm}}$ is the martingale given by \eqref{eqn:z_T} and $\Psi_i^{\boldsymbol b}$ is defined by \eqref{eqn:big_psi_i_new}. Then \eqref{eq:new:expression:Qcursm0} follows.
\end{proof}

Of course, it must be stressed that the random variable $z_T^{t_0,\cursm}$ is here regarded as a measurable function on $\Xi$. At this stage, the measurable function representing $z_T^{t_0,\cursm}$ depends on $\cursm$, and one of the purposes of the forthcoming statements is to provide a version that is independent of $\cursm$.

\begin{remark}[Dependence of $\mathbf{Q}^{t_0, \cursm}$ and $z_T^{t_0, \cursm}$ on $(b^i)_i$] The map $z_T^{t_0, \cursm}: \Xi \to \R$ and the probability $\mathbf{Q}^{t_0, \cursm}$ depend also on the collection of drifts $(b^i)_i$, but we do not stress this dependence in the notation, unless it is needed.
\end{remark}
Similarly to what we did for $\mathbf P^{t_0, \cursm}$, we adopt a lighter notation when dealing with a deterministic initial condition $(\mbfs, \mbfx) \in T_o^\infty \times (\T^d)^\infty_o$, setting
\begin{equation}
\label{eq:Q:t0:s,x}
 {\mathbf Q}^{t_0, (\mbfs,\mbfx)}:= {\mathbf Q}^{t_0, \delta_{(\mbfs,\mbfx)}}. 
 \end{equation}

\subsubsection{A disintegration formula}

This paragraph is devoted to establishing a disintegration formula for ${\mathbf Q}^{t_0, \cursm}$, closely resembling a weak Markov property. Our proof relies on the representation formula \eqref{eq:new:expression:Qcursm0}. While the result could alternatively be derived by adapting the notion of martingale problem to the present framework, it is more convenient here to exploit the explicit expression of the solutions.
Let us start with the analogous result for ${\mathbf P}^{t_0, \cursm}$.
\begin{lemma}
\label{lem:disintegration:Pcursm0}
Let $t_0 \in [0,T)$; then the map $\cursm \in {\mathcal P}(T^\infty_o \times (\T^d)^\infty_o)
\mapsto {\mathbf P}^{t_0, \cursm}$ is measurable, i.e., for any Borel subset $E \subset \Xi$, the map 
$\cursm \in {\mathcal P}(T^\infty_o \times (\T^d)^\infty_o)
\mapsto {\mathbf P}^{t_0, \cursm}(E)$ is Borel measurable.
Moreover, for any $\cursm \in {\mathcal P}(T^\infty_o \times (\T^d)^\infty_o)$ and any Borel subset $E \subset \Xi$
\begin{equation}
\label{eq:disintegration:Pcursm0}
{\mathbf P}^{t_0, \cursm}(E)
= \int_{T^\infty_o \times (\T^d)^\infty_o}
{\mathbf P}^{t_0, (\mbfs,\mbfx)}(E)
\de \cursm(\mbfs,\mbfx).
\end{equation}
\end{lemma}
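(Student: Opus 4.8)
The plan is to obtain both the measurability statement and the disintegration formula \eqref{eq:disintegration:Pcursm0} directly from the explicit construction of $\mathbf P^{t_0,\cursm}$ given in \eqref{def:mathbfP:cursm}, namely $\mathbf P^{t_0,\cursm} = (\mathrm{id},\Phi)_\sharp(\cursm\otimes\cW^\infty_{t_0})$, where $\Phi$ is the Borel map defined in Subsection~\ref{ssec:canonical} and $\cW_{t_0}^\infty$ is a \emph{fixed} probability measure on $\rmC([0,T];\R^d)^\infty$ not depending on $\cursm$. The key observation is that, since $(\mathrm{id},\Phi)$ is a fixed Borel map, pushing forward commutes with the disintegration of $\cursm\otimes\cW^\infty_{t_0}$ over its first marginal.

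First I would record the elementary fact that for a fixed Borel map $G\colon \sfX\times\sfZ\to\sfY$ between Polish spaces, a fixed probability $\mathrm n$ on $\sfZ$, and a probability $\cursm$ on $\sfX$, one has $G_\sharp(\cursm\otimes\mathrm n)(E)=\int_{\sfX} G_\sharp(\delta_x\otimes\mathrm n)(E)\,\de\cursm(x)$ for every Borel $E\subset\sfY$; this is just Fubini's theorem applied to $\mathbf 1_E\circ G$, writing $(\cursm\otimes\mathrm n)(G^{-1}(E)) = \int_\sfX \int_\sfZ \mathbf 1_{G^{-1}(E)}(x,z)\,\de\mathrm n(z)\,\de\cursm(x)$ and recognizing the inner integral as $G_\sharp(\delta_x\otimes \mathrm n)(E)$. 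Applying this with $\sfX = T^\infty_o\times(\T^d)^\infty_o$, $\sfZ = \rmC([0,T];\R^d)^\infty$, $\mathrm n = \cW^\infty_{t_0}$, $G=(\mathrm{id},\Phi)$ (so $\sfY=\Xi$), and noting that $\mathbf P^{t_0,(\mbfs,\mbfx)} = \mathbf P^{t_0,\delta_{(\mbfs,\mbfx)}} = (\mathrm{id},\Phi)_\sharp(\delta_{(\mbfs,\mbfx)}\otimes \cW^\infty_{t_0})$ by definition \eqref{def:mathbfP:cursm} together with the deterministic-initial-condition convention, yields exactly \eqref{eq:disintegration:Pcursm0}.

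For the measurability claim, I would argue that $\cursm\mapsto \mathbf P^{t_0,\cursm}(E)$ is Borel by first treating the case where $E$ is a cylinder, or more simply by invoking \eqref{eq:disintegration:Pcursm0} itself: the integrand $(\mbfs,\mbfx)\mapsto \mathbf P^{t_0,(\mbfs,\mbfx)}(E)$ is a bounded Borel function on $T^\infty_o\times(\T^d)^\infty_o$ (measurability of $(\mbfs,\mbfx)\mapsto \mathbf P^{t_0,(\mbfs,\mbfx)}(E) = (\cW_{t_0}^\infty)(\{w : (\mbfs,\mbfx,\Phi(\mbfs,\mbfx,w))\in E\})$ follows from Borel measurability of $\Phi$ and a standard monotone-class / Fubini argument, e.g.\ the one already cited after \eqref{eq:Intro:Phi} for $f_\sharp\mathrm n$), so $\cursm\mapsto \int \mathbf P^{t_0,(\mbfs,\mbfx)}(E)\,\de\cursm(\mbfs,\mbfx)$ is Borel in $\cursm$ because $\cursm\mapsto\int h\,\de\cursm$ is Borel for every bounded Borel $h$ (it is continuous for bounded continuous $h$, and one extends by monotone class). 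Alternatively, one can first establish the measurability of $\cursm\mapsto\mathbf P^{t_0,\cursm}(E)$ for $E$ in a generating algebra and then extend, but routing everything through \eqref{eq:disintegration:Pcursm0} is cleaner.

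The only genuine technical point — and the step I expect to require the most care — is the measurability of $(\mbfs,\mbfx)\mapsto \mathbf P^{t_0,(\mbfs,\mbfx)}(E)$ and of $\cursm\mapsto\int h\,\de\cursm$ for merely Borel (not continuous) integrands; both are standard but must be stated via a monotone-class argument rather than hand-waved, since $\Phi$ is only Borel and $E$ only Borel. Everything else is a direct unwinding of the definition \eqref{def:mathbfP:cursm} together with Fubini's theorem, so no new estimates or probabilistic input are needed beyond what is already in Subsection~\ref{ssec:canonical}.
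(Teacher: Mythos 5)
Your proof is correct and follows essentially the same route as the paper: the disintegration formula is obtained by applying Fubini's theorem to the product $\cursm\otimes\cW_{t_0}^\infty$ pushed forward by $(\mathrm{id},\Phi)$, exactly as the paper does. The only cosmetic difference is that you deduce the measurability of $\cursm\mapsto\mathbf P^{t_0,\cursm}(E)$ from \eqref{eq:disintegration:Pcursm0} together with the Borel-ness of $\cursm\mapsto\int h\,\de\cursm$ for bounded Borel $h$, whereas the paper argues directly by composing the continuous map $\cursm\mapsto\cursm\otimes\cW_{t_0}^\infty$ with the Borel map $\nu\mapsto\nu\bigl((\mathrm{id},\Phi)^{-1}(E)\bigr)$; both rest on the same standard measure-theoretic facts.
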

Observe that, $T^\infty_o \times (\T^d)^\infty_o$ being a Polish space, the notion of measurability that is used in the statement is consistent with the one described in Subsection \ref{sec:wass}.

\begin{proof}
We first notice that the mapping $\cursm \mapsto \cursm \otimes \mathcal{W}_{t_0}^\infty$ is continuous, where $\mathcal{W}_{t_0}^\infty$ has been defined in Subsection \ref{ssec:canonical}. We deduce that, for any Borel subset $E \subset \Xi$, the mapping 
\begin{equation*}
\cursm
\mapsto {\mathbf P}^{t_0, \cursm}(E)
= \left( \cursm \otimes \mathcal{W}_{t_0}^\infty
\right)\left( (\textrm{\rm id},\Phi)^{-1}(E) \right)
\end{equation*}
is measurable, where we recall \eqref{def:mathbfP:cursm} for the definition 
of 
$(\textrm{\rm id},\Phi)$. This proves the first claim in the statement. 

As for \eqref{eq:disintegration:Pcursm0}, we notice that, for any Borel subset $E \subset \Xi$,
\begin{equation*}
\begin{split}
{\mathbf P}^{t_0, \cursm}(E) 
&= \int_{\Xi^0} {\mathbf 1}_E \circ (\textrm{\rm id},\Phi)
 \left( \mbfs,\mbfx,(w^i)_i \right) \de \left( \cursm \otimes \mathcal{W}_{t_0}^\infty \right)
\left( \mbfs,\mbfx,(w^i)_i \right)
\\
&= \int_{T^\infty_o \times (\T^d)^\infty_o}
\left[\int_{\rmC([0,T];{\mathbb R}^d)^\infty}
{\mathbf 1}_E \circ (\textrm{\rm id},\Phi)
 \left( \mbfs,\mbfx,(w^i)_i \right)  \de \mathcal{W}_{t_0}^\infty
\left((w^i)_i \right)
\right] \de \cursm(\mbfs,\mbfx)
\\
&=  \int_{T^\infty_o \times (\T^d)^\infty_o}
{\mathbf P}^{t_0, (\mbfs,\mbfx)}(E)
\de \cursm(\mbfs,\mbfx),
\end{split}
\end{equation*}
which completes the proof. 
\end{proof}

Before establishing an analogous disintegration formula for $\mathbf Q^{t_0, \cursm}$, it is necessary to analyze the measurability of the map $\cursm \mapsto \mathbf Q^{t_0, \cursm}$, which in turn requires several technical preliminaries. The first step is to consider the measurability of $\cursm \mapsto z_T^{t_0, \cursm}$, where $z_T^{t_0, \cursm}$ is defined in Proposition \ref{prop:pandq}. Observe that $z_T^{t_0, \cursm}$ depends on the 
collection of drifts $(b^i)_i$, although we suppress this dependence in the notation for the time being.

\begin{lemma}
\label{lem:universal:zeta0}
Let $(b^i:[0,T]\times\prob(\T^d)\times \T^d \to \R^d)_i$ be a collection of uniformly bounded and measurable functions and $t_0\in [0,T)$ be an initial time. There exists a measurable mapping $\zeta_T^{t_0} : \Xi \rightarrow {\mathbb R}$ such that, for
${\mathcal D}$-a.e.~$m \in \cP(\T^d)$, ${\mathbf P}^{t_0, \emp^{-1}(m)}(\{\zeta_T^{t_0}=z_T^{t_0, \emp^{-1}(m)}\})=1$.
\end{lemma}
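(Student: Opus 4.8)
The plan is to build the $\cursm$-independent version $\zeta_T^{t_0}$ of $z_T^{t_0,\cursm}$ by exploiting the explicit formula \eqref{eqn:z_T}, which exhibits $z_T^{t_0,\cursm}$ as an exponential whose exponent is a sum of stochastic and Lebesgue integrals involving only the canonical process on $\Xi$ and the collection $(b^i)_i$. The key observation is that \emph{the formula itself does not involve $\cursm$}: what depends on $\cursm$ is only the choice of the ${\mathbf P}^{t_0,\cursm}$-null set on which the (a priori only $\PP$-a.s.\ defined) stochastic integrals are left undefined, together with the approximating sequence defining them. So the task is to produce, for each fixed $i\in\N_+$, a single Borel functional $N^{t_0}_i:\Xi\to\rmC([0,T];\R^d)$ representing the stochastic integral $\int_{t_0}^\cdot \sqrt{\varsigma^i/2}\,b^i_r(\mu^\infty_r,\xi^i_r)\cdot\de\beta^i_r$ simultaneously under all the measures ${\mathbf P}^{t_0,(\mbfs,\mbfx)}$, and then set
\[
\zeta_T^{t_0}:=\exp\left(\sum_{i=1}^\infty N^{t_0}_{i,T} - \frac12\sum_{i=1}^\infty\int_{t_0}^T\frac{\varsigma^i}{2}\,\lvert b^i_r(\mu^\infty_r,\xi^i_r)\rvert^2\de r\right),
\]
the second (pathwise, hence genuinely Borel) sum causing no difficulty because $\sum_i\varsigma^i=1$ and $\sup_i\|b^i\|_\infty<\infty$ make it a bounded Borel function of $\chi\in\Xi$.

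First I would invoke the standard construction of a \emph{universally measurable} (here in fact Borel, after modification on a null set) version of the stochastic integral: for a progressively measurable, uniformly bounded integrand $H^i_r(\chi):=\sqrt{\varsigma^i(\chi)/2}\,b^i_r(\mu^\infty_r(\chi),\xi^i_r(\chi))$ and the canonical semimartingale $\beta^i$ on $\Xi$, there is a Borel map $\chi\mapsto\big(\int_{t_0}^t H^i_r\de\beta^i_r\big)_{t\in[t_0,T]}$ that, under every probability measure on $\Xi$ under which $(\beta^j)_j$ are independent Brownian motions and the stated adaptedness holds, agrees a.s.\ with the It\^o integral. This is exactly the content of \cite[Lemmata 4.3.2 \& 4.3.3]{StroockVaradhan} already cited in the proof of Proposition~\ref{prop:weak:uniqueness} for the reciprocal martingale; one applies it coordinatewise in $i$ using $L^2$-approximation by Riemann sums along a fixed sequence of partitions, the limit being taken in a fixed complete separable metric (e.g.\ ucp), so that the exceptional null set can be chosen the same for all the measures in the family $\{{\mathbf P}^{t_0,(\mbfs,\mbfx)}\}$. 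Summing over $i$ in $L^2$ (again the bound $\sum_i\varsigma^i\|b^i\|_\infty^2\le\sup_i\|b^i\|_\infty^2$ gives the required uniform control of the quadratic variations) yields the Borel functional $\chi\mapsto\sum_{i=1}^\infty N^{t_0}_{i,T}(\chi)$, defined off a fixed Borel null set that is ${\mathbf P}^{t_0,(\mbfs,\mbfx)}$-negligible for every $(\mbfs,\mbfx)$.

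Next I would upgrade ``for every $(\mbfs,\mbfx)$'' to ``for ${\mathcal D}$-a.e.\ $m$'' in the precise form demanded by the statement. By construction, for each deterministic initial datum $(\mbfs,\mbfx)\in T^\infty_o\times(\T^d)^\infty_o$, under ${\mathbf P}^{t_0,(\mbfs,\mbfx)}$ the canonical coordinates $(\beta^i)_i$ are independent $\{\cG_t\}$-Brownian motions (Subsection~\ref{ssec:canonical}), so $\zeta_T^{t_0}$ coincides ${\mathbf P}^{t_0,(\mbfs,\mbfx)}$-a.s.\ with the genuine Girsanov exponential $z_T^{t_0,(\mbfs,\mbfx)}$ of \eqref{eqn:z_T}; in particular ${\mathbf P}^{t_0,(\mbfs,\mbfx)}(\{\zeta_T^{t_0}=z_T^{t_0,(\mbfs,\mbfx)}\})=1$. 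Specializing to $(\mbfs,\mbfx)=\emp^{-1}(m)$ for $m\in\prob_o(\T^d)$ — which is meaningful since $\emp^{-1}$ is Borel and, by Theorem~\ref{thm:dfm} (or \cite[Proposition~4.9]{DelloSchiavo22}), $\cD$ is concentrated on $\prob_o(\T^d)$ — gives ${\mathbf P}^{t_0,\emp^{-1}(m)}(\{\zeta_T^{t_0}=z_T^{t_0,\emp^{-1}(m)}\})=1$ for $\cD$-a.e.\ $m$, which is precisely the assertion. I would remark that the measurability in $m$ of $m\mapsto{\mathbf P}^{t_0,\emp^{-1}(m)}(E)$ needed to even phrase ``$\cD$-a.e.'' follows from Lemma~\ref{lem:disintegration:Pcursm0} together with Borel measurability of $\emp^{-1}$.

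The main obstacle is the first step: producing one Borel (or at least universally measurable) version of the infinite sum of stochastic integrals valid simultaneously under the uncountable family $\{{\mathbf P}^{t_0,(\mbfs,\mbfx)}\}$ rather than a separate a.s.-defined object per measure. The resolution is the classical ``functional dependence'' theorem for stochastic integrals on canonical path space — again \cite[Lemmata 4.3.2 \& 4.3.3]{StroockVaradhan} — combined with care that (i) the approximating Riemann sums are honestly Borel functionals of $\chi$ (which they are, being finite sums of products of coordinate evaluations, legitimate because $b^i$ is merely Borel, not required continuous, and $\mu^\infty_\cdot$ is a Borel function of $\chi$), and (ii) the series over $i$ converges in a mode (say, in $L^2$ uniformly over the family, hence along a subsequence pathwise off a common null set) that does not depend on the initial law — guaranteed by the uniform bound $\sup_i\|b^i\|_\infty<\infty$ and $\sum_i\varsigma^i=1$, exactly as in the proofs of Propositions~\ref{prop: weak_existence} and \ref{prop:weak:uniqueness}. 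Everything else — the exponential, the pathwise drift integral, the passage from arbitrary $(\mbfs,\mbfx)$ to $\cD$-a.e.\ $m$ — is routine.
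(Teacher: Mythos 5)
Your high-level strategy is the same as the paper's: observe that the explicit formula \eqref{eqn:z_T} for $z_T^{t_0,\cursm}$ does not involve $\cursm$ once the stochastic integrals are made into genuine path-functionals, build a single progressively-measurable functional via the Stroock--Varadhan representation lemma, exponentiate, and specialize to $\cursm = \emp^{-1}(m)$. The exponentiation, the treatment of the pathwise Lebesgue integral, and the observation that $\cD$ is concentrated on $\prob_o(\T^d)$ are all fine.

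The gap is in the first step, and it is the crux of the lemma. You assert that a single Borel map represents the stochastic integral $\sum_i\int_{t_0}^\cdot \sqrt{\varsigma^i/2}\,b^i_r(\mu^\infty_r,\xi^i_r)\cdot\de\beta^i_r$ a.s.\ under \emph{every} $\mathbf{P}^{t_0,(\mbfs,\mbfx)}$, justified by ``the limit being taken in a fixed complete separable metric (e.g.\ ucp), so that the exceptional null set can be chosen the same for all the measures in the family.'' This is not correct. The Stroock--Varadhan lemma produces a functional that agrees a.s.\ with the It\^o integral under a \emph{single} reference measure: it is built as a limit of Riemann sums, and for a merely measurable (not left-continuous) integrand, passing from $L^2$/ucp convergence to a.s.\ convergence requires extracting a subsequence whose choice depends on the measure through the rate of convergence $\mathbb{E}^{\mathbf{P}^{t_0,(\mbfs,\mbfx)}}[\int_0^T|H_r^{(n)}-H_r|^2\,\de r]$. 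The bound $\sum_i\varsigma^i\|b^i\|_\infty^2\le\sup_i\|b^i\|_\infty^2$ gives a uniform \emph{bound} on the second moment, not a uniform \emph{rate}, so for the uncountable mutually singular family $\{\mathbf{P}^{t_0,(\mbfs,\mbfx)}\}$ there is no way to extract a single common subsequence. Indeed, if your argument were correct, the lemma would hold for every $m\in\prob_o(\T^d)$, whereas it is stated (deliberately) only for $\cD$-a.e.~$m$.

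The paper's proof (carried out in Lemma~\ref{lem:universal:mathcalI0}) closes this gap as follows: build the version $\mathcal{I}^{t_0}$ under the \emph{single mixture} measure $\mathbf{P}^{t_0,\emp_\sharp^{-1}(\cD)}$, using the invariance of $\cD$ (Proposition~\ref{prop:mut}) to control $\mathbb{E}^{\mathbf{P}^{t_0,\emp_\sharp^{-1}(\cD)}}[\sum_i\int_0^T\varsigma^i|b_r^i(\mu_r^\infty,\xi_r^i)|^2\de r]$; take Riemann sums $\mathcal{I}^{t_0,(n)}$ converging to $\mathcal{I}^{t_0}$ in probability under this one measure; then apply the disintegration formula \eqref{eq:disintegration:Pcursm0} to rewrite the convergence in probability as an average over $m\sim\cD$ of $\mathbf{P}^{t_0,\emp^{-1}(m)}$-probabilities; finally extract a subsequence $(n_k)$ along which, for $\cD$-a.e.~$m$, the Riemann sums converge in probability under $\mathbf{P}^{t_0,\emp^{-1}(m)}$. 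That subsequence extraction through the disintegration formula is precisely the step your proposal is missing: you cite Lemma~\ref{lem:disintegration:Pcursm0} only for the measurability of $m\mapsto\mathbf{P}^{t_0,\emp^{-1}(m)}(E)$, not for the averaging that allows the passage from one reference measure to almost every component of its disintegration.
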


The main difficulty in the above statement lies in constructing a mapping $\zeta_T^{t_0}$ that is independent of $m$, at least when $m$ ranges over a ${\mathcal D}$-full subset of ${\mathcal P}({\mathbb T}^d)$. The existence of such a $\zeta_T^{t_0}$, however, follows directly from the next lemma. For clarity, recall that $\Xi$ is equipped with its Borel $\sigma$-field $\mathcal{G}$ and the canonical filtration $\{{\mathcal{G}_t}\}_{t \in [0,T]}$.
\begin{lemma}
\label{lem:universal:mathcalI0} Let $(b^i:[0,T]\times\prob(\T^d)\times \T^d \to \R^d)_i$ be a collection of measurable functions
that are all dominated in norm by some $b \in L^2([0,T],L^2(\prob(\T^d)\times \T^d, \overline{\mathcal{D}}; \R^d))$,
and  $t_0\in[0,T]$ be an initial time. There exists a progressively-measurable mapping ${\mathcal I}^{t_0} : \Xi \rightarrow \rmC([0,T];{\mathbb R}^d)$ such that for ${\mathcal D}$-a.e. $m \in \cP(\T^d)$, 
\begin{equation*}
{\mathbf P}^{t_0, \emp^{-1}(m)}\left(\left\{\forall t \in [0,T], 
\quad {\mathcal I}_t = \sum_{i=1}^\infty 
\int_{0}^t 
\sqrt{\frac{\varsigma^i}2}
b_r^i(\mu^\infty_r,\xi^i_r) \cdot \de \beta_r^i
\right\}\right)=1. 
\end{equation*}
\end{lemma}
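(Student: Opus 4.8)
The core issue is that the stochastic integral $\sum_i \int_0^t \sqrt{\varsigma^i/2}\, b_r^i(\mu_r^\infty,\xi_r^i)\cdot\de\beta_r^i$ is, a priori, only defined $\mathbf{P}^{t_0,\cursm}$-almost surely, and the null set depends on $\cursm$ (equivalently on $m=\emp(\mbfs,\mbfx)$). We want a single progressively-measurable functional ${\mathcal I}^{t_0}\colon\Xi\to\rmC([0,T];\R^d)$ that realizes this integral simultaneously for $\mathcal{D}$-a.e.\ starting measure. The plan is to build ${\mathcal I}^{t_0}$ by a pathwise (Bichteler--Karandikar-type) construction on the canonical space: define, for each $n\in\N_+$, the truncated Riemann--It\^o sum
\[
{\mathcal I}^{t_0,n}_t(\chi):=\sum_{i=1}^{n}\sum_{k=0}^{\lfloor 2^n t\rfloor-1}\sqrt{\tfrac{\varsigma^i}{2}}\,b^i_{t^n_k}\!\bigl(\mu^\infty_{t^n_k},\xi^i_{t^n_k}\bigr)\cdot\bigl(\beta^i_{t^n_{k+1}\wedge t}-\beta^i_{t^n_k\wedge t}\bigr),\qquad t^n_k:=t_0+k2^{-n},
\]
which is manifestly a progressively-measurable function of $\chi\in\Xi$ (each summand is a Borel function of finitely many coordinates), then set ${\mathcal I}^{t_0}_t(\chi):=\lim_{n\to\infty}{\mathcal I}^{t_0,n}_t(\chi)$ on the Borel set where the limit exists in $\rmC([0,T];\R^d)$, and ${\mathcal I}^{t_0}_t(\chi):=0$ otherwise. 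This ${\mathcal I}^{t_0}$ is progressively measurable by construction.

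It then remains to verify that, for $\mathcal{D}$-a.e.\ $m$, under $\mathbf{P}^{t_0,\emp^{-1}(m)}$ the sequence ${\mathcal I}^{t_0,n}$ converges (uniformly on $[0,T]$, in probability, hence along a subsequence a.s.\ uniformly) to the genuine stochastic integral. The key quantitative input is that the integrand has finite energy: under $\mathbf{P}^{t_0,(\mbfs,\mbfx)}$,
\[
\sum_{i=1}^\infty\mathbb{E}\!\left[\int_{t_0}^T\frac{\varsigma^i}{2}\,\bigl|b^i_r(\mu^\infty_r,\xi^i_r)\bigr|^2\de r\right]
=\sum_{i=1}^\infty\frac{s_i}{2}\,\mathbb{E}\!\left[\int_{t_0}^T\bigl|b^i_r(\mu^\infty_r,\xi^i_r)\bigr|^2\de r\right],
\]
which we must show is finite for $\mathcal{D}$-a.e.\ $m$; integrating this in $m$ against $\mathcal{D}$ and using the Mecke-type identity (Theorem~\ref{thm:dfm}) together with the invariance of $\mathcal{D}$ under $\mu^\infty$ (Proposition~\ref{prop:mut}) and the domination $|b^i|\le b$ with $b\in L^2([0,T];L^2(\prob(\T^d)\times\T^d,\overline{\mathcal{D}};\R^d))$, one gets a finite bound (of the form $\tfrac12\int_{t_0}^T\|b_r\|^2\de r$ after the reweighting by the atomic masses is absorbed by $\overline{\mathcal{D}}$). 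Thus for $\mathcal{D}$-a.e.\ $m$ the integrand is in the space of $\mathbf{P}^{t_0,\emp^{-1}(m)}$-admissible integrands, and standard $L^2$ approximation by step processes shows ${\mathcal I}^{t_0,n}\to\int_0^\cdot\sum_i\sqrt{\varsigma^i/2}\,b^i_r(\mu^\infty_r,\xi^i_r)\cdot\de\beta^i_r$ in $\mathbf{P}^{t_0,\emp^{-1}(m)}$-probability in $\rmC([0,T];\R^d)$; passing to the limit along the (deterministic, $m$-independent) dyadic scheme and redefining on a null set, the claimed identity holds $\mathbf{P}^{t_0,\emp^{-1}(m)}$-a.s.

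The main obstacle is precisely the interchange of the "$\mathcal{D}$-a.e.\ $m$" quantifier with the "$\mathbf{P}$-a.s." quantifier: one must make the convergence of ${\mathcal I}^{t_0,n}$ happen off a single null set in the product $\int_{\prob(\T^d)}\mathbf{P}^{t_0,\emp^{-1}(m)}\,\de\mathcal{D}(m)$, and then use the disintegration \eqref{eq:disintegration:Pcursm0} (Lemma~\ref{lem:disintegration:Pcursm0}, applied with $\cursm=\emp^{-1}_\sharp\mathcal{D}=\Pi_1\otimes\mathrm{vol}_d^\infty$) together with a Fubini argument to conclude that for $\mathcal{D}$-a.e.\ $m$ the null set is $\mathbf{P}^{t_0,\emp^{-1}(m)}$-negligible. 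Concretely: let $A\subset\Xi$ be the Borel set where ${\mathcal I}^{t_0,n}(\chi)$ converges in $\rmC([0,T];\R^d)$ to the pathwise-defined limit and that limit equals the stochastic integral computed via the limit of step-process approximations; one shows $\int_{\prob(\T^d)}\mathbf{P}^{t_0,\emp^{-1}(m)}(\Xi\setminus A)\,\de\mathcal{D}(m)=0$ (using the energy bound above and the $L^2$ maximal inequality for the martingale differences ${\mathcal I}^{t_0,n+1}-{\mathcal I}^{t_0,n}$, after passing to a rapidly converging subsequence), whence $\mathbf{P}^{t_0,\emp^{-1}(m)}(A)=1$ for $\mathcal{D}$-a.e.\ $m$. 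Everything else—progressive measurability of ${\mathcal I}^{t_0,n}$, Borel-measurability of the set where the limit exists, the fact that $\Xi$ is Polish so these sets are genuinely Borel—is routine.
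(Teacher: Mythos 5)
Your overall architecture is the same as the paper's: establish an $L^2$ energy bound under $\mathbf{P}^{t_0,\emp^{-1}_\sharp\mathcal{D}}$ via the invariance of $\mathcal{D}$ (Proposition~\ref{prop:mut}) and the domination by $b$, build a progressively-measurable candidate ${\mathcal I}^{t_0}$ via partial sums, and then push the ``$\mathbf{P}$-a.s.''\ statement down to ${\mathcal D}$-a.e.\ $m$ through the disintegration of Lemma~\ref{lem:disintegration:Pcursm0}. Those parts are correct and match the paper.

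The gap is in the construction of the partial sums. You propose the left-endpoint dyadic Riemann sums with raw evaluation of the integrand,
${\mathcal I}^{t_0,n}_t=\sum_{i\le n}\sum_k \sqrt{\varsigma^i/2}\,b^i_{t^n_k}(\mu^\infty_{t^n_k},\xi^i_{t^n_k})\cdot(\beta^i_{t^n_{k+1}\wedge t}-\beta^i_{t^n_k\wedge t})$,
and you then claim that ``standard $L^2$ approximation by step processes'' forces these to converge to the It\^o integral. That implication is false for an integrand that is merely measurable and square-integrable: $L^2$ approximability by \emph{some} sequence of simple processes does not imply that the simple processes obtained by \emph{sampling the integrand on a grid} converge, because for a generic Borel $b^i$ the difference $\int_{t_k^n}^{t_{k+1}^n}|b^i_r-b^i_{t_k^n}|^2\,\de r$ need not vanish as the mesh shrinks. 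Consequently, the $L^2$ maximal-inequality argument you invoke for ${\mathcal I}^{t_0,n+1}-{\mathcal I}^{t_0,n}$ has no control on the increments, and the set $A$ on which your dyadic scheme converges could even be $\mathbf P$-null. This is exactly what the paper's appeal to \cite[Lemmata~4.3.2~\&~4.3.3]{StroockVaradhan} circumvents: there, the ``Riemann sums'' are built from a \emph{time-mollified} integrand $h^{(n),i}_s := \sqrt{2\varsigma^i}\,n\int_0^T \varrho(n(s-r))\,\eta_n(\cdots)\,\de r$ evaluated at the grid points (compare the formula \eqref{eq:expression:Hnik} in the proof of Proposition~\ref{prop:ito_general:new}); the smoothing is what makes the sampled step process converge to the integrand in $L^2(\de t\otimes \de\mathbf P)$, and that convergence propagates to the Riemann sums. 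To repair your argument you should replace the raw evaluations $b^i_{t^n_k}$ by such mollified versions, or otherwise verify separately that for your coefficients the raw grid samples approximate the integrand in $L^2$ — a fact that does not hold in the generality of the statement.
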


The fact that Lemma 
\ref{lem:universal:mathcalI0} implies 
Lemma \ref{lem:universal:zeta0} is quite obvious. It suffices to let 
\begin{equation*}
\zeta_T^{t_0} := \exp \left( - {\mathcal I}^{t_0}_T + \frac12 \sum_{i=1}^{\infty}
\int_{t_0}^T \frac{\varsigma^i}2 \vert b_r^i(\mu^\infty_r,\xi^i_r) \vert^2 
\de r \right). 
\end{equation*}

We thus concentrate on the proof of the first lemma.
\begin{proof}[Proof of Lemma \ref{lem:universal:mathcalI0}]
By the domination condition,
we first observe that
\begin{equation*}
\begin{split}
{\mathbb{E}}^{t_0, \emp^{-1}_\sharp (\cD)}
\left[\sum_{i=1}^\infty \int_0^T \varsigma^i 
\vert b_r^i(\mu^\infty_r,\xi_r^i) 
\vert^2 \de r
\right] 
&\leq {\mathbb{E}}
^{t_0, \emp^{-1}_\sharp (\cD)}
\left[ \int_0^T 
\left(
\int_{\T^d} 
\vert b_r (\mu^\infty_r,x) 
\vert^2 
\de \mu_r^\infty(x) 
\right)
\de r
\right]
\\
&=
\int_0^T 
\left[
\int_{\cP(\T^d)}
\left( \int_{\T^d}
\vert b_r(m,x) 
\vert^2
\de m(x)
\right)
\de \cD(m) 
\right] 
\de r 
< \infty,
\end{split}
\end{equation*}
where we used  
Proposition \ref{prop:mut} to get the second line. This implies that the stochastic integrals appearing in the statement are well-defined and form a continuous martingale.
Following \cite[Lemmata 4.3.2 \& 4.3.3]{StroockVaradhan}, we can construct a progressively-measurable mapping 
${\mathcal I}^{t_0}: \Xi \rightarrow \rmC([0,T];{\mathbb R}^d)$ such that 
\begin{equation*}
{\mathbf P}^{t_0, \emp^{-1}_\sharp (\cD)} \left( 
\left\{
\forall t \in [0,T], 
\quad 
{\mathcal I}^{t_0}_t = \sum_{i=1}^{\infty} \int_0^t 
\sqrt{\frac{\varsigma^i}{2}} b_r^i(\mu_r^\infty,\xi_r^i) \cdot \de \beta_r^i 
\right\}
\right) = 1.
\end{equation*}
Note that $\mathcal{I}_t^{t_0}=0$ for every $t\in[0,t_0]$, since, by the definition of $\mathbf P^{t_0, \emp^{-1}_\sharp (\cD)}$ (see \eqref{def:mathbfP:cursm}), the processes $(\beta^i)_i$ coincide with the path identical to zero on $[0,t_0]$. In fact, the same results say that there exists a sequence 
$({\mathcal I}^{t_0,(n)})_{n}$
of stochastic Riemann sums
such that 
\begin{equation*}
\forall \varepsilon >0, 
\quad 
\lim_{n \rightarrow \infty}
{\mathbf P}^{t_0, \emp^{-1}_\sharp (\cD)}
\left(
\left\{\sup_{t \in [0,T]}
\vert 
{\mathcal I}^{t_0,(n)}_t 
- 
{\mathcal I}^{t_0}_t
\vert \geq \varepsilon 
\right\}
\right)
=0.
\end{equation*}
By stochastic Riemann sums, we mean that each 
${\mathcal I}^{t_0,(n)}: \Xi \to \rmC([0,T]; \R^d)$ can be written in the form 
\begin{equation}
\label{eq:mathcalIn0}
{\mathcal I}^{t_0,(n)}_t = \sum_{i=1}^{\infty} \sum_{k=1}^{N(n)} H_{k}^{(n),i} \cdot \left( \beta_{t_k^{(n)}\wedge t}^i - \beta_{t_{k-1}^{(n)}\wedge t}^i 
\right), \quad t \in [0,T],
\end{equation} 
for 
$N(n) \in {\mathbb N}_+$  (typically, $N(n)$ tends to $\infty$ as $n$ tends to $\infty$), for
$0=t_0^{(n)}<t_1^{(n)}<\cdots<t_{N(n)}^{(n)}=T$ a partition of $[0,T]$ 
(typically, its step-size tends $0$ as $n$ tends to $\infty$),
and for 
$((H_k^{(n),i})_{i})_{k =1}^{N(n)}$
a collection of
$N(n)$ ${\mathbb R}^d$-valued 
bounded random variables such that 
$H_k^{(n),i}$ is ${\mathcal G}_{t_{k-1}^{(n)}}$-measurable
for each $k \in \{1,\cdots,N(n)\}$ and each $i \in {\mathbb N}_+$.
Using the disintegration formula \eqref{eq:disintegration:Pcursm0} for ${\mathbf P}^{t_0, \emp^{-1}_\sharp (\cD)}$, 
we can rewrite this as 
\begin{equation*}
\forall \varepsilon >0, 
\quad 
\lim_{n \rightarrow \infty}
\int_{\cP(\T^d)}
{\mathbf P}^{t_0, \emp^{-1}(m)}
\left(
\left\{\sup_{t \in [0,T]}
\vert 
{\mathcal I}^{t_0,(n)}_t 
- 
{\mathcal I}^{t_0}_t
\vert \geq \varepsilon 
\right\}
\right)
\de {\mathcal D}(m)
=0.
\end{equation*}
Up to a subsequence $k \mapsto n_k$, we easily deduce that, for 
${\mathcal D}$-a.e.~$m$,
\begin{equation*}
\forall \varepsilon \in (0,\infty) \cap \Q,
\quad 
\lim_{k \rightarrow \infty}
{\mathbf P}^{t_0, \emp^{-1}(m)}
\left(
\left\{\sup_{t \in [0,T]}
\vert 
{\mathcal I}^{t_0,(n_k)}_t 
- 
{\mathcal I}^{t_0}_t
\vert \geq \varepsilon 
\right\}
\right)
=0.
\end{equation*}
(Obviously, the same result holds true when $\varepsilon$ is taken in the entire $(0,\infty)$)
Since ${\mathcal I}^{t_0,(n_k)}$ is a stochastic Riemann sum, we deduce that, for $(\emp^{-1})_{\sharp} {\mathcal D}$-a.e.~$(\mbfs,\mbfx)$, 
${\mathcal I}^{t_0}$ provides a version of the stochastic integral $(\sum_{i=1}^\infty \int_0^t 
{\sqrt{\varsigma^i/2}}
b_r^i(\mu_r^\infty,\xi_r^i) \cdot \de \beta_r^i)_{t \in [0,T]}$. 
\end{proof}

As a result of the disintegration Lemma \ref{lem:disintegration:Pcursm0}, we notice that, for any $t_0 \in [0,T)$ and for any bounded $\cD$-density $p$ (that is, $p : {\mathcal P}(\T^d) \rightarrow [0,\infty)$ is a measurable function, with the 
property that $\int_{\cP(\T^d)} p(m) \de {\mathcal D}(m) = 1$), it holds
\begin{equation}\label{eq:diswithp}
\begin{split}
\text{ for every Borel subset } E \subset \Xi, \quad 
{\mathbf P}^{t_0, \emp^{-1}_\sharp (p {\mathcal D})}
(E) 
&= \int_{\cP(\T^d)} p(m) {\mathbf P}^{t_0, \emp^{-1}( m)}(E)
\de {\mathcal D}(m)
\\
&\leq \|p\|_\infty \int_{\cP(\T^d)}  {\mathbf P}^{t_0, \emp^{-1}(m)}(E)
\de {\mathcal D}(m) \\
&= \|p\|_\infty {\mathbf P}^{t_0, \emp^{-1}_\sharp  {\mathcal D}}(E).
\end{split}
\end{equation}
From \eqref{eq:diswithp} and Proposition \ref{prop:mut} we also immediately infer that
\begin{equation}\label{eq:acbound}
(\mu_t^{\infty})_{\sharp} {\mathbf P}^{t_0,{\emp_\sharp^{-1}} (p \cD)} \le \|p\|_\infty \cD \quad \text{ for every } t \in [t_0, T].
\end{equation}
 And, then repeating the proof of Lemmata \ref{lem:universal:zeta0}
and 
\ref{lem:universal:mathcalI0}, we obtain the following result. 
\begin{corollary}
\label{corol:universal:zeta0} In the same framework as
Lemma \ref{lem:universal:zeta0}, if $p$ is a bounded $\cD$-density, then
\begin{equation*}
{\mathbf P}^{t_0, \emp^{-1}_\sharp(p {\mathcal D})}\left(\left\{\zeta_T^{t_0}=z_T^{t_0, \emp^{-1}_\sharp(p {\mathcal D})}\right\}\right)=1.
\end{equation*}
\end{corollary}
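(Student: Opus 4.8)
The plan is to show that the universal functional $\zeta_T^{t_0}$ produced in Lemma~\ref{lem:universal:zeta0} still represents, now ${\mathbf P}^{t_0,\emp^{-1}_\sharp(p\cD)}$-almost surely, the density appearing in \eqref{eqn:z_T} for the initial distribution $\emp^{-1}_\sharp(p\cD)$. Since that density differs from $\zeta_T^{t_0}$ only through the stochastic integral $t\mapsto\sum_i\int_{t_0}^t\sqrt{\varsigma^i/2}\,b^i_r(\mu^\infty_r,\xi^i_r)\cdot\de\beta^i_r$ — the finite-variation term $\frac12\sum_i\int_{t_0}^T\frac{\varsigma^i}{2}|b^i_r(\mu^\infty_r,\xi^i_r)|^2\de r$ being defined pathwise — it suffices to prove that the functional $\mathcal{I}^{t_0}$ of Lemma~\ref{lem:universal:mathcalI0} is a version, under ${\mathbf P}^{t_0,\emp^{-1}_\sharp(p\cD)}$, of that stochastic integral. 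First, this integral is indeed well defined under ${\mathbf P}^{t_0,\emp^{-1}_\sharp(p\cD)}$: the processes $(\beta^i)_i$ remain independent Brownian motions under ${\mathbf P}^{t_0,\cursm}$ for every initial law $\cursm$, by the very definition \eqref{def:mathbfP:cursm}, and $\mathbb{E}^{t_0,\emp^{-1}_\sharp(p\cD)}\big[\sum_i\int_{t_0}^T\varsigma^i|b^i_r(\mu^\infty_r,\xi^i_r)|^2\de r\big]<\infty$, either because the $b^i$ are uniformly bounded, or directly from \eqref{eq:acbound}, which bounds this expectation by $\|p\|_\infty\int_{t_0}^T\!\int_{\prob(\T^d)}\!\int_{\T^d}|b_r(m,x)|^2\de m(x)\de\cD(m)\de r$.

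Next I would re-run the proofs of Lemmata~\ref{lem:universal:zeta0} and \ref{lem:universal:mathcalI0} verbatim, with $\cD$ replaced throughout by $p\cD$. Only two ingredients change: the integrability estimate for the integrand, which in Lemma~\ref{lem:universal:mathcalI0} rested on the invariance Proposition~\ref{prop:mut}, is now supplied by \eqref{eq:acbound}; and the disintegration formula \eqref{eq:disintegration:Pcursm0} for ${\mathbf P}^{t_0,\emp^{-1}_\sharp\cD}$, used to localise over the deterministic initial data, is replaced by the equality ${\mathbf P}^{t_0,\emp^{-1}_\sharp(p\cD)}(E)=\int_{\prob(\T^d)}p(m)\,{\mathbf P}^{t_0,\emp^{-1}(m)}(E)\,\de\cD(m)$ recorded in \eqref{eq:diswithp}. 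The Stroock--Varadhan construction \eqref{eq:mathcalIn0} and the subsequence extraction then apply unchanged, and deliver a progressively measurable map $\widetilde{\mathcal{I}}^{t_0}\colon\Xi\to\rmC([0,T];\R^d)$ which, for $p\cD$-a.e.\ $m$, is a version under ${\mathbf P}^{t_0,\emp^{-1}(m)}$ of $\big(\sum_i\int_0^t\sqrt{\varsigma^i/2}\,b^i_r(\mu^\infty_r,\xi^i_r)\cdot\de\beta^i_r\big)_{t\in[0,T]}$, and whose value at $t=T$, combined with the pathwise finite-variation term, yields exactly $z_T^{t_0,\emp^{-1}_\sharp(p\cD)}$ — once more by \eqref{eq:diswithp}.

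It then remains to identify $\widetilde{\mathcal{I}}^{t_0}$ with the original $\mathcal{I}^{t_0}$. By Lemma~\ref{lem:universal:mathcalI0}, $\mathcal{I}^{t_0}$ is, for $\cD$-a.e.\ $m$, a version under ${\mathbf P}^{t_0,\emp^{-1}(m)}$ of the very same stochastic integral (well defined there since $\sum_i\varsigma^i|b^i_r(\mu^\infty_r,\xi^i_r)|^2\le\sup_i\|b^i\|_\infty^2$). As $p\cD$ and $\cD$ are mutually absolutely continuous on $\{p>0\}$, it follows that for $p\cD$-a.e.\ $m$ both $\mathcal{I}^{t_0}$ and $\widetilde{\mathcal{I}}^{t_0}$ coincide ${\mathbf P}^{t_0,\emp^{-1}(m)}$-a.s.\ with that stochastic integral, hence with each other; integrating over $m$ against $p\cD$ and invoking \eqref{eq:diswithp} one last time gives $\mathcal{I}^{t_0}=\widetilde{\mathcal{I}}^{t_0}$, ${\mathbf P}^{t_0,\emp^{-1}_\sharp(p\cD)}$-a.s. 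Together with the previous paragraph this yields $\zeta_T^{t_0}=z_T^{t_0,\emp^{-1}_\sharp(p\cD)}$, ${\mathbf P}^{t_0,\emp^{-1}_\sharp(p\cD)}$-a.s., which is the assertion.

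The argument is essentially bookkeeping, and I do not expect a serious obstacle; the one point that genuinely matters is the absolute-continuity bound ${\mathbf P}^{t_0,\emp^{-1}_\sharp(p\cD)}\le\|p\|_\infty\,{\mathbf P}^{t_0,\emp^{-1}_\sharp\cD}$ from \eqref{eq:diswithp}, together with its companion \eqref{eq:acbound}: it is exactly what lets every almost-sure, $L^2(\cD)$, or in-$\cD$-measure statement from the reference construction be transported to the perturbed measure. The boundedness of the density $p$ is essential here — an unbounded $p$ would destroy this domination, and the transfer would fail.
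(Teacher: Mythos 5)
Your proof is correct and matches the paper's intended argument, which is precisely to re-run the proofs of Lemmata \ref{lem:universal:zeta0} and \ref{lem:universal:mathcalI0} with $\cD$ replaced by $p\cD$, using the domination ${\mathbf P}^{t_0,\emp^{-1}_\sharp(p\cD)} \le \|p\|_\infty\,{\mathbf P}^{t_0,\emp^{-1}_\sharp\cD}$ from \eqref{eq:diswithp} and the bound \eqref{eq:acbound} to transfer the integrability and the convergence in probability of the Riemann sums. The final identification of $\widetilde{\mathcal{I}}^{t_0}$ with $\mathcal{I}^{t_0}$ via the disintegration over $p\cD$-a.e.~$m$ is a slightly more roundabout route than directly pulling the convergence of the same Riemann sums through the absolute continuity, but it is valid and the essential ingredients are exactly the ones the paper relies on.
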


We now state the main disintegration formula. 
\begin{proposition}
\label{prop:disintegration:Q0} Let 
$(b^i:[0,T]\times\prob(\T^d)\times \T^d \to \R^d)_i$ be a
collection of uniformly bounded and measurable functions, and $t_0\in [0,T)$ be an initial time. Then, for every Borel subset $E \subset \Xi$, the mapping
\begin{equation}
\label{eq:mapping:m:mapsto:Qt0m(E)}
m \in \cP(\T^d) \mapsto {\mathbf Q}^{t_0, \emp^{-1}(m)}(E)
\end{equation}
is measurable for the completion of the Borel $\sigma$-field on 
$\cP(\T^d)$ under ${\mathcal D}$. Moreover, if $p$ is a bounded $\cD$-density, then for every Borel subset $E \subset \Xi$
\begin{equation}
\label{eq:disintegration:Qcursm0}
{\mathbf Q}^{t_0, \emp^{-1}_\sharp (p {\mathcal D})}(E)
= \int_{\cP(\T^d)}
p(m) 
{\mathbf Q}^{t_0, \emp^{-1} (m)}(E)
\de {\mathcal D}(m).
\end{equation}
\end{proposition}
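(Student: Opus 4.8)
The plan is to deduce both assertions from the explicit representation of $\mathbf{Q}^{t_0,\cursm}$ in Proposition~\ref{prop:pandq}, combined with the universal Girsanov density $\zeta_T^{t_0}$ produced by Lemmas~\ref{lem:universal:zeta0}--\ref{lem:universal:mathcalI0} and Corollary~\ref{corol:universal:zeta0}. First I would specialise \eqref{eq:new:expression:Qcursm0} to a Dirac initial condition: writing $G := ({\cercle{$\Psi$}}^{\boldsymbol b})^{-1}(E)$, Proposition~\ref{prop:pandq} gives, for every $m\in\cP(\T^d)$,
\[
\mathbf{Q}^{t_0,\emp^{-1}(m)}(E)=\int_\Xi \frac{1}{z_T^{t_0,\emp^{-1}(m)}}\,\mathbf{1}_G\de\mathbf{P}^{t_0,\emp^{-1}(m)},
\]
and by Lemma~\ref{lem:universal:zeta0} I would replace $1/z_T^{t_0,\emp^{-1}(m)}$ by the $m$-independent density $1/\zeta_T^{t_0}$, which is legitimate $\mathbf{P}^{t_0,\emp^{-1}(m)}$-a.s.\ for $\mathcal{D}$-a.e.\ $m$. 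The resulting identity
\[
\mathbf{Q}^{t_0,\emp^{-1}(m)}(E)=\int_\Xi \frac{1}{\zeta_T^{t_0}}\,\mathbf{1}_G\de\mathbf{P}^{t_0,\emp^{-1}(m)}\qquad\text{for }\mathcal{D}\text{-a.e.\ }m
\]
is the workhorse for both claims, its point being that the integrand no longer depends on $m$.

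For the measurability of \eqref{eq:mapping:m:mapsto:Qt0m(E)} I would argue as follows. The map $m\mapsto\emp^{-1}(m)$ is Borel (Subsection~\ref{sec:diri}), and by Lemma~\ref{lem:disintegration:Pcursm0} the map $\cursm\mapsto\mathbf{P}^{t_0,\cursm}\in\mathcal{P}(\Xi)$ is Borel, hence so is $m\mapsto\mathbf{P}^{t_0,\emp^{-1}(m)}$. Since $\Xi$ is Polish, the evaluation $\nu\mapsto\int_\Xi h\de\nu$ is Borel on $\mathcal{P}(\Xi)$ for every nonnegative Borel $h$ (true for indicators, then by a monotone class argument for bounded $h$, then by monotone convergence in general); applying this with $h=\tfrac{1}{\zeta_T^{t_0}}\mathbf{1}_G$ shows that $m\mapsto\int_\Xi\tfrac{1}{\zeta_T^{t_0}}\mathbf{1}_G\de\mathbf{P}^{t_0,\emp^{-1}(m)}$ is Borel. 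By the workhorse identity it agrees $\mathcal{D}$-a.e.\ with $m\mapsto\mathbf{Q}^{t_0,\emp^{-1}(m)}(E)$, which is therefore measurable for the $\mathcal{D}$-completion of the Borel $\sigma$-field on $\cP(\T^d)$.

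For the disintegration formula I would proceed in three moves. From \eqref{eq:diswithp} I have $\mathbf{P}^{t_0,\emp^{-1}_\sharp(p\mathcal{D})}(E')=\int_{\cP(\T^d)}p(m)\,\mathbf{P}^{t_0,\emp^{-1}(m)}(E')\de\mathcal{D}(m)$ for Borel $E'\subset\Xi$, which the same monotone-class/monotone-convergence argument upgrades to $\int_\Xi h\de\mathbf{P}^{t_0,\emp^{-1}_\sharp(p\mathcal{D})}=\int_{\cP(\T^d)}p(m)\bigl(\int_\Xi h\de\mathbf{P}^{t_0,\emp^{-1}(m)}\bigr)\de\mathcal{D}(m)$ for nonnegative Borel $h$. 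Next, applying Proposition~\ref{prop:pandq} with $\cursm=\emp^{-1}_\sharp(p\mathcal{D})$ and then Corollary~\ref{corol:universal:zeta0} to replace $1/z_T^{t_0,\emp^{-1}_\sharp(p\mathcal{D})}$ by $1/\zeta_T^{t_0}$ (valid $\mathbf{P}^{t_0,\emp^{-1}_\sharp(p\mathcal{D})}$-a.s.), I get $\mathbf{Q}^{t_0,\emp^{-1}_\sharp(p\mathcal{D})}(E)=\int_\Xi\tfrac{1}{\zeta_T^{t_0}}\mathbf{1}_G\de\mathbf{P}^{t_0,\emp^{-1}_\sharp(p\mathcal{D})}$. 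Finally, the upgraded disintegration with $h=\tfrac{1}{\zeta_T^{t_0}}\mathbf{1}_G$ turns the right-hand side into $\int_{\cP(\T^d)}p(m)\bigl(\int_\Xi\tfrac{1}{\zeta_T^{t_0}}\mathbf{1}_G\de\mathbf{P}^{t_0,\emp^{-1}(m)}\bigr)\de\mathcal{D}(m)$, and the inner integral equals $\mathbf{Q}^{t_0,\emp^{-1}(m)}(E)$ for $\mathcal{D}$-a.e.\ $m$ by the workhorse identity; since $p$ is a fixed bounded density, this is exactly \eqref{eq:disintegration:Qcursm0}. All integrals here are of nonnegative quantities with total mass $\le 1$, so no integrability issue arises.

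The main obstacle --- already isolated and resolved in the lemmas preceding the proposition --- is the $\cursm$-dependence of the Girsanov density $z_T^{t_0,\cursm}$: because the change of measure linking the free and drifted particle systems is built from the Brownian motions of the free system, its density is not a priori a single $\cursm$-independent functional on $\Xi$, and this is precisely what forces both the passage through $\zeta_T^{t_0}$ and the fact that \eqref{eq:mapping:m:mapsto:Qt0m(E)} is only measurable for the $\mathcal{D}$-completion. The remaining ingredients --- Borel measurability of $\nu\mapsto\int h\de\nu$ and the extension of the $\mathbf{P}$-disintegration from sets to nonnegative integrands --- are routine, and I would not detail them.
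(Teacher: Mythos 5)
Your proposal is correct and follows essentially the same route as the paper: both start from the representation $\mathbf{Q}^{t_0,\emp^{-1}(m)}(E)=\int_\Xi\zeta_T^{t_0\,-1}\mathbf{1}_{{\cercle{$\Psi$}}^{-1}(E)}\de\mathbf{P}^{t_0,\emp^{-1}(m)}$ (valid for $\cD$-a.e.\ $m$ by \eqref{eq:new:expression:Qcursm0} and Lemma~\ref{lem:universal:zeta0}), obtain measurability from Lemma~\ref{lem:disintegration:Pcursm0} and a monotone approximation of the integrand, and deduce the disintegration from Corollary~\ref{corol:universal:zeta0} together with the upgraded $\mathbf{P}$-disintegration. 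The only differences are cosmetic phrasings (framing the first step through $m\mapsto\mathbf{P}^{t_0,\emp^{-1}(m)}\in\prob(\Xi)$ rather than directly through $m\mapsto\mathbf{P}^{t_0,\emp^{-1}(m)}(E')$, and citing \eqref{eq:diswithp} rather than \eqref{eq:disintegration:Pcursm0}).
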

\begin{proof}
By combining \eqref{eq:new:expression:Qcursm0} 
 (but omitting, for simplicity, the superscript ${\boldsymbol b}$ in the notation $\cercle{$\Psi$}^{\boldsymbol b}$)
with Lemma \ref{lem:universal:zeta0}, we deduce that there exists a Borel 
subset $O$ of $\cP(\T^d)$ such that 
${\mathcal D}(O)=1$ and, for any $m \in O$
and any Borel subset $E \subset \Xi$,
\begin{equation*}
{\mathbf Q}^{t_0, \emp^{-1}(m)}(E)
= 
{\cercle{$\Psi$}}_\sharp \left(
\frac1{z_T^{t_0, \emp^{-1}(m)}}  {\mathbf P}^{t_0, \emp^{-1}(m)}\right)(E)
= 
\int_{\Xi}
\frac1{\zeta_T^{t_0}} {\mathbf 1}_{{\cercle{$\Psi$}}^{-1}(E)} 
\de {\mathbf P}^{t_0, \emp^{-1}(m)}.
\end{equation*}
We observe from Lemma \ref{lem:disintegration:Pcursm0} that the right-hand side is measurable w.r.t. 
$m$ (using in addition the fact that any positive measurable function can be approximated, pointwise, 
by non-decreasing limits of simple functions). 
This proves the measurability of the mapping 
\eqref{eq:mapping:m:mapsto:Qt0m(E)}.

As in the proof of the disintegration formula, we can
restart from the last display, but under the measure 
$\emp^{-1}_\sharp (p {\mathcal D})$
(which is possible thanks to Corollary \ref{corol:universal:zeta0}). And then, using also \eqref{eq:disintegration:Pcursm0}, we get
\begin{equation*}
\begin{split}
{\mathbf Q}^{t_0, \emp^{-1}_\sharp (p {\mathcal D})}(E)
&= 
\int_{\Xi}
\frac1{\zeta_T^{t_0}} {\mathbf 1}_{{\cercle{$\Psi$}}^{-1}(E)} 
\de {\mathbf P}^{t_0, \emp^{-1}_\sharp (p {\mathcal D})}
\\
&= \int_{\cP(\T^d)} 
\left( \int_{\Xi}
\frac1{\zeta_T^{t_0}} {\mathbf 1}_{{\cercle{$\Psi$}}^{-1}(E)}
\de {\mathbf P}^{t_0, \emp^{-1}(m)}
\right) p(m) \de {\mathcal D}(m)
\\
&= \int_{\cP(\T^d)} p(m) {\mathbf Q}^{t_0, \emp^{-1}(m)}(E) 
\de {\mathcal D}(m),
\end{split}
\end{equation*}
which
proves 
\eqref{eq:disintegration:Qcursm0}, and then
completes the proof.
\end{proof}

\subsubsection{On the choice of a version for $b$}
In this paragraph, we restrict ourselves to the case when all the entries of the collection $(b^i)_i$ are the same, i.e., there exists $b$ such that $b^i=b$ for each $i \in {\mathbb N}_+$. In this framework,
it is important (especially for the applications in Section \ref{sec: control}) {to see what our construction becomes when only a version of $b$ is given.
Assume indeed that $\tilde b$ is another measurable version of $b$, that is, for $\mathscr{L}^{[0,T]} \otimes {\mathcal D}$ almost every 
 $(t,\mu) \in [0,T] \times {\mathcal P}(\T^d)$ (so that $\mu$ is purely atomic),
 and for any $x \in {\mathbb T}^d$ such that $\mu_x >0$
 \begin{equation*}
b_t(\mu,x) = \tilde{b}_t(\mu,x),
 \end{equation*}
or, equivalently,
for $\mathscr{L}^{[0,T]} \otimes {\mathcal D}$-almost every
$(t,\mu)$, $b(t,\mu,\cdot)$ and $\tilde{b}(t,\mu,\cdot)$ coincide under $\mu$. Given these two measurable versions, we can consider the two associated systems of weak solutions (on the canonical space) to \eqref{eqn: particles_drift}
(with velocity fields that are independent of $i$), namely
\begin{equation*}
\left( {\mathbf Q}^{t_0,(\mbfs,\mbfx)} \right)_{\mbfs,\mbfx}
\quad \text{and} \quad
\left( \tilde{\mathbf Q}^{t_0,(\mbfs,\mbfx)} \right)_{\mbfs,\mbfx},
\end{equation*}
where $t_0\in[0,T)$ and $(\mbfs,\mbfx)$ varies in $T^\infty_o\times(\T^d)^\infty_o$.
The goal of this subsection is to prove the following consistency result.
\begin{proposition}
\label{prop:consistency:solutions}
Let $t_0\in [0,T)$, $b\colon[0,T]\times\prob(\T^d)\times\T^d\to\R^d$ be bounded and measurable, and $\tilde{b}\colon[0,T]\times\prob(\T^d)\times\T^d\to\R^d$ be a measurable version of $b$ in the sense described above. Then, for ${\mathcal D}$-a.e.~$m$, \begin{equation*}
{\mathbf Q}^{t_0,\emp^{-1}(m)} = \tilde {\mathbf Q}^{t_0,\emp^{-1}(m)},
\end{equation*}
where ${\mathbf Q}^{t_0,\emp^{-1}(m)}$ and $\tilde{\mathbf Q}^{t_0,\emp^{-1}(m)}$ denote the laws of the weak solutions to \eqref{eqn: particles_drift} with 
$i$-independent
drifts $b$ and $\tilde{b}$, respectively.
\end{proposition}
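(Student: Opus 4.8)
The plan is to compare the two systems via the Girsanov representation formula \eqref{eq:new:expression:Qcursm0}. Write ${\boldsymbol b}:=(b)_i$ and $\tilde{\boldsymbol b}:=(\tilde b)_i$ for the two $i$-independent collections. By Proposition \ref{prop:pandq} applied with $\cursm=\delta_{\emp^{-1}(m)}$ (recall \eqref{eq:Q:t0:s,x}), for every Borel $E\subset\Xi$,
\[
{\mathbf Q}^{t_0,\emp^{-1}(m)}(E)=\int_{\Xi}{\mathbf 1}_E\bigl({\cercle{$\Psi$}}^{\boldsymbol b}(\chi)\bigr)\,\frac{1}{z_T^{t_0,\emp^{-1}(m)}(\chi)}\,\de{\mathbf P}^{t_0,\emp^{-1}(m)}(\chi),
\]
and likewise for $\tilde{\mathbf Q}^{t_0,\emp^{-1}(m)}$ with $\tilde{\boldsymbol b}$ in place of ${\boldsymbol b}$. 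Now ${\cercle{$\Psi$}}^{\boldsymbol b}$ depends on $b$ only through the pathwise Lebesgue integrals $\int_0^{\cdot}b_r(\mu^\infty_r,\xi^i_r)\,\de r$ (see \eqref{eqn:psi_i_new}), and $1/z_T^{t_0,\emp^{-1}(m)}$ is, under ${\mathbf P}^{t_0,\emp^{-1}(m)}$, the exponential martingale \eqref{eqn:z_T}, built from $b$ only through the integrals $\int_{t_0}^{\cdot}b_r(\mu^\infty_r,\xi^i_r)\cdot\de\beta^i_r$ and $\int_{t_0}^{\cdot}|b_r(\mu^\infty_r,\xi^i_r)|^2\,\de r$. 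Hence it suffices to prove that, for $\cD$-a.e.\ $m$, one has ${\cercle{$\Psi$}}^{\boldsymbol b}={\cercle{$\Psi$}}^{\tilde{\boldsymbol b}}$ and $z_T^{t_0,\emp^{-1}(m)}=\tilde z_T^{t_0,\emp^{-1}(m)}$, both ${\mathbf P}^{t_0,\emp^{-1}(m)}$-a.s.; inserting these identities into the displays above then yields ${\mathbf Q}^{t_0,\emp^{-1}(m)}=\tilde{\mathbf Q}^{t_0,\emp^{-1}(m)}$.

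The cornerstone is the invariance of $\cD$. Under ${\mathbf P}^{t_0,\emp^{-1}_\sharp\cD}$, the process $(\mu^\infty_r)_{r\in[0,T]}$ of \eqref{eq:muinfmap} is frozen equal to $\emp(\mbfs,\mbfx)\sim\cD$ on $[0,t_0]$ and satisfies $(\mu^\infty_r)_\sharp{\mathbf P}^{t_0,\emp^{-1}_\sharp\cD}=\cD$ for every $r\in[t_0,T]$ by Proposition \ref{prop:mut}; hence $\mu^\infty_r$ has law $\cD$ for every $r\in[0,T]$. Since the hypothesis on $\tilde b$ means exactly that $b=\tilde b$ as elements of $L^2([0,T]\times\prob(\T^d)\times\T^d,\mathscr{L}^{[0,T]}\otimes\overline{\cD};\R^d)$, Fubini's theorem together with the definition \eqref{eq:bmm} of $\overline{\cD}$ gives
\[
{\mathbb E}^{{\mathbf P}^{t_0,\emp^{-1}_\sharp\cD}}\!\left[\int_0^T\!\int_{\T^d}\bigl|b_r(\mu^\infty_r,x)-\tilde b_r(\mu^\infty_r,x)\bigr|^2\,\de\mu^\infty_r(x)\,\de r\right]=\int_0^T\!\int_{\prob(\T^d)}\!\int_{\T^d}\bigl|b_r(\mu,x)-\tilde b_r(\mu,x)\bigr|^2\,\de\mu(x)\,\de\cD(\mu)\,\de r=0.
\]
Therefore the (Borel) event $A:=\bigl\{\int_0^T\!\int_{\T^d}|b_r(\mu^\infty_r,x)-\tilde b_r(\mu^\infty_r,x)|^2\,\de\mu^\infty_r(x)\,\de r=0\bigr\}$ satisfies ${\mathbf P}^{t_0,\emp^{-1}_\sharp\cD}(A)=1$; by the disintegration formula \eqref{eq:disintegration:Pcursm0} applied with $\cursm=\emp^{-1}_\sharp\cD=\Pi_1\otimes\mathrm{vol}_d^\infty$ (and since $\cD$ is concentrated on $\prob_o(\T^d)$), we obtain $1=\int_{\prob(\T^d)}{\mathbf P}^{t_0,\emp^{-1}(m)}(A)\,\de\cD(m)$, whence ${\mathbf P}^{t_0,\emp^{-1}(m)}(A)=1$ for $\cD$-a.e.\ $m$.

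Fixing such an $m$, on the full-measure event $A$ one has, for $\mathscr{L}^{[0,T]}$-a.e.\ $r$, $b_r(\mu^\infty_r,\cdot)=\tilde b_r(\mu^\infty_r,\cdot)$ $\mu^\infty_r$-a.e.; since $\mu^\infty_r$ is purely atomic, this forces equality at each of its atoms, in particular $b_r(\mu^\infty_r,\xi^i_r)=\tilde b_r(\mu^\infty_r,\xi^i_r)$ for every $i$. Consequently, on $A$ the integrands defining $\psi_i^{\boldsymbol b}$ and $\psi_i^{\tilde{\boldsymbol b}}$ in \eqref{eqn:psi_i_new} coincide for all $i$, so ${\cercle{$\Psi$}}^{\boldsymbol b}={\cercle{$\Psi$}}^{\tilde{\boldsymbol b}}$ on $A$; and since the relevant integrands agree $\mathscr{L}^{[0,T]}\otimes{\mathbf P}^{t_0,\emp^{-1}(m)}$-a.e., the corresponding stochastic and Lebesgue integrals in \eqref{eqn:z_T} are indistinguishable under ${\mathbf P}^{t_0,\emp^{-1}(m)}$, giving $z_T^{t_0,\emp^{-1}(m)}=\tilde z_T^{t_0,\emp^{-1}(m)}$ ${\mathbf P}^{t_0,\emp^{-1}(m)}$-a.s. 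Plugging these two identities into the Girsanov representation of the first paragraph completes the proof. The only genuinely delicate point is the passage from the single ``averaged'' statement ${\mathbf P}^{t_0,\emp^{-1}_\sharp\cD}(A)=1$ to the pointwise-in-$m$ statement, which is handled by combining the disintegration formula \eqref{eq:disintegration:Pcursm0} with the invariance of $\cD$; alternatively, one may run the same argument through the $m$-independent versions of $z_T^{t_0,\cdot}$ and of the underlying stochastic integrals provided by Lemmata \ref{lem:universal:zeta0} and \ref{lem:universal:mathcalI0}.
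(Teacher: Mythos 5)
Your proof is correct, but it takes a genuinely different route from the paper's. The paper proves the statement in two stages: it first establishes (Lemma~\ref{lem:consistency:absolutely:continuous:measures}) that the aggregated laws ${\mathbf Q}^{t_0,\emp_\sharp^{-1}(p\cD)}$ and $\tilde{\mathbf Q}^{t_0,\emp_\sharp^{-1}(p\cD)}$ coincide for every bounded $\cD$-density $p$, via uniqueness in law (Proposition~\ref{prop:weak:uniqueness}) together with the absolute continuity of the time-marginals of $\mu^\infty$ under $\mathbf Q^{t_0,\emp_\sharp^{-1}(p\cD)}$ (Lemma~\ref{lem:absolute:continuity}); it then reads off the pointwise-in-$m$ statement from the $\mathbf Q$-disintegration formula (Proposition~\ref{prop:disintegration:Q0}). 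You bypass both of these: instead of reasoning at the level of the drifted laws $\mathbf Q$, you stay entirely under the base measures $\mathbf P^{t_0,\emp^{-1}(m)}$, where the marginal law of $\mu^\infty_r$ is exactly $\cD$ for every $r$. The expectation computation, the $\mathbf P$-disintegration Lemma~\ref{lem:disintegration:Pcursm0}, and the purely atomic structure of $\mu^\infty_r$ then yield, for $\cD$-a.e.\ $m$, the $\mathscr{L}^{[0,T]}\otimes\mathbf P^{t_0,\emp^{-1}(m)}$-a.e.\ equality $b_r(\mu^\infty_r,\xi^i_r)=\tilde b_r(\mu^\infty_r,\xi^i_r)$ for every $i$, which makes the pathwise-defined objects $\cercle{$\Psi$}^{\boldsymbol b}$, $\cercle{$\Psi$}^{\tilde{\boldsymbol b}}$ and the Girsanov densities $z_T$, $\tilde z_T$ indistinguishable. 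Plugging into \eqref{eq:new:expression:Qcursm0} then finishes the argument in one stroke. Your route is more elementary in that it needs neither uniqueness in law for the drifted system nor the $\mathbf Q$-disintegration formula — only the (easier) $\mathbf P$-disintegration and the explicit Girsanov representation — whereas the paper's route leans on machinery that has been set up anyway and makes the final proof of the proposition essentially a two-line consequence of Proposition~\ref{prop:disintegration:Q0}. Both proofs ultimately exploit the same underlying fact, namely that $\mu^\infty$ sees the measure $\cD$; you use exact invariance under $\mathbf P$, the paper uses absolute continuity of marginals under $\mathbf Q$.
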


The proof of Proposition \ref{prop:consistency:solutions} relies on the following lemma, proved later in this subsection.
\begin{lemma}
\label{lem:consistency:absolutely:continuous:measures}
Let $t_0 \in [0,T)$, $p$ be a bounded $\cD$-density,
and denote by 
\[{\mathbf Q}^{t_0,{\emp_\sharp^{-1}} (p {\mathcal D})}
\quad 
\textrm{and}
\quad \tilde{\mathbf Q}^{t_0,{\emp_\sharp^{-1}}  (p {\mathcal D})}\]
the laws of the two weak solutions of \eqref{eqn: particles_drift}, both  initialized from the measure 
${\emp_\sharp^{-1}}  (p {\mathcal D})$ (equivalently, $\mu^\infty_{t_0}$ is sampled from $p {\mathcal D}$), and driven by the velocity fields $b$ and 
$\tilde b$ respectively. 
Then, 
\[{\mathbf Q}^{t_0,{\emp_\sharp^{-1}}  (p {\mathcal D})}=\tilde{\mathbf Q}^{t_0,{\emp_\sharp^{-1}}  (p {\mathcal D})}.\]
\end{lemma}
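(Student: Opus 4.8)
The plan is to build everything on the explicit Girsanov representation \eqref{eq:new:expression:Qcursm0}, applied with the initial law $\cursm = \emp^{-1}_\sharp(p\mathcal{D}) \in \prob(T^\infty_o\times(\T^d)^\infty_o)$ (which is legitimate, since $p\mathcal{D}\ll\mathcal{D}$ is carried by $\prob_o(\T^d)$ and $\emp^{-1}$ is Borel there). Inspecting the building blocks \eqref{eqn:psi_i_new}, \eqref{eq:notation:cercle} and \eqref{eqn:z_T}, one sees that both the transfer map $\cercle{$\Psi$}^{\boldsymbol b}$ and the Radon--Nikodym factor $1/z_T^{t_0,\emp^{-1}_\sharp(p\mathcal{D})}$ depend on the velocity field only through the processes $r \mapsto b_r(\mu^\infty_r,\xi^i_r)$, $i\in\N_+$, evaluated along the canonical coordinates (with $\mu^\infty_r = \sum_j \varsigma^j\delta_{\xi^j_r}$ as in \eqref{eq:muinfmap}). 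Hence it is enough to exhibit a ${\mathbf P}^{t_0,\emp^{-1}_\sharp(p\mathcal{D})}$-full event $E\in\mathcal{G}$ (passing to the $\mathcal{G}$-completion if needed) on which
\[
\int_{t_0}^T \bigl| b_r(\mu^\infty_r,\xi^i_r) - \tilde b_r(\mu^\infty_r,\xi^i_r)\bigr|^2\,\de r = 0 \qquad \text{for every } i\in\N_+ .
\]

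The construction of $E$ is the core of the argument, and its only real difficulty is that $\xi^i_r$ is an \emph{atom} of $\mu^\infty_r$ of positive mass $\varsigma^i$, so a priori altering $b$ on a $\mu^\infty_r$-null set might affect it. I would bypass this as follows. Saying that $\tilde b$ is a version of $b$ means $b=\tilde b$ $\ \mathscr{L}^{[0,T]}\otimes\overline{\mathcal{D}}$-a.e., i.e.\ $\int_{\prob(\T^d)}\int_{\T^d}\mathbf{1}_{\{b_r\ne\tilde b_r\}}\de\mu(x)\de\mathcal{D}(\mu)=0$ for a.e.\ $r\in[t_0,T]$. For every such $r$, the absolute-continuity bound \eqref{eq:acbound} (valid since $p$ is a bounded $\mathcal{D}$-density) yields
\[
\mathbb{E}^{t_0,\emp^{-1}_\sharp(p\mathcal{D})}\!\Bigl[\int_{\T^d}\mathbf{1}_{\{b_r(\mu^\infty_r,x)\ne\tilde b_r(\mu^\infty_r,x)\}}\de\mu^\infty_r(x)\Bigr]
\ \le\ \|p\|_\infty\!\int_{\prob(\T^d)}\!\int_{\T^d}\mathbf{1}_{\{b_r\ne\tilde b_r\}}\de\mu\,\de\mathcal{D} \ =\ 0 .
\]
Now the integral on the left equals $\sum_i \varsigma^i\,\mathbf{1}_{\{b_r(\mu^\infty_r,\xi^i_r)\ne\tilde b_r(\mu^\infty_r,\xi^i_r)\}}$ (the identity $\int\varphi\,\de\mu^\infty_r=\sum_i\varsigma^i\varphi(\xi^i_r)$ holds unconditionally), and $\varsigma^i>0$ almost surely because $\emp^{-1}_\sharp(p\mathcal{D})$ is carried by $T^\infty_o\times(\T^d)^\infty_o$, where the weights are strictly positive. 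Thus each summand vanishes a.s.; a Fubini argument in $r$, combined with the boundedness of $b$ and $\tilde b$, produces the desired event $E$. This weighted-sum trick is precisely what defeats the "positive-mass atom'' obstacle, which I regard as the main point of the proof.

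It then remains to substitute $E$ back into \eqref{eq:new:expression:Qcursm0}. On $E$ one has $\int_{t_0}^t b_r(\mu^\infty_r,\xi^i_r)\de r = \int_{t_0}^t \tilde b_r(\mu^\infty_r,\xi^i_r)\de r$ for all $t$ and all $i$, so $\psi^{\boldsymbol b}_i$ and $\psi^{\tilde{\boldsymbol b}}_i$ from \eqref{eqn:psi_i_new} take the same value along the canonical process, whence $\cercle{$\Psi$}^{\boldsymbol b}=\cercle{$\Psi$}^{\tilde{\boldsymbol b}}$ on $E$ by \eqref{eq:notation:cercle}. Likewise, the integrands in the exponent of $1/z_T^{t_0,\emp^{-1}_\sharp(p\mathcal{D})}$ in \eqref{eqn:z_T} coincide $\mathscr{L}^{[t_0,T]}$-a.e.\ on $E$ with those of the analogous $1/\tilde z_T^{t_0,\emp^{-1}_\sharp(p\mathcal{D})}$ built from $\tilde b$; since progressively measurable integrands agreeing $\de r$-a.e.\ almost surely give identical Lebesgue and Itô integrals, and the relevant series converge as in Proposition \ref{prop: weak_existence}, we obtain $z_T^{t_0,\emp^{-1}_\sharp(p\mathcal{D})}=\tilde z_T^{t_0,\emp^{-1}_\sharp(p\mathcal{D})}$ ${\mathbf P}^{t_0,\emp^{-1}_\sharp(p\mathcal{D})}$-a.s. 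Consequently the two reweighted measures $\frac{1}{z_T^{t_0,\emp^{-1}_\sharp(p\mathcal{D})}}{\mathbf P}^{t_0,\emp^{-1}_\sharp(p\mathcal{D})}$ and $\frac{1}{\tilde z_T^{t_0,\emp^{-1}_\sharp(p\mathcal{D})}}{\mathbf P}^{t_0,\emp^{-1}_\sharp(p\mathcal{D})}$ agree, they are concentrated (up to a null set) on $E$, and on $E$ the two transfer maps agree; applying \eqref{eq:new:expression:Qcursm0} to $b$ and to $\tilde b$ then gives ${\mathbf Q}^{t_0,\emp^{-1}_\sharp(p\mathcal{D})}=\tilde{\mathbf Q}^{t_0,\emp^{-1}_\sharp(p\mathcal{D})}$, as claimed.
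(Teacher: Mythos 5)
Your proof is correct, and it takes a route that differs from the paper's in a meaningful way, though both share the same key idea. The paper's proof works entirely under $\mathbf{Q}^{t_0,\cursm}$: it shows that, $\mathbf{Q}^{t_0,\cursm}$-a.s., the drift integrals computed from $b$ and $\tilde b$ coincide, so the canonical process is a weak solution for both velocity fields, and then invokes uniqueness in law (Proposition~\ref{prop:weak:uniqueness}). For this it needs Lemma~\ref{lem:absolute:continuity}, i.e.\ $(\mu^\infty_t)_\sharp\mathbf{Q}^{t_0,\cursm}\ll\cD$. You instead stay under $\mathbf{P}^{t_0,\cursm}$, using only the more elementary bound~\eqref{eq:acbound}, and then conclude by plugging directly into the Girsanov representation~\eqref{eq:new:expression:Qcursm0}: on a $\mathbf{P}^{t_0,\cursm}$-full event the transfer map $\cercle{$\Psi$}^{\boldsymbol b}$ and the density $1/z_T^{t_0,\cursm}$ depend on $b$ only through $b_r(\mu^\infty_r,\xi^i_r)$, which you show agrees with $\tilde b_r(\mu^\infty_r,\xi^i_r)$ along the trajectories. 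This avoids Lemma~\ref{lem:absolute:continuity} and uniqueness in law, at the price of having to argue that the Itô integrals in $z_T$ are version-independent (which you address correctly by noting that progressively measurable integrands agreeing $\de r\otimes\de\mathbf{P}$-a.e.\ give the same stochastic integral). The crux in both proofs is the same: since $\xi^i_r$ carries positive mass $\varsigma^i$ under $\mu^\infty_r$, one cannot simply assert that null modifications of $b$ are invisible; instead one expands $\int_{\T^d}|b_r-\tilde b_r|\,\de\mu^\infty_r = \sum_i \varsigma^i\,|b_r(\mu^\infty_r,\xi^i_r)-\tilde b_r(\mu^\infty_r,\xi^i_r)|$, pushes this to zero via absolute continuity of $(\mu^\infty_r)_\sharp$ w.r.t.\ $\cD$, and uses $\varsigma^i>0$ to peel off each summand. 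You correctly identify this as the main point; the paper does exactly the same computation, just under $\mathbf{Q}$ rather than $\mathbf{P}$.
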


Assuming the above lemma, we now turn to the proof of Proposition \ref{prop:consistency:solutions}.

\begin{proof}[Proof of Proposition 
\ref{prop:consistency:solutions}.]
Let $p$ be a fixed bounded $\cD$-density. By the disintegration formula 
\eqref{eq:disintegration:Qcursm0}, it holds 
\begin{equation*}
 {\mathbf Q}^{t_0,{\emp_\sharp^{-1}}  (p {\mathcal D})} =
\int_{\cP(\T^d)}
p(m) 
{\mathbf Q}^{t_0,\emp^{-1}(m)}
\de {\mathcal D}(m).
\end{equation*}
Analogously, we can write the same formula for the weak solution associated to the version $\tilde b$ of the drift. Then, by Lemma \ref{lem:consistency:absolutely:continuous:measures}, we can equate the two formulas and obtain
\begin{equation*}
\begin{split}
\int_{\cP(\T^d)}
p(m) 
{\mathbf Q}^{t_0,\emp^{-1}(m)}
\de {\mathcal D}(m)=
\int_{\cP(\T^d)}
p(m) 
\tilde {\mathbf Q}^{t_0,\emp^{-1}(m)}
\de {\mathcal D}(m),
\end{split}
\end{equation*}
which completes the proof of Proposition \ref{prop:consistency:solutions}.
\end{proof}

The next step is to prove Lemma \ref{lem:consistency:absolutely:continuous:measures}, and to do so, we need to introduce another lemma.

\begin{lemma}
\label{lem:absolute:continuity}
With the same notation as in the statement of Lemma 
\ref{lem:consistency:absolutely:continuous:measures}, one has, for any 
$t \in [t_0,T)$, 
\begin{equation*}
(\mu_t^{\infty})_{\sharp} {\mathbf Q}^{t_0,{\emp_\sharp^{-1}} (p \cD)} \ll {\mathcal D},
\end{equation*}
i.e., the (time) marginal laws of 
$\mu^\infty$ (see \eqref{eq:muinfmap}) under 
${\mathbf Q}^{t_0,{\emp_\sharp^{-1}} (p \cD)}$
are absolutely 
continuous with respect to 
${\mathcal D}.$
\end{lemma}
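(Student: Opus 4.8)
The plan is to exploit the explicit Girsanov representation \eqref{eq:new:expression:Qcursm0}, which writes ${\mathbf Q}^{t_0,\emp^{-1}_\sharp(p\cD)}$ as the push-forward under the map ${\cercle{$\Psi$}}^{\boldsymbol b}$ of the finite measure $(1/z_T^{t_0,\emp^{-1}_\sharp(p\cD)})\,{\mathbf P}^{t_0,\emp^{-1}_\sharp(p\cD)}$. The first step is the elementary but crucial observation that ${\cercle{$\Psi$}}^{\boldsymbol b}$, as defined in \eqref{eq:notation:cercle}, modifies only the Brownian component of $\Xi$, leaving the weights $(\varsigma^j)_j$, the initial positions $(\xi^{0,j})_j$ and the trajectories $(\xi^j)_j$ unchanged; since the process $\mu^\infty$ defined in \eqref{eq:muinfmap} depends solely on $(\varsigma^j)_j$ and $(\xi^j)_j$, one gets $\mu^\infty_t\circ{\cercle{$\Psi$}}^{\boldsymbol b}=\mu^\infty_t$ for every $t$. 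Combining this with \eqref{eq:new:expression:Qcursm0}, for any Borel set $A\subset\cP(\T^d)$ and any $t\in[t_0,T)$ one has
\[
(\mu_t^\infty)_\sharp{\mathbf Q}^{t_0,\emp^{-1}_\sharp(p\cD)}(A)=\int_{\Xi}\frac{1}{z_T^{t_0,\emp^{-1}_\sharp(p\cD)}}\,{\mathbf 1}_{\{\mu_t^\infty\in A\}}\,\de{\mathbf P}^{t_0,\emp^{-1}_\sharp(p\cD)}.
\]

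Next I would estimate this integral by the Cauchy--Schwarz inequality, bounding it by the product of $\|1/z_T^{t_0,\emp^{-1}_\sharp(p\cD)}\|_{L^2({\mathbf P}^{t_0,\emp^{-1}_\sharp(p\cD)})}$ and ${\mathbf P}^{t_0,\emp^{-1}_\sharp(p\cD)}(\{\mu^\infty_t\in A\})^{1/2}$. The second factor is controlled directly by \eqref{eq:acbound}: ${\mathbf P}^{t_0,\emp^{-1}_\sharp(p\cD)}(\{\mu^\infty_t\in A\})=(\mu_t^\infty)_\sharp{\mathbf P}^{t_0,\emp^{-1}_\sharp(p\cD)}(A)\le\|p\|_\infty\,\cD(A)$, which vanishes whenever $\cD(A)=0$. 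For the first factor, I would observe that, by \eqref{eqn:z_T}, $1/z_T^{t_0,\emp^{-1}_\sharp(p\cD)}=\mathcal{E}(M)_T$ is the stochastic exponential of the continuous martingale $M_t:=\sum_i\int_{t_0}^t\sqrt{\varsigma^i/2}\,b^i_r(\mu^\infty_r,\xi^i_r)\cdot\de\beta^i_r$, whose quadratic variation satisfies $\langle M\rangle_T=\sum_i\int_{t_0}^T\frac{\varsigma^i}{2}|b^i_r(\mu^\infty_r,\xi^i_r)|^2\,\de r\le\frac{T}{2}\sup_i\|b^i\|_\infty^2=:C_0$, because $\sum_i\varsigma^i=1$ almost surely (exactly as in the proof of Proposition \ref{prop: weak_existence}). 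Then $(1/z_T^{t_0,\emp^{-1}_\sharp(p\cD)})^2=e^{\langle M\rangle_T}\,\mathcal{E}(2M)_T\le e^{C_0}\,\mathcal{E}(2M)_T$, and since $\langle 2M\rangle_T\le4C_0$ is deterministically bounded, Novikov's criterion ensures that $\mathcal{E}(2M)$ is a true martingale; hence the first factor is at most $e^{C_0/2}$.

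Putting the two bounds together yields $(\mu_t^\infty)_\sharp{\mathbf Q}^{t_0,\emp^{-1}_\sharp(p\cD)}(A)\le e^{C_0/2}(\|p\|_\infty\,\cD(A))^{1/2}$ for every Borel $A\subset\cP(\T^d)$, and in particular this quantity vanishes whenever $\cD(A)=0$, which is precisely the asserted absolute continuity. I do not anticipate any serious obstacle: the only point requiring a little care is the $L^2$-integrability of the Girsanov density, but this is immediate from the structural assumptions (bounded velocity field, weights summing to one) which make $\langle M\rangle_T$ deterministically bounded; everything else reduces to the representation formula \eqref{eq:new:expression:Qcursm0} and the already-established absolute continuity \eqref{eq:acbound} for the free particle system.
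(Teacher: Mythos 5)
Your argument is correct and follows the same route as the paper's proof: it hinges on the representation formula \eqref{eq:new:expression:Qcursm0}, the invariance $\mu^\infty\circ{\cercle{$\Psi$}}^b=\mu^\infty$, and the bound \eqref{eq:acbound} on the marginals of the free system. The Cauchy--Schwarz step and the Novikov-based $L^2$ estimate on the Girsanov density $1/z_T^{t_0,\cursm}$ are unnecessary, though correct: once one knows ${\mathbf P}^{t_0,\cursm}(\{\mu_t^\infty\in E\})=0$, the integral of the nonnegative measurable function $1/z_T^{t_0,\cursm}$ over this ${\mathbf P}^{t_0,\cursm}$-null set vanishes automatically, which is precisely how the paper concludes.
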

\begin{proof}
For any Borel subset $E$ of $\prob(\T^d)$ satisfying $\cD(E)=0$, we aim to prove that, for a fixed $t\in[t_0,T]$,
\begin{equation}
\label{eq:absolu:continuity:proof:1}
{\mathbf Q}^{t_0,{\emp_\sharp^{-1}} (p \cD)}\left(\left\{ \mu_t^\infty\in E\right\}\right) = 0. 
\end{equation}
To simplify the exposition, let us denote 
${\emp_\sharp^{-1}} (p \cD)$
by $\cursm$. We recall from 
\eqref{eq:new:expression:Qcursm0} that 
${\mathbf Q}^{t_0,\cursm}$ is given by 
\begin{equation*}
{\mathbf Q}^{t_0,\cursm} =\cercle{$\Psi$}^b_\sharp \left( \frac1{z^{t_0,\cursm}_T}  {\mathbf P}^{t_0,\cursm} \right),
\end{equation*}
where ${\cercle{$\Psi$}^b}$ is as in Proposition \ref{prop:pandq} with $b$ $i$-independent.
Then, we observe that $\mu^\infty$ is invariant by ${\cercle{$\Psi$}^b}$, that is 
$\mu^\infty \circ {\cercle{$\Psi$}^b} = \mu^\infty$. As a result, we have
\begin{equation*}
\begin{split}
{\mathbf Q}^{t_0,\cursm}
\left( 
\left\{ 
\mu_t^\infty\in E
\right\}
\right)
=
\left(\frac1{z^{t_0,\cursm}_T}  {\mathbf P}^{t_0,\cursm}
\right) 
\left(
\left\{
\mu_t^\infty\in E
\right\}
\right).
\end{split}
\end{equation*}
In particular, to prove 
\eqref{eq:absolu:continuity:proof:1}, it suffices to prove that 
the right-hand side on the above display is equal to $0$. But, then, it is enough to show that 
\begin{equation*}
 {\mathbf P}^{t_0,\cursm}
\left(
\left\{
\mu_t^\infty\in E
\right\}
\right)=0
\end{equation*}
which follows from \eqref{eq:acbound}.
\end{proof}
Now, we can finally prove Lemma \ref{lem:consistency:absolutely:continuous:measures}.
\begin{proof}[Proof of Lemma \ref{lem:consistency:absolutely:continuous:measures}.]
By uniqueness in law, it is sufficient to prove  that, with probability 1 under 
${\mathbf Q}^{t_0,\cursm}$, with 
$\cursm := 
{\emp_\sharp^{-1}} (p {\mathcal D})$, for any 
$i \in \N_+$ and 
$t \in [t_0,T]$,
\begin{equation*}
\int_{t_0}^t b_r(\mu^\infty_r,\xi^i_r) \de r
= \int_{t_0}^t \tilde b_r(\mu^\infty_r,\xi^i_r) \de r. 
\end{equation*}
Obviously, it suffices to check that 
\begin{equation}\label{eq:bandtildeb}
    \int_{t_0}^T \int_{\Xi} \left | b_t (\mu_t^\infty, \xi^i_t) - \tilde{b}_t(\mu_t^\infty, \xi^i_t) \right | \de \mathbf Q^{t_0, \cursm} \de t =0 \quad \text{ for all } i \in \N_+.
\end{equation}
Since by Lemma \ref{lem:absolute:continuity} we have that $(\mu_t^\infty)_\sharp \mathbf Q^{t_0, \cursm} \ll \cD$, the  $\mathscr{L}^{[0,T]} \otimes \overline{\cD}$-a.e.~equality between $b$ and $\tilde{b}$ gives that
\begin{align*}
0&= \int_{t_0}^T \int_{\prob(\T^d)} \int_{\T^d} \left | b_t (\mu, x) - \tilde{b}_t(\mu, x) \right | \de \mu(x) \de \left [ (\mu_t^\infty)_\sharp \mathbf Q^{t_0, \cursm} \right ] (\mu) \de t\\
&= \int_{t_0}^T \int_{\Xi} \int_{\T^d} \left | b_t (\mu_t^\infty, x) - \tilde{b}_t(\mu_t^\infty, x) \right | \de \mu_t^\infty(x) \de \mathbf Q^{t_0, \cursm} \de t\\
&=\int_{t_0}^T \int_{\Xi} \sum_{i=1}^\infty \varsigma_i \left | b_t (\mu_t^\infty, \xi_t^i) - \tilde{b}_t(\mu_t^\infty, \xi_t^i) \right | \de \mathbf Q^{t_0, \cursm} \de t.
\end{align*}
In particular,
\[ \left | b_t (\mu_t^\infty, \xi_t^i) - \tilde{b}_t(\mu_t^\infty, \xi_t^i) \right | =0 \quad  \mathscr{L}^{[0,T]} \otimes {\mathbf Q}^{t_0, \cursm}\text{-a.e.} \]
This gives \eqref{eq:bandtildeb} and concludes the proof.
\end{proof}

\subsection{Chain rule and relation with the backward Kolmogorov equation}
The main purpose of this subsection is to identify the generator of the solution to
the particle system \eqref{eqn: particles_drift}
(when driven by a drift $b$ that is independent of $i$)
with the operator governing the PDE \eqref{eq:pdeaaa}. Throughout, we make a repeated use of the spaces $H$ and $H^{1,2}$ introduced in 
\eqref{eq:notation}. For convenience, we recall their definitions below:
\begin{equation*}
    H := L^2(\prob(\T^d), \mathcal{D}), \quad H^{1,2} := H^{1,2}(\prob(\T^d), W_2, \mathcal{D}).
\end{equation*}

The connection between the particle system \eqref{eqn: particles_drift} and the PDE \eqref{eq:pdeaaa} is established through an appropriate Kolmogorov formula, stated in Theorem \ref{thm:repr} below. This representation is obtained by expanding the solution to \eqref{eq:pdeaaa} along the particle system \eqref{eqn: particles_drift} via a suitable chain rule. The proof of this expansion relies on Proposition \ref{prop: ito_cyl} and first requires approximating the solution to the PDE \eqref{eqn:bkw_heta_forcing}, when driven by arbitrary boundary and source terms, by solutions to the same PDE but with cylindrical coefficients. As clarified in Proposition \ref{prop:cyl_solutions}, a key step is to show that, for cylindrical coefficients, the solution to \eqref{eqn:bkw_heta_forcing} itself inherits a form of cylindrical structure which we make precise below.

\begin{definition}\label{def:fwcyl} We say that a function $u: \prob_o(\T^d) \to \R$ is \emph{fiber-wise cylinder} if there exists $(u^\mbfs)_{\mbfs \in T_o^\infty} \subset \hat{\mathfrak{Z}}^\infty_c$ such that:
\begin{enumerate}[1)]
    \item $u(\mu) = u^{\mbfs(\mu)}(\mu)$ for every $\mu \in \prob_o(\T^d)$, where $\mbfs(\mu)$ is as in \eqref{eq:phiinv};
    \item setting
    \[ (\boldnabla u)_\mu := (\boldnabla u^{\mbfs(\mu)})_\mu, \quad (L_c u)_\mu := (L_c u^{\mbfs(\mu)})_\mu, \quad \mu \in \prob_o(\T^d), \]
    we have that $\boldnabla u \in L^2(\prob(\T^d) \times \T^d, \overline{\cD}; \R^d)$ and $u, L_c u \in H$.
\end{enumerate}
We denote the space of fiber-wise cylinder function with $\hat{\mathfrak{Z}}_{fw}^\infty$.
\end{definition}

Note that the definition of $\boldnabla u$ and $L_c u$ are well posed since, for a every $\mbfs \in T_o^\infty$, $u^\mbfs$ belongs to $\hat{\mathfrak{Z}}_{c}^\infty$ which is included in the domain of both the operators $\boldnabla$ and $L_c$, see \eqref{eqn:gen_on_cil} and Lemma \ref{le:hasgrad}.
Also, clearly, $\hat{\mathfrak{Z}}_{c}^\infty \subset \hat{\mathfrak{Z}}_{fw}^\infty$ and the definitions of $\boldnabla$ and $L_c$ are consistent.

\begin{proposition}\label{prop:ident} We have that $\hat{\mathfrak{Z}}_{fw}^\infty \subset  D(\boldsymbol{\Delta})$ and
\begin{equation}\label{eq:thekey}
\rmD u = \boldnabla u, \quad \boldsymbol{\Delta} u = L_c u \quad \text{ for every } u \in \hat{\mathfrak{Z}}_{fw}^\infty.
\end{equation}
\end{proposition}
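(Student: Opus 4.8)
The strategy is to \emph{disintegrate} every object over the fibres indexed by the weight sequence $\mbfs\in T^\infty_o$, on which a fibre-wise cylinder function genuinely reduces to a smooth cylinder function, and then to transfer the known Euclidean calculus of Lemma~\ref{lemma: rel_der_cil} fibre by fibre. First, Definition~\ref{def:dfm} immediately gives the disintegration $\cD=\int_{T^\infty_o}\cD^\mbfs\de\Pi_1(\mbfs)$, where $\cD^\mbfs:=\emp(\mbfs,\cdot)_\sharp\mathrm{vol}_d^\infty$ (a probability on $\prob(\T^d)$ concentrated, since $\mathrm{vol}_d^\infty$-a.e.~$\mbfx\in(\T^d)^\infty_o$, on $\{\mu\in\prob_o(\T^d):\mbfs(\mu)=\mbfs\}$), and correspondingly $\overline\cD=\int_{T^\infty_o}\overline\cD^\mbfs\de\Pi_1(\mbfs)$ with $\overline\cD^\mbfs:=\int(\delta_\mu\otimes\mu)\de\cD^\mbfs(\mu)$. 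Fix $u\in\hat{\mathfrak{Z}}^\infty_{fw}$ with representatives $(u^\mbfs)_\mbfs\subset\hat{\mathfrak{Z}}^\infty_c$ and, for each $\mbfs$, choose $\eps_\mbfs\in(0,1)$ with $u^\mbfs\in\hat{\mathfrak{Z}}^\infty_{\eps_\mbfs}$; also fix $\hat v\in\hat{\mathfrak{Z}}^\infty_{\eps_0}$ and set $N_\mbfs:=\max\{N(\eps_0,\mbfs),N(\eps_\mbfs,\mbfs)\}<\infty$.

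The core step is a \emph{fibre-wise integration by parts}: for every $\mbfs\in T^\infty_o$ and all $\hat w_1,\hat w_2\in\hat{\mathfrak{Z}}^\infty_c$,
\begin{equation}\label{eq:plan:fiberIBP}
\begin{aligned}
\int_{\prob(\T^d)}\langle(\boldnabla\hat w_1)_\mu,(\boldnabla\hat w_2)_\mu\rangle_{T_\mu\prob(\T^d)}\de\cD^\mbfs(\mu)
&= -\int_{\prob(\T^d)}(L_c\hat w_1)_\mu\,\hat w_2(\mu)\de\cD^\mbfs(\mu) \\
&= -\int_{\prob(\T^d)}\hat w_1(\mu)\,(L_c\hat w_2)_\mu\de\cD^\mbfs(\mu).
\end{aligned}
\end{equation}
To prove \eqref{eq:plan:fiberIBP} one pushes $\cD^\mbfs$ forward by $\emp(\mbfs,\cdot)$: because the $\hat{\mathfrak{Z}}^\infty_\eps$-supports of $\hat w_1,\hat w_2$ are bounded away from $0$, for $N$ above their thresholds the quantities $\hat w_l(\mu)$, $(\boldnabla\hat w_l)_\mu$, $(L_c\hat w_l)_\mu$ evaluated along $\mu=\emp(\mbfs,\mbfx)$ depend, for $\mathrm{vol}_d^\infty$-a.e.~$\mbfx$, only on $(x_1,\dots,x_N)$, coincide with those produced by $\hat w_l\circ\emp^N(\mbfs,\cdot)$, and the latter agrees $\mathrm{vol}_d^N$-a.e.\ on $(\T^d)^N$ with a genuinely smooth function of $(x_1,\dots,x_N)$. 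Lemma~\ref{lemma: rel_der_cil} then turns \eqref{eq:plan:fiberIBP} into $\sum_i\frac1{s_i}\int_{(\T^d)^N}\nabla_i g\cdot\nabla_i h=-\sum_i\frac1{s_i}\int_{(\T^d)^N}(\Delta_i g)h=-\sum_i\frac1{s_i}\int_{(\T^d)^N}g\,\Delta_i h$, which holds since $(\T^d)^N$ has no boundary.

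Integrating \eqref{eq:plan:fiberIBP} against $\Pi_1(\de\mbfs)$ with $\hat w_1:=u^\mbfs$, $\hat w_2:=\hat v$, and using $\boldnabla u=\boldnabla u^\mbfs$, $L_cu=L_cu^\mbfs$, $u=u^\mbfs$ on the fibre $\mbfs$ together with the integrability hypotheses $\boldnabla u\in L^2(\overline\cD;\R^d)$, $u,L_cu\in H$ (so that Fubini/disintegration of the $\cD$-integrals is licit, $L_c\hat v\in H$ as well), yields
\begin{equation}\label{eq:plan:IBP}
\langle\boldnabla u,\boldnabla\hat v\rangle=-(L_cu,\hat v)_H=-(u,L_c\hat v)_H,\qquad \hat v\in\hat{\mathfrak{Z}}^\infty_c.
\end{equation}
The middle equality of \eqref{eq:plan:IBP} says $(u,L_c\hat v)_H=(L_cu,\hat v)_H$ for all $\hat v\in\hat{\mathfrak{Z}}^\infty_c$, with $L_cu\in H$; since $L_c$ is essentially self-adjoint on $\hat{\mathfrak{Z}}^\infty_c$ with closure $\hat L=L_c^\ast=\boldsymbol{\Delta}$ (recalling $\mathcal{E}_0=\hat{\mathcal{E}}$ for $d\ge2$), this is precisely $u\in D(\boldsymbol{\Delta})$ and $\boldsymbol{\Delta}u=L_cu$. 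In particular $u\in D(\boldsymbol{\Delta})\subset H^{1,2}$, so $\rmD u$ is defined and $\rmD u$ lies in $\mathrm{T}$, the $L^2(\overline\cD;\R^d)$-closure of $\{\boldnabla\hat v:\hat v\in\hat{\mathfrak{Z}}^\infty_c\}$ (as recorded in the proof of Proposition~\ref{prop:pde}). To get $\rmD u=\boldnabla u$, combine \eqref{eq:ibp0}, Proposition~\ref{prop:vectorgrad}, $\rmD\hat v=\boldnabla\hat v$ and \eqref{eq:plan:IBP}: $\langle\rmD u,\boldnabla\hat v\rangle=\langle\rmD u,\rmD\hat v\rangle=\CE(u,\hat v)=-(\boldsymbol{\Delta}u,\hat v)_H=-(L_cu,\hat v)_H=\langle\boldnabla u,\boldnabla\hat v\rangle$, so $\boldnabla u-\rmD u\perp\mathrm{T}$; and running \eqref{eq:plan:fiberIBP} with $\hat w_1=\hat w_2=u^\mbfs$ and integrating gives $\|\boldnabla u\|^2=-(L_cu,u)_H=-(\boldsymbol{\Delta}u,u)_H=\CE(u,u)=\|\rmD u\|^2$. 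Since $\rmD u\in\mathrm{T}$ and $\boldnabla u-\rmD u\perp\mathrm{T}$, one has $\langle\boldnabla u,\rmD u\rangle=\|\rmD u\|^2$, hence $\|\boldnabla u-\rmD u\|^2=\|\boldnabla u\|^2-\|\rmD u\|^2=0$.

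I expect the only delicate point to be the fibre-wise reduction in \eqref{eq:plan:fiberIBP}: one must verify that, for $N$ large (depending on $\mbfs$), the representatives $u^\mbfs$ and the test function $\hat v$ — along with their gradients and Laplacians — are captured by the first $N$ atoms, and that on $(\T^d)^N$ the relevant functions agree a.e.\ with smooth ones so the boundary-free integration by parts applies. This is exactly what Lemma~\ref{lemma: rel_der_cil} (and its proof) provides, so no genuinely new difficulty arises; the real content is that the self-adjointness of $L_c$ upgrades the integral identity \eqref{eq:plan:IBP} to the operator identity $\boldsymbol{\Delta}u=L_cu$, and that a Cauchy--Schwarz equality argument upgrades orthogonality plus norm equality to $\rmD u=\boldnabla u$.
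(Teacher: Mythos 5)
Your proposal is correct, and it reaches the integration-by-parts identity by a genuinely different route than the paper's. The paper proves $(L_c u, w)_H = -\langle\boldnabla u, \boldnabla w\rangle$ directly by applying Mecke's identity (Theorem \ref{thm:dfm}): it trades the $\de\mu$-integral inside $L_c u$ for a $\de\mathrm{vol}_d\,\de r$-integral over the atom-perturbed measures $\mu_r^x$, performs a single integration by parts on $\T^d$ via relation \eqref{eq:relgrad}, and then reverses Mecke's identity. You instead disintegrate $\cD = \int_{T^\infty_o}\cD^\mbfs\de\Pi_1(\mbfs)$ over the weight fibers, push $\cD^\mbfs$ forward by $\emp(\mbfs,\cdot)$, and invoke Lemma \ref{lemma: rel_der_cil} to reduce to a boundary-free finite-dimensional integration by parts on $(\T^d)^{N(\eps,\mbfs)}$. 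Your route is more elementary (no Mecke identity) and is more tightly aligned with the finite-dimensional reduction the paper later develops in Proposition \ref{prop:cyl_solutions} and Subsection \ref{ssec:finite_dim}; the price is the bookkeeping of the disintegration, the measurability granted by Definition \ref{def:fwcyl}, and the subtlety you correctly flag—that $\hat w\circ\emp^{N}(\mbfs,\cdot)$ agrees only $\mathrm{vol}_d^{N}$-a.e.\ with a smooth function on $(\T^d)^{N}$ (the formula fails on the diagonal), which is harmless for the integral identity but worth stating. After the integration-by-parts step the two proofs coincide in substance: essential self-adjointness of $L_c$ gives $u\in D(\boldsymbol{\Delta})$ with $\boldsymbol{\Delta}u=L_cu$, and the norm equality $\|\boldnabla u\|=\|\rmD u\|$ combined with density forces $\rmD u=\boldnabla u$—the paper via a Cauchy--Schwarz equality, you via orthogonality plus Pythagoras, which are equivalent.
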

We stress that the statement above means that, for every $u \in \hat{\mathfrak{Z}}_{fw}^\infty$, the $L^2(\prob(\T^d), \cD)$-equivalence class represented by the $\cD$-a.e.~defined function
$u$ belongs to the domain of the operator $\boldsymbol{\Delta}$ (and thus, in particular, also to the domain of $\rmD$) and the $L^2(\prob(\T^d) \times \T^d, \overline{\cD}; \R^d)$ function $\rmD u$ can be represented by the $\overline{\cD}$-a.e.~defined function $\boldnabla u$ (respectively, the $L^2(\prob(\T^d),\cD)$ function $\boldsymbol{\Delta} u$ can be represented by the $\cD$-a.e.~defined function $L_c u$) as in Definition \ref{def:fwcyl}.

\begin{proof}
 We claim that the statement follows if we show that
\begin{equation}\label{eq:thelast}
    ( L_c u , w)_H  = -\langle \boldnabla u, \boldnabla w \rangle \quad \text{ for every } u, w \in \hat{\mathfrak{Z}}_{fw}^\infty.
\end{equation}
Assume  indeed that \eqref{eq:thelast} holds true. Then, if $u \in \hat{\mathfrak{Z}}_{fw}^\infty$ is fixed, \eqref{eq:thelast} gives, by symmetry
\[ (L_c u, w)_H = (u, L_c w)_H \quad \text{ for every } w \in \hat{\mathfrak{Z}}_c^\infty,\]
which shows that $ u \in D(L_c^*)$ and $L_c^* u = L_c u$; since, as discussed at the end of Subsection \ref{sec:geo}, $\boldsymbol{\Delta}= L_c^*$, this gives that $u \in D(\boldsymbol{\Delta})$ and $\boldsymbol{\Delta} u = L_c u$. By the integration by parts formula in \eqref{eq:ibp0}, we 
deduce from \eqref{eq:thelast} again (still assuming that it holds true) that
\begin{equation}\label{eq:tangent}
\langle \rmD u, \rmD w \rangle = \langle \boldnabla u, \boldnabla w \rangle \quad \text{ for every }  w \in \hat{\mathfrak{Z}}_{fw}^\infty.
\end{equation}
Taking $w=u$, we see that $\|\rmD u\|= \|\boldnabla u\|$; taking $w=w_n$ for a sequence $(w_n)_n \subset \hat{\mathfrak{Z}}_c^\infty$ such that $\boldnabla w_n \to \rmD u$ in $L^2(\prob(\T^d) \times \T^d, \overline{\cD}; \R^d)$ as $n \to \infty$ (see Proposition \ref{prop:vectorgrad} and its proof) and passing to the limit as $n \to \infty$, we obtain
\[ \langle \boldnabla u, \rmD u \rangle = \langle \rmD u, \rmD u \rangle = \|\rmD u\|^2 = \| \boldnabla u \| \|\rmD u\|,\]
which gives $\boldnabla u = \rmD u$, 
and completes the proof (provided \eqref{eq:thelast} holds true). 
\smallskip

Therefore, we are only left to show \eqref{eq:thelast}; let $u, w \in \hat{\mathfrak{Z}}_{fw}^\infty$ and let, for every $\mbfs \in T_o^\infty$, $u^\mbfs, w^\mbfs \in \hat{\mathfrak{Z}}_c^\infty$ be the functions as in Definition \ref{def:fwcyl}, for $u$ and $w$ respectively.
We show that \eqref{eq:thelast} holds true by following \cite[Theorem 5.11]{DelloSchiavo22}.  We have
\begin{align*}
    (L_c u, w)_H &= \int_{\prob(\T^d)} (L_c u)_\mu w(\mu) \de \mathcal{D}(\mu) \\
    &= \int_{\prob(\T^d)} (L_c u^{\mbfs(\mu)})_\mu w^{\mbfs(\mu)}(\mu) \de \mathcal{D}(\mu) \\
    & = \int_{\prob(\T^d)} \int_{\T^d}\frac{\Delta_z\rvert_{z=x} u^{\mbfs(\mu)}(\mu+\mu_x\delta_z-\mu_x\delta_x)}{(\mu_x)^2} \de\mu(x) w^{\mbfs(\mu)}(\mu) \de \mathcal{D}(\mu) \\
    &=\int_0^1\int_{\prob(\T^d)} \int_{\T^d}\frac{\Delta_z\rvert_{z=x} u^{\mbfs(\mu_r^x)}(\mu_r^x+r\delta_z-r\delta_x)}{r^2}  w^{\mbfs(\mu_r^x)}(\mu_r^x) \de\mathrm{vol}_d(x) \de \mathcal{D}(\mu) \de r \\
    &=\int_0^1\int_{\prob(\T^d)} \int_{\T^d}\frac{\Delta_z\rvert_{z=x} u^{\mbfs(\mu,r)}(\mu_r^x+r\delta_z-r\delta_x)}{r^2}  w^{\mbfs(\mu,r)}(\mu_r^x) \de\mathrm{vol}_d(x) \de \mathcal{D}(\mu) \de r \\
    &=-\int_0^1\int_{\prob(\T^d)} \int_{\T^d} \frac{\nabla_z\rvert_{z=x} u^{\mbfs(\mu,r)}(\mu_r^x+r\delta_z-r\delta_x)}{r} \cdot   \frac{\nabla_z\rvert_{z=x} w^{\mbfs(\mu,r)}(\mu_r^x+r\delta_z-r\delta_x)}{r} \de\mathrm{vol}_d(x) \de \mathcal{D}(\mu) \de r\\
    &= -\int_0^1\int_{\prob(\T^d)} \int_{\T^d} (\boldnabla u^{\mbfs(\mu, r)})_{\mu_r^x}( x) \cdot (\boldnabla w^{\mbfs(\mu, r)})_{\mu_r^x}(x) \de\mathrm{vol}_d(x) \de \mathcal{D}(\mu) \de r\\
    &= -\int_0^1\int_{\prob(\T^d)} \int_{\T^d} (\boldnabla u^{\mbfs(\mu_r^x)})_{\mu_r^x} (x) \cdot (\boldnabla w^{\mbfs(\mu_r^x)})_{\mu_r^x} (x) \de\mathrm{vol}_d(x) \de \mathcal{D}(\mu) \de r\\
    &=- \int_{\prob(\T^d} \int_{\T^d} (\boldnabla u^{\mbfs(\mu)})_{\mu} (x) \cdot (\boldnabla u^{\mbfs(\mu)})_{\mu} (x) \de \mu(x) \de \cD(\mu) \\
    &=- \int_{\prob(\T^d} \int_{\T^d} (\boldnabla u)_{\mu} (x) \cdot (\boldnabla w)_{\mu} (x) \de \mu(x) \de \cD(\mu) \\
    &=-\langle \boldnabla u, \boldnabla w \rangle,
    \end{align*}
where we have used, in order, the definition of $L_c u$ (on the second line), the definition of $L_c u^\mbfs$ (on the third line), the Mecke identity \eqref{eq:Mecke} (to pass from the third to the fourth line), the fact that the set $\{(x, \mu) : \mu_x >0 \}$ is $\mathrm{vol}_d \otimes \mathcal{D}$-negligible (see~\cite[Proposition 4.9(iii)]{DelloSchiavo22} so that $\mbfs(\mu_x^r)$ does not depend on $x \in \T^d$ and therefore we can denote it by $\mbfs(\mu,r)$ (to get the fifth line), integration by parts on $\T^d$ (on the sixth line), the relation \eqref{eq:relgrad} (on the seventh line), again that $\mbfs(\mu_r^x)= \mbfs(\mu, r)$ (on the eight line), one more time the Mecke identity \eqref{eq:Mecke} (to obtain the ninth line), and, finally, the definition of $\boldnabla u$ and $\boldnabla w$ (to get the tenth line). This shows \eqref{eq:thelast} and concludes the proof.
\end{proof}

\begin{proposition}\label{prop:cyl_solutions} 
For a given $\eps \in (0,1)$, let $f \in \hat{\mathfrak{Z}}_\eps^\infty$ and $g \in \hat{\mathfrak{Z}}_\eps^\infty$. Then, if $u$ is the unique solution of
\[ \partial_t u_t + \boldsymbol{\Delta} u_t = f \quad \text{for a.e.~}t \in (0,T), \quad u_t \bigr |_{t=T} = g,\]
there exist Borel representatives of $u$, $\rmD u$, and $\boldsymbol{\Delta} u$
satisfying the following properties: for every $\mbfs \in  T^\infty_o$, there exists $v^{\mbfs} \in \mathfrak{T}\hat{\mathfrak{Z}}_{\eps/2}^\infty([0, T])$ such that
\begin{align}
 \label{eq:theprop}
u_t(\mu) &= v_t^{\mbfs}(\mu), \quad &&\text{ for all } \mu \in \emp(\mbfs, (\T^d)^\infty_o)\text{ and } t \in [0,T],\\ \label{eq:laprima}
\rmD u_t(\mu, \cdot) &= \rmD v_t^{\mbfs}(\mu, \cdot), \quad &&\text{ for all } \mu \in \emp(\mbfs, (\T^d)^\infty_o)\text{ and } t \in [0,T],\\ \label{eq:laseconda}
\boldsymbol{\Delta}u_t(\mu) &= \boldsymbol{\Delta} v_t^{\mbfs}(\mu), \quad &&\text{ for all } \mu \in \emp(\mbfs, (\T^d)^\infty_o)\text{ and } t \in [0,T].
\end{align}
In particular $u_t \in \hat{\mathfrak{Z}}_{fw}^\infty$ for every $t \in [0,T].$
\end{proposition}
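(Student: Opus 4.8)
The plan is \emph{not} to restrict the abstract solution $u$ of Proposition~\ref{prop:pde} directly to the fibres $\emp(\mbfs,(\T^d)^\infty_o)$, but rather to \emph{construct} a candidate $u^\star$ that is fibre-wise cylinder by design, to check that $u^\star$ meets every requirement characterising the unique solution in Proposition~\ref{prop:pde} with the present data $f,g\in\hat{\mathfrak{Z}}_\eps^\infty$, and then to conclude $u=u^\star$ by that uniqueness. Once this is done, \eqref{eq:theprop}--\eqref{eq:laseconda} are read off from the definitions of $u^\star$, its fibre-wise gradient and fibre-wise Laplacian, together with Proposition~\ref{prop:vectorgrad} (which identifies $\rmD v^\mbfs_t=\boldnabla v^\mbfs_t$ and the corresponding expression for $\boldsymbol{\Delta}v^\mbfs_t$ whenever $v^\mbfs_t\in\hat{\mathfrak{Z}}_c^\infty$) and Proposition~\ref{prop:ident} (which upgrades these fibre-wise objects to the genuine $\rmD u^\star_t$ and $\boldsymbol{\Delta}u^\star_t$ and gives $u^\star_t\in\hat{\mathfrak{Z}}_{fw}^\infty$).

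\textbf{A finite-dimensional PDE on each fibre.} Fix $\mbfs\in T^\infty_o$ and set $n:=N(\eps,\mbfs)\le\ceil{1/\eps}$, cf.~\eqref{eq:Neps:mbfs}. Since $f,g\in\hat{\mathfrak{Z}}_\eps^\infty$, the maps $\tilde f:=f\circ\emp^n(\mbfs,\cdot)$ and $\tilde g:=g\circ\emp^n(\mbfs,\cdot)$ are smooth on $(\T^d)^n$ and depend only on $(x_1,\dots,x_n)$ (their internal functions vanish at weights $<\eps$). Let $\mathcal L^\mbfs:=\sum_{i=1}^n s_i^{-1}\Delta_i$, a constant-coefficient elliptic operator on $(\T^d)^n$ with ellipticity constants in $[1,1/\eps]$, generating the tensor-product heat semigroup $\bigotimes_{i=1}^n e^{ts_i^{-1}\Delta_i}$. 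Duhamel's formula yields a unique $\tilde v^\mbfs\in\rmC^\infty([0,T]\times(\T^d)^n)$ with $\partial_t\tilde v^\mbfs_t+\mathcal L^\mbfs\tilde v^\mbfs_t=\tilde f$ on $(0,T)$ and $\tilde v^\mbfs_T=\tilde g$, given explicitly by integrals against the torus heat kernels $p^{1/s_i}_\tau$. The maximum principle and standard parabolic estimates bound $\|\tilde v^\mbfs_t\|_{\rmC^2}$ and $\|\partial_t\tilde v^\mbfs_t\|_\infty$ by a constant depending only on $\eps,T$ and on norms of $f,g$, \emph{uniformly in} $\mbfs$; moreover, restricted to each stratum $\{N(\eps,\cdot)=n\}$, the heat-kernel formula makes $(\mbfs,x_1,\dots,x_n,t)\mapsto\tilde v^\mbfs_t(x_1,\dots,x_n)$ and its $x$-derivatives continuous, hence Borel — this is what will make the assembled function globally Borel.

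\textbf{Cylinder representation with weights $\ge\eps/2$ (the crux).} Still for fixed $\mbfs$, I would realise $\tilde v^\mbfs_t$ as a function in $\hat{\mathfrak{Z}}_{\eps/2}^\infty$. Any smooth function on the compact manifold $(\T^d)^n$ factors as $\tilde v^\mbfs_t(x_1,\dots,x_n)=V^\mbfs_t(\iota(x_1),\dots,\iota(x_n))$, where $\iota\colon\T^d\hookrightarrow\R^{2d}$ is the standard trigonometric embedding and $V^\mbfs_t\in\rmC^\infty_c(\R^{2dn})$ is a compactly supported smooth extension from the compact submanifold $\bigl(\iota(\T^d)\bigr)^n$, chosen independently of $t$. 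Each coordinate of $\iota(x_i)$ is of the form $\phi(x_i)$ for a trigonometric $\phi$; to turn $\phi(x_i)$ into $\hat h^\trid(\mu)$ for $\mu=\emp(\mbfs,(x_j)_j)$, I use that, $\mbfs$ being strictly decreasing, each $s_i$ with $1\le i\le n$ (so $s_i\ge\eps$) is isolated in $\{s_j\}_j$: pick $\chi_i\in\rmC^\infty_c$ supported in a neighbourhood $U_i\ni s_i$ with $U_i\subset(\eps/2,1]$, $U_i\cap\{s_j:j\ne i\}=\emptyset$ and $\chi_i(s_i)=1$, and set $\hat h_{i,\phi}(s,x):=s^{-1}\chi_i(s)\phi(x)$. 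Then $\hat h_{i,\phi}\in\rmC^\infty([0,1]\times\T^d)$ is supported in $[\eps/2,1]\times\T^d$ and $\hat h_{i,\phi}^\trid(\emp(\mbfs,(x_j)_j))=\sum_j\chi_i(s_j)\phi(x_j)=\phi(x_i)$. Collecting the finitely many $\hat h_{i,\phi}$ into $\hat{\mbfh}^\mbfs$ and setting $v^\mbfs_t:=V^\mbfs_t\circ(\hat{\mbfh}^\mbfs)^\trid$ gives $v^\mbfs_t\in\hat{\mathfrak{Z}}_{\eps/2}^\infty$ with $v^\mbfs_t(\mu)=\tilde v^\mbfs_t(x_1,\dots,x_n)$ for $\mu\in\emp(\mbfs,(\T^d)^\infty_o)$, and since $t\mapsto v^\mbfs_t(\mu)$ is smooth and uniformly bounded one gets $v^\mbfs\in\mathfrak{T}\hat{\mathfrak{Z}}_{\eps/2}^\infty([0,T])$. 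As $\hat h_{i,\phi}$ vanishes at every $s_j$ with $j\ne i$, $v^\mbfs_t$ ignores the positions of the atoms of mass $<\eps$, so Lemma~\ref{lemma: rel_der_cil} (applied with threshold $\eps/2$) gives, for $\mu=\emp(\mbfs,\mbfx)$, $\rmD v^\mbfs_t(\mu,x_i)=s_i^{-1}\nabla_i\tilde v^\mbfs_t$ for $i\le n$, $\rmD v^\mbfs_t(\mu,x_i)=0$ for $i>n$, and $\boldsymbol{\Delta}v^\mbfs_t(\mu)=\sum_{i=1}^n s_i^{-1}\Delta_i\tilde v^\mbfs_t=\mathcal L^\mbfs\tilde v^\mbfs_t$. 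This is the main obstacle: one must simultaneously represent an \emph{arbitrary} smooth function of the atom positions as a genuine cylinder function (whence the embedding/extension trick) and keep the internal functions supported in $[\eps/2,1]\times\T^d$ (whence the weight-localising bumps $\chi_i$), and it is precisely here that the strict monotonicity of $\mbfs\in T^\infty_o$ and the bound $s_i\ge\eps$ are needed.

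\textbf{Assembling and concluding.} Define $u^\star_t(\mu):=v^{\mbfs(\mu)}_t(\mu)$ for $\mu\in\prob_o(\T^d)$, and define $\boldnabla u^\star_t$ and $L_c u^\star_t$ fibre-wise as in Definition~\ref{def:fwcyl}; by the Borel dependence from the second step these are Borel in $(t,\mu)$. The uniform bounds make $u^\star_t$, $\boldnabla u^\star_t$, $L_c u^\star_t$ bounded, hence $u^\star_t\in\hat{\mathfrak{Z}}_{fw}^\infty$, and Proposition~\ref{prop:ident} gives $u^\star_t\in D(\boldsymbol{\Delta})$ with $\rmD u^\star_t=\boldnabla u^\star_t$ and $\boldsymbol{\Delta}u^\star_t=L_c u^\star_t$. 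Using the uniform control of $\partial_t\tilde v^\mbfs$ and $\nabla\tilde v^\mbfs$ and dominated convergence, $t\mapsto u^\star_t$ is Lipschitz from $[0,T]$ to $H$ (so $u^\star\in\rmC([0,T];H)\cap\mathrm{AC}_{loc}((0,T);H)$) and $t\mapsto\rmD u^\star_t$ is bounded in $L^2(\prob(\T^d)\times\T^d,\overline{\mathcal D};\R^d)$ (so $u^\star\in L^2([0,T];H^{1,2})$), while the fibre-wise identity $\partial_t\tilde v^\mbfs_t=\tilde f-\mathcal L^\mbfs\tilde v^\mbfs_t$ becomes $\partial_t u^\star_t+\boldsymbol{\Delta}u^\star_t=f$ in $H$ for every $t\in(0,T)$, with $u^\star_T=g$. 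Thus $u^\star$ satisfies every requirement of Proposition~\ref{prop:pde}, and by the uniqueness there $u=u^\star$. Finally, \eqref{eq:theprop}--\eqref{eq:laseconda} follow from $u=u^\star$, the fibre-wise definitions, and the identities $\rmD v^\mbfs_t=\boldnabla v^\mbfs_t$, $\boldsymbol{\Delta}v^\mbfs_t=L_c v^\mbfs_t$ valid for $v^\mbfs_t\in\hat{\mathfrak{Z}}_c^\infty$ (Propositions~\ref{prop:vectorgrad} and~\ref{prop:ident}), and $u_t=u^\star_t\in\hat{\mathfrak{Z}}_{fw}^\infty$ for every $t\in[0,T]$.
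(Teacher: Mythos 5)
Your proof follows the same overall strategy as the paper's: fix $\mbfs \in T_o^\infty$ with $n := N(\eps,\mbfs)$, solve the constant-coefficient parabolic problem $\partial_t h + \sum_{i\le n} s_i^{-1}\Delta_i h = f\circ\emp^n(\mbfs,\cdot)$ on $(\T^d)^n$ with final datum $g\circ\emp^n(\mbfs,\cdot)$, lift the finite-dimensional solution back to a cylinder function $v^\mbfs_t \in \hat{\mathfrak{Z}}_{\eps/2}^\infty$, assemble the fibre-wise candidate $u^\star_t(\mu) := v^{\mbfs(\mu)}_t(\mu)$, invoke Proposition~\ref{prop:ident} to obtain $u^\star_t \in D(\boldsymbol{\Delta})$ with $\rmD u^\star_t = \boldnabla u^\star_t$ and $\boldsymbol{\Delta} u^\star_t = L_c u^\star_t$, and conclude $u = u^\star$ by the uniqueness in Proposition~\ref{prop:pde}. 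The uniform-in-$\mbfs$ Schauder bounds exploiting $s_i \ge \eps$ and $n \le \lceil 1/\eps\rceil$, the fibre-wise measurability via continuity of the heat-kernel formula on each stratum $\{N(\eps,\cdot)=k\}$, and the reliance on Lemma~\ref{lemma: rel_der_cil} to convert fibre-wise differential operators into the abstract $\rmD$ and $\boldsymbol{\Delta}$, are all shared.

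Where you genuinely diverge is the mechanism for realising the finite-dimensional solution $h^\mbfs_t\colon(\T^d)^n\to\R$ as a cylinder function with threshold $\eps/2$. The paper constructs coordinate-extractor internal functions $\hat{p}_i^\mbfs(r,x)=x^j/r$ localised in $r$ near $s_{k+1}$ over an interval of width $\mathrm{sep}(\mbfs)$, so that $(\hat{\mbfp}^\mbfs)^\trid$ returns the atom locations, and then composes $h^\mbfs_t$ with this map. You instead embed $\T^d$ into $\R^{2d}$ by the trigonometric $\iota$, use the $s^{-1}\chi_i(s)\phi(x)$ device with weight-localising bumps $\chi_i$ isolating $s_i$ (playing the same role as $\mathrm{sep}(\mbfs)$) to write each coordinate of $\iota(x_i)$ as a $\trid$-image, and postcompose with a compactly supported smooth extension $V^\mbfs_t$ of $h^\mbfs_t$ off the submanifold $(\iota(\T^d))^n\subset\R^{2dn}$. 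Your route buys immunity from the fact that the real coordinate $x\mapsto x^j$ is not globally smooth on $\T^d$: the trigonometric coordinates are genuine smooth functions, so the inner functions are manifestly in $\rmC^\infty([0,1]\times\T^d;\R^{2dn})$ with support in $[\eps/2,1]\times\T^d$, and the outer $V^\mbfs_t\in\rmC^\infty_c(\R^{2dn};\R)$ is obtained by a standard extension lemma; the paper's approach encodes the same periodicity implicitly through $h^\mbfs_t$ and has to be read with that convention in mind. The cost is the extra bookkeeping of the extension operator in $t$, which you handle by taking it linear and $t$-independent. One point you treat more lightly than the paper is joint Borel measurability of $u^\star$, $\rmD u^\star$, $\boldsymbol{\Delta}u^\star$ in $(t,\mu)$ — the paper builds an explicit continuous stratum-wise family $F_k$ — but your continuity-on-strata remark supplies the same ingredient, so the argument closes.
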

\begin{proof}  
Recall that the map $\emp$ as in \eqref{eq:Intro:Phi} is injective on $ T^\infty_o \times (\T^d)^\infty_o$ and we write its Borel measurable inverse $\emp^{-1}: \prob_o(\T^d)\to  T^\infty_o \times (\T^d)^\infty_o$ in the form
\[ (\mbfs(\mu), \mbfx(\mu)):= \emp^{-1}(\mu), \quad \mu \in \prob_o(\T^d).\]
For every $\mbfs \in  T^\infty_o$, we set $n(\mbfs):= N(\eps, \mbfs)$, the latter being defined in \eqref{eq:Neps:mbfs}; intuitively, 
$n(\mbfs)$ is the largest index below which masses contained in the sequence $\mbfs$ are greater than or equal to $\eps$. Note that $\mbfs \mapsto n(\mbfs)$ is a Borel map.

\medskip

\textit{Construction of $v^\mbfs$}: let $(s_j)_j =\mbfs \in T_o^\infty$ be fixed. When $n(\mbfs)=0$, we set $v_t^\mbfs:=0$, for all
$t \in [0,T]$. 
When
$n(\mbfs)>0$, the construction of $v^\mbfs$ is more delicate and proceeds in several steps. We first set
\[
\mathrm{sep}(\mbfs) := \frac{1}{2} \left (\min \{ |s_i -s_j| : i,j \le n(\mbfs), \, i \ne j \} \wedge \min \{ s_j-\eps/2 : j \le n(\mbfs)\} \right ),
\]
which is related  both to the minimal distance between the masses greater than or equal to $\eps$, and  to the minimal distance from these masses to $\varepsilon/2$.
Given $i \in \{ 1, \dots, dn(\mbfs)\}$, we call $(k,j)$ the unique pair of integers $k \in \{0, \dots, n(\mbfs)-1\}$ and $j \in \{1, \dots, d\}$ such that $i= kd+j$, and then, we find a function $\hat{p}_i^{\mbfs} \in \rmC^\infty([0, 1]\times \T^d)$ with $\supp(\hat{p}_i^{\mbfs}) \subset [\eps/2, 1]\times \T^d$ satisfying, for any $(r,x) \in [\varepsilon/2,1] \times {\mathbb T}^d$,
\[
\hat{p}_i^{\mbfs}(r,x^1,\dots,  x^d) = \begin{cases}
     x^j/r\quad &\text{if } \ (r,x) \in [s_{k+1} -\mathrm{sep}(\mbfs)/2, (s_{k+1} + \mathrm{sep}(\mbfs)/2) \wedge 1] \times \T^d,\\
    0 \quad 
    &\text{if } \ (r,x) \in \left ( [0, s_{k+1} - \mathrm{sep}(\mbfs)] \cup [(s_{k+1} + \mathrm{sep}(\mbfs)) \wedge 1, 1]  \right ) \times \T^d, 
\end{cases}
\]
where $x^j$ denotes the $j$-th coordinate of $x \in \T^d$. We set $\hat{\mbfp}^{\mbfs}:=(\hat{p}_1^{\mbfs}, \dots, \hat{p}_{dn(\mbfs)}^{\mbfs})$.
It is clear from this definition, and with the notation 
\eqref{eq:thefirsttrid}, 
that,
\[ (\hat{\mbfp}^{\mbfs})^{\trid}(\mu) = (x_1, \dots, x_{n(\mbfs)}) \in (\T^d)^{n(\mbfs)} \quad \text{ for every } \mu =\emp(\mbfs, \mbfx), \quad (\mbfs, \mbfx) \in  T^\infty_o \times(\T^d)^\infty_o.\]

We define two smooth functions $f^{\mbfs}, g^\mbfs: (\T^d)^{n(\mbfs)} \to \R$ by letting
\begin{equation}
\label{eq:fmbfs:gmbfs}
f^\mbfs := (f \circ \emp^{n(\mbfs)})(\mbfs, \cdot), \quad g^\mbfs := (g \circ \emp^{n(\mbfs)})(\mbfs, \cdot), 
\end{equation}
where $\emp^n$ is as in \eqref{eq:empN}. We then consider, on $[0,T] \times (\T^d)^{n(\mbfs)}$, the PDE
\begin{equation}\label{eq:pden}
\partial_t h_t + \Delta^{\mbfs} h_t = f^{\mbfs} \quad \text{ for a.e.~} t \in (0, T), \quad h_t \bigr |_{t=T} = g^{\mbfs},
\end{equation}
where $\Delta^{\mbfs}$ is the operator given by 
\[ \Delta^{\mbfs} h := \sum_{j=1}^{n(\mbfs)} \frac{1}{s_i} (\Delta_{i} h), \quad h \in \rmC^2((\T^d)^{n(\mbfs)}),\]
with $\Delta_i$ being the Laplacian operator on $\T^d$, acting on the $i$-th ($d$-dimensional) component of   $(\T^d)^{n(\mbfs)}$. The existence of a smooth solution $h^{\mbfs}: [0,T] \times (\T^d)^{n(\mbfs)} \to \R$ to \eqref{eq:pden} follows from the fact that it can be expressed as
\[ h^{\mbfs}_t(x):= k^\mbfs_{T-t}(A_\mbfs^{-1/2}x), \quad \text{ for every } (t,x) \in [0,T] \times (\T^d)^{n(\mbfs)},\]
where $A_\mbfs$ is the diagonal matrix 
of size $n(\mbfs) \times n(\mbfs)$
with $(A_\mbfs)_{jj}=s_i^{-1}$ for $j \in \{kd+1, \dots, (k+1)d\}$, $k \in \{0, \dots, n(\mbfs)-1\}$, and $k^\mbfs:[0,T] \times (\R^d)^{n(\mbfs)} \to \R$ is the smooth solution of the heat equation
\begin{equation}\label{eq:pdenk}
\partial_t k_t - \Delta k_t = -\tilde{f}^{\mbfs}(A_\mbfs^{1/2}(\cdot)) \quad \text{ for a.e.~} t \in (0, T), \quad k_t \bigr |_{t=0} = \tilde{g}^{\mbfs}(A_\mbfs^{1/2}(\cdot)),
\end{equation}
with $\tilde{f}^\mbfs$ and $\tilde{g}^\mbfs$
being the periodic extensions of the smooth functions $f^\mbfs$ and $g^\mbfs$ to $(\R^d)^{n(\mbfs)}$; see e.g.~\cite[Exercise 9.1.3]{krylovholder} for an existence result to the PDE. Classical estimates on the solution to the heat equation \eqref{eq:pdenk} (see e.g.~\cite[Exercise 9.1.4]{krylovholder}) and the fact that $s_i \ge \eps$ for every $i \in \{1, \dots, n(\mbfs)\}$ with $n(\mbfs) \le \eps^{-1}$, give the existence of a constant $C$, depending on $\eps$, $f$, and $g$ but not on $\mbfs$, such that
\begin{equation}\label{eq:inftyest}
\sup_{t \in [0,T]} \left ( \sum_{i=1}^{n(\mbfs)} \frac{1}{s_i} \|\nabla_i h_t^\mbfs\|_\infty + \|\Delta^\mbfs h_t^\mbfs  \|_\infty +\|h_t^\mbfs\|_\infty \right ) \le C. 
\end{equation}
When $n(\mbfs)>0$, we set $v^\mbfs_t := h_t^\mbfs \circ (\hat{\mbfp}^{\mbfs})^{\trid}$, $t \in [0, T]$. It is clear from the construction that $v^\mbfs \in \mathfrak{T}\hat{\mathfrak{Z}}_{\eps/2}^\infty([0, T])$, in fact for every $\mu \in \prob_o(\T^d)$  the function $t \mapsto v^\mbfs_t(\mu)$ belongs to $\rmC^\infty([0,T])$, being $h^\mbfs$ smooth.\\
\quad \\
\textit{Construction of $u$, its derivatives, and their measurability}:
following item (1) in Definition \ref{def:fwcyl}, we define
the function $u: [0, T]\times \prob_o(\T^d) \to \R$ as
\begin{equation}\label{eq:udef}
u_t(\mu) := v^{\mbfs(\mu)}_t(\mu)  ,  \quad (t, \mu) \in [0, T] \times \prob_o(\T^d).    
\end{equation}
Let us also set
\begin{equation}\label{eq:uoper}
 (\boldnabla u_t)_\mu :=  (\boldnabla v^{\mbfs(\mu)}_t)_\mu, \quad (L_c u_t)_\mu:= (L_c v^{\mbfs(\mu)}_t)_\mu \quad \text{ for every } (t, \mu) \in [0,T] \times \prob_o(\T^d).
\end{equation}
We now show simpler representations of the above operators: for every $t \in [0,T]$ we have
\begin{align} \label{eq:gradu}
(\boldnabla u_t)_\mu(x_i) &= \frac{1}{s_i} \nabla_i h_t^\mbfs (x_1, \dots, x_{n(\mbfs)}) \quad &&\text{ for every } \mu = \emp(\mbfs, \mbfx), \, (\mbfs, \mbfx) \in T_o^\infty \times (\T^d)_o^\infty, \, i=1, \dots, n(\mbfs),\\ \label{eq:nablau}
(L_c u_t)_\mu &= \Delta^\mbfs h_t^\mbfs (x_1, \dots, x_{n(\mbfs)}) \quad &&\text{ for every } \mu = \emp(\mbfs, \mbfx), \, (\mbfs, \mbfx) \in T_o^\infty \times (\T^d)_o^\infty,
\end{align}
where, in both formulas, we mean that, if $n(\mbfs)=0$, then both right-hand sides are equal to the constant function $0$. The proof of the above formulas relies on the following two arguments: if $n(\mbfs)=0$, then $v^\mbfs=0$ and the formulas follow; if $n(\mbfs)>0$, \eqref{eq:gradu} is a simple consequence of the definition of $v^\mbfs$ and Lemma \ref{le:hasgrad}, while \eqref{eq:nablau} can be obtained by simply computing
\begin{align*}
(L_c v_t^\mbfs)_\mu&=  \int_{\T^d}\frac{\Delta_z\rvert_{z=x} v^\mbfs_t(\mu+\mu_x\delta_z-\mu_x\delta_x)}{(\mu_x)^2} \de\mu(x) = \sum_{j=1}^{\infty} \frac{1}{s_j} \Delta_z\rvert_{z=x_j} v^\mbfs_t \left ( \sum_{k=1}^{\infty} s_k \delta_{x_k} + s_j \delta_z - s_j \delta_{x_j}  \right ) \nonumber\\
& = \sum_{j=1}^{n(\mbfs)} \frac{1}{s_j} \Delta_j h_t^{\mbfs}(x_1, \dots, x_{n(\mbfs)}) = \Delta^{\mbfs} h_t^{\mbfs}(x_1, \dots, x_{n(\mbfs)}),
\end{align*}
where we have also used that $\mbfs = \mbfs(\mu) = \mbfs(\mu+ \mu_x\delta_z -\mu_x \delta_x)$. To show that $u$, $\boldnabla u$ and $L_c u$ are measurable functions, 
we consider, for every $k \in \N_+$, the 
set
\begin{equation*}
T^{k,\varepsilon}_o
:= \pi_k\left(\{ \mbfs \in T^\infty_o : n(\mbfs) = k\}\right),
\end{equation*}
where $\pi_k$ is the projection from $T_o^\infty$ to $[0,1]^k$ defined by $\pi_k(\mbfs)=(s_1,\dots,s_k)$
and, for every $(r_1, \dots, r_k) \in T_o^{k, \eps}$, $h^{r_1, \dots, r_k}$ is the smooth solution (see the discussion above for existence and uniqueness of it) of the PDE, on $[0,T] \times (\T^d)^k$,
\begin{equation}\label{eq:pdeagain}
\partial_t h_t + \sum_{j=1}^k \frac{1}{r_j} \Delta_j h_t = f^{r_1, \dots, r_k} \quad \text{ for a.e. } t \in (0,T), \quad h_t |_{t=T} =g^{r_1, \dots, r_k},    
\end{equation}
where $f^{r_1, \dots, r_k}, g^{r_1, \dots, r_k}: (\T^d)^k \to \R$ are defined as
\[ f^{r_1, \dots, r_k}:= f^{\mbfs}, \quad g^{r_1, \dots, r_k}:= g^{\mbfs}\]
for any $\mbfs \in \pi_k^{-1}(r_1, \dots, r_k)$, observing that $f^{\mbfs}$ and $f^{\mbfs'}$ are equal for any two $\mbfs,\mbfs' \in T^\infty_o$ such that $n(\mbfs)=n(\mbfs')=k$ and $\pi_k(\mbfs)=\pi_k(\mbfs')$. In particular, the PDE \eqref{eq:pdeagain} can be regarded as a parametrized parabolic PDE whose coefficients depend in a continuous way on the parameter $(r_1,\cdots,r_k) \in T^{k,\varepsilon}_o$, with the latter space  being (obviously) equipped with the induced topology. By standard Schauder theory, solutions to the PDE, regarded as  real-valued functions defined on $[0,T] \times ({\mathbb T}^d)^k$, are once differentiable in time and twice in space. Together with their derivatives (of order one in time and of order two in space), they  are jointly  H\"older continuous, uniformly in $(r_1,\cdots,r_k)$. As a result,  the mapping
that sends $(r_1,\dots,r_k)$ onto $(h^{r_1,\dots,r_k},(\partial_{i} h^{r_1,\dots,r_k})_{i=1,\dots,dk},(\partial^2_{ij} h^{r_1,\dots,r_k})_{i,j=1\dots,dk})$ is continuous, when all the functions on the right-hand side are regarded as elements of $\rmC([0,T],(\T^d)^k)$
equipped with the $L^\infty$-norm.
Therefore, the map $F_k : T_o^{k, \eps} \to \rmC([0,T] \times (\T^d)^k) \times \rmC([0,T] \times (\T^d)^k) \times \rmC([0,T] \times (\T^d)^k; \R^d)^k$, defined by
\[ F_k(r_1, \dots, r_k) := \left ( h^{r_1, \dots, r_k}, \sum_{j=1}^k \frac{1}{r_j} \Delta_j h^{r_1, \dots, r_k}, \left (\frac{1}{r_j} \nabla_j h^{r_1, \dots, r_k} \right )_{j=1, \dots, k} \right )\]
is continuous when 
the arrival space is  equipped with the uniform convergence topology.
Denoting the components of $F_k$ by $F_k^i$, $i=1,2,3$, and using \eqref{eq:gradu} and \eqref{eq:nablau} together with the identity $h^{\mbfs}=h^{\pi_k(\mbfs)}$ when $n(\mbfs)=k$, it is not difficult to check that $u$, $\boldnabla u$, and $L_c$ 
can be expressed in the following way:
\begin{align*}
u_t(\mu) &= \sum_{k=1}^{\ceil{1/\eps}} \mathbf{1}_{\{k\}}(n(\mbfs(\mu))) [F_k^1(s_1(\mu), \dots, s_k(\mu))](t, x_1(\mu), \dots, x_k(\mu)),\\
(L_c u_t)_\mu &= \sum_{k=1}^{\ceil{1/\eps}} \mathbf{1}_{\{k\}}(n(\mbfs(\mu))) [F_k^2(s_1(\mu), \dots, s_k(\mu))](t, x_1(\mu), \dots, x_k(\mu)),\\
(\boldnabla u_t)_\mu(x) &= \sum_{k=1}^{\ceil{1/\eps}} \mathbf{1}_{\{k\}}(n(\mbfs(\mu))) \sum_{j=1}^k \{[F_k^3(s_1(\mu), \dots, s_k(\mu))](t, x_1(\mu), \dots, x_k(\mu))\}_j \mathbf{1}_{\{x_j(\mu)\}}(x),
\end{align*}
for every $(t, \mu, x) \in [0,T] \times \prob_o(\T^d) \times \T^d$, where we have denoted with a subscript $j$ the components of $F^3_k$. The above expressions, the aforementioned continuity properties of $F_k$, and the measurability of $\emp^{-1}$, give the joint measurability of $u$, $\boldnabla u$ and $L_c u$
with respect to their respective entries, i.e., $t$, $\mu$ and possibly $x$.
Finally, the definition of $u$ and the measurability properties just established, together with the bounds in \eqref{eq:inftyest}, give that, for every $t \in [0,T]$, $u_t \in \hat{\mathfrak{Z}}_{fw}^\infty$. Hence, by Proposition \ref{prop:ident}, we get \eqref{eq:laprima} and \eqref{eq:laseconda}.\\
\quad \\
\textit{The equation solved by $u$}: we show that $u$ solves
\begin{equation}\label{eq:pdepointw}
\partial_t u_t(\mu) + (L_c u_t)_\mu = f(\mu), \quad u_T(\mu) = g(\mu) \quad \text{ for every } (t, \mu)\in (0, T) \times \prob_o(\T^d).
\end{equation}
Let $(t, \mu)\in (0, T) \times \prob_o(\T^d)$ with $\mu= \emp(\mbfs, \mbfx)$ for some $(\mbfs, \mbfx) \in T_o^\infty \times (\T^d)_o^\infty$. If $n(\mbfs)=0$, then
\[ \partial_t u_t (\mu) = \partial_t v^\mbfs (\mu) = (L_c u_t)_\mu = (L_c v_t^\mbfs)_\mu = 0,\]
where we used the definition of $u$ and \eqref{eq:nablau} together with the fact that, in this case, $v^\mbfs=0$; since, in this case, we also have  $f(\mu)=g(\mu)=0$ (as $f$ and $g$ belong to $\hat{\mathfrak{Z}}_\eps^\infty$), we deduce that \eqref{eq:pdepointw} is satisfied at $(t, \mu)$. If $n(\mbfs)>0$, we have
\[ \partial_t u_t(\mu) = \partial_t v^\mbfs_t (\mu) = \partial_t h^\mbfs (x_1, \dots, x_{n(\mbfs)}) = - \Delta^\mbfs h^\mbfs(x_1, \dots, x_{n(\mbfs)})+f^\mbfs(x_1, \dots, x_{n(\mbfs)}) = -(L_c u_t)_\mu + f(\mu),\]
where we have used the definition of $u$, \eqref{eq:nablau}, the PDE \eqref{eq:pden}, the definition
of $f^\mbfs$ in 
\eqref{eq:fmbfs:gmbfs}, and again the fact that $f \in \hat{\mathfrak{Z}}_{\eps}^\infty$. Using the same properties and the definition of $g^\mbfs$ (also in 
\eqref{eq:fmbfs:gmbfs}), it is easy to see that $u_T(\mu) = g(\mu)$. This shows that \eqref{eq:pdepointw} is satisfied at $(t, \mu)$ also in this case. Finally, \eqref{eq:pdepointw}, \eqref{eq:uoper}, and \eqref{eq:laseconda} show that $u$ solves the PDE in the statement thus concluding the proof. 
\end{proof}

\begin{remark} It follows immediately from the proof, that, for a given fixed $\mu \in \prob(\T^d)$,  the functions $t \mapsto v^{\mbfs}(\mu)$ belong to $\rmC^\infty([0,T])$ and not only to $\Lip_b([0,T], \sfd_{eucl})$ because of the smoothness of the mapping
$t \mapsto h^\mbfs(x_1, \dots, x_{n(\mbfs)})$ for a fixed value of $\mbfs \in T_o^\infty$.
\end{remark}

\begin{corollary}\label{prop:pde_approx_cyl}
Let $f \in L^2([0,T];H)$, $g\in H$, and $u$ be the unique solution of
    \[ \partial_t u_t + \boldsymbol{\Delta} u_t =  f_t \quad \text{for a.e.~}t \in (0,T), \quad u_t \bigr |_{t=T} = g.\]
Then, there exist sequences $(\eps_n)_n \subset (0,1)$, $(g^n)_n\subset\hat{\mathfrak{Z}}_c^\infty$ and $(f^n)_n\subset L^2([0,T];H)$, 
satisfying the following two properties:
\begin{enumerate}[i.]
    \item for any $n\in\N_+$,
given the
unique solution $u^n$ to 
     \[ \partial_t u^n_t + \boldsymbol{\Delta} u^n_t =  f^n_t \quad \text{for a.e.~}t \in (0,T), \quad u^n_t \bigr |_{t=T} = g^n,\]
there exist Borel representatives of $u^n$, $\rmD u^n$, and $\boldsymbol{\Delta} u^n$ and, for any $\mbfs \in T^\infty_o$,
there exists also a function $v^{\mbfs, n} \in \mathfrak{T}\hat{\mathfrak{Z}}_{\eps_n}^\infty([0, T])$ such that, at any 
pair $(t,\mu) \in 
[0,T] \times \emp(\mbfs, (\T^d)^\infty_o)$, it holds
\begin{equation}
u_t^n(\mu) = v_t^{\mbfs, n}(\mu), \quad \rmD u_t^n (\mu, \cdot) = \rmD v_t^{\mbfs, n} (\mu, \cdot), \quad \boldsymbol{\Delta} u_t^n (\mu) = \boldsymbol{\Delta} v_t^{\mbfs, n} (\mu);
\end{equation}
\item $g^n\to g$ in $H$, $f^n\to f$ in $L^2([0,T];H)$, and 
\[  \sup_{t \in [0,T]}  |u^n_t-u_t|^2_H + \int_{0}^T \|\rmD u_t^n - \rmD u_t \|^2 \de t \to 0 \]
as $n \to \infty$.
\end{enumerate}
\end{corollary}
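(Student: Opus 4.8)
The statement will follow by combining three ingredients: a density argument producing cylindrical data $(f^n,g^n)$ approximating $(f,g)$; a time-dependent version of Proposition~\ref{prop:cyl_solutions} giving the fiber-wise cylinder structure of the associated solutions $u^n$; and the stability estimate of Proposition~\ref{prop_stab_new}, specialized to vanishing drifts, for the convergence in item~(ii).

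\textbf{Step (a): the approximating data.} By \eqref{eq:density}, $\hat{\mathfrak{Z}}_c^\infty$ is dense in $H$, so I pick $(g^n)_n\subset\hat{\mathfrak{Z}}_c^\infty$ with $g^n\to g$ in $H$. Since smooth functions are dense in $L^2([0,T])$ and $\hat{\mathfrak{Z}}_c^\infty$ is dense in $H$, the algebraic tensor product $\rmC^\infty([0,T])\otimes\hat{\mathfrak{Z}}_c^\infty$ is dense in $L^2([0,T];H)$ (approximate a simple function $\sum_j\mathbf{1}_{A_j}h_j$, $h_j\in H$, by replacing $\mathbf{1}_{A_j}$ with a smooth function in $L^2([0,T])$ and $h_j$ with an element of $\hat{\mathfrak{Z}}_c^\infty$). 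Hence I take $(f^n)_n$ of the form $f^n_t(\mu)=\sum_{i=1}^{K_n}g^{i,n}(t)\,\hat u^{i,n}(\mu)$ with $g^{i,n}\in\rmC^\infty([0,T])$, $\hat u^{i,n}\in\hat{\mathfrak{Z}}_c^\infty$, and $f^n\to f$ in $L^2([0,T];H)$. Since each $\hat{\mathfrak{Z}}_\delta^\infty$ is an algebra and $\hat{\mathfrak{Z}}_c^\infty=\bigcup_{\delta\in(0,1)}\hat{\mathfrak{Z}}_\delta^\infty$, one may choose $\delta_n\in(0,1)$ with $g^n\in\hat{\mathfrak{Z}}_{\delta_n}^\infty$ and $f^n_t\in\hat{\mathfrak{Z}}_{\delta_n}^\infty$ for every $t$; as $t\mapsto f^n_t(\mu)$ is moreover smooth (hence Lipschitz) for each $\mu$, this gives $f^n\in\mathfrak{T}\hat{\mathfrak{Z}}_{\delta_n}^\infty([0,T])$. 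I then set $\eps_n:=\delta_n/2$.

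\textbf{Step (b): the fiber-wise cylinder structure.} The key claim is that Proposition~\ref{prop:cyl_solutions} remains valid when $f$ is replaced by a time-dependent source $f\in\mathfrak{T}\hat{\mathfrak{Z}}_\delta^\infty([0,T])$ (keeping $g\in\hat{\mathfrak{Z}}_\delta^\infty$), the conclusion being unchanged with $v^\mbfs\in\mathfrak{T}\hat{\mathfrak{Z}}_{\delta/2}^\infty([0,T])$. Its proof mirrors that of Proposition~\ref{prop:cyl_solutions}: the finite-dimensional backward heat equation \eqref{eq:pden} is replaced by its inhomogeneous-in-time version $\partial_th_t+\Delta^{\mbfs} h_t=f^\mbfs_t$, $h_t|_{t=T}=g^\mbfs$, where $f^\mbfs_t:=(f_t\circ\emp^{n(\mbfs)})(\mbfs,\cdot)$ (cf.~\eqref{eq:empN}, \eqref{eq:fmbfs:gmbfs}); since $f^\mbfs$ is Lipschitz in time and $\rmC^\infty$ in space, the Duhamel representation with the heat kernel yields a solution $h^\mbfs$ that is $\rmC^\infty$ in space and Lipschitz in time, enjoying the same uniform-in-$\mbfs$ bounds \eqref{eq:inftyest} (using $s_i\ge\delta$, $n(\mbfs)\le\delta^{-1}$ and the boundedness of $f$ and $g$) and, by parabolic Schauder theory, the same continuous dependence on the weight parameters $(r_1,\dots,r_k)$ that underlies the measurability argument. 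Proposition~\ref{prop:ident} then identifies $\rmD$ and $\boldsymbol{\Delta}$ with $\boldnabla$ and $L_c$ on the resulting functions $u_t\in\hat{\mathfrak{Z}}_{fw}^\infty$ exactly as before, and the pointwise computation of the equation satisfied by $u$ together with uniqueness (Proposition~\ref{prop:pde}) shows that this $u$ is \emph{the} solution. Applying this to $(f^n,g^n)$, with $\delta=\delta_n$, produces the unique solution $u^n$ of the equation in item~(i), together with Borel representatives of $u^n,\rmD u^n,\boldsymbol{\Delta}u^n$ and functions $v^{\mbfs,n}\in\mathfrak{T}\hat{\mathfrak{Z}}_{\eps_n}^\infty([0,T])$ satisfying the three displayed identities on each fiber $\emp(\mbfs,(\T^d)^\infty_o)$; this is item~(i).

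\textbf{Step (c): convergence, and the main obstacle.} For item~(ii), $g^n\to g$ in $H$ and $f^n\to f$ in $L^2([0,T];H)$ hold by construction, while $u$ and $u^n$ are the unique solutions of \eqref{eq:pdewithf} with vanishing drift and data $(f,g)$, $(f^n,g^n)$ respectively. Hence Proposition~\ref{prop_stab_new} with $b=b^n\equiv 0$ (so $M=0$ and $\mathcal{R}(0,0,\rmD u,T)=0$) gives
\[
\sup_{t\in[0,T]}|u^n_t-u_t|_H^2+\int_0^T\|\rmD u^n_t-\rmD u_t\|^2\de t\le C(0,T)\Big(|g-g^n|_H^2+\int_0^T|f_t-f^n_t|_H^2\de t\Big),
\]
whose right-hand side tends to $0$ as $n\to\infty$, which is exactly the claimed convergence. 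The main obstacle is Step~(b): one must re-run the somewhat technical construction behind Proposition~\ref{prop:cyl_solutions}—in particular the measurable selection of the fiber-wise representatives and the continuity in the weights of the finite-dimensional solutions—for Lipschitz-in-time source terms. Everything else is a direct combination of density and the already-established stability estimate.
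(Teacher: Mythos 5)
Your proof is correct in spirit, but it takes a genuinely different route in Step~(b), and that route is where the paper actually sidesteps the work you propose to redo. The paper chooses $(f^n)_n$ in the class $\mathcal{S}(0,T;\hat{\mathfrak{Z}}_c^\infty)$ of \emph{simple} (piecewise constant) functions in time with values in $\hat{\mathfrak{Z}}_c^\infty$, i.e., $f^n(t)=\sum_i\chi_{[t_i^n,t_{i+1}^n)}(t)\,f_i^n$ with each $f_i^n\in\hat{\mathfrak{Z}}_{\eps_n'}^\infty$. On each sub-interval $[t_i^n,t_{i+1}^n]$, the source term is time-independent, so Proposition~\ref{prop:cyl_solutions} applies \emph{as stated} to the semigroup-like operator $S_t^{i,n}h$ solving the time-constant-source equation; the approximating solution $u^n$ is then obtained by concatenating these pieces backward in time. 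In contrast, you take $(f^n)_n$ smooth in time (in $\rmC^\infty([0,T])\otimes\hat{\mathfrak{Z}}_c^\infty$) and then assert that Proposition~\ref{prop:cyl_solutions} extends to Lipschitz-in-time sources. That extension is plausible (Duhamel and Schauder estimates do give the regularity and boundedness you need, and the joint time-space Lipschitz bound follows from the control on $\Delta^\mbfs h_t^\mbfs$), but you only sketch it, and in particular the claim that the continuity of $(r_1,\dots,r_k)\mapsto F_k(r_1,\dots,r_k)$ — the measurability argument's cornerstone — survives the replacement of a time-constant source by a merely time-Lipschitz one requires checking (one should verify that the parabolic estimates remain uniform in the parameter via, e.g., the $W^{1,2,N}_p$ theory rather than classical Schauder). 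You correctly flag this as "the main obstacle," and indeed the paper's concatenation device makes the whole problem vanish: no new version of Proposition~\ref{prop:cyl_solutions} is needed. Steps~(a) and~(c) of your argument agree with the paper's.
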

\begin{proof} By density of $\hat{\mathfrak{Z}}_c^\infty$ in $L^2(\prob(\R^d), \mathcal{D})$ (see~\eqref{eq:density}) we can find sequences $(g^n)_n \subset \hat{\mathfrak{Z}}_c^\infty$ and $(f^n)_n \subset \mathcal{S}(0,T; \hat{\mathfrak{Z}}_c^\infty)$ such that $g^n \to g$ in $H$ and $f^n \to f$ in $L^2([0,T]; H)$ as $n \to \infty$, where we have denoted by $\mathcal{S}(0,T; \hat{\mathfrak{Z}}_c^\infty)$ the vector space of simple (in time) functions taking values in $\hat{\mathfrak{Z}}_c^\infty$. 
In words, the two inclusions 
$(g^n)_n \subset \hat{\mathfrak{Z}}_c^\infty$ and $(f^n)_n \subset \mathcal{S}(0,T; \hat{\mathfrak{Z}}_c^\infty)$ mean that for every $n \in \N_+$, there exist values $\eps'_n \in (0,1)$
and $K_n \in \N_+$, a partition $0=t_0^n < t_1^n < \dots < t_{K_n-1}^n < t_{K_n}^n=T$, and functions $(f_i^n)_{i=1}^{K_n} \subset \hat{\mathfrak{Z}}_{\eps'_n}^\infty$, such that $g^n \in \hat{\mathfrak{Z}}_{\eps'_n}^\infty$ and
\[ f^n(t) = \sum_{i=0}^{K_n-1} \chi_{[t_i^n, t_{i+1}^n)}(t) f_i^n \quad \text{ for every } t \in [0,T).\]
For a given $h \in \hat{\mathfrak{Z}}_\eps^\infty$, $\eps \in (0,1)$, let us denote by $S_t^{i,n} h$ the unique solution of 
\[ \partial_t v_t + \boldsymbol{\Delta} v_t =  f_i^n \quad \text{for a.e.~}t \in (0,T), \quad v_t \bigr |_{t=T} = h.\]
We deduce from Proposition \ref{prop:cyl_solutions} that there exist Borel representatives of $S_t^{i,n}h$, its gradient, and its Laplacian, and,
for every $\mbfs \in  T^\infty_o$,
there exists also
$v^{\mbfs,i,n,h} \in \mathfrak{T}\hat{\mathfrak{Z}}_{\delta_n}^\infty([0, T])$, 
with $\delta_n :=(\varepsilon_n\wedge \varepsilon)/2$,
such that 
\begin{align*}
S_t^{i,n} h(\mu) &= v_t^{\mbfs,i,n,h}(\mu), \quad &&\text{ for all } \mu \in \emp(\mbfs, (\T^d)^\infty_o)\text{ and }  t \in [0,T],\\ 
\rmD (S_t^{i,n} h) (\mu, \cdot) &= \rmD (v_t^{\mbfs,i,n,h})(\mu, \cdot), \quad &&\text{ for all } \mu \in \emp(\mbfs, (\T^d)^\infty_o)\text{ and }  t \in [0,T],\\
\boldsymbol{\Delta}(S_t^{i,n} h)(\mu) &= \boldsymbol{\Delta} (v_t^{\mbfs,i,n,h})(\mu), \quad &&\text{ for all } \mu \in \emp(\mbfs, (\T^d)^\infty_o)
\text{ and } t \in [0,T].
\end{align*}
Now define
\[ u_T^{n} := g^n, \quad u^{n}_t:=\sum_{i=0}^{K_n-1}S^{i,n}_{T-t_{i+1}^n+t} u^{n}_{t_{i+1}^n}\chi_{[t_i^n,t_{i+1}^n]}(t) \quad t \in [0,T]. \]
Combining 
the above construction with 
Proposition \ref{prop_stab_new}, we easily deduce that 
the sequence 
$(u^n)_n$ 
satisfies 
i and ii.
\end{proof}

Our goal is to relate \eqref{eq:pdeaaa} to the system of interacting particles \eqref{eqn: particles_drift} (when driven by a drift $b$ that is independent of $i$) with the language of the canonical process as in Subsection \ref{ssec:canonical}, see also Subsection \ref{ssec:transfer}.

\begin{proposition}\label{prop:ito_general}  Let $b:[0,T]\times\prob(\T^d) \times \T^d \to \R^d$ be a bounded and measurable function, $t_0\in [0,T)$ be an initial time, $p$ be a bounded $\cD$-density, $f \in L^2([0,T];H)$, and $g \in H$ be a final datum. If $u$ is the unique solution to \eqref{eq:pdewithf}, then the process $(u_t(\mu_t^\infty))_{t \in [t_0,T]}$ can be expanded 
in the following two equivalent ways:
\begin{align*}
    g(\mu^{\infty}_T) & =  u_{t}(\mu^{\infty}_{t}) + \int_t^T f_r(\mu_r^\infty) \de r -\int_{t}^T\int_{\T^d}\rmD u_r(\mu^{\infty}_r,x)\cdot b_r(\mu^{\infty}_r,x)\de\mu^{\infty}_r(x)\de r\\
    & \quad +\sum_{i=1}^{\infty}\int^T_{t} \sqrt{2\varsigma_i}\rmD u_r(\mu^{\infty}_r, \xi^i_r) \cdot \de \beta^i_r, \quad t \in [t_0, T],\, \mathbf{ P}^{t_0, \emp^{-1}_\sharp( p\cD)}\text{-a.e.},\\
    g(\mu^{\infty}_T) &  = u_{t}(\mu^{\infty}_{t})  +  \int_t^T f_r(\mu_r^\infty) \de r +\sum_{i=1}^{\infty}\int^T_{t} \sqrt{2\varsigma_i}\rmD u_r(\mu^{\infty}_r, \xi^i_r) \cdot\de \beta^i_r,\quad  t \in [t_0, T],\,\mathbf{ Q}^{t_0, \emp^{-1}_\sharp (p \cD)}\text{-a.e.} \, ,
\end{align*}
where 
$\mathbf{ Q}^{t_0, \emp^{-1}_\sharp (p \cD)}$
is defined as in Proposition 
\ref{prop:pandq}
(it being understood that 
all the 
$b^i$'s in 
\eqref{eqn: particles_drift}
are equal to $b$).

\end{proposition}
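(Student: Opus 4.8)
The plan is to first prove the chain rule for solutions with cylinder coefficients — where the regularity of $u$ permits a direct application of the It\^o formula from Proposition~\ref{prop: ito_cyl} — and then pass to the limit using the stability estimates from Proposition~\ref{prop_stab_new} and the disintegration formula of Proposition~\ref{prop:disintegration:Q0}. Recall from Corollary~\ref{prop:pde_approx_cyl} that the solution $u$ to \eqref{eq:pdewithf} (with source $f$ and final datum $g$) can be built from an approximating sequence $(u^n)_n$ with cylinder coefficients $f^n,g^n$, such that on each fiber $\emp(\mbfs,(\T^d)^\infty_o)$ the function $u^n$ (together with $\rmD u^n$ and $\boldsymbol{\Delta} u^n$) agrees with a time-dependent cylinder function $v^{\mbfs,n}\in\mathfrak{T}\hat{\mathfrak{Z}}_{\eps_n}^\infty([0,T])$.

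**Step 1: the cylinder case under $\mathbf{P}$.** Fix $n$ and a deterministic initial datum $(\mbfs,\mbfx)\in T_o^\infty\times(\T^d)_o^\infty$. Since $u^n$ solves $\partial_t u^n_t + \boldsymbol{\Delta} u^n_t = f^n_t$ on $(0,T)$ piecewise on each subinterval of the time partition, and since on each subinterval $u^n$ coincides (on the relevant fiber) with an element of $\mathfrak{T}\hat{\mathfrak{Z}}_{\eps_n}^\infty$, I apply Proposition~\ref{prop: ito_cyl} to $v^{\mbfs,n}$ along the free particle process $\mu^\infty$ under $\mathbf{P}^{t_0,(\mbfs,\mbfx)}$. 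Because $\mu^\infty_r = \emp(\mbfs, (\varpi(\xi^j_r))_j)$ stays on the fiber $\emp(\mbfs,\cdot)$ for all $r$ (the weights are frozen), we may replace $v^{\mbfs,n}$ and its derivatives by $u^n$ and $\rmD u^n$, $\boldsymbol{\Delta} u^n$ wherever they appear. Using the PDE to rewrite $\partial_r u^n_r + \boldsymbol{\Delta} u^n_r = f^n_r$, and patching the expansion across the finitely many subintervals, I obtain, $\mathbf{P}^{t_0,(\mbfs,\mbfx)}$-a.s.,
\begin{equation*}
g^n(\mu^\infty_T) = u^n_t(\mu^\infty_t) + \int_t^T f^n_r(\mu^\infty_r)\de r + \sum_{i=1}^\infty \int_t^T \sqrt{2\varsigma_i}\,\rmD u^n_r(\mu^\infty_r,\xi^i_r)\cdot\de\beta^i_r.
\end{equation*}
Integrating against $p\,\de\mathcal{D}(m)$ over $m=\emp(\mbfs,\mbfx)$ (using \eqref{eq:disintegration:Pcursm0} and the fact that $\mathcal{D}$ is concentrated on $\prob_o(\T^d)$) yields the same identity $\mathbf{P}^{t_0,\emp^{-1}_\sharp(p\mathcal{D})}$-a.s.

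**Step 2: change of measure to obtain both expansions for $u$.** Under $\mathbf{P}^{t_0,\emp^{-1}_\sharp(p\mathcal{D})}$ the canonical processes $(\beta^i)_i$ are independent Brownian motions. By Girsanov (as in the proof of Proposition~\ref{prop: weak_existence}) and the representation \eqref{eq:new:expression:Qcursm0}, under $\mathbf{Q}^{t_0,\emp^{-1}_\sharp(p\mathcal{D})}$ the processes $\beta^i$ acquire the drift $\sqrt{\varsigma_i/2}\,b_r(\mu^\infty_r,\xi^i_r)$; equivalently, substituting $\de\beta^i_r = \de\beta^i_r + \sqrt{\varsigma_i/2}\,b_r(\mu^\infty_r,\xi^i_r)\de r$ into the stochastic integral and summing against $\varsigma_i$ produces exactly the transport term $\int_t^T\!\int_{\T^d}\rmD u_r(\mu^\infty_r,x)\cdot b_r(\mu^\infty_r,x)\de\mu^\infty_r(x)\de r$ (the sum $\sum_i \varsigma_i\,\rmD u_r\cdot b_r$ converging in $L^1$ since $\|b\|_\infty<\infty$, $\sum_i\varsigma_i=1$, and $\rmD u_r\in L^2(\overline{\mathcal{D}})$). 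Passing to the limit $n\to\infty$ using Corollary~\ref{prop:pde_approx_cyl}(ii) for the convergence $u^n\to u$, $\rmD u^n\to\rmD u$, $f^n\to f$, $g^n\to g$ — controlling the stochastic integral by It\^o's isometry, the time integral by Cauchy–Schwarz and \eqref{eq:acbound} (which bounds the time marginals of $\mu^\infty$ by $\|p\|_\infty\mathcal{D}$), and the transport term similarly — gives the two expansions for $u$, the first $\mathbf{P}^{t_0,\emp^{-1}_\sharp(p\mathcal{D})}$-a.s. and the second $\mathbf{Q}^{t_0,\emp^{-1}_\sharp(p\mathcal{D})}$-a.s.

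**Main obstacle.** The delicate point is the justification of the term-by-term manipulation of the \emph{infinite} sum of stochastic integrals: one must verify that $\sum_{i=1}^\infty\int_t^T\sqrt{2\varsigma_i}\,\rmD u_r(\mu^\infty_r,\xi^i_r)\cdot\de\beta^i_r$ converges as an $L^2$-martingale (quadratic variation $\sum_i 2\varsigma_i\int|\rmD u_r|^2$, integrable thanks to \eqref{eq:acbound} and $\rmD u\in L^2([0,T];L^2(\overline{\mathcal{D}}))$), and that the Girsanov drift correction can legitimately be pushed through the limit $n\to\infty$ despite $\rmD u$ being only an $L^2$-object with no pointwise regularity. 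Here the absolute-continuity bound \eqref{eq:acbound} is essential: it ensures that the Borel representative of $\rmD u_r$ evaluated along $(\mu^\infty_r,\xi^i_r)$ is almost surely well-defined and that replacing $\rmD u^n$ by $\rmD u$ incurs an error controlled in $L^2$. A further mild point is the measurability/choice of Borel representatives for $u$, $\rmD u$, $\boldsymbol{\Delta} u$ along the fibers, but this is supplied by the remark following Proposition~\ref{prop:pde} together with Proposition~\ref{prop:cyl_solutions}.
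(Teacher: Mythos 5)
Your proposal follows the same high-level skeleton as the paper (cylinder approximation via Corollary~\ref{prop:pde_approx_cyl}, It\^o along the free particle system via Proposition~\ref{prop: ito_cyl}, disintegration, Girsanov, and a passage to the limit), but there are two gaps that prevent the argument from closing.

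First and most seriously, Corollary~\ref{prop:pde_approx_cyl} produces cylinder approximations of solutions of the \emph{driftless} heat equation $\partial_t u + \boldsymbol{\Delta}u = f$; it does not apply directly to the transport–diffusion equation~\eqref{eq:pdewithf}. To invoke it you must first observe that the solution $u$ of~\eqref{eq:pdewithf} also solves the heat equation with \emph{modified} source $k_t := f_t - \sclprd{b_t}{\rmD u_t}$, which lies in $L^2([0,T];H)$ because $b$ is bounded and $\rmD u \in L^2([0,T]; L^2(\overline{\cD}))$. This reformulation is what the paper does on the first line of its proof. Your write-up instead asserts that $u^n$ solves $\partial_t u^n + \boldsymbol{\Delta}u^n = f^n$ with $f^n \to f$. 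If $f^n$ really converged to $f$, the sequence $(u^n)$ would converge to the heat-equation solution with source $f$, not to the transport–diffusion solution $u$, and the Girsanov bookkeeping in your Step~2 would then leave an uncancelled transport term $+\int_t^T\sclprd{b_r}{\rmD u_r}\,\de r$ on the $\mathbf{Q}$-side, contradicting the claimed second expansion. The argument works only if $f^n \to k$, so that passing to the limit and splitting $k = f - \sclprd{b}{\rmD u}$ produces the transport term with the \emph{correct} (negative) sign in the $\mathbf{P}$-expansion, which is exactly what is absorbed by the change of Brownian motion. This is not a notational nit: identifying the right source term is the step that makes the heat-equation approximation compatible with the drifted PDE.

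Second, the step in which you "integrate against $p\,\de\mathcal{D}(m)$" to pass from a $\mathbf{P}^{t_0,(\mbfs,\mbfx)}$-a.s.\ identity to a $\mathbf{P}^{t_0,\emp^{-1}_\sharp(p\cD)}$-a.s.\ one is not innocent: the infinite sum of stochastic integrals is a Borel function on $\Xi$ defined only up to an initial-data-dependent null set. One must show that there is a single progressively measurable map $\mathcal{I}^{t_0}$ that represents the stochastic integral simultaneously under $\mathbf{P}^{t_0,\emp^{-1}(m)}$ for $\cD$-a.e.~$m$; that is exactly the content of Lemma~\ref{lem:universal:mathcalI0} and Corollary~\ref{corol:universal:zeta0} (applied here with the drift replaced by $2\rmD u^n$), which the paper invokes explicitly and you do not. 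The "Main obstacle" paragraph of your proposal discusses integrability and the choice of a Borel representative of $\rmD u$, but neither point resolves this measurable-selection issue for the stochastic integral itself.
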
 Note that in the above display we are using the convention explained in Remark \ref{rem:toruspb} to evaluate $\rmD u_r(\mu_r^\infty, \cdot)$ at $\xi_r^i$.
\begin{proof}
Recalling the notation 
\eqref{eq:[b1,b2]},
let us set $t \in [0,T]\mapsto k_t := f_t -\sclprd{b}{\rmD u_t}\in H$.
Since $b$ is bounded and $u\in L^2([0,T];H^{1,2})$, $k$ belongs to $ L^2([0,T];H)$, which makes it possible to apply Corollary \ref{prop:pde_approx_cyl}. We then introduce $(\eps_n)_n \subset (0,1)$, $(g^n)_n\subset\hat{\mathfrak{Z}}^\infty_c$, $(k^n)_n \subset L^2([0,T];H)$,   $(u^n)_n \subset L^2([0,T];H^{1,2})$
\and
$(v^{\mbfs, n})_n \in \prod_{n=1}^\infty \mathfrak{T}\hat{\mathfrak{Z}}_{\eps_n}^\infty([0, T])$, the sequences provided by Corollary \ref{prop:pde_approx_cyl} (note $f_t$ there is $k_t$ here). Thanks to the latter, there exist Borel representatives of $u^n$, $\rmD u^n$, and $\boldsymbol{\Delta} u^n$ such that, for every $\mbfs \in  T^\infty_o$,
\begin{align}
 \label{eq:theprop2}
u_t^n(\mu) &= v_t^{\mbfs,n}(\mu), \quad &&\text{ for all } \mu \in \emp(\mbfs, (\T^d)^\infty_o) \
\text{and} \ t \in [0,T],\\ \label{eq:laprima2}
\rmD u_t^n(\mu, \cdot) &= \rmD v_t^{\mbfs,n}(\mu, \cdot), \quad &&\text{ for all } \mu \in \emp(\mbfs, (\T^d)^\infty_o) \
\text{and} \ t \in [0,T],\\ \label{eq:laseconda2}
\boldsymbol{\Delta}u_t^n(\mu) &= \boldsymbol{\Delta} v_t^{\mbfs,n}(\mu), \quad &&\text{ for all } \mu \in \emp(\mbfs, (\T^d)^\infty_o) \
\text{and} \ t \in [0,T].
\end{align}
Note that \eqref{eq:theprop2} also gives that
\begin{equation}
    \partial_t u_t^n(\mu) = \partial_t v_t^{\mbfs, n}(\mu) \quad \text{ for all } \mu \in \emp(\mbfs, (\T^d)^\infty_o) \
\text{and a.e.~} t \in [0,T], \label{eq:laterza}
\end{equation}
where both functions are regarded as representatives of Sobolev derivatives, see also the proof of Proposition \ref{prop: ito_cyl}.
For fixed $n \in \N_+$ and $(\mbfs, \mbfx) \in T_o^\infty \times (\T^d)_o^\infty$, since $v^{\mbfs, n} \in \mathfrak{T}\hat{\mathfrak{Z}}_{\eps_n}^\infty([0, T])$, we can apply Proposition \ref{prop: ito_cyl} and obtain that
\begin{equation}
    \begin{split}
        v_t^{\mbfs,n}(\mu^\infty_t)  &= v_{t_0}^{\mbfs,n}(\mu^\infty_{t_0}) + \int_{t_0}^t \left (\partial_r  v_t^{\mbfs,n}(\mu^\infty_r) 
         +  \boldsymbol{\Delta}v_r^{\mbfs,n}(\mu^\infty_r)\right )\de r
         \\
         &\hspace{15pt} + \sum_{i = 1}^{\infty}\int_{t_0}^t \sqrt{2 \varsigma_i} \rmD v_r^{\mbfs,n}(\mu_r^\infty, \xi^i_r) \cdot \de \beta^i_r,\quad t\in[t_0,T],\quad \mathbf{P}^{t_0,(\mbfs,\mbfx)}\text{-a.s.},
   \end{split}
    \end{equation}
where we recall Remark \ref{rem:toruspb} for the meaning of the evaluation of $\rmD v_r^{\mbfs, n}(\mu_r^\infty, \cdot)$ at $\xi_r^i$.  
Using the above relations \eqref{eq:theprop2}, \eqref{eq:laprima2}, \eqref{eq:laseconda2}, and \eqref{eq:laterza}, we can write
    \begin{equation}\label{eq:equalitywithn}
    \begin{split}
        u_t^{n}(\mu^\infty_t)  &= u_{t_0}^{n}(\mu^\infty_{t_0}) + \int_{t_0}^t \left (\partial_r u_r^n(\mu^\infty_r) 
         +  \boldsymbol{\Delta}u_r^{n}(\mu^\infty_r)\right )\de r
         \\
         &\hspace{15pt} + \sum_{i = 1}^{\infty}\int_{t_0}^t \sqrt{2 \varsigma_i} \rmD u_r^{n}(\mu_r^\infty, \xi^i_r) \cdot \de \beta^i_r,\quad t\in[t_0,T],\quad \mathbf{P}^{t_0,(\mbfs,\mbfx)}\text{-a.s.}
   \end{split}
\end{equation}
Observe that, in the above formula, the process defined by the stochastic integral depends in principle on $n$ and $(\mbfs, \mbfx)$. However, by Lemma \ref{lem:universal:mathcalI0} and Corollary \ref{corol:universal:zeta0} applied to $b= 2\rmD u^n$, we can assume that it only depends on $n$, provided that $(\mbfs, \mbfx) = \emp^{-1}(m)$ for $m$ varying in a full $\cD$-measure subset of $\prob(\T^d)$. In other words, for every $n \in \N_+$ there exists a progressively-measurable mapping $\mathcal{I}^{t_0, n} : \Xi \to \rmC([0,T]; \R^d)$ such that for $\cD$-a.e.~$m \in \prob(\T^d)$ and for every bounded $\cD$-density $q$ it holds
\begin{align*}
&\mathbf{P}^{t_0, \emp^{-1}(m)} \left ( \left \{ \forall t \in [t_0,T], \quad \mathcal{I}_t^{t_0,n} =  \sum_{i = 1}^{\infty}\int_{t_0}^t \sqrt{2 \varsigma_i} \rmD u_r^{n}(\mu_r^\infty, \xi^i_r) \right \} \right ) \\
&=\mathbf{P}^{t_0, \emp_\sharp^{-1}(q\cD)} \left ( \left \{ \forall t \in [t_0,T], \quad \mathcal{I}_t^{t_0,n} =  \sum_{i = 1}^{\infty}\int_{t_0}^t \sqrt{2 \varsigma_i} \rmD u_r^{n}(\mu_r^\infty, \xi^i_r) \right \} \right )=1.   
\end{align*}
Therefore, we can assume that for $\cD$-a.e.~$m \in \prob(\T^d)$, the equality in \eqref{eq:equalitywithn} is a $\mathbf{ P}^{t_0, \emp^{-1}(m)}$-a.e.~equality between Borel functions in $\Xi$ which only depends on $n$. By the disintegration formula  \eqref{eq:diswithp} for $\mathbf{P}^{t_0, \emp_\sharp^{-1}(p\cD)}$ and \eqref{eq:equalitywithn}, we get that for every $n \in \N_+$ it holds
\begin{equation}\label{eq:ito:under:em(pD)}
\begin{split}
        u_t^{n}(\mu^\infty_t)  - u_{t_0}^{n}(\mu^\infty_{t_0}) &= \int_{t_0}^t \left (\partial_r u_r^n(\mu^\infty_r) 
         +  \boldsymbol{\Delta}u_r^{n}(\mu^\infty_r)\right )\de r
         \\
         &\hspace{15pt} + \sum_{i = 1}^{\infty}\int_{t_0}^t \sqrt{2 \varsigma_i} \rmD u_r^{n}(\mu_r^\infty, \xi^i_r) \cdot \de \beta^i_r,\quad t\in[t_0,T],\quad \mathbf{P}^{t_0,\emp_\sharp^{-1}(p\cD)}\text{-a.s.}
   \end{split}
\end{equation}
Using the additive structure of the right-hand side, we can easily write a similar formula but for the increment 
$g^{n}(\mu_T^\infty) - u_t(\mu_t^\infty)$ (in place of 
$u_t(\mu_t^\infty) - u_{t_0}(\mu_{t_0}^\infty)$). And then, using the fact that $\partial_r u_r^n + \boldsymbol{\Delta} u_r^n = k_r^n$ (see Corollary \ref{prop:pde_approx_cyl}) together with the fact that $(\mu_t^\infty)_\sharp \mathbf{P}^{t_0, \emp^{-1}_\sharp (p \cD)} \ll \cD$ for every $t \in [t_0, T]$ by \eqref{eq:acbound}, we obtain for every $n \in \N_+$
\begin{equation}\label{eq:itowithn}
  g^n(\mu^\infty_T)  = u_{t}^{n}(\mu^\infty_{t}) + \int_{t}^T k_r^n(\mu_r^\infty) \de r + \sum_{i = 1}^{\infty}\int_{t}^T \sqrt{2 \varsigma_i} \rmD u_r^{n}(\mu_r^\infty, \xi^i_r) \cdot \de \beta^i_r,\quad t \in [t_0, T], \, \mathbf{P}^{t_0,\emp_\sharp^{-1}(p\cD)}\text{-a.s.}  
\end{equation}

Now we show that every term in the above equality converges as $n \to \infty$ to the corresponding term without the apex $n$. Using that $(\mu_t^\infty)_\sharp \mathbf{P}^{t_0, \emp_\sharp^{-1}(p\cD)} \le \|p\|_\infty \cD$ (again, coming from \eqref{eq:acbound}), we get
\begin{align*}
    \int_\Xi \lvert g^n(\mu^\infty_T) - g(\mu^\infty_T) \rvert^2 \de \mathbf{P}^{t_0, \emp_\sharp^{-1}(p\cD)}  &= \int_{\prob(\T^d)}\lvert  g^n(\mu) - g(\mu)\rvert^2\de[(\mu^{\infty}_T)_\sharp\mathbf{P}^{t_0, \emp_\sharp^{-1}(p\cD)} ](\mu) \\
    & \le \|p\|_\infty |g^n-g|_H^2 \to 0,\quad \text{as } n\to\infty,
\end{align*}
the convergence of the last term to $0$ following from the properties stated in Corollary \ref{prop:pde_approx_cyl}. Similarly, 
using again the conclusion of Corollary \ref{prop:pde_approx_cyl}, one can show that
\begin{align*}
&\int_\Xi \lvert u^n_{t}(\mu^\infty_{t}) - u_{t}(\mu^\infty_{t}) \rvert^2] \de \mathbf{P}^{t_0, \emp_\sharp^{-1}(p\cD)} \to 0,\quad \text{as } n\to\infty,
\\
&\int_\Xi \left\lvert\int_{t}^T (k^n_r(\mu^\infty_r)-k_r(\mu^\infty_r))\de r\right\rvert^2 \de \mathbf{P}^{t_0, \emp_\sharp^{-1}(p\cD)}
\\
&\hspace{30pt}\le T\int_0^T \int_{\Xi}\lvert k_r^n(\mu_r^\infty) - k_r(\mu_r^\infty) \rvert^2 \de \mathbf{P}^{t_0, \emp_\sharp^{-1}(p\cD)} \de r \to 0,\quad \text{as }n\to\infty.
\end{align*}
Finally, let us focus on the stochastic integral. By It\^o isometry and thanks, once again, to \eqref{eq:acbound}, it holds 
\begin{align*}
    &\int_\Xi \left\lvert \sum_{i = 1}^{\infty}\int_{t}^T \sqrt{2\varsigma_i} \left (\rmD u^n_r(\mu_r^\infty, \xi^i_r) - \rmD u_r(\mu_r^\infty, \xi^i_r)\right )\cdot \de \beta^i_r  \de r\right\rvert^2 \de \mathbf{P}^{t_0, \emp_\sharp^{-1}(p\cD)}  \\ &\qquad=2 \int_\Xi \int_{t}^T\sum_{i=1}^{\infty} \varsigma_i\lvert \rmD u^n_r(\mu_r^\infty, \xi^i_r) - \rmD u_r(\mu_r^\infty, \xi^i_r)\rvert^2\de r \de \mathbf{P}^{t_0, \emp_\sharp^{-1}(p\cD)}\\
    &\qquad =2 \int_{t}^T\int_\Xi \int_{\T^d}\lvert \rmD u^n_r(\mu_r^\infty, x) - \rmD u_r(\mu_r^\infty, x)\rvert^2\de\mu^\infty_r(x) \de \mathbf{P}^{t_0, \emp_\sharp^{-1}(p\cD)} \de r\\
    &\qquad \le 2\|p\|_\infty \int_{0}^T\int_{\prob(\T^d)}\int_{\T^d}\lvert \rmD u^n_r(\mu, x) - \rmD u_r(\mu, x)\rvert^2\de\mu(x)\de\mathcal{D(\mu)}\de r\\ &\qquad=2\|p\|_\infty \int_{0}^T\| \rmD u^n_r - \rmD u_r\|^2\de r\to 0,\quad\text{as } n\to\infty,
\end{align*}
where the last line follows from Corollary \ref{prop:pde_approx_cyl}. Thus, we can let $n$ tend to $ \infty$ in 
\eqref{eq:itowithn}. In the limit, we obtain the same formula, but without the apex $n$, as we announced earlier. 
Recalling that $k_t = f_t-\sclprd{b}{\rmD u_t}$, we obtain
\begin{align*}
    g(\mu^{\infty}_T) &=  u_{t}(\mu^{\infty}_{t}) +\int_t^T f_r(\mu_r^\infty) \de r  -\int_{t}^T\int_{\T^d} \rmD u_r(\mu^{\infty}_r,x)\cdot b(\mu^{\infty}_r,x)\de\mu^{\infty}_r(x)\de r \\
    & \quad +\sum_{i=1}^{\infty}\int^T_{t} \sqrt{2\varsigma_i}\rmD u_r(\mu^{\infty}_r, \xi^i_r)
    \cdot
    \de \beta^i_r, \quad t \in [t_0, T],\, \mathbf{P}^{t_0,\emp_\sharp^{-1}(p\cD)}\text{-a.s.}
\end{align*}
By Girsanov theorem, the sum of the last two terms on the right-hand side can be written as a stochastic integral, driven by a new Brownian motion under the probability $(z_T^{t_0,
\emp_\sharp^{-1}(p \cD)})^{-1} {\mathbf P}^{t_0,\emp_\sharp^{-1}(p \cD)}$, see 
\eqref{eqn:z_T}.
Using the transformation
$\cercle{$\Psi$}^{b}$,
see \eqref{eq:new:expression:Qcursm0} (with the bold superscript ${\boldsymbol b}$ being replaced by $b$ since the $b^i$'s are all equal to $b$), the resulting expansion can be transferred onto the canonical space.
We obtain
\begin{align*}
    g(\mu^{\infty}_T)= u_{t}(\mu^{\infty}_{t}) + \int_t^T f_r(\mu_r^\infty) \de r+ \sum_{i=1}^{\infty}\int^T_{t} \sqrt{2\varsigma_i}\rmD u_r(\mu^{\infty}_r, \xi^i_r)\cdot\de \beta^i_r,\quad t \in [t_0, T],\,\mathbf{Q}^{t_0, \emp_{\sharp}^{-1}(p\cD)}\text{-a.s.,}
\end{align*}
which completes the proof.
\end{proof}

The following result, which constitutes the main finding of this section, 
provides a Kolmogorov-type representation formula for the solution of the 
transport-diffusion equation~\eqref{eq:pdeaaa} in terms of the drifted particle 
system~\eqref{eqn: particles_drift}.

\begin{theorem}\label{thm:repr} Let $b:[0,T]\times\prob(\T^d) \times \T^d \to \R^d$ be a bounded and measurable function, and $g \in H$ be a final datum. If $u$ is the unique solution to \eqref{eq:pdeaaa}, then, for every $t_0 \in [0,T)$, there exists a Borel subset $O$ of $\cP(\T^d)$, with ${\mathcal D}(O)=1$, such that
\begin{equation}\label{eqn:repr}
    u_{t_0}(m) = \mathbb{E}^{{\mathbf Q}^{t_0,\emp^{-1}(m)}}
[g(\mu_T^\infty)],\quad \text{ for every } m \in O,
\end{equation}

where ${\mathbb{E}}^{{\mathbf Q}^{t_0,\emp^{-1}(m)}}$ is the expectation under ${\mathbf Q}^{t_0,\emp^{-1}(m)}$.
\end{theorem}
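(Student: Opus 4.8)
The plan is to derive \eqref{eqn:repr} directly from the chain-rule expansion provided by Proposition~\ref{prop:ito_general}. First I would fix $t_0 \in [0,T)$ and recall that, since $g \in H$ and $b$ is bounded and measurable, $u$ is the unique solution to \eqref{eq:pdeaaa}, i.e.\ to \eqref{eq:pdewithf} with $f \equiv 0$. Applying Proposition~\ref{prop:ito_general} with this $f \equiv 0$ and with an arbitrary bounded $\cD$-density $p$, the second expansion there reads, evaluated at time $t = t_0$,
\begin{equation*}
g(\mu^\infty_T) = u_{t_0}(\mu^\infty_{t_0}) + \sum_{i=1}^\infty \int_{t_0}^T \sqrt{2\varsigma_i}\, \rmD u_r(\mu^\infty_r, \xi^i_r) \cdot \de\beta^i_r, \qquad \mathbf{Q}^{t_0, \emp^{-1}_\sharp(p\cD)}\text{-a.e.}
\end{equation*}
Taking expectation under $\mathbf{Q}^{t_0, \emp^{-1}_\sharp(p\cD)}$, the key point is that the stochastic-integral term has zero mean: because $(\mu^\infty_t)_\sharp \mathbf{Q}^{t_0,\emp^{-1}_\sharp(p\cD)} \ll \cD$ (Lemma~\ref{lem:absolute:continuity}, combined with the bound $\le \|p\|_\infty\cD$ transferred through $\cercle{$\Psi$}^b$ exactly as in the proof of Proposition~\ref{prop:ito_general}) and $\rmD u \in L^2([0,T]; L^2(\prob(\T^d)\times\T^d,\overline{\cD};\R^d))$, the quantity $\sum_i \int_{t_0}^T \varsigma_i |\rmD u_r(\mu^\infty_r,\xi^i_r)|^2 \de r$ is integrable, so the sum of stochastic integrals is a genuine $\mathbf{Q}^{t_0,\emp^{-1}_\sharp(p\cD)}$-martingale with vanishing expectation. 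Hence
\begin{equation*}
\mathbb{E}^{\mathbf{Q}^{t_0,\emp^{-1}_\sharp(p\cD)}}[g(\mu^\infty_T)] = \mathbb{E}^{\mathbf{Q}^{t_0,\emp^{-1}_\sharp(p\cD)}}[u_{t_0}(\mu^\infty_{t_0})] = \int_{\prob(\T^d)} u_{t_0}(m)\, p(m)\, \de\cD(m),
\end{equation*}
where the last equality uses that $\mu^\infty_{t_0}$ is distributed according to $p\cD$ under $\mathbf{Q}^{t_0,\emp^{-1}_\sharp(p\cD)}$ (as it is under $\mathbf{P}^{t_0,\emp^{-1}_\sharp(p\cD)}$, the initial law being unchanged by Girsanov).

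Next I would rewrite the left-hand side using the disintegration formula \eqref{eq:disintegration:Qcursm0} of Proposition~\ref{prop:disintegration:Q0}: since $m \mapsto \mathbf{Q}^{t_0,\emp^{-1}(m)}(E)$ is measurable for the $\cD$-completion and $\mathbf{Q}^{t_0,\emp^{-1}_\sharp(p\cD)} = \int p(m)\,\mathbf{Q}^{t_0,\emp^{-1}(m)}\de\cD(m)$, we get
\begin{equation*}
\int_{\prob(\T^d)} p(m)\, \mathbb{E}^{\mathbf{Q}^{t_0,\emp^{-1}(m)}}[g(\mu^\infty_T)]\, \de\cD(m) = \int_{\prob(\T^d)} p(m)\, u_{t_0}(m)\, \de\cD(m),
\end{equation*}
for \emph{every} bounded $\cD$-density $p$; here one should first check, by monotone approximation via simple functions and Tonelli, that $m \mapsto \mathbb{E}^{\mathbf{Q}^{t_0,\emp^{-1}(m)}}[g(\mu^\infty_T)]$ is $\cD$-measurable and integrable (the uniform absolute-continuity bound \eqref{eq:acbound}-type estimate gives $\int |\mathbb{E}^{\mathbf{Q}^{t_0,\emp^{-1}(m)}}[g(\mu^\infty_T)]| \de\cD(m) \le \int |g(\mu)| \cdot \text{(density bound)}\, \de\cD < \infty$ after integrating in $m$). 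Since the identity holds for all bounded densities $p$, and equivalently for all bounded measurable $p \ge 0$ with arbitrary (nonzero) total mass, we conclude that $u_{t_0}(m) = \mathbb{E}^{\mathbf{Q}^{t_0,\emp^{-1}(m)}}[g(\mu^\infty_T)]$ for $\cD$-a.e.\ $m$; calling $O$ the corresponding full-measure Borel set (which can be taken Borel by modifying on a $\cD$-null set) yields \eqref{eqn:repr}.

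The main obstacle, and the step deserving the most care, is the rigorous justification of the measurability and integrability required to apply \eqref{eq:disintegration:Qcursm0} to the map $m \mapsto \mathbb{E}^{\mathbf{Q}^{t_0,\emp^{-1}(m)}}[g(\mu^\infty_T)]$ — that is, passing from the already-established measurability of $m \mapsto \mathbf{Q}^{t_0,\emp^{-1}(m)}(E)$ for Borel $E$ to the measurability of an integral against the possibly unbounded function $g \circ (\mu^\infty_T)$ — together with checking that the stochastic-integral term in the expansion truly has zero $\mathbf{Q}$-expectation (this is where the absolute continuity $(\mu^\infty_r)_\sharp\mathbf{Q}^{t_0,\emp^{-1}_\sharp(p\cD)} \ll \cD$, hence $\le \|p\|_\infty \cD$, and the $L^2$-integrability of $\rmD u$ must be combined, exactly mirroring the estimate in the last display of the proof of Proposition~\ref{prop:ito_general}). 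Everything else is a routine application of Fubini/Tonelli and the arbitrariness of $p$.
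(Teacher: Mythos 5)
Your overall plan---apply the Itô expansion of Proposition~\ref{prop:ito_general}, take expectations so that the stochastic integral drops out, then use the disintegration formula \eqref{eq:disintegration:Qcursm0} together with the arbitrariness of the bounded $\cD$-density $p$ to localize to $\cD$-a.e.~$m$---is exactly the strategy the paper follows, and the second half of your argument (the disintegration step and the measurability discussion) is sound. The gap is in the step you yourself flag as delicate, namely the justification that the local martingale has vanishing $\mathbf Q$-expectation.

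You write that $(\mu^\infty_r)_\sharp\mathbf{Q}^{t_0,\emp^{-1}_\sharp(p\cD)}\ll\cD$, ``hence $\le\|p\|_\infty\cD$,'' and combine this with $\rmD u\in L^2([0,T];L^2(\overline{\cD}))$ to conclude that the quadratic variation $\sum_i\int_{t_0}^T\varsigma_i|\rmD u_r(\mu_r^\infty,\xi_r^i)|^2\de r$ is $\mathbf Q$-integrable. That ``hence'' does not follow. The bounded-density estimate \eqref{eq:acbound} holds under the \emph{free} law $\mathbf P^{t_0,\emp^{-1}_\sharp(p\cD)}$, whereas Lemma~\ref{lem:absolute:continuity} only provides absolute continuity (not a bounded density) of $(\mu^\infty_t)_\sharp\mathbf Q^{t_0,\emp^{-1}_\sharp(p\cD)}$ with respect to $\cD$. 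Pushing \eqref{eq:acbound} through the Girsanov map $\cercle{$\Psi$}^b$ introduces the density $1/z_T^{t_0,\cursm}$, which is not bounded, so the bound does not transfer. Consequently you cannot deduce $\mathbf Q$-integrability of the quadratic variation by this route, and the stochastic integral need not be an $L^2$-martingale under $\mathbf Q$.

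The paper's proof addresses this by a different mechanism: it shows directly that $\mathbb E^{\mathbf Q}[\sup_{t\in[t_0,T]}|M_t|]<\infty$ for the stochastic integral $M$ (which suffices for the true-martingale property by dominated convergence along a localizing sequence). To get this bound it transfers back to $\mathbf P$ via the Girsanov identity $\mathbf Q=\cercle{$\Psi$}^b_\sharp((z_T)^{-1}\mathbf P)$, splits $M\circ\cercle{$\Psi$}^b$ into a stochastic integral plus a drift correction, and then applies Cauchy--Schwarz (using the $L^2(\mathbf P)$-bound \eqref{eqn: bound_girsanov_exp} on $(z_T)^{-1}$) together with Doob's $L^2$-maximal inequality and the $\mathbf P$-level estimate \eqref{eq:thefirstbound}. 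To repair your argument you would need to replace the paragraph on the zero-mean property with this Cauchy--Schwarz/Doob argument under $\mathbf P$, rather than attempting a direct quadratic-variation bound under $\mathbf Q$.
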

\begin{remark} The set $O$ appearing in the statement of Theorem~\ref{thm:repr} depends on the choice of $t_0 \in [0,T)$.
This dependence will also occur in several statements throughout the remainder of the paper.
Rather than making it explicit each time, as in Theorem~\ref{thm:repr}, we shall simply write that the equality holds “for every $t_0 \in [0,T)$ and $\cD$-a.e.~$m \in \prob(\T^d)$”.
\end{remark}

\begin{proof}
    The proof is divided into two steps: the first establishes a weaker version of a \eqref{eqn:repr}, which is then combined with \eqref{eq:disintegration:Qcursm0} in the second step to obtain the desired representation formula.\\
    \smallskip
    \textit{First step:}
    The objective of this step is to prove that
    $$u_{t_0}(\mu^\infty_{t_0}) = {\mathbb{E}}^{{\mathbf Q}^{t_0,\emp^{-1}_\sharp(\cD)}}[g(\mu^{\infty}_T)\rvert \mu^\infty_{t_0}], \quad \text{for every $t_0 \in [0,T)$ and ${\mathbf Q}^{t_0,\emp^{-1}_\sharp(\cD)}$~-a.e.}$$ 
    Let us denote $\cursm:= \emp_{\sharp}^{-1}(\cD)$; by Proposition \ref{prop:ito_general} with $f =0$, $p=1$, and $t=t_0$ 
\begin{align*}
   g(\mu^{\infty}_T) &  = u_{t_0}(\mu^{\infty}_{t_0})  + \sum_{i=1}^{\infty}\int^T_{t_0} \sqrt{2\varsigma_i}\rmD u_r(\mu^{\infty}_r, \xi^i_r) \cdot\de \beta^i_r,\quad  \mathbf{ Q}^{t_0, \cursm}\text{-a.e.}
\end{align*} 
If we assume that the stochastic integral is a $\mathbf{ Q}^{t_0, \cursm}$-martingale, then \eqref{eqn:repr} follows by taking the conditional expectation with respect to $\mu_{t_0}^\infty$. 

To prove the martingale property of the stochastic integral, we argue as follows.
Since the stochastic integral is already known to be a local martingale under $\mathbf{ Q}^{t_0, \cursm}$, it suffices to show that
\begin{equation}\label{eqn: unif_int}
    {\mathbb{E}}^{{\mathbf Q}^{t_0,\cursm}}\left[\sup_{ t\in [t_0,T]}\left\lvert\sum_{i=1}^{\infty}\int^t_{t_0} \sqrt{2 \varsigma_i}\rmD u_r(\mu^{\infty}_r, \xi^i_r)\cdot\de \beta^i_r\right\rvert\right]<\infty.
\end{equation}
In order to prove \eqref{eqn: unif_int}, let us recall the following two estimates. First, by Proposition \ref{prop:mut} and Theorem \ref{thm:pde}, we have 
\begin{equation}\label{eq:thefirstbound}
    {\mathbb{E}}^{{\mathbf P}^{t_0,\cursm}}\left[\sum_{i=1}^{\infty}\int_{t_0}^T\varsigma_i\lvert \rmD u_r(\mu^{\infty}_r, \xi^i_r)\rvert^2\de r\right]  \le \int_0^T \int_{\prob(\T^d)\times\T^d}\lvert \rmD u_r(\mu,x)\rvert^2 \de\overline{\mathcal{D}}(\mu,x)\de r <\infty.
\end{equation}
Second, recalling the definition of $z_{T}^{t_0, \cursm}$ in \eqref{eqn:z_T}, we have
\begin{equation}\label{eqn: bound_girsanov_exp}
  {\mathbb{E}}^{{\mathbf P}^{t_0,\cursm}} \left [ (z_{T}^{t_0, \cursm})^{-2} \right ] \le 2 \exp\{T\norm{b}_\infty^2\}.
\end{equation}
And then, by the definition of $\mathbf{Q}^{t_0, \cursm}$ in \eqref{eq:new:expression:Qcursm0}, we have
\begin{align}
     & {\mathbb{E}}^{{\mathbf Q}^{t_0,\cursm}}\left[\sup_{t \in [t_0,T]}\left\lvert\sum_{i=1}^{\infty}\int^t_{t_0} \sqrt{2\varsigma_i}\rmD u_r(\mu^{\infty}_r, \xi^i_r)\cdot\de \beta^i_r\right\rvert\right] \nonumber \\
    &\quad \leq  {\mathbb{E}}^{{\mathbf P}^{t_0,\cursm}}\left[\frac{1}{z_T^{t_0, \cursm}}\sup_{t \in [t_0,T]}\left\lvert\sum_{i=1}^{\infty}\int^t_{t_0} \sqrt{2\varsigma_i}\rmD u_r(\mu^{\infty}_r, \xi^i_r)\cdot\de \beta^i_r\right\rvert\right]\label{eqn: mg_est_1}\\
     &\qquad+ {\mathbb{E}}^{{\mathbf P}^{t_0,\cursm}}\left[\frac{1}{z_T^{t_0, \cursm}}\sup_{t \in [t_0,T]}\left\lvert\sum_{i=1}^{\infty}\int^t_{t_0} \varsigma_i\rmD u_r(\mu^{\infty}_r, \xi^i_r)\cdot b_r(\mu^{\infty}_r, \xi^i_r)\de r\right\rvert\right]\label{eqn: mg_est_2}.
\end{align}
By combining Cauchy-Schwarz inequality with Doob's inequality and the two bounds \eqref{eq:thefirstbound} and \eqref{eqn: bound_girsanov_exp}, we obtain 
\begin{align*}
    \eqref{eqn: mg_est_1}&\leq {\mathbb{E}}^{{\mathbf P}^{t_0,\cursm}}\left[(z_T^{t_0, \cursm})^{-2} \right ]^{1/2} {\mathbb{E}}^{{\mathbf P}^{t_0,\cursm}}\left[ \sup_{t \in [t_0,T]}\left\lvert\sum_{i=1}^{\infty}\int^t_{t_0} \sqrt{2 \varsigma_i}\rmD u_r(\mu^{\infty}_r, \xi^i_r)\cdot\de \beta^i_r\right\rvert^2\right]^{1/2}\\
    &\leq 4 e^{\frac{1}{2} T\norm{b}_\infty^2}{\mathbb{E}}^{{\mathbf P}^{t_0,\cursm}}\left[\sum_{i=1}^{\infty}\int_{t_0}^T \varsigma_i\lvert \rmD u_r(\mu^{\infty}_r, \xi^i_r)\rvert^2\de r\right]^{\frac{1}{2}}< \infty.
\end{align*}
Similarly, 
\begin{align*}
    \eqref{eqn: mg_est_2}&\leq {\mathbb{E}}^{{\mathbf P}^{t_0,\cursm}}\left[(z_T^{t_0, \cursm})^{-2} \right ]^{1/2} {\mathbb{E}}^{{\mathbf P}^{t_0,\cursm}}\left[\sup_{t \in [t_0,T]}\left\lvert\sum_{i=1}^{\infty}\int^t_{t_0} \varsigma_i\rmD u_r(\mu^{\infty}_r, \xi^i_r)\cdot b(\mu^{\infty}_r, \xi^i_r)\de r\right\rvert^2\right]^{\frac{1}{2}}\\
    &\leq \sqrt{2}e^{\frac{1}{2} T\norm{b}_\infty^2} \sqrt{T}\norm{b}_\infty{\mathbb{E}}^{{\mathbf P}^{t_0,\cursm}}\left[\sum_{i=1}^{\infty}\int_{t_0}^T \varsigma_i\lvert \rmD u_r(\mu^{\infty}_r, \xi^i_r)\rvert^2\de r\right]^{\frac{1}{2}}< \infty.
\end{align*}
Gathering the last three displays, we get \eqref{eqn: unif_int}, which 
concludes the proof of the first step.

    \textit{Second step:} From the first step, it follows that for any bounded and measurable $p\colon\prob(\T^d)\to\R$
    \begin{equation*}
        {\mathbb{E}}^{{\mathbf Q}^{t_0,\emp^{-1}_\sharp(\cD)}}\left[p(\mu^\infty_{t_0}) u_{t_0}(\mu^\infty_{t_0}) \right] = {\mathbb{E}}^{{\mathbf Q}^{t_0,\emp^{-1}_\sharp(\cD)}}\left[p(\mu^\infty_{t_0}) g(\mu_T^\infty)\right].
\end{equation*}
Then, by \eqref{eq:disintegration:Qcursm0}, we can rewrite the latter as
\begin{equation}
\label{eq:improved:Kolmogorov:1}
\int_{\prob(\T^d)} {\mathbb{E}}^{{\mathbf Q}^{t_0,\emp^{-1}(m)}}\left[p(\mu^\infty_{t_0})  u_{t_0}(\mu^\infty_{t_0}) \right] \de {\mathcal D}(m) = \int_{\prob(\T^d)}{\mathbb{E}}^{{\mathbf Q}^{t_0,\emp^{-1}(m)}}\left[p(\mu^\infty_{t_0}) g(\mu_T^\infty)\right]\de \cD(m).
\end{equation}
On the one hand, the left-hand side of \eqref{eq:improved:Kolmogorov:1} is equal to 
\begin{equation*}
\int_{\prob(\T^d)}
{\mathbb{E}}^{{\mathbf Q}^{t_0,\emp^{-1}(m)}}\left[p(\mu^\infty_{t_0}) u_{t_0}(\mu^\infty_{t_0}) \right] \de {\mathcal D}(m) = \int_{\prob(\T^d)}  p(m) u_{t_0}(m) \de {\mathcal D}(m),
\end{equation*}
since $\mu^\infty_{t_0} = m$ holds ${\mathbf Q}^{t_0,\emp^{-1}(m)}$-a.e..
On the other hand, the right-hand side of \eqref{eq:improved:Kolmogorov:1} is equal to
\begin{equation*}
\int_{\prob(\T^d)}
{\mathbb{E}}^{{\mathbf Q}^{t_0,\emp^{-1}(m)}}\left[ 
p(\mu^\infty_{t_0}) 
g(\mu^\infty_T)
\right]
\de {\mathcal D}(m)
=
\int_{\prob(\T^d)} 
p(m) 
{\mathbb{E}}^{{\mathbf Q}^{t_0,\emp^{-1}(m)}}\left[  
g(\mu^\infty_T)
\right]
\de {\mathcal D}(m).
\end{equation*}
Then, by combining the last two displays, we deduce that 
\begin{equation*}
\int_{\prob(\T^d)} 
p(m) 
{\mathbb{E}}^{{\mathbf Q}^{t_0,\emp^{-1}(m)}}\left[  
g(\mu^\infty_T)
\right]
\de {\mathcal D}(m)=
\int_{\prob(\T^d)} 
p(m) 
u_{t_0}(m)
\de {\mathcal D}(m),
\end{equation*}
and, finally, we deduce that, for ${\mathcal D}$-a.e.~$m$, 
\begin{equation*}
u_{t_0}(m)
= {\mathbb{E}}^{{\mathbf Q}^{t_0,\emp^{-1}(m)}}\left[  
g(\mu^\infty_T)
\right].
\end{equation*}
\end{proof}

We conclude this section with a different version of Proposition \ref{prop:ito_general} which will be needed when addressing the control problem in Section \ref{sec: control}. The main difference concerns the fact that the subset of $m \in \prob(\T^d)$ of full $\cD$-measure at which the It\^o expansion holds true, is independent of} the drift. In the statement below, the latter is denoted by ${\boldsymbol \alpha}=(\alpha^i)_i$ and is thus allowed to depend on $i$, as originally considered in the beginning of this section. Accordingly, we denote by 
${\mathbf Q}^{t_0,\emp^{-1}(m),{\boldsymbol \alpha}}$ the probability measure introduced in Proposition \ref{prop:pandq},
when $(b^i)_i$ is given by 
$(\alpha^i)_i$.
\begin{proposition}\label{prop:ito_general:new} Let $t_0 \in [0,T)$ be a initial time, $f \in L^2([0,T]; H)$ and $g \in H$. If $u$ is the unique solution of \eqref{eq:pdewithf} with $b=0$, then there exists a Borel subset $O$ of $\cP(\T^d)$, with ${\mathcal D}(O)=1$, such that, 
for any $m \in O$ and any collection of uniformly bounded and measurable functions 
$(\alpha^i : [0,T] \times \cP(\T^d) \times \T^d \rightarrow \R^d)_{i}$, it holds 
\begin{align*}
    g(\mu^\infty_T) & =  u_{t}(\mu^\infty_{t})  
    + \int_{t}^T f_r(\mu^\infty_r) \de r   
    + \sum_{i =1}^\infty
    \int_{t}^T 
    \varsigma _i
    \rmD u_r(\mu^\infty_r,\xi_r^i)\cdot \alpha_r^i(\mu^\infty_r,\xi_r^i) \ \de r 
    \\
    & \quad +\sum_{i=1}^{\infty}\int^T_{t} \sqrt{2 \varsigma_i}\rmD  u_r(\mu^\infty_r, \xi^i_r) \cdot \de \beta^i_r,\quad t\in[t_0,T],\quad{\mathbf Q}^{t_0,\emp^{-1}(m),\alpha}\text{-a.s.}
\end{align*}
\end{proposition}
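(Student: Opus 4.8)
The plan is to start from the $b=0$ version of Proposition~\ref{prop:ito_general} and then superimpose an \emph{arbitrary} drift $\boldsymbol{\alpha}$ by a Girsanov change of measure, the crucial point being that the null set has to be chosen \emph{before} the drift is given. The starting point is the following observation. Apply Proposition~\ref{prop:ito_general} with $b=0$ (any bounded $\cD$-density $p$): there one obtains, under ${\mathbf P}^{t_0,\emp^{-1}_\sharp(p\cD)}$ (which here coincides with ${\mathbf Q}^{t_0,\emp^{-1}_\sharp(p\cD)}$ since $b=0$), the expansion
\begin{equation*}
g(\mu^\infty_T) = u_t(\mu^\infty_t) + \int_t^T f_r(\mu^\infty_r)\,\de r + \sum_{i=1}^\infty \int_t^T \sqrt{2\varsigma_i}\,\rmD u_r(\mu^\infty_r,\xi^i_r)\cdot \de\beta^i_r, \quad t\in[t_0,T].
\end{equation*}
The key step is to upgrade this from a ${\mathbf P}^{t_0,\emp^{-1}_\sharp(p\cD)}$-a.s.\ identity to a \emph{$\cD$-a.e.\ pointwise-in-$m$} identity, i.e.\ to produce a single Borel set $O\subset\cP(\T^d)$ with $\cD(O)=1$ such that the expansion holds ${\mathbf P}^{t_0,\emp^{-1}(m)}$-a.s.\ for every $m\in O$. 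This is done exactly as in the proof of Proposition~\ref{prop:ito_general}: by Lemma~\ref{lem:universal:mathcalI0} and Corollary~\ref{corol:universal:zeta0} applied to the (fixed, drift-independent) integrand $2\rmD u$, the stochastic integral admits a progressively-measurable version ${\mathcal I}^{t_0}$ on $\Xi$ that represents $\sum_i\int_{t_0}^{\cdot}\sqrt{2\varsigma_i}\,\rmD u_r(\mu^\infty_r,\xi^i_r)\cdot\de\beta^i_r$ simultaneously under ${\mathbf P}^{t_0,\emp^{-1}(m)}$ for $\cD$-a.e.\ $m$; then the disintegration formula~\eqref{eq:disintegration:Pcursm0} for ${\mathbf P}^{t_0,\cdot}$ transfers the identity, which holds under ${\mathbf P}^{t_0,\emp^{-1}_\sharp\cD}$, down to ${\mathbf P}^{t_0,\emp^{-1}(m)}$ for $m$ in a $\cD$-full Borel set $O$. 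Note that every object appearing here — $u$, $\rmD u$, $f$, the stochastic integral version ${\mathcal I}^{t_0}$, the set $O$ — is built \emph{before} any $\boldsymbol{\alpha}$ is chosen, which is precisely the feature we need.

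Now fix $m\in O$ and a collection $(\alpha^i)_i$ of uniformly bounded measurable drifts. On the canonical space $\Xi$, consider ${\mathbf P}^{t_0,\emp^{-1}(m)}$ (the law of the free particle system started from $\emp^{-1}(m)$) and the exponential martingale
\begin{equation*}
\frac{1}{z^{t_0,\emp^{-1}(m),\boldsymbol{\alpha}}_T} = \exp\!\left( \sum_{i=1}^\infty \int_{t_0}^T \sqrt{\tfrac{\varsigma^i}{2}}\, \alpha^i_r(\mu^\infty_r,\xi^i_r)\cdot \de\beta^i_r - \tfrac12 \sum_{i=1}^\infty \int_{t_0}^T \tfrac{\varsigma^i}{2}\,\lvert \alpha^i_r(\mu^\infty_r,\xi^i_r)\rvert^2\,\de r \right),
\end{equation*}
which is a genuine ${\mathbf P}^{t_0,\emp^{-1}(m)}$-martingale (finite energy because $\sum_i\varsigma^i=1$ and $\sup_i\|\alpha^i\|_\infty<\infty$). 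By Proposition~\ref{prop:pandq} — applied with the deterministic initial condition $\emp^{-1}(m)$ — one has ${\mathbf Q}^{t_0,\emp^{-1}(m),\boldsymbol{\alpha}}={\cercle{$\Psi$}}^{\boldsymbol{\alpha}}_\sharp\big((z^{t_0,\emp^{-1}(m),\boldsymbol{\alpha}}_T)^{-1}{\mathbf P}^{t_0,\emp^{-1}(m)}\big)$, and under the measure $(z^{t_0,\emp^{-1}(m),\boldsymbol{\alpha}}_T)^{-1}{\mathbf P}^{t_0,\emp^{-1}(m)}$ the processes $W^i_t:=\beta^i_t-\int_{t_0}^t\sqrt{\varsigma^i/2}\,\alpha^i_r(\mu^\infty_r,\xi^i_r)\,\de r$ are independent Brownian motions by Girsanov. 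Substituting $\de\beta^i_r=\de W^i_r+\sqrt{\varsigma^i/2}\,\alpha^i_r(\mu^\infty_r,\xi^i_r)\,\de r$ into the free-system expansion and using $\sqrt{2\varsigma_i}\cdot\sqrt{\varsigma_i/2}=\varsigma_i$, the stochastic integral splits into a new martingale term $\sum_i\int_t^T\sqrt{2\varsigma_i}\,\rmD u_r(\mu^\infty_r,\xi^i_r)\cdot\de W^i_r$ plus the finite-variation term $\sum_i\int_t^T\varsigma_i\,\rmD u_r(\mu^\infty_r,\xi^i_r)\cdot\alpha^i_r(\mu^\infty_r,\xi^i_r)\,\de r$. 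Since $\mu^\infty$ is invariant under ${\cercle{$\Psi$}}^{\boldsymbol{\alpha}}$ (it depends only on the $\varsigma^j,\xi^j$ coordinates, which ${\cercle{$\Psi$}}^{\boldsymbol{\alpha}}$ leaves unchanged), the identity between processes indexed by $\mu^\infty$ transfers under pushforward, and after relabeling the new Brownian motions as $\beta^i$ on $\Xi$ one reads off exactly the claimed expansion, valid ${\mathbf Q}^{t_0,\emp^{-1}(m),\boldsymbol{\alpha}}$-a.s., for every $m\in O$.

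The main obstacle — and it is really the only subtle point — is the interchange of quantifiers: the Girsanov argument above is completely routine \emph{once $m$ is fixed}, but the statement demands one Borel set $O$ that works for \emph{all} $\boldsymbol{\alpha}$ simultaneously. This is resolved precisely because the expansion under the free dynamics (before Girsanov) involves only the fixed integrand $\rmD u$, so Lemma~\ref{lem:universal:mathcalI0}, Corollary~\ref{corol:universal:zeta0} and the disintegration Lemma~\ref{lem:disintegration:Pcursm0} can be invoked \emph{once and for all} to fix $O$; the drift $\boldsymbol{\alpha}$ then enters only through a measure change on the already-identified canonical-space objects, which requires no further almost-sure choices (the martingale property of $1/z^{t_0,\emp^{-1}(m),\boldsymbol{\alpha}}_T$ under ${\mathbf P}^{t_0,\emp^{-1}(m)}$ is automatic, for \emph{every} $m$, from the boundedness of $\boldsymbol{\alpha}$ and $\sum_i\varsigma^i=1$). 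One should also briefly check that the finite-variation term $\sum_i\int_t^T\varsigma_i\,\rmD u_r(\mu^\infty_r,\xi^i_r)\cdot\alpha^i_r(\mu^\infty_r,\xi^i_r)\,\de r$ is well-defined (absolutely convergent), which follows from Cauchy–Schwarz in $i$, $\sum_i\varsigma_i=1$, boundedness of $\boldsymbol{\alpha}$, and the ${\mathbf P}^{t_0,\emp^{-1}_\sharp\cD}$-integrability $\mathbb{E}[\sum_i\int\varsigma_i|\rmD u_r(\mu^\infty_r,\xi^i_r)|^2\de r]\le\int_0^T\|\rmD u_r\|^2\de r<\infty$ (together with the fact that $(\mu^\infty_t)_\sharp{\mathbf P}^{t_0,\emp^{-1}(m)}$ can be controlled for $\cD$-a.e.\ $m$ via~\eqref{eq:acbound}, after possibly shrinking $O$), so the decomposition of the stochastic integral is legitimate.
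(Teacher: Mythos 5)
Your proof is correct and takes the same overall route as the paper: both constructions fix the exceptional set $O$ by working exclusively with the free system (Lemma~\ref{lem:universal:mathcalI0}, Corollary~\ref{corol:universal:zeta0}, and the disintegration formula applied to the $\boldsymbol{\alpha}$-independent integrand $\rmD u$), and then transfer the resulting pathwise expansion to ${\mathbf Q}^{t_0,\emp^{-1}(m),\boldsymbol{\alpha}}$ for each fixed $(m,\boldsymbol{\alpha})$ via the pushforward structure ${\mathbf Q} = {\cercle{$\Psi$}}^{\boldsymbol{\alpha}}_\sharp(z_T^{-1}{\mathbf P})$.

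The one place where your write-up is thinner than the paper's is the step you describe as ``after relabeling the new Brownian motions as $\beta^i$ on $\Xi$ one reads off exactly the claimed expansion.'' You phrase this as a routine Girsanov-then-pushforward argument: decompose $\mathcal{I}^{t_0}$ as a $\tilde{\mathbf P}$-semimartingale (martingale part $\sum_i\int \sqrt{2\varsigma_i}\,\rmD u\,\de W^i$ plus drift $\mathcal{K}$), then push forward by ${\cercle{$\Psi$}}^{\boldsymbol{\alpha}}$ so that $W^i$ becomes $\beta^i$. This is sound in spirit, but it silently invokes the invariance of stochastic integration under the measure isomorphism $(\Xi,\tilde{\mathbf P})\to(\Xi,{\mathbf Q})$: one must verify that the specific Borel representative $\mathcal{I}^{t_0}\circ{\cercle{$\Theta$}}^{\boldsymbol{\alpha}}$ is a ${\mathbf Q}$-version of the stochastic integral against $\beta^i$ plus the finite-variation correction. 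The paper does not take this for granted; it replaces the abstract pushforward of the integral with a concrete Riemann-sum argument (Sub-step 2.b). Because the Riemann sums $\mathcal{I}^{t_0,(n)}$ are pathwise-defined Borel functions, their composition with ${\cercle{$\Theta$}}^{\boldsymbol{\alpha}}$ can be computed explicitly (yielding exactly the split into a stochastic integral against $\beta^i$ and the drift term $\sum_i\varsigma_i\int\rmD u\cdot\alpha^i\,\de r$, see \eqref{eq:ito:new:3:a}), and the carefully chosen step-size $n^{-3}$ together with the uniform-integrability estimates \eqref{eq:whystep}--\eqref{eq:bound:L2:Du:m} upgrade the ${\mathbf Q}$-convergence in probability to $L^1$. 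This Riemann-sum route and your Girsanov-then-pushforward route are two formulations of the same fact; yours is more compact but would need to cite (or reprove) the invariance of stochastic integration under a measure-preserving bijection between probability spaces to be airtight, which is exactly what the paper's sub-step supplies by hand. Also, your aside invoking \eqref{eq:acbound} for the integrability of the drift term is slightly misdirected: for a fixed deterministic initial condition $\emp^{-1}(m)$, the relevant control is the pointwise-in-$m$ $L^2$-bound \eqref{eq:bound:L2:Du:mxx}, obtained by disintegrating the $\emp^{-1}_\sharp\cD$-bound, which is precisely why the paper adds $O_3$ to the exceptional set.
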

Note that in the above display we are using the convention explained in Remark \ref{rem:toruspb} to evaluate $\rmD u_r(\mu_r^\infty, \cdot)$ and $\alpha_r^i(\mu_r^\infty, \cdot)$ at $\xi_r^i$.
\begin{proof}
The proof is divided into two steps. \\
\textit{First step:} Handling the case $\alpha^i= 0$, for all $i \in {\mathbb N}_+$, and defining the set $O$. \\
By Proposition \ref{prop:ito_general} with $b=0$ and $p= 1$, we know that, with probability 1 under ${\mathbf{P}}^{t_0, \emp^{-1}_\sharp(\cD)}$, for all $t \in [t_0,T]$, 
\begin{align*}
    g(\mu^{\infty}_T) & =  u_{t}(\mu^{\infty}_{t})  
    + \int_{t}^T f_r(\mu^{\infty}_r) \de r 
 +\sum_{i=1}^{\infty}\int^T_{t} \sqrt{2 \varsigma^i}{\rmD} u_r(\mu^{\infty}_r, \xi^i_r) \cdot \de \beta^i_r.
\end{align*}
Note that, the result above holds for one arbitrary version of ${\rmD} u$. 
Then, by the second display in the proof of Lemma \ref{lem:universal:mathcalI0}, with $b=2{\rmD}u$ (at least,
for one version of the latter), we deduce that there exists a progressively measurable mapping 
${\mathcal I}^{t_0} : \Xi \rightarrow \rmC([0,T];{\mathbb R}^d)$ such that ${\mathcal I}^{t_0}_T - {\mathcal I}^{t_0}_t$ coincides ${\mathbf P}^{t_0, \emp^{-1}_\sharp (\mathcal D)}$-a.s. with the stochastic integral appearing in the expansion right above. The statement of Lemma \ref{lem:universal:mathcalI0} says that 
${\mathcal I}_T^{t_0} - {\mathcal I}_t^{t_0}$ also coincides with the same stochastic integral, but 
${\mathbf P}^{t_0, \emp^{-1}(m)}$-a.s., for $m$ belonging to a Borel subset $O_1$ of $\cP(\T^d)$ such that 
${\mathcal D}(O_1)=1$.
We thus rewrite the above identity in the form 
\begin{align*}
{\mathbf P}^{t_0, \emp^{-1}_\sharp (\mathcal D)}
\left( \left\{\forall t \in [0,T], 
\quad     g(\mu^{\infty}_T)  =  u_t(\mu^{\infty}_t)  
    + \int_t^T f_r(\mu^{\infty}_r) \de r 
     + {\mathcal I}^{t_0}_T - {\mathcal I}^{t_0}_t
\right\} 
\right) = 1. 
\end{align*}
Using the disintegration formula \eqref{eq:disintegration:Pcursm0}, we can write this as
\begin{align*}
\int_{\cP(\T^d)}
{\mathbf P}^{t_0, \emp^{-1}(m)}
\left( \left\{\forall t \in [0,T], 
\quad     g(\mu^{\infty}_T)  =  u_t(\mu^{\infty}_t)  
    + \int_t^T f_r(\mu^{\infty}_r) \de r 
     + {\mathcal I}^{t_0}_T - {\mathcal I}^{t_0}_t
\right\} 
\right) \de {\mathcal D}(m) = 1,
\end{align*}
which implies that there exists a Borel subset $O_2$ of $\cP(\T^d)$ such that 
${\mathcal D}(O_2)=1$ and, for any $m \in O_2$, 
\begin{align*}
{\mathbf P}^{t_0, \emp^{-1}(m)}
\left( \left\{\forall t \in [0,T], 
\quad     g(\mu^{\infty}_T)  =  u_t(\mu^{\infty}_t)  
    + \int_t^T f_r(\mu^{\infty}_r) \de r 
     + {\mathcal I}^{t_0}_T - {\mathcal I}^{t_0}_t
\right\} 
\right) = 1.
\end{align*}
On $O_1 \cap O_2$ (which has again measure equal to 1 under ${\mathcal D}$), 
the following two expansions hold true 
${\mathbf P}^{t_0, \emp^{-1}(m)}$-a.s., 
for any $t \in [t_0,T]$,
\begin{align}
g(\mu^{\infty}_T) 
&= u_t(\mu^{\infty}_t)  
    + \int_t^T f_r(\mu^{\infty}_r) \de r 
   + {\mathcal I}^{t_0}_T - {\mathcal I}^{t_0}_t
   \label{eq:expansion:ito:with:I}
\\
&=  u_t(\mu^{\infty}_t)  
    + \int_t^T f_r(\mu^{\infty}_r) \de r 
   +\sum_{i=1}^{\infty}\int^T_t \sqrt{2 \varsigma_i}{\rm D} u_r(\mu^{\infty}_r, \xi^i_r) \cdot \de \beta^i_r.
   \nonumber
\end{align}
Finally, we recall that, by Proposition \ref{prop:mut}, 
\begin{equation*}
{\mathbb{E}}^{{\mathbf P}^{t_0,\emp^{-1}_{\sharp} (\mathcal D)}}\left[\sum_{i =1}^\infty
\int_{0}^T  \varsigma^i  \left\vert 
{\rm D}u_r(\mu^\infty_r,\xi_r^i) \right\vert^2\de r\right]
 = 
 \int_{0}^T \int_{\cP(\T^d)}\left(  \int_{\T^d} \vert {\rm D}u_r(\mu,x)  \vert^2
 \de \mu(x) \right) \de {\mathcal D}(\mu)\de r < \infty,
\end{equation*}
which implies, again thanks to \eqref{eq:disintegration:Pcursm0}, that there exists a Borel subset $O_3$ of $\cP(\T^d)$, with 
${\mathcal D}(O_3)=1$, such that, for any $m \in O_3$, 
\begin{equation}
\label{eq:bound:L2:Du:mxx}
{\mathbb{E}}^{{\mathbf P}^{t_0,\emp^{-1}(m)}}
\left[
\sum_{i =1}^\infty
\int_{0}^T  \varsigma^i  \left\vert 
{\rm D}u_r(\mu^\infty_r,\xi_r^i) \right\vert^2\de r
\right] < \infty.
\end{equation}
\vskip 2pt

For the remaining part of the proof, we set $O:=O_1 \cap O_2 \cap O_3$, and we note that ${\mathcal D}(O)=1$. 
\medskip

\textit{Second step:}
Handling the general case. 
\\
The purpose of this step is to transfer
\eqref{eq:expansion:ito:with:I}
from the canonical space equipped with 
${\mathbf P}^{t_0, \emp^{-1}(m)}$
to the canonical space equipped with
${\mathbf Q}^{t_0, \emp^{-1}(m),\boldsymbol \alpha}$,
for a given collection 
of uniformly
bounded and measurable functions $
{\boldsymbol \alpha}=(
\alpha^i: [0,T] \times \prob(\T^d) \times \T^d \to \R^d)_i$, and for any $m \in O$. This step is divided in several sub-steps.\\
\textit{Sub-step 2.a:} Revisiting some definitions and results from Subsection \ref{ssec:well_posedness_partycles}. \\
We first recall  formula \eqref{eq:new:expression:Qcursm0} (with $\boldsymbol b$ therein being now understood 
as $\boldsymbol \alpha$), which holds true for any initial measure 
$\cursm$ on $T^\infty_o \times (\T^d)^\infty_o$ and in particular under 
$\delta_{\emp^{-1}(m)}$, for $m \in O$:
\begin{equation}
\label{eq:formula:Q:ito:new}
{\mathbf Q}^{t_0, \emp^{-1}(m),\boldsymbol \alpha}= {\cercle{$\Psi$}}^{\boldsymbol{\alpha}}_\sharp \left( \frac1{z_T^{t_0, \emp^{-1}(m),\boldsymbol \alpha}}  
{\mathbf P}^{t_0, \emp^{-1}(m)}\right).
\end{equation}
Note that we emphasized the dependence of $z_T$ on both the initial measure  $\emp^{-1}(m)$ and the collection of velocity fields ${\boldsymbol \alpha}$, as well as the dependence 
of $\cercle{$\Psi$}^{\boldsymbol \alpha}$ on $\boldsymbol \alpha$.
We also recall (as a consequence of Girsanov theorem) that 
\begin{equation}
\label{eq:girsanov:expectation=1}
{\mathbb{E}}^{{\mathbf P}^{t_0, \emp^{-1}(m)}}\left[ \frac1{z_T^{t_0, \emp^{-1}(m),\boldsymbol \alpha}}\right]=
1.
\end{equation}
The second point we wish to emphasize is that, from the proof of 
Lemma~\ref{lem:universal:mathcalI0}, we can construct a sequence of Riemann sums $({\mathcal I}^{t_0,(n)})_n$ such that, for any 
$m \in O$, 
\begin{equation*}
\forall \varepsilon >0, \quad 
\lim_{n \rightarrow \infty}
{\mathbf P}^{t_0, \emp^{-1}(m)}
\left( \left\{
\sup_{t \in [0,T]}
\vert {\mathcal I}^{t_0}_t - {\mathcal I}_t^{t_0,(n)}
\vert \geq \varepsilon 
\right\} 
\right) 
= 0. 
\end{equation*}
By combining the last two displays, we deduce that, for the same fixed collection
${\boldsymbol \alpha}$ as before, 
\begin{equation}
\label{eq:proof:ito:new:point:2}
\forall \varepsilon >0, \quad 
\lim_{n \rightarrow \infty}
\left[ \frac1{z_T^{t_0, \emp^{-1}(m),\boldsymbol \alpha}}
{\mathbf P}^{t_0, \emp^{-1}(m)}
\right]
\left( \left\{
\sup_{t \in [0,T]}
\vert {\mathcal I}^{t_0}_t - {\mathcal I}_t^{t_0,(n)}
\vert \geq \varepsilon 
\right\} 
\right) 
= 0. 
\end{equation}
For the purpose of the proof, we introduce, 
for each $i \in \N_+$,   
\begin{equation*}
\begin{split}
\theta_i^{\boldsymbol \alpha} : T^\infty_o\times 
\rmC([0,T];{\mathbb R}^d)^{\infty} \times 
\rmC([0,T];{\mathbb R}^d)
&\rightarrow    \rmC([0,T];{\mathbb R}^d)
\\
\left( (s_j)_j, (x^j)_j,
w\right) &\mapsto  
\left(
w_t + \sqrt{\frac{s_i}{2}} \int_0^t \alpha_r^i\left( 
\sum_{j =1 }^\infty
s_j \delta_{x^j_r}
,x_r^i
\right) \de r
\right)_{t \in [0,T]},
\end{split}
\end{equation*}
where we are using the convention as in Remark \ref{rem:toruspb} to evaluate $\alpha_r^i$ at $x_r^i$, see also \eqref{eq:project} for the meaning of $\sum_{j =1 }^\infty
s_j \delta_{x^j_r}$. We define the mapping ${\cercle{$\Theta$}}^{\boldsymbol \alpha}\colon\Xi\to\Xi$ as
\begin{equation*}
{\cercle{$\Theta$}}^{\boldsymbol \alpha} : \left((s_j)_j,(x_j^0)_j,(w^j)_j,(x^j)_j\right) 
\mapsto \left( (s_j)_j,(x_j^0),\left(\theta_j^{\boldsymbol \alpha}((s_i)_i,(x^i)_i,w^j)\right)_j,(x^j)_j
\right).
\end{equation*}
Recalling the definition of $\psi_i^{\boldsymbol \alpha}$ given in \eqref{eqn:psi_i_new}, we observe that
\begin{equation*}
\theta_i^{\boldsymbol \alpha}\left((s_j)_j, (x^j)_j,
\psi_i^{\boldsymbol \alpha}\left((s_j)_j, (x^j)_j,
w\right)\right)=(w_t)_{t \in [0,T]} 
\end{equation*}
for every $i \in \N_+$ and every $\left((s_j)_j,(x_j^0)_j,(w^j)_j,(x^j)_j\right) \in T^\infty_o\times 
\rmC([0,T];{\mathbb R}^d)^{\infty} \times 
\rmC([0,T];{\mathbb R}^d)$,
from which we deduce that $\cercle{$\Theta$}^{\boldsymbol \alpha} \circ \cercle{$\Psi$}^{\boldsymbol \alpha}$ is the identity on 
$\Xi$. This makes it possible to rewrite 
\eqref{eq:proof:ito:new:point:2} in the form 
\begin{equation*}
\forall \varepsilon >0, \quad 
\lim_{n \rightarrow \infty}
\left[ \frac1{z_T^{t_0, \emp^{-1}(m),\boldsymbol \alpha}}
{\mathbf P}^{t_0, \emp^{-1}(m)}
\right]
\left( \left\{
\sup_{t \in [0,T]}
\left\vert 
{\mathcal I}_t^{t_0} \circ \cercle{$\Theta$}^{\boldsymbol \alpha} \circ \cercle{$\Psi$}^{\boldsymbol \alpha} 
-   {\mathcal I}^{t_0, (n)}_t
\circ \cercle{$\Theta$}^{\boldsymbol \alpha} \circ \cercle{$\Psi$}^{\boldsymbol \alpha}
\right\vert \geq \varepsilon 
\right\} 
\right) 
= 0,
\end{equation*}
which, by \eqref{eq:formula:Q:ito:new}, reads as 
\begin{equation}
\label{eq:convergence:proba:underQ}
\forall \varepsilon >0, \quad 
\lim_{n \rightarrow \infty}
{\mathbf Q}^{t_0, \emp^{-1}(m),{\boldsymbol \alpha}}
\left( \left\{
\sup_{t \in [0,T]}
\left\vert  {\mathcal I}^{t_0}_t \circ \cercle{$\Theta$}^{\boldsymbol \alpha}
 -  {\mathcal I}^{t_0, (n)}_t
\circ \cercle{$\Theta$}^{\boldsymbol \alpha}
\right\vert \geq \varepsilon 
\right\} 
\right) 
= 0.
\end{equation}
\vskip 2pt

\textit{Sub-step 2.b:}
addressing the limit 
of the sequence $(
{\mathcal I}^{t_0, (n)}
\circ \cercle{$\Theta$}^{\boldsymbol \alpha})_{n \geq 1}$ under 
${\mathbf Q}^{t_0, \emp^{-1}(m),{\boldsymbol \alpha}}$.
\vskip 2pt

We first provide an explicit expression of 
${\mathcal I}^{t_0, (n)}
\circ \cercle{$\Theta$}^{\boldsymbol \alpha}$, for a given fixed $n \in \N_+$. 
Recalling 
\eqref{eq:mathcalIn0} in the proof of Lemma \ref{lem:universal:mathcalI0}, we observe that 
each ${\mathcal I}^{t_0, (n)}$ can be written in the form
\begin{equation*}
{\mathcal I}_t^{t_0, (n)}
\circ \cercle{$\Theta$}^{\boldsymbol \alpha} 
=
 \sum_{i =1}^\infty \sum_{k=1}^{N(n)} \left( H_{k}^{(n),i} \circ \cercle{$\Theta$}^{\boldsymbol \alpha} 
 \right)\cdot \left( \beta_{ t_k^{(n)}\wedge t}^i \circ \cercle{$\Theta$}^{\boldsymbol \alpha}
  -
 \beta_{ t_{k-1}^{(n)}\wedge t}^i \circ \cercle{$\Theta$}^{\boldsymbol \alpha}
  \right), \quad t \in [0,T],
\end{equation*}
with $0=t_0^{(n)}<t_1^{(n)}< \cdots < t_{N(n)}^{(n)}=T$ being a subdivision of $[0,T]$ of step-size
\begin{equation}\label{eq:stepsize}
\max_{k=1,\cdots,N(n)} \left\vert t_k^{(n)} - t_{k-1}^{(n)} 
\right\vert \le n^{-3}, \end{equation}
(the rationale for this choice, which implies in particular that $N(n)$ grows at least like $n^3$ with $n$, will become clear later in the proof), and $H^{(n),i}_k$ being as in
\cite[Lemma 4.3.2]{StroockVaradhan}, i.e.,
\begin{equation}
\label{eq:expression:Hnik}
H^{(n),i}_k =
h^{(n),i}_{t_k^{(n)}}
\quad 
\textrm{\rm with}
\quad
h^{(n),i}_s:=
\sqrt{ 2 \varsigma^i } \, n  \int_{0}^{T}
\varrho\left( n(s - r) \right) \eta_n\left({\rm D}u_r\left(\mu_r^\infty,\xi_r^i\right)\right)
\de r, \quad s \in [0,T],
\end{equation}
where $\varrho$
is a a smooth density on ${\mathbb R}$ supported on the interval 
$[0,1]$ and, for any $n \geq 1$, 
$\eta_n$ is a bounded function, equal to the identity on the hypercube $[-n,n]^d$ and satisfying the growth property $\vert \eta_n(x) \vert \leq \vert x \vert$ for any $x \in {\mathbb R}^d$.  
By using the definition of $\cercle{$\Theta$}^{\boldsymbol \alpha}$, we obtain 
\begin{equation*}
{\mathcal I}_t^{t_0, (n)}
\circ \cercle{$\Theta$}^{\boldsymbol \alpha} 
=
 \sum_{i =1}^\infty \sum_{k=1}^{N(n)} \left( H_{k}^{(n),i} \circ \cercle{$\Theta$}^{\boldsymbol \alpha} 
 \right)\cdot \left( \left[\beta_{ t_k^{(n)}\wedge t}^i
 -
 \beta_{ t_{k-1}^{(n)}\wedge t}^i
 \right]
 +  \sqrt{\frac{\varsigma^i}2}
 \int_{t_{k-1}^{(n)} \wedge t}^{t_k^{(n)}\wedge t}
 \alpha_r^i \left( \mu_r^{\infty},\xi_r^i \right) \de r
\right), \quad t \in [0,T].
\end{equation*} 
Moreover,
using the fact that $\mu^{\infty} \circ \cercle{$\Theta$}^{\boldsymbol \alpha}=\mu^{\infty}$ and 
$\xi^{i} \circ \cercle{$\Theta$}^{\boldsymbol \alpha}=\xi^i$, we also notice that 
\begin{equation*}
H^{(n),i}_k \circ   \cercle{$\Theta$}^{\boldsymbol \alpha}
= H^{(n),i}_k,
\end{equation*}
for any $k \in \{1,\cdots,N(n)\}$ and $i \in {\mathbb N}_+$.
Then,
\begin{equation}
\label{eq:ito:new:3:a}
 {\mathcal I}^{t_0, (n)}_t 
\circ \cercle{$\Theta$}^{\boldsymbol \alpha} 
=
 \sum_{i =1}^\infty \sum_{k=1}^{N(n)}   H_{k}^{(n),i}  
 \cdot \left( \left[\beta_{t_k^{(n)}\wedge t}^i
 -
 \beta_{t_{k-1}^{(n)}\wedge t}^i
 \right]
 +  \sqrt{\frac{\varsigma^i}2}
 \int_{t_{k-1}^{(n)}\wedge t}^{t_k^{(n)}\wedge t}
 \alpha_r^i \left( \mu_r^{\infty},\xi_r^i \right) \de r
\right), \quad t \in [0,T].
\end{equation}
In order to complete this sub-step, we must derive further estimates on the rate of convergence of the time-discrete approximation considered above. We compute
\begin{equation}\label{eq:whystep}   
\begin{split} 
&\sum_{i=1}^\infty \int_0^T \left | \sum_{k=1}^{N(n)}
{\mathbf 1}_{[t_{k-1}^{(n)},t_{k}^{(n)})}(s)
H_{k}^{(n),i} -h_s^{(n),i}\right |^2 \de s \\
&=  \sum_{i=1}^\infty \sum_{i=1}^{N(n)} \int_{t_{k-1}^{(n)}}^{t_k^{(n)}} \left | h_{t_k^{(n)}}^{(n),i}-h_s^{(n),i} \right|^2 \de s \\
& \le \sum_{i=1}^\infty 2 \varsigma^i n^2 \sum_{i=1}^{N(n)} \int_{t_{k-1}^{(n)}}^{t_k^{(n)}} \left | \int_0^T \left [ \varrho (n(t_k^{(n)}-r)) - \varrho (n(s-r)) \right  ] \eta_n \left ( {\rm D}u_r\left(\mu_r^\infty,\xi_r^i\right) \right ) \de r\right|^2 \de s \\
&  \le \sum_{i=1}^\infty 2 \|\varrho'\|_\infty^2 \varsigma^i n^4 T^2 \max_{k=1,\cdots,N(n)} \left\vert t_k^{(n)} - t_{k-1}^{(n)}
\right\vert^2 \int_0^T \left |{\rm D}u_r\left(\mu_r^\infty,\xi_r^i\right) \right |^2 \de r \\
& = 2 T^2 \|\varrho'\|_\infty^2 n^{-2} \sum_{i=1}^\infty \int_0^T \varsigma^i \left |{\rm D}u_r\left(\mu_r^\infty,\xi_r^i\right) \right |^2 \de r, 
\end{split}
\end{equation}
where we used the choice of the step size in \eqref{eq:stepsize}. As a consequence of \eqref{eq:bound:L2:Du:mxx}, \eqref{eq:whystep} and standard approximation techniques in $L^2$, we conclude that, for $m \in O$, we have,  ${\mathbf P}^{t_0,\emp^{-1}(m)}$ a.s.,
\begin{equation}
\label{eq:convergence:L2:a.s.:H(n)toDu}
\lim_{n \rightarrow \infty}
\sum_{i =1}^\infty
\int_0^T 
\left\vert \sqrt{2 \varsigma^i} {\rm D} u_r(\mu^{\infty}_r,\xi_r^i)
-
\sum_{k=1}^{N(n)}
{\mathbf 1}_{[t_{k-1}^{(n)},t_{k}^{(n)})}(r)
H_{k}^{(n),i} 
\right\vert^2 \de r = 0.
\end{equation}
Using
\eqref{eq:convergence:L2:a.s.:H(n)toDu}, once again that $\mu^{\infty} \circ \cercle{$\Theta$}^{\boldsymbol \alpha}=\mu^{\infty}$,  
$\xi^{i} \circ \cercle{$\Theta$}^{\boldsymbol \alpha}=\xi^i$, $
H^{(n),i}_k \circ   \cercle{$\Theta$}^{\boldsymbol \alpha}
= H^{(n),i}_k,$
for any $k \in \{1,\cdots,N(n)\}$ and $i \in {\mathbb N}_+$, and \eqref{eq:formula:Q:ito:new}, we notice that,  for any $m \in O$, 
${\mathbf Q}^{t_0, \emp^{-1}(m),{\boldsymbol \alpha}}$-a.s.
\begin{equation}\label{eq:butunderq}
\lim_{n \rightarrow \infty}
\sum_{i =1}^\infty
\int_0^T 
\left\vert \sqrt{2 \varsigma^i} {\rm D} u_r(\mu^{\infty}_r,\xi_r^i)
-
\sum_{k=1}^{N(n)}
{\mathbf 1}_{[t_{k-1}^{(n)},t_{k}^{(n)})}(r)
H_{k}^{(n),i} 
\right\vert^2 \de r = 0.
\end{equation}

The objective, in the rest of this sub-step, is to prove that 
the 
above   convergence
also holds in
$L^1$ under ${\mathbf Q}^{t_0,\emp^{-1}(m), \boldsymbol{\alpha}}$.
Using the contraction property of the convolution in $L^2$-norm, 
it is standard to check that 
\begin{equation*}
 \sum_{i=1}^\infty \int_0^T \vert h_s^{(n),i}\vert^2 \de s  \leq 2 \sum_{i=1}^\infty \varsigma^i 
 \int_0^T
\left\vert
{\rm D}u_r\left(\mu_r^\infty,\xi_r^i\right)\right\vert^2 \de r.
\end{equation*}
From this and \eqref{eq:whystep}, we  deduce that 
\begin{equation}\label{eq:norm:L2:approx:stoch}
\sum_{i=1}^\infty 
  \int_0^T
 \left | \sum_{k=1}^{N(n)}
{\mathbf 1}_{[t_{k-1}^{(n)},t_{k}^{(n)})}(r)
H_{k}^{(n),i} \right |^2 \le  4(1+T^2 \|\varrho'\|^2_\infty)  \sum_{i=1}^\infty \varsigma^i 
 \int_0^T
\left\vert
{\rm D}u_r\left(\mu_r^\infty,\xi_r^i\right)\right\vert^2 \de r.
\end{equation}

\vskip 2pt

We further notice
that, since the collection
$(\alpha^i)_i$ is uniformly bounded, we have an improved version of the estimate \eqref{eqn: bound_girsanov_exp}. Precisely, for any $m \in O$,
\eqref{eq:girsanov:expectation=1}
can be strengthened into 
\begin{equation*}
{\mathbb{E}}^{{\mathbf P}^{t_0, \emp^{-1}(m)}}\left[ \left( \frac1{z_T^{t_0, \emp^{-1}(m),\boldsymbol \alpha}} \right)^p \right] < \infty,
\end{equation*}
for every $p\geq 1$.
Recalling \eqref{eq:bound:L2:Du:mxx} and \eqref{eq:formula:Q:ito:new}, and using the Cauchy--Schwarz inequality, 
together with the facts that 
$\mu^{\infty} \circ \cercle{$\Theta$}^{\boldsymbol \alpha}=\mu^{\infty}$ and 
$\xi^{i} \circ \cercle{$\Theta$}^{\boldsymbol \alpha}=\xi^i$, 
we then deduce that, for every $q \in (1,2)$,
\begin{equation}
\label{eq:bound:L2:Du:m}
\begin{split}
&{\mathbb{E}}^{{\mathbf Q}^{t_0, \emp^{-1}(m),{\boldsymbol \alpha}}}
\left[ \left( 
\sum_{i =1}^\infty
\int_{0}^T  \varsigma^i  \left\vert 
{\rm D}u_r(\mu^\infty_r,\xi_r^i) \right\vert^2\de r
\right)^{q/2}
\right] 
\\
&={\mathbb{E}}^{{\mathbf Q}^{t_0, \emp^{-1}(m),{\boldsymbol \alpha}}}
\left[ \left( 
\sum_{i =1}^\infty
\int_{0}^T  \varsigma^i  \left\vert 
{\rm D}u_r\left(\mu^\infty_r \circ \cercle{$\Theta$}^{\boldsymbol \alpha},\xi_r^i\circ \cercle{$\Theta$}^{\boldsymbol \alpha}
\right) \right\vert^2\de r
\right)^{q/2}
\right]
\\
&= {\mathbb{E}}^{{\mathbf P}^{t_0,\emp^{-1}(m)}}
\left[ \frac1{z_T^{t_0, \emp^{-1}(m),{\boldsymbol \alpha}}}  \left( 
\sum_{i =1}^\infty
\int_{0}^T  \varsigma^i  \left\vert 
{\rm D}u_r\left(\mu^\infty_r  ,\xi_r^i 
\right) \right\vert^2\de r
\right)^{q/2}
\right] \\
& \le {\mathbb{E}}^{{\mathbf P}^{t_0, \emp^{-1}(m)}}\left[ \left( \frac1{z_T^{t_0, \emp^{-1}(m),{\boldsymbol \alpha}}} \right)^{p} \right]^{1/p} {\mathbb{E}}^{{\mathbf P}^{t_0, \emp^{-1}(m)}}\left[ \sum_{i =1}^\infty
\int_{0}^T  \varsigma^i  \left\vert 
{\rm D}u_r(\mu^\infty_r,\xi_r^i) \right\vert^2\de r \right ]^{q/2} < \infty,
\end{split}
\end{equation}
where $p=2/(2-q)$ is the conjugate exponent of $2/q$.
Thanks to \eqref{eq:bound:L2:Du:m} and \eqref{eq:butunderq}, we deduce from a uniform integrability argument that 
for any 
$m \in O$, 
\begin{equation}\label{eq:fordoob}
\lim_{n \rightarrow \infty}
{\mathbb{E}}^{{\mathbf Q}^{t_0, \emp^{-1}(m),{\boldsymbol \alpha}}}
\left[ \left (
\sum_{i =1}^\infty
\int_{0}^T 
\left\vert \sqrt{2 \varsigma^i} {\rm D} u_r(\mu^{\infty}_r,\xi_r^i)
-
\sum_{k=1}^{N(n)}
{\mathbf 1}_{[t_{k-1}^{(n)},t_{k}^{(n)})}(r)
H_{k}^{(n),i} 
\right\vert^2 \de r \right )^{1/2}
\right]
= 0.
\end{equation}
Using the expression in \eqref{eq:ito:new:3:a},  Burkholder-Davis-Gundy inequality,  and \eqref{eq:fordoob}, we obtain for any $m \in O$
\begin{equation*}
\begin{split}
\lim_{n \rightarrow \infty}
&{\mathbb{E}}^{{\mathbf Q}^{t_0, \emp^{-1}(m), {\boldsymbol \alpha}}}
\left[ \sup_{t \in [0,T]}
\left\vert  {\mathcal I}^{t_0,(n)}_t
\circ \cercle{$\Theta$}^{\boldsymbol \alpha} 
\phantom{\sum_{i =1}^\infty
\int_0^t }
\right. \right.
\\
&\hspace{15pt} -   \left. \left.
\sum_{i =1}^\infty
\int_0^t \sqrt{2 \varsigma^i} {\rm D}u_r(\mu^\infty_r,\xi^i_r) \cdot \de \beta^i_r 
- \sum_{i =1}^\infty
\varsigma^i
\int_0^t  {\rm D}u_r(\mu^\infty_r,\xi^i_r) \cdot \alpha_r^i(\mu_r^{\infty},\xi^i_r)
\de r \right\vert\right]=0.
\end{split}
\end{equation*}
\vskip 2pt

Returning to 
\eqref{eq:convergence:proba:underQ}, this gives, for 
any 
$m \in O$, 
${\mathbf Q}^{t_0, \emp^{-1}(m),{\boldsymbol \alpha}}$-a.s., for all 
$t \in [0,T]$, 
\begin{equation*}
{\mathcal I}^{t_0}_t \circ \cercle{$\Theta$}^{\boldsymbol \alpha} 
= \sum_{i =1}^\infty
\int_0^t \sqrt{2 \varsigma^i} {\rm D}u_r(\mu^\infty_r,\xi^i_r) \cdot \de \beta^i_r 
+ \sum_{i =1}^\infty
\varsigma^i
\int_0^t  {\rm D}u_r(\mu^\infty_r,\xi^i_r) \cdot \alpha^i_r(\mu_r^{\infty},\xi^i_r)
\de r.
\end{equation*}
\vskip 2pt

\textit{Sub-step 2.c:} conclusion.\\
It remains to come back to 
\eqref{eq:expansion:ito:with:I}, which we write, under 
${\mathbf P}^{t_0, \emp^{-1}(m)}$ for $m \in O$, 
for any $t \in [t_0,T]$,
\begin{equation*}
\begin{split}
g\left(\mu^{\infty}_T \circ \cercle{$\Theta$}^{\boldsymbol \alpha} \circ \cercle{$\Psi$}^{\boldsymbol \alpha}\right)
&= u_t\left(\mu^{\infty}_t
\circ \cercle{$\Theta$}^{\boldsymbol \alpha} \circ \cercle{$\Psi$}^{\boldsymbol \alpha}
\right)  
\\
&\hspace{15pt}
    + \int_t^T f_r\bigl(\mu^{\infty}_r  
    \circ \cercle{$\Theta$}^{\boldsymbol \alpha} \circ \cercle{$\Psi$}^{\boldsymbol \alpha}
    \bigr) \de r 
   + {\mathcal I}^{t_0}_T\circ \cercle{$\Theta$}^{\boldsymbol \alpha} \circ \cercle{$\Psi$}^{\boldsymbol \alpha} - {\mathcal I}^{t_0}_t\circ \cercle{$\Theta$}^{\boldsymbol \alpha} \circ \cercle{$\Psi$}^{\boldsymbol \alpha}.
   \end{split}
\end{equation*}
This gives, for any 
$m \in O$, 
under 
${\mathbf Q}^{t_0, \emp^{-1}(m),{\boldsymbol \alpha}}$, 
for any $t \in [t_0,T]$,
\begin{equation*}
\begin{split}
g\left(\mu^{\infty}_T \circ \cercle{$\Theta$}^{\boldsymbol \alpha} \right)
&= u_t\left(\mu^{\infty}_t
\circ \cercle{$\Theta$}^{\boldsymbol \alpha}  
\right)  
    + \int_t^T f_r\bigl(\mu^{\infty}_r  
    \circ \cercle{$\Theta$}^{\boldsymbol \alpha}  
    \bigr) \de r 
   + {\mathcal I}^{t_0}_T\circ \cercle{$\Theta$}^{\boldsymbol \alpha} - {\mathcal I}^{t_0}_t\circ \cercle{$\Theta$}^{\boldsymbol \alpha}.
   \end{split}
\end{equation*}
Using the fact that $\mu^{\infty}
\circ \cercle{$\Theta$}^{\boldsymbol \alpha}=\mu^{\infty}$ and then inserting the expression of 
${\mathcal I}^{t_0}_t\circ \cercle{$\Theta$}^{\boldsymbol \alpha}$, we complete the proof.
\end{proof}

\subsection{Finite-dimensional approximation}\label{ssec:finite_dim} 
Let $b:[0,T] \times \prob(\T^d) \times \T^d \to \R^d$ be a bounded and measurable function and $g \in H$ (to ease the presentation, the source term $f$ is assumed be null in this subsection, but
the results stated below could be adapted, without any difficulty, to the case when $f$ is non-zero). The main result of this section is to provide a suitable finite-dimensional reduction of the PDE \eqref{eq:pdeaaa} and the related particle system \eqref{eqn: particles_drift}. Unlike what is typically done in the classical mean-field literature, here, it is not possible to directly substitute the (uniform) empirical measure of a finite  particle system for the measure $\mu^\infty$.
What we do instead is to intervene at the level of the coefficients, introducing approximations of the velocity field $b$ and of the terminal condition $g$ that depend on a probability measure only through a finite number of (sufficiently massive) atoms.
The goal of this section is to show how this approximation propagates at both the PDE and particle levels, and, in particular, 
how it allows us to reduce \eqref{eq:pdeaaa}
to a PDE set in finite dimension, 
and 
\eqref{eqn: particles_drift}
to a system with finitely many particles.

To make it precise, the approximations that are used next rely on the following notion of compatibility between two probability measures.

\begin{definition}
\label{def:epsilon:equivalence:proba:measures}
    For a given $\eps\in (0,1]$, two measures $\mu, \nu \in \prob(\T^d)$ are said to be $\eps$-compatible if they have the same atoms of mass greater than or equal to $\eps$ and assign the same mass to these atoms, i.e., 
\[ \mu_x = \nu_x \text{ for every } x \in \{ x \in \T^d : \mu_x \ge \eps\} = \{x \in \T^d : \nu_x \ge \eps\}.\]
\end{definition}
This induces an equivalence relation between elements of $\prob(\T^d)$ which we denote by $\mu \approx_{\varepsilon} \nu$.
The $\R^m$-valued, $m \in \N_+$,  functions defined 
on the quotient space 
(and also depending on time and space)
can be identified with 
functions belonging to the set 
\[ \AA_\eps^{m} := \set{\fru:  [0,T] \times \prob(\T^d) \times \T^d \to \R^{m} : \fru_{t}(\mu,x) = \fru_{t}(\nu,x) 
\begin{array}{l}
\text{for every } \mu, \nu \in \prob(\T^d) \, \,\text{s.t. $\mu \approx_{\varepsilon} \nu$} 
\\
\text{and for every }
 (t,x) \in [0,T]\times\T^d
\end{array}}
,\]
which is an algebra. Elements of 
$\AA_\eps^{m}$ depend on the measure argument only through  atoms of mass greater than or equal to $\eps$. Note that if $\eps < \delta$ then $\AA_\delta^{m} \subseteq \AA_\eps^{m}$. More generally, we say  that a function $v\colon\prob(\T^d)\to\R^{m}$  belongs to $\AA_{\eps}^{m}$ if its trivial extension to $ [0,T] \times \prob(\T^d)\times\T^d$ lies in $\AA_{\eps}^{m}$.
\vskip 4pt

\smallskip
We then consider three sequences $(\eps_n)_n$, $(g^n)_n$ and $(b^n)_n$ such that:
\begin{enumerate}[i.]
\item 
the sequence 
$(\varepsilon_n)_n$ takes values in $(0,1)$, is decreasing and tends to $0$ as $n \rightarrow \infty$;
\item for each $n \in {\mathbb N}_+$, 
$b^n \in \AA_{\varepsilon_n}^{d}$, 
$\sup_{n \in {\mathbb N}_+} \| b^n \|_\infty < \infty$, 
and $b^n \rightarrow b$ in the space 
$L^2([0,T] \times \prob(\T^d) \times \T^d, \mathscr{L}^{[0,T]} \otimes \overline{\cD}; \R^d)$ as $n \rightarrow \infty$;
\item for each $n \in {\mathbb N}_+$, $g^n \in \AA_{\varepsilon_n}^{1}$
and 
$\| g^n\|_\infty < \infty$;
and, $g^n \rightarrow g$ in the space $H$
 as $n \rightarrow \infty$.
\end{enumerate}

Existence of the three sequences $(\varepsilon_n)_n$, 
$(g^n)_{n}$ and 
$(b^n)_n$ is guaranteed by Proposition \ref{prop:approx_trunc} and property \eqref{eq:density}. However, other constructions are conceivable; we come back to this point next.
\smallskip

For each $n \in {\mathbb N}_+$, 
we denote by $\mathbf{Q}^{t_0,(\mbfs,\mbfx),b^n}$ the solution to the particle system \eqref{eqn: particles_drift} (with $i$-independent drift) with data $b^n,t_0$ and $ (\mbfs,\mbfx)\in T^\infty_o\times(\T^d)^\infty_o$. Since $b^n\in\AA_{\eps_n}^{d}$, we can consider the following finite-dimensional sub-system of \eqref{eqn: particles_drift}, which is given in closed form by
\begin{equation}\label{eqn: finite_particles_drift_trunc}
    \begin{cases}
        \de X^{i}_t & = b_t^n(\emp^{N(\eps_n, \mbfs)}(\mbfs, (X^{i}_t)_{i=1}^{N(\eps_n, \mbfs)}), X^i_t)\de t +  \sqrt{\frac{2}{s_i}}\de W^i_t,\quad t\in(t_0,T),\, i=1,\dots,N(\eps_n, \mbfs),\\
        X^{i}_{t_0} & = \iota(x_i), \quad i=1,\dots,N(\eps_n, \mbfs),
    \end{cases}
\end{equation}
where $N(\eps_n, \mbfs)$ is as in \eqref{eq:Neps:mbfs}.
Since the drift 
$b^n \circ \emp^{N(\varepsilon_,n\mbfs)(\mbfs,\cdot)}$ is bounded, the system \eqref{eqn: finite_particles_drift_trunc} has a unique strong solution, 
see 
\cite{Veretennikov80}. Of course, this solution is also unique in law. In this context, the purpose of the next statement is to clarify how the finite-dimensional structure of the particle system \eqref{eqn: finite_particles_drift_trunc} is reflected in the PDE \eqref{eq:pdeaaa} (with $b^n$ being  substituted for $b$). Notice indeed that the latter is still posed in infinite dimension despite the seemingly simpler form of $b^n$.

In order to provide a finite-dimensional version of \eqref{eq:pdeaaa}, we first introduce the following functional spaces. For any $N \in \N_+$, we denote by 
$\rmC^{0,1,N}$ the space of continuous real-valued functions 
on $[0,T] \times (\T^d)^{N}$ that are differentiable in space and whose space derivative is jointly continuous in time and space on $[0,T) \times (\T^d)^N$, and for any real $p>1$, 
we denote by
$W^{1,2,N}_p$ the space of functions $k : [0,T] \times ({\mathbb T}^d)^N \rightarrow {\mathbb R}$ that belong to 
$L^p([0,T] \times ({\mathbb T}^d)^N, \mathscr{L}^{[0,T]} \otimes \textrm{vol}_d^N)$
and  possess generalized Sobolev derivatives $\partial_t k$, 
$(\partial_{x_i^j} k)_{i=1,\cdots,N;j=1,\cdots,d}$ and 
$(\partial^2_{x_{i}^{j} x_{i'}^{j'}} k)_{i,i'=1,\cdots,N;j,j'=1,\cdots,d}$ in
$L^p([0,T] \times ({\mathbb T}^d)^N, \mathscr{L}^{[0,T]} \otimes \textrm{vol}_d^N)$, with the convention that an element of 
$({\mathbb T}^d)^N$ is represented in the form $(x_i^j)_{i=1,\cdots,N;j=1,\cdots,d}$.

\begin{theorem}
\label{thm:finite:dim:PDE}
Let $n \in \N_+$, $b^n$ and $g^n$ be as above, and $u^n$ be the solution of $\eqref{eq:pdeaaa}$ with drift $b^n$ and final datum $g^n$. Then, for every $t_0 \in [0,T)$ and $\cD$-a.e.~$m \in \prob(\T^d)$,
\[ u^n_{t_0}(m) = h_{t_0}^{\mbfs(m)}(x_1(m), \dots, x_{N(\eps_n, \mbfs(m))}(m)),\]
where, for $\mbfs \in T_o^\infty$, $h^{\mbfs}$ is the unique solution, in the space
$\rmC^{0,1,N(\varepsilon_n,\mbfs)} \cap (\cap_{p > 1} W^{1,2,N(\varepsilon_n,\mbfs)}_p$), 
of the PDE, set on $[0,T] \times (\T^d)^{N(\eps_n, \mbfs)}$:
\begin{equation}\label{eqn:finite_dim_kolmogorov} \partial_t h_t + \sum_{i=1}^{N(\eps_n, \mbfs)} \frac{1}{s_i} \Delta_i h_t + \sum_{i=1}^{N(\eps_n, \mbfs)}  b^n_t (\emp^{N(\eps_n, \mbfs)}(\mbfs, \cdot), \cdot_i) \cdot \nabla_i h_t =0 \quad \text{ for a.e. } t \in (0,T)
\end{equation}
with final condition
\begin{equation*} h_t \rvert_{t=T} = g^n \circ \emp^{N(\eps_n, \mbfs)}(\mbfs, \cdot). \end{equation*}
\end{theorem}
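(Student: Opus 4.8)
The plan is to derive the finite-dimensional identity by combining the Kolmogorov representation formula of Theorem~\ref{thm:repr} with the fact that the particle system, once driven by a drift lying in $\AA_{\eps_n}^d$, decouples into a finite-dimensional SDE plus an auxiliary (non-influential) tail. First I would fix $t_0\in[0,T)$ and apply Theorem~\ref{thm:repr} to the PDE \eqref{eq:pdeaaa} with drift $b^n$ and terminal datum $g^n$: there is a Borel set $O$ with $\cD(O)=1$ such that $u^n_{t_0}(m)=\mathbb{E}^{\mathbf{Q}^{t_0,\emp^{-1}(m),b^n}}[g^n(\mu^\infty_T)]$ for $m\in O$. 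The point is then to identify this expectation with $h_{t_0}^{\mbfs(m)}(x_1(m),\dots,x_{N(\eps_n,\mbfs(m))}(m))$.

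Next I would make the finite-dimensional reduction explicit. Write $m=\emp(\mbfs,\mbfx)$ with $(\mbfs,\mbfx)\in T_o^\infty\times(\T^d)_o^\infty$, set $N:=N(\eps_n,\mbfs)$, and recall from Proposition~\ref{prop:pandq} that $\mathbf{Q}^{t_0,\emp^{-1}(m),b^n}$ is the image under $\cercle{$\Psi$}^{b^n}$ of $(z_T^{t_0})^{-1}\mathbf{P}^{t_0,\emp^{-1}(m)}$, so that, under $\mathbf{Q}^{t_0,\emp^{-1}(m),b^n}$, the canonical process $(\xi^i)_i$ solves \eqref{eqn: particles_drift} with drift $b^n$ and deterministic initial datum $(\mbfs,\mbfx)$. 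Because $b^n\in\AA_{\eps_n}^d$, for any configuration the value $b^n_t(\mu^\infty_t,\cdot)$ depends only on the atoms of $\mu^\infty_t$ of mass $\ge\eps_n$, i.e.\ only on $(\xi^1_t,\dots,\xi^N_t)$ (their weights being the deterministic constants $s_1,\dots,s_N\ge\eps_n$); hence $b^n_t(\mu^\infty_t,\xi^i_t)=b^n_t(\emp^{N}(\mbfs,(\xi^j_t)_{j=1}^N),\xi^i_t)$ for $i=1,\dots,N$ and $b^n_t(\mu^\infty_t,\xi^i_t)=0$ for $i>N$ (the latter since $s_i<\eps_n$ forces $b^n$ to vanish there, as $\supp$ of the internal functions lies in $[\eps_n,1]\times\T^d$). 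Consequently $(\xi^1,\dots,\xi^N)$ solves, under $\mathbf{Q}^{t_0,\emp^{-1}(m),b^n}$, exactly the closed finite-dimensional system \eqref{eqn: finite_particles_drift_trunc}, while the remaining coordinates $\xi^i$, $i>N$, are independent rescaled Brownian motions that do not enter $g^n(\mu^\infty_T)$ — again by the $\AA_{\eps_n}$-membership of $g^n$, $g^n(\mu^\infty_T)=g^n(\emp^{N}(\mbfs,(\xi^j_T)_{j=1}^N))=(g^n\circ\emp^{N}(\mbfs,\cdot))(\xi^1_T,\dots,\xi^N_T)$. Therefore $\mathbb{E}^{\mathbf{Q}^{t_0,\emp^{-1}(m),b^n}}[g^n(\mu^\infty_T)]$ is the expectation of $(g^n\circ\emp^N(\mbfs,\cdot))$ evaluated at time $T$ along the (unique in law) solution of \eqref{eqn: finite_particles_drift_trunc} started from $(\imath(x_1),\dots,\imath(x_N))$.

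Then I would invoke the finite-dimensional Feynman--Kac/Kolmogorov theory for the drift-diffusion operator with bounded measurable drift and non-degenerate (constant diagonal, entries $2/s_i$) diffusion: by Veretennikov-type results and the associated parabolic regularity (the relevant existence and uniqueness class being $\rmC^{0,1,N}\cap(\cap_{p>1}W^{1,2,N}_p)$, obtained from $L^p$ a priori estimates for parabolic equations with $L^\infty$ lower-order coefficients and the Sobolev embedding giving spatial $\rmC^1$-regularity up to $t<T$), the function
\[
h^{\mbfs}_{t}(y_1,\dots,y_N):=\mathbb{E}\!\left[(g^n\circ\emp^N(\mbfs,\cdot))\big(Y^{t,y}_T\big)\right],
\]
where $Y^{t,y}$ solves \eqref{eqn: finite_particles_drift_trunc} from time $t$ at $y$, is the unique solution of the backward Kolmogorov equation \eqref{eqn:finite_dim_kolmogorov} with terminal datum $g^n\circ\emp^N(\mbfs,\cdot)$. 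Identifying $u^n_{t_0}(m)=\mathbb{E}^{\mathbf{Q}^{t_0,\emp^{-1}(m),b^n}}[g^n(\mu^\infty_T)]=h^{\mbfs(m)}_{t_0}(x_1(m),\dots,x_{N(\eps_n,\mbfs(m))}(m))$ for $\cD$-a.e.\ $m$ then finishes the proof; the measurability of $\mbfs(\cdot)$, $x_i(\cdot)$ and $N(\eps_n,\cdot)$ ensures the right-hand side is a bona fide $\cD$-measurable function of $m$.

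I expect the main obstacle to be the careful bookkeeping in the reduction step: one must verify rigorously, working on the canonical space $\Xi$ and using the $\AA_{\eps_n}$-structure of both $b^n$ and $g^n$, that the indices $i>N(\eps_n,\mbfs)$ genuinely play no role — both in the drift (so that \eqref{eqn: finite_particles_drift_trunc} is exactly the law of $(\xi^1,\dots,\xi^N)$ under $\mathbf{Q}^{t_0,\emp^{-1}(m),b^n}$) and in the terminal functional — and that the atoms $z_j$, $j>N$, inserted through $\emp^N$ via the selection map $\mathrm{P}_N$ of Lemma~\ref{le:measel} are immaterial for exactly the same reason (as already stressed after Lemma~\ref{lemma: rel_der_cil}). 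A secondary, purely technical point is to cite or briefly justify the precise well-posedness class $\rmC^{0,1,N}\cap(\cap_{p>1}W^{1,2,N}_p)$ for \eqref{eqn:finite_dim_kolmogorov}: this is standard parabolic theory with bounded measurable first-order coefficient and smooth (hence, a fortiori, regular) terminal datum, so I would simply refer to the relevant results (e.g.\ in \cite{krylovholder} or Veretennikov's work \cite{Veretennikov80}) rather than reprove it.
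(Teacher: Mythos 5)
Your proof is correct and follows essentially the same route as the paper's: apply Theorem~\ref{thm:repr} for the Kolmogorov representation of $u^n_{t_0}$, observe that the $\AA_{\eps_n}$-membership of $b^n$ and $g^n$ together with the triviality of the weights makes the first $N(\eps_n,\mbfs)$ coordinates of the canonical process under $\mathbf{Q}^{t_0,\emp^{-1}(m),b^n}$ a closed solution to \eqref{eqn: finite_particles_drift_trunc}, and then invoke finite-dimensional parabolic theory and the associated Feynman--Kac formula.

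One inessential parenthetical is wrong: the claim that for $i>N(\eps_n,\mbfs)$ the coordinates $\xi^i$ are \emph{drift-free} ``rescaled Brownian motions'' because ``$\supp$ of the internal functions lies in $[\eps_n,1]\times\T^d$.'' Membership of $b^n$ in $\AA_{\eps_n}^d$ constrains only its dependence on the measure argument (through the atoms of mass $\ge\eps_n$), not its dependence on the space variable $x$, so $b^n_t(\mu^\infty_t,\xi^i_t)$ need not vanish for $i>N(\eps_n,\mbfs)$. Luckily, your argument does not use this: what you actually need, and correctly state, is that $(\xi^1,\dots,\xi^N)$ is autonomous (the drift of $\xi^i$, $i\le N$, depends only on $(\xi^1,\dots,\xi^N)$), and that the terminal functional $g^n(\mu^\infty_T)$ is a function of $(\xi^1_T,\dots,\xi^N_T)$ alone. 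The paper never asserts anything about the dynamics of the tail coordinates either. For the regularity class, the paper cites \cite[Theorem 2.1]{DELARUE20061712} rather than \cite{krylovholder} (which concerns Hölder-space theory, not the bounded-measurable-drift $L^p$ setting you need), and combines it with Krylov's It\^o formula \cite[Theorem 2.10.1]{KrylovBookControl}; your sketch of the $L^p$ approach is the right idea but should be supported by that kind of reference.
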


\begin{proof}
From Theorem \ref{thm:repr}, we know that, for every $t_0 \in [0,T)$ and $\cD$-a.e.~$m \in \prob(\T^d)$,
$u^n_{t_0}(m) = \mathbb{E}^{\mathbf{Q}^{t_0,\emp^{-1}(m),b^n}}[g^n(\mu^\infty_T)]$. For the rest of the proof, we fix $m$ such that the last equality holds true, and for this $m$, we set $(\mbfs, \mbfx):=(\mbfs(m), \mbfx(m))$ and, then, $N:= N(\eps_n, \mbfs)$. Since $g^n\in
\AA_{\eps_n}^{1}$, we have that $g^n(m) = g^n\circ\emp^{N} (\mbfs, (x_i)_{i=1}^{N})$. Moreover, the mapping 
\[
    (y_1,\dots,y_{N})\in(\T^d)^{N}\mapsto g^n\circ\emp^{N}(\mbfs, (y_i)_{i=1}^{N})
\]
belongs to $\rmC^\infty((\T^d)^{N})$. Therefore, we have
\begin{align*}
    u_{t_0}(m) & = \mathbb{E}^{\mathbf{Q}^{t_0,(\mbfs,\mbfx),b^n}}\left[(g^n\circ\emp^{N})(\mbfs,(\xi^1_T,\dots,\xi^{N}_T))\right].
\end{align*}
It is plain to see that, under the probability measure 
${\mathbf Q}^{t_0,(\mbfs,\mbfx),b^n},$
the process $(\xi^{i})_{i=1}^N$ is a solution to the particle system \eqref{eqn: finite_particles_drift_trunc} (with $\beta^1,\cdots,\beta^N$ as driving Brownian motions).

Consider now the PDE
\eqref{eqn:finite_dim_kolmogorov}. Existence and uniqueness of a solution 
(within the class specified in the statement of Theorem \ref{thm:finite:dim:PDE}) are standard in the literature (viewing the equation as a PDE posed on $[0,T] \times ({\mathbb R}^d)^{N}$, with periodic coefficients); 
for convenience, 
we refer to \cite[Theorem 2.1]{DELARUE20061712}, as the result therein fits exactly our needs.
Since the operator driving the PDE \eqref{eqn:finite_dim_kolmogorov} is the generator of 
\eqref{eqn: finite_particles_drift_trunc}, we deduce from Krylov's version of It\^o's formula, see \cite[Theorem 2.10.1]{KrylovBookControl}, that the right-hand side in the above display is also equal to
\begin{align*}
    \mathbb{E}^{\mathbf{Q}^{t_0,(\mbfs,\mbfx),b^n}}\left[(g^n\circ\emp^{N})(\mbfs,(\xi^1_T,\dots,\xi^{N}_T))\right]= h^{\mbfs}_{t_0}(x_1,\dots,x_{N}),
    \end{align*}
    which completes the proof.
\end{proof}

\begin{remark}
\label{rem:rate:convergence}
It must be stressed that  the 
finite-dimensional approximation constructed above satisfies the assumption of 
Proposition \ref{prop_stab_new}.
In particular, with the same notation 
as in the statement 
of Theorem 
\ref{thm:finite:dim:PDE}, Proposition
\ref{prop_stab_new}
guarantees that \[  \sup_{t \in [0,T]} |u^n_t-u_t|_H   \to 0 \quad \text{ as } n \to \infty. \]
Of course, the rate of convergence depends on the rates at which 
$(b^n)_{n \geq 1}$ and 
$(g^n)_{n \geq 1}$ converge to $b$ and $g$ respectively. 
When 
$(b^n)_{n \geq 1}$
and $(g^n)$ are obtained by applying
Proposition \ref{prop:approx_trunc} and property \eqref{eq:density}, 
the rate is not explicit (as the approximation is constructed by means of a Stone-Weierstrass
argument). 
Still, we can wonder whether it is possible to obtain  more explicit rates 
when $b$ and $g$ are  continuous. We explain in  Appendix \ref{app:B} that, 
 when $b$ and $g$ are indeed continuous (the space $\prob(\T^d)$ being equipped with the weak topology), there exists (at least, in \textit{reasonable} situations) a function 
$\vartheta : {\mathbb R}_+ \rightarrow {\mathbb R}_+$, with 
$\lim_{r \downarrow 0}
\vartheta(r)=0$, such that 
\begin{equation*}
\begin{split}
\vert g^n - g \vert_H^2 &\leq \vartheta(\varepsilon_n),
\\
\int_0^T \int_{\prob(\T^d)} \left[ \int_{\T^d}
\vert b_r(\mu,x) - 
b_r^n(\mu,x) \vert^2 
\de\mu(x) 
\right] \de {\mathcal D}(\mu) \de r &\leq \vartheta(\varepsilon_n),
\end{split}
\end{equation*}
with the behaviour of the function 
$\vartheta$
being dictated by the shape of a certain modulus of continuity of $b$ and $g$.
Inserting these two bounds in \eqref{eq:prop:3.6:main:bound}, we see that
the term 
$\mathcal{R(}b,b^n,\rmD u,T)$ therein remains non-explicit: this is due to the presence, inside the integral, of $\lvert \rmD u_r(\mu,x) \rvert^2$, which is just known to be integrable. Without  stronger integrability properties, the best bound we can obtain is: 
\begin{equation*}
\begin{split}
\mathcal{R(}b,b^n,\rmD u,T) \leq \inf_{a>0} \left[a^2 
\vartheta(\varepsilon_n) + 
4 \| b \|_\infty^2 
\int_0^T \int_{\prob(\T^d)} \int_{\T^d} 
{\mathbf 1}_{\{
\vert \rmD u_r(\mu,x) \vert>a 
\}
}
\lvert \rmD u_r(\mu,x) \rvert^2 \de \mu(x) \de \cD(\mu) \de r \right].
\end{split}
\end{equation*}
The  above 
inequality is not completely explicit; this explains why we refrain from providing more details on the construction of the function $\vartheta$.
Actually, the fact that we cannot get a better bound should not come as a surprise: since we are working with low regularity properties on the coefficients $b$ and $g$, the resulting rate of convergence 
remains rather poor and depends on the regularity of $u$ through the integrability properties of the gradient.
Obviously, if $\rmD u$  
were bounded, the rate would be directly given by $\vartheta(\eps_n)$, but, without any 
stronger regularization property of the semi-group of the Dirichlet-Ferguson diffusion, we cannot expect this bound to hold, except if we assume $b$ and $g$ to be regular in a 
sufficiently strong sense. Obviously, the latter assumption would not fit the framework of this article.

That said, the rate becomes explicit when $b=0$ and $g$ is continuous. It is
given by $\vartheta(\varepsilon_n)$ for the same function $\vartheta$ as in the last paragraph. As we explain in Appendix 
\ref{app:B}, the rate of growth of the function 
$\vartheta$ is algebraic if the modulus of continuity of $g$ (in total variation distance) is also algebraic. In this case, the rate of convergence  
can be bounded by 
$C\varepsilon_n^{\alpha}$, for a certain
constant $C>0$ and a certain
$\alpha >0$. Observing that $N(\varepsilon_n,\mbfs)$ is always less than $\lceil 1/\varepsilon_n \rceil$, this says that the rate of convergence decays, in this situation, algebraically fast with the number of particles that are used in the approximation. This observation is consistent with the rates that can be obtained by 
using the \textit{semi-group (or master equation)}
approach in standard mean field systems; see, for example, 
\cite{cardaliaguetdelaruelasrylions,carmonadelarue1}.
\end{remark}

\section{The semilinear case}\label{sec: control}
The aim of this section is to apply the tools developed earlier in a nonlinear framework inspired by mean-field control theory. In Subsection~\ref{ssec: hjb}, we establish a general existence and uniqueness result for a class of semilinear PDEs driven by the operator ${\boldsymbol \Delta}$. 
When the nonlinear term can be interpreted as a Hamiltonian, i.e., as the Legendre transform of a Lagrangian, we provide an interpretation of the solution to the nonlinear PDE as the value function of a control problem posed on a particle system of the form~\eqref{eqn: particles_drift}. The main result in this regard is Theorem~\ref{thm: verif}. In Subsection \ref{sec:apprcontr} we introduce an approximation of the infinite-dimensional control problem in the same spirit of Subsection \ref{ssec:finite_dim}.

In the whole section we let $T$ be the same finite time horizon as at the beginning of Section \ref{se:2}. 
As in Section~\ref{sec: massive}, we make repeated use of the material introduced in Section~\ref{se:2} and of the results established thus far.

\subsection{Well-posedness of a semilinear PDE}\label{ssec: hjb}

Given a measurable function $\HH\colon\T^d\times\R^d\to\R$, we define a 
new function $\cH : \prob(\T^d) \times L^2(\prob(\T^d)\times \T^d, \overline{\mathcal{D}}; \R^d) \to \R$ by letting
\begin{equation}\label{eqn: int_hamiltonian}
    \cH(\mu,\gamma):=\int_{\T^d}\HH(x,\gamma(\mu, x))\de\mu (x),
\end{equation}
whenever the integral makes sense. For coefficients $\mathscr{F}:[0,T] \times \prob(\T^d) \to \R$ and $\mathscr{G} : \prob(\T^d) \to \R$, we then aim at solving the following semilinear PDE:
\begin{equation}\label{eqn: HJB}
        \partial_t u_t + \boldsymbol{\Delta}u_t - \cH(\cdot, \rmD u_t) + \mathscr{F}_t = 0\quad \text{for a.e.~} t \in (0,T), \quad u_t  \bigr |_{t=T} = \mathscr{G}.  
    \end{equation}
We work under the following assumption.
\begin{assumption} \label{hp: hj} 
The function $\HH\colon\T^d\times\R^d\to\R$  is jointly measurable and satisfies the following two properties:
    \begin{enumerate}[a)]
        
        \item  it holds
        \[\int_{\prob(\T^d)}\int_{\T^d}\lvert \HH(x,0)\rvert^2 \de \mu(x)\de\mathcal{D}(\mu)<\infty.\]
        \item 
        The mapping $p \in {\mathbb R}^d\mapsto \HH(x,p)$ is Lipschitz continuous, uniformly with respect to $x\in\T^d$, i.e., there exists a constant $L>0$, such that, for 
        any $x \in \T^d$ and
        any $p,q\in\R^d$, 
        \begin{equation*}
                \lvert \HH(x, p) - \HH(x,q)\rvert \leq  L \lvert p  - q\rvert.
            \end{equation*}
    \end{enumerate}
\end{assumption}
\begin{remark}\label{rmk: H_integral}
We notice from  Assumption \ref{hp: hj}a) that, for
$\mathcal{D}$-a.e.~$\mu \in \prob(\T^d)$, 
\begin{equation}
\label{eq:HH(x,0)<infty}
\int_{{\mathbb T}^d}
\lvert \HH(x,0)\rvert^2 \de \mu(x) <   \infty.
\end{equation}
And then, Assumption \ref{hp: hj}b)
gives, for any $\mu$ in the Borel subset of 
$\prob(\T^d)$ on which the above inequality holds true, and any 
$\gamma \in L^2(\prob(\T^d)\times \T^d, \overline{\mathcal{D}}; \R^d)$,
\begin{equation*}
\left( 
\int_{\T^d}
\lvert \HH\left( x,\gamma(\mu,x)\right)\rvert^2 \de \mu(x)
\right)^{1/2}
\leq 
\left( 
\int_{\T^d}
\lvert \HH\left( x,0\right)\rvert^2 \de \mu(x)
\right)^{1/2}
+ 
L 
\left( 
\int_{\T^d}
\lvert \gamma(\mu,x) \rvert^2 \de \mu(x) \right)^{1/2},
\end{equation*}
so that the left-hand side is automatically finite if, in addition to 
\eqref{eq:HH(x,0)<infty}, 
it holds 
$\int_{\T^d}
\lvert \gamma(\mu,x) \rvert^2 \de \mu(x)<\infty$. 
Obviously, the latter is true 
for $\mathcal{D}$-a.e.~$\mu \in \prob(\T^d)$. 
We deduce that $\cH(\mu, \gamma)$ is well defined for $\mathcal{D}$-a.e.~$\mu \in \prob(\T^d)$ and 
induces a mapping
${\mathcal H}(\cdot,\gamma) : \mu \mapsto 
{\mathcal H}(\mu,\gamma)
$ that can be regarded as an element of $H$. Moreover, for any 
$\gamma^1,\gamma^2 \in L^2(\prob(\T^d)\times \T^d, \overline{\mathcal{D}}; \R^d)$, 
for any $\mu \in \prob(\T^d)$ such that 
\eqref{eq:HH(x,0)<infty} holds true and 
$\int_{\T^d}
( \lvert \gamma^1(\mu,x) \rvert^2 + \lvert \gamma^2(\mu,x) \rvert^2) \de \mu(x)$ is finite (and thus for 
${\mathcal D}$-a.e $\mu$), 
we get, from Assumption 
\ref{hp: hj}b), 
\begin{equation}
\label{eq:Lipschitz:difference:mathcalH}
\begin{split}
        \lvert \cH(\mu, \gamma^1) - \cH(\mu, \gamma^2)\rvert &\leq \int_{\T^d} \lvert \HH(x,\gamma^1(\mu, x)) - \HH(x,\gamma^2(\mu, x)) \rvert\de \mu(x)
        \\
        &\leq L \int_{\T^d}\lvert \gamma^1(\mu, x) - \gamma^2(\mu, x)\rvert\de\mu(x).
        \end{split}
    \end{equation}

\end{remark}
\begin{proposition}\label{prop:well_pos_HJ} Let Assumption \ref{hp: hj} be in force, $\mathscr{F} \in L^2([0,T]; H)$, and $\mathscr{G} \in H$. Then there exists a unique $u \in \rmC([0,T];H) \cap \mathrm{AC}_{loc}((0,T); H) \cap L^2([0,T]; H^{1,2})$ such that $u_t \in D(\boldsymbol{\Delta})$ for a.e.~$t \in (0,T)$ and 
    \begin{equation*}
        \partial_t u_t + \boldsymbol{\Delta}u_t - \cH(\cdot, \rmD u_t) + \mathscr{F}_t = 0\quad \text{for a.e.~} t \in (0,T), \quad u_t  \bigr |_{t=T} = \mathscr{G}.  
    \end{equation*}
\end{proposition}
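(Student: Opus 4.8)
The plan is to solve \eqref{eqn: HJB} by a fixed point argument, in exactly the same spirit as the proof of Theorem \ref{thm:pde}, but now using the linear transport-diffusion equation \eqref{eq:pdewithf} (or rather its source-term version \eqref{eqn:bkw_heta_forcing}) as the underlying linear solver. The point is that the nonlinearity $\gamma \mapsto \cH(\cdot,\gamma)$ is globally Lipschitz from $L^2(\prob(\T^d)\times\T^d,\overline{\cD};\R^d)$ to $H$ with constant $L$, as recorded in \eqref{eq:Lipschitz:difference:mathcalH} and Remark \ref{rmk: H_integral}: indeed, $|\cH(\mu,\gamma^1)-\cH(\mu,\gamma^2)| \le L\int_{\T^d}|\gamma^1(\mu,x)-\gamma^2(\mu,x)|\de\mu(x)$, so that, integrating the square against $\cD$ and using Jensen's inequality in the inner integral, $|\cH(\cdot,\gamma^1)-\cH(\cdot,\gamma^2)|_H \le L\|\gamma^1-\gamma^2\|$. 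Likewise Assumption \ref{hp: hj}a) together with the bound displayed in Remark \ref{rmk: H_integral} ensures $t\mapsto \cH(\cdot,\rmD v_t)$ belongs to $L^2([0,T];H)$ whenever $v\in L^2([0,T];H^{1,2})$, so the linear problem \eqref{eqn:bkw_heta_forcing} with source $f_t := -\cH(\cdot,\rmD v_t)+\mathscr F_t$ and datum $g:=\mathscr G$ is solvable by Proposition \ref{prop:pde}.

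Concretely, I would fix a horizon $\bar T \in (0,T]$ to be chosen, set $X_{\bar T}:=L^2([0,\bar T];H^{1,2})$, and (after the usual time reversal $u_t = \tilde u_{T-t}$ that turns \eqref{eqn: HJB} into a forward Cauchy problem) define a map $\Phi: X_{\bar T}\to X_{\bar T}$ sending $v$ to the unique solution $u^v$ of the forward linear equation with source $\cH(\cdot,\rmD v_{\cdot})$ reversed in time, given by Proposition \ref{prop:pde}; this solution indeed lies in $\rmC([0,\bar T];H)\cap \mathrm{AC}_{loc}((0,\bar T);H)\cap L^2([0,\bar T];H^{1,2})$ with $u^v_t\in D(\boldsymbol\Delta)$ for a.e.~$t$. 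For two inputs $v,\tilde v$, subtracting the two linear equations for $u:=\Phi(v)$ and $\tilde u:=\Phi(\tilde v)$, testing against $u-\tilde u$, and using the integration by parts formula \eqref{eq:ibp0} together with Young's inequality, I would obtain exactly the chain of estimates \eqref{eqn: young}--\eqref{eqn: l2_est_diff_2} from the proof of Theorem \ref{thm:pde}, with $\|b\|_\infty$ replaced by the Lipschitz constant $L$ and $\rmD v-\rmD\tilde v$ playing the role of the difference of drifts; the key inequality $\|\,\cH(\cdot,\rmD v_r)-\cH(\cdot,\rmD\tilde v_r)\,\|_H \le L\|\rmD v_r-\rmD\tilde v_r\|$ is the analogue of the boundedness of $b$ used there. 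This yields $\|\Phi(v)-\Phi(\tilde v)\|_{X_{\bar T}}^2 \le 4L^2(\bar T+\bar T^2)\|v-\tilde v\|_{X_{\bar T}}^2$, so choosing $\bar T := (4\sqrt2 L)^{-2}\wedge (T/2)$ makes $\Phi$ a strict contraction. Banach's fixed point theorem then gives a unique fixed point on $[0,\bar T]$, hence a unique solution of \eqref{eqn: HJB} on $[T-\bar T,T]$; since $\bar T$ depends only on $L$ and $T$, one patches the solutions together over successive intervals of length $\bar T$ to cover $[0,T]$, arguing for uniqueness interval by interval as at the end of the proof of Theorem \ref{thm:pde}. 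Finally, that the pieced-together function has all the claimed regularity ($\rmC([0,T];H)$, $\mathrm{AC}_{loc}((0,T);H)$, $L^2([0,T];H^{1,2})$, $u_t\in D(\boldsymbol\Delta)$ a.e.) follows from the corresponding properties on each subinterval provided by Proposition \ref{prop:pde} and the continuity of the matching at the interior nodes.

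The main obstacle, such as it is, is purely bookkeeping: one must make sure that the nonlinear source $t\mapsto \cH(\cdot,\rmD v_t)$ is a genuine element of $L^2([0,T];H)$ — i.e.~strongly measurable with square-integrable norm — so that Proposition \ref{prop:pde} applies. Square-integrability is immediate from the triangle-inequality bound in Remark \ref{rmk: H_integral} (the $\HH(\cdot,0)$ term is in $L^2$ by Assumption \ref{hp: hj}a) and is constant in $t$, and the $L\|\rmD v_t\|$ term is in $L^2([0,T])$ since $v\in L^2([0,T];H^{1,2})$); strong measurability follows from the fact that $\gamma\mapsto\cH(\cdot,\gamma)$ is ($L$-Lipschitz hence) continuous from $L^2(\prob(\T^d)\times\T^d,\overline{\cD};\R^d)$ to $H$, composed with the strongly measurable map $t\mapsto \rmD v_t$ (using the Borel representative of $\rmD v$ discussed after Proposition \ref{prop:pde}). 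Beyond this, the argument is a verbatim transcription of the proof of Theorem \ref{thm:pde}, with the bounded-drift term $\sclprd{b_t}{\rmD u_t}$ replaced by the Lipschitz nonlinearity $\cH(\cdot,\rmD u_t)$; in fact one could even absorb both at once, but since $\mathscr F$ already accounts for the inhomogeneity there is nothing further to do.
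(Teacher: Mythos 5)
Your proof is correct and follows the same contraction/fixed-point strategy as the paper, including the reduction via Proposition \ref{prop:pde}, the Lipschitz estimate for $\gamma\mapsto\cH(\cdot,\gamma)$, the choice of $\bar T$ depending only on $L$, and the patching over subintervals. The extra care you take with the strong measurability of $t\mapsto\cH(\cdot,\rmD v_t)$ (composing the $L$-Lipschitz map with the strongly measurable $t\mapsto\rmD v_t$) is a point the paper leaves implicit, and your argument handles it correctly.
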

\begin{proof}
   Thanks to the Lipschitz regularity of $\HH$, the proof proceeds along the same lines as in Theorem~\ref{thm:pde}, which notation we adopt here.
 We briefly outline the reasoning, emphasizing the key differences. As a preliminary remark, note that for $v \in X_{\bar{T}}$, the function 
 $t \mapsto {\mathcal H}(\cdot,D v_t)$ belongs to $X_{\bar{T}}$. Indeed, by Assumption \ref{hp: hj},
\begin{align*}
    \int_0^{\bar{T}}\int_{\prob(\T^d)} \lvert \cH(\mu,\rmD v_t)\rvert^2\de\mathcal{D}(\mu)\de t & = \int_0^{\bar{T}}\int_{\prob(\T^d)}\left \lvert \int_{\T^d}\HH(x,\rmD v_t(\mu,x))\de\mu(x)\right\rvert^2\de\mathcal{D}(\mu)\de t\\
    & \leq 2L^2\int_0^{\bar{T}}\int_{\prob(\T^d)}\int_{\T^d}\lvert\rmD v_t(\mu,x)\rvert^2\de\mu(x)\de\mathcal{D}(\mu)\de t\\
    &\quad + 2\int_0^{\bar{T}}\int_{\prob(\T^d)}\int_{\T^d}\lvert \HH(x,0)\rvert^2 \de \mu(x)\de\mathcal{D}(\mu)\de t<\infty.
\end{align*}
 
 The core of the argument is to show that, for a suitably small $\bar{T}>0$, for every $w_0 \in H$, 
 the operator  $\Phi_{\bar{T},w_0}\colon X_{\bar{T}}\to X_{\bar{T}}$ that maps every \( v\in X_{\bar{T}} \) to the unique solution of  
\begin{equation}\label{eqn: hjb_v}
    \partial_t u^v_t - \boldsymbol{\Delta}u^v_t  = -\cH(\cdot, \rmD v_t)+ \mathscr{F}_t\quad \text{for a.e.~} t \in (0,\bar{T}), \quad u^v_t  \bigr |_{t=0} = w_0\in H,
\end{equation}  
is a contraction. As in the proof of Theorem \ref{thm:pde}, we can focus on the forward-in-time equation and then recover the result for the backward-in-time version via a simple time-reversal argument. 
\\ 

    \textit{Claim 1}: 
    For any $\bar{T} \in (0,T)$, $w_0 \in H$ and $v \in X_{\bar{T}}$ there exists a unique $u^v \in X_{\bar{T}}$ solving \eqref{eqn: hjb_v}. This  defines a map $\Phi_{\bar{T},w_0}: X_{\bar{T}} \to X_{\bar{T}}$ associating to every $v \in X_{\bar{T}}$ the function $u^v$ as above.\\
\smallskip
\textit{Proof of claim 1:}  This follows directly from Proposition \ref{prop:pde}, with $f$ given by $t\mapsto f_t:=-\cH(\cdot,\rmD v_t)+ \mathscr{F}_t$, noting that the latter belongs to to $L^2([0,\bar{T}]; H)$ \\

\textit{Claim 2}: There exists $\bar{T} \in (0,T)$, only depending on the Lipschitz constant $L$ of $\HH$, such that, for any $w_0 \in H$, the map $\Phi_{\bar{T}, w_0}$ defined in claim 1 is a contraction on $X_{\bar{T}}$.\\
\textit{Proof of claim 2:} For $w_0 \in H$ and $v,\tilde{v}\in X_{\bar{T}}$, set $u := \Phi_{\bar{T}, w_0}(v)$, $\tilde{u} := \Phi_{\bar{T}, w_0}(\tilde{v})$. The argument used in the proof of Theorem \ref{thm:pde} can be repeated directly. The only new point concerns the treatment of the difference between the two terms driven by ${\mathcal H}$. Here, we handle the difference by means of 
\eqref{eq:Lipschitz:difference:mathcalH}:
    \begin{align*}
        &\int_0^t \int_{\prob(\T^d)}\lvert \cH(\mu,\rmD v_r) - \cH(\mu,\rmD \tilde{v}_r) \rvert^2\de\mathcal{D}(\mu)\de r \\ 
        &\qquad\leq L^2\int_0^t\int_{\prob(\T^d)}\int_{\T^d}\lvert\rmD v_r(\mu,x) - \rmD\tilde{v}_r(\mu,x)\rvert^2\de\mu(x)\de\mathcal{D}(\mu) \de r
        \leq L^2\norm{v-\tilde{v}}^2_{X_{\bar{T}}}.
    \end{align*}
    \\
    
    \textit{Conclusion}: The end of the proof is similar to that of Theorem~\ref{thm:pde}. 
The main point is to note that $\bar{T}$ depends only on the Lipschitz constant $L$ of ${\mathcal H}$ and, in particular, is independent of the initial condition $w_0 \in H$ in \eqref{eqn: hjb_v}. 
The argument of Claim~2 can then be iterated over successive subintervals of $[0,T]$ of length smaller than $\bar{T}$, as in the proof of Theorem~\ref{thm:pde}.
\end{proof}
\begin{remark}\label{rml: H_general}
The form of ${\mathcal H}$ imposed by relation~\eqref{eqn: int_hamiltonian}, with 
$\HH$ satisfying Assumption~\ref{hp: hj}, is motivated by the application to optimal control given below. 
Nevertheless, the result of Proposition~\ref{prop:well_pos_HJ} still holds when $\cH$ is a more general measurable function from 
$\prob_{o}(\T^d) \times L^2(\prob(\T^d)\times \T^d, \overline{\mathcal{D}}; \R^d)$
into $\R$ (recall that 
$\prob_o(\T^d) \times \T^d$
has full measure under 
$\overline{\mathcal{D}}$; 
therefore, restricting the first factor in the space
$\prob(\T^d) \times L^2(\prob(\T^d)\times \T^d, \overline{\mathcal{D}}; \R^d)$
to $\prob_o(\T^d)$ is consistent with our choice of equipping 
$\prob(\T^d)\times \T^d$ with the measure $\overline{\mathcal D}$)
satisfying the following two conditions:
\begin{enumerate}
    \item[$\text{a}^\star$)]it holds
      \[\int_{\prob(\T^d)}\lvert \cH(\mu,0)\rvert^2 \de\mathcal{D}(\mu)<\infty;\]
      \item[$\text{b}^\star$)] there exists a constant $L>0$ such that for any $\mu\in\prob_{o}(\T^d)$, $\gamma^1,\gamma^2\in L^2(\prob(\T^d)\times \T^d, \overline{\mathcal{D}} ; \R^d)$,
            \begin{equation*}
                \lvert \cH(\mu, \gamma^1) - \cH(\mu, \gamma^2)\rvert \leq L\int_{\T^d}\lvert \gamma^1(\mu, x) - \gamma^2(\mu, x)\rvert\de\mu(x).
            \end{equation*}
    \end{enumerate}
\end{remark}

\subsection{Infinite-dimensional control problem}\label{ssec:verification}

We now use the results of the previous subsection in order to characterize the value function of a stochastic control problem set over a controlled version of the massive particle system. 

For a compact and convex subset $A \subset \R^d$ such that $0\in A$, 
we define the 
space of admissible controls (in Markov feedback form), denoted by ${\mathcal A}$, as the 
space of sequences $\boldsymbol{\alpha} = (\alpha^i)_i$, 
where each $\alpha^i\colon[0,T]\times\prob(\T^d)\times\T^d\to A\subset\R^d$ is a measurable function.
Throughout, we work within the framework of Subsection \ref{ssec:transfer}. Given an initial time $t_0 \in [0,T)$ and an initial distribution $\cursm \in \prob(T_o^\infty \times (\T^d)_o^\infty)$, we denote by $\mathbf Q^{t_0, \cursm, \boldsymbol{\alpha}}$ the law of $((\mbfs, \mbfx), (X^i)_i, (W^i)_i)$ on the canonical space $\Xi$, where $({\Omega},{\cF},{\Q},\{{\cF}_t\}_{t\in[t_0,T]}, (({\mbfs},{\mbfx}), ({X}^i)_i, ({W}^i)_i))$ is a (unique in law) weak solution (according to the definition given in  Subsection \ref{ssec:well_posedness_partycles}) to the system
\begin{equation}\label{eqn: controlled_particles}
    \begin{cases}
        \de X^i_t & = \alpha^i_t(\mu^\infty_t,X_t^i)\de t + {\sqrt{\frac{2}{s_i}}}\de W^i_t,\quad t\in[t_0,T],\quad i\in\N_+,\\
        X^i_{t_0} & = \iota(x_i).
    \end{cases}
\end{equation}
Since the controls $\boldsymbol{\alpha} = (\alpha^i)_i$ are assumed to be uniformly bounded (as the set $A$ itself is bounded), 
the system \eqref{eqn: controlled_particles} can be regarded as a system of 
the same type as 
\eqref{eqn: particles_drift}, and the approach developed in 
Subsection \ref{ssec:well_posedness_partycles}
applies. In particular, 
Propositions \ref{prop: weak_existence} and
\ref{prop:weak:uniqueness} guarantee existence and uniqueness in law of weak solutions.
In this approach, the index $i$ should be viewed as the label of a rational agent; accordingly, the massive particle system  becomes, in this subsection, a countable system of agents (or players). 
The control (or feedback function) played by the agent $i$ is allowed to depend on $i$, on  the collective state of the system (here encoded through $\mu^\infty$), and on the private state of the agent (encoded through $X_t^i$). Whilst this form looks rather restrictive, it covers in fact a wide class of feedback functions. Indeed, in  contrast with standard mean-field models, which are symmetric, the current model is not symmetric: for instance, the first player is, by construction, the most massive one (i.e., $s_1$ is greater than all the other masses $(s_i)_{i \geq 2}$); more generally, the 
measure $\mu^\infty_t$ provides a complete description of the masses and the atom locations of the agents, in the sense that the mass and location of the agent number $i$ can be deduced from the observation of $\mu^\infty_t$. This follows from the fact that the masses are ordered and (almost-surely) pairwise distinct and the locations are also (almost-surely) pairwise distinct. Mathematically, we can retrieve the masses via the map 
$\mbfs$ and the atom locations via the map $\mbfx$ as in~\eqref{eq:phiinv}: observing $\mu_t^\infty$, we can observe both 
$\mbfs(\mu^\infty_t)$ and 
$\mbfx(\mu^\infty_t)$. As such, any (measurable) function of the masses and the atom locations can be rewritten as a (measurable) function of $\mu^\infty_t$. In particular, with the given form of feedback functions, we allow each agent to play a control depending on the state of the \textit{whole} system. By the way, notice that, in the notation 
$\alpha^i_{t}(\mu_t^\infty,X_t^i)$, the variable $X_t^i$ is somewhat redundant: as we just explained, 
$X_t^i$ can be observed from 
the observation of $\mu^\infty_t$. That said, we feel more consistent with the usual mean-field control theory to stick to the notation $\alpha^i_{t}(\mu_t^\infty,X_t^i)$.

\smallskip
We associate a cost with $ \mathbf Q^{t_0,m,\boldsymbol{\alpha}}$; to do so, we consider measurable coefficients 
$\LL\colon\T^d\times A\to\R$, $\mathscr{F}:[0,T] \times \prob(\T^d) \to \R$, and $\mathscr{G}\colon\prob(\T^d)\to\R$, with 
$\LL$ being understood as the Lagrangian of the control problem, and 
${\mathscr F}$ and ${\mathscr G}$ as (respectively) running and terminal interaction potentials. For 
an initial time $t_0\in[0,T)$, an initial distribution $\cursm \in \prob(T_o^\infty \times (\T^d)_o^\infty)$, and
an admissible control 
${\boldsymbol \alpha} \in {\mathcal A}$, the cost associated to $(t_0,\cursm,{\boldsymbol \alpha})$ is defined as
\begin{equation*}
J(t_0,\cursm,\boldsymbol{\alpha}):=  \mathbb{E}^{\mathbf{Q}^{t_0, \cursm, \boldsymbol{ \alpha}}}\left[\int_{t_0}^T\left(\mathscr{F}_r(\mu_r^\infty) +  \sum_{i=1}^{\infty} \varsigma_i \LL(\xi^i_r,\alpha^i_r(\mu^\infty_r,\xi^i_r) )\right) \de r+ \mathscr{G}(\mu^\infty_T)\right]. 
\end{equation*}
Here we are using the same convention as in Remark \ref{rem:toruspb} to evaluate $\mathscr L$ and $\alpha^i_r$ at $\xi_r^i$.
Conditions are given below under which the right-hand side is well-defined for any 
${\boldsymbol \alpha} \in {\mathcal A}$. Of course, the purpose is to minimize 
$J(t_0,\cursm,{\boldsymbol \alpha})$ with respect to ${\boldsymbol \alpha} \in \cA$.}
The corresponding value function is defined as
\begin{equation}\label{eqn: value}
    V(t_0,\cursm):= \inf_{\boldsymbol{\alpha}\in\cA} J(t_0,\cursm,\boldsymbol{\alpha}),\quad (t_0,\cursm)\in[0,T)\times \prob( T_o^\infty \times (\T^d)_o^\infty).
\end{equation}
In analogy with the shorthand notation $\mathbf Q^{t_0, (\mbfs, \mbfx)}$
introduced in \eqref{eq:Q:t0:s,x} as a substitute for the notation 
$\mathbf Q^{t_0, \delta_{(\mbfs, \mbfx)}}$, we write $V(t_0, (\mbfs, \mbfx))$ in place of $V(t_0, \delta_{(\mbfs, \mbfx)})$ when the particle system is initialized with the deterministic initial condition $(\mbfs, \mbfx)$.

We study the control problem under the following assumption.
\begin{assumption}\label{hp: control} The functions $\LL\colon\T^d\times A\to\R$, $\mathscr{F}:[0,T] \times \prob(\T^d) \to \R$, and $\mathscr{G}\colon\prob(\T^d)\to\R$ satisfy:
    \begin{enumerate}[a)]
        \item for every $x \in {\mathbb T}^d$, the function    
     $A \ni a \mapsto \LL(x,a)$ is strictly convex and lower semicontinuous;
        \item there exists $C \ge 0$ such that, for every $x \in \T^d$, 
        $|\inf_a \LL(x,a)| \le C$; 
        \item $\mathscr{F} \in L^2([0,T];H)$ and $\mathscr{G} \in H$.
    \end{enumerate}
\end{assumption}

In the following, we often assume that Assumption \ref{hp: control} is in force.

In order to connect the control problem introduced above with 
Subsection~\ref{ssec: hjb}, we associate to the Lagrangian $\LL$ the 
Hamiltonian
\begin{equation}\label{eq:hdef}
\HH(x,p) := \sup_{a\in A} \left\{ -\LL(x,a) - p\cdot a \right\}, 
\qquad (x,p)\in \T^d \times \R^d,
\end{equation}
which will play, in what follows, the same role as 
$\HH$ in 
Subsection~\ref{ssec: hjb}.

By construction, $\HH$ is convex \cite[Theorem 12.2]{Rockafellar70} and Lipschitz in $p$ uniformly with respect to $x$. Moreover,  
 $|\HH(x,0)| = | \inf_{a \in A} \LL(x,a) | \le C$, for any $x \in \T^d.$
In particular, $\HH$ satisfies Assumption \ref{hp: hj} with a Lipschitz constant $L$ depending only on  $A$. Moreover, since $\LL$ is strictly convex, then $\R^d\ni p\mapsto \HH(x,p)$ is everywhere differentiable for any $x\in\T^d$ \cite[Theorem 26.3]{Rockafellar70}, with derivative bounded again by a constant depending solely on $A$. By convexity and lower semicontinuity in $a$ of $\LL$, it holds $\LL(x,a) = \sup_{p\in\R^d}\{-\HH(x,p) - a\cdot p\}$ for any $(x,a)\in\T^d\times A$ \cite[Corollary 12.2.1]{Rockafellar70}. 

\begin{proposition}\label{prop:ito_general_HJB} Let Assumption \ref{hp: control} be in force and $t_0 \in [0,T)$ be an initial time. If $u$ is the unique solution of \eqref{eqn: HJB}, then there exists a Borel subset $O$ of $\cP(\T^d)$, with ${\mathcal D}(O)=1$, such that, 
for any $m \in O$ and any $\boldsymbol{\alpha} \in \mathcal{A}$
\begin{align*}
    \mathscr{G}(\mu^\infty_T) & =  u_{t}(\mu^\infty_{t})  
    - \int_{t}^T \mathscr{F}_r(\mu^\infty_r) \de r   
    + \int_{t}^T\int_{\T^d}\HH(x,\rmD u_r(\mu_r^\infty,x))\de\mu^\infty_r(x)\de r 
    \\
    & \quad + \sum_{i=1}^\infty \int_t^T \varsigma_i \rmD u_r(\mu_r^\infty, \xi_r^i) \cdot \alpha_r^i(\mu_r^\infty, \xi_r^i) \de r \\
    & \quad +\sum_{i=1}^{\infty}\int^T_{t} \sqrt{2 \varsigma_i}\rmD  u_r(\mu^\infty_r, \xi^i_r) \cdot \de \beta^i_r,\quad t\in[t_0,T],\quad{\mathbf Q}^{t_0,\emp^{-1}(m),\boldsymbol{\alpha}}\text{-a.s.}
\end{align*}
\end{proposition}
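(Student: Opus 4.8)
The plan is to derive this statement as a direct corollary of Proposition~\ref{prop:ito_general:new}, after identifying $u$ --- the solution of the semilinear equation~\eqref{eqn: HJB} --- with the solution of a \emph{linear} transport--diffusion equation having no drift and a suitable, $u$-dependent, source term. Recall first that the Hamiltonian $\HH$ built from $\LL$ via~\eqref{eq:hdef} satisfies Assumption~\ref{hp: hj}: by Assumption~\ref{hp: control}b) one has $|\HH(x,0)| = |\inf_{a\in A}\LL(x,a)| \le C$, so that $\HH(\cdot,0)\in H$, and $\HH$ is Lipschitz in $p$ uniformly in $x$ with constant $L$ depending only on $A$. Consequently, by Remark~\ref{rmk: H_integral}, for a.e.~$t\in[0,T]$ the map $\cH(\cdot,\rmD u_t)$ from~\eqref{eqn: int_hamiltonian} is a well-defined element of $H$, with $\|\cH(\cdot,\rmD u_t)\|_H \le \|\HH(\cdot,0)\|_H + L\,\|\rmD u_t\|$. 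Since $u\in L^2([0,T];H^{1,2})$ and $\mathscr{F}\in L^2([0,T];H)$ by Assumption~\ref{hp: control}c), this shows that
\[ f : t\mapsto f_t := \cH(\cdot,\rmD u_t) - \mathscr{F}_t \]
belongs to $L^2([0,T];H)$; strong measurability of $t\mapsto\cH(\cdot,\rmD u_t)$ as an $H$-valued map is obtained, exactly as in the proof of Proposition~\ref{prop:well_pos_HJ}, from a jointly Borel representative of $\rmD u$ (cf.~the remark after Proposition~\ref{prop:pde}) composed with the jointly measurable map $(\mu,x)\mapsto\HH(x,\rmD u_t(\mu,x))$ and integrated against $\mu$.

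Next I would observe that, rewriting~\eqref{eqn: HJB} as $\partial_t u_t + \boldsymbol{\Delta}u_t = f_t$ with terminal condition $u_T = \mathscr{G}$, the function $u$ is precisely \emph{the} unique solution of~\eqref{eq:pdewithf} with $b=0$, source term $f$, and final datum $\mathscr{G}\in H$, in the sense of Proposition~\ref{prop:pde} (uniqueness being part of that statement). Therefore Proposition~\ref{prop:ito_general:new} applies verbatim to this $u$: it produces a Borel subset $O\subset\cP(\T^d)$ with $\cD(O)=1$ (depending on $t_0$, as flagged in the remark after Theorem~\ref{thm:repr}) such that, for every $m\in O$ and every collection of uniformly bounded measurable feedbacks $(\alpha^i)_i$, the expansion of Proposition~\ref{prop:ito_general:new} holds $\mathbf{Q}^{t_0,\emp^{-1}(m),\boldsymbol{\alpha}}$-a.s., for all $t\in[t_0,T]$.

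It then remains to specialize to $\boldsymbol{\alpha}\in\mathcal{A}$ and to rewrite the source term. Since $A$ is compact, each $\alpha^i$ takes values in $A$ and is hence a uniformly bounded measurable function, so $(\alpha^i)_i$ is admissible in the sense required by Proposition~\ref{prop:ito_general:new}. Substituting $f_r = \cH(\cdot,\rmD u_r) - \mathscr{F}_r$ into the expansion and using
\[ \cH(\mu_r^\infty,\rmD u_r) = \int_{\T^d}\HH\big(x,\rmD u_r(\mu_r^\infty,x)\big)\de\mu_r^\infty(x), \]
which is legitimate for $\mathscr{L}^{[0,T]}\otimes\mathbf{Q}^{t_0,\emp^{-1}(m),\boldsymbol{\alpha}}$-almost every $(r,\cdot)$ thanks to the absolute continuity of the time-marginals of $\mu^\infty$ with respect to $\cD$ that is built into the proof of Proposition~\ref{prop:ito_general:new} (via~\eqref{eq:acbound} and Girsanov), one finds that the term $+\int_t^T f_r(\mu_r^\infty)\de r$ becomes $-\int_t^T\mathscr{F}_r(\mu_r^\infty)\de r + \int_t^T\int_{\T^d}\HH(x,\rmD u_r(\mu_r^\infty,x))\de\mu_r^\infty(x)\de r$, which is exactly the claimed identity.

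There is no genuine obstacle here beyond this bookkeeping: the only point deserving a line of care is the $H$-valued measurability and square-integrability of $t\mapsto\cH(\cdot,\rmD u_t)$, needed to place us within the scope of Proposition~\ref{prop:ito_general:new}, and this is the same verification already performed in the proof of Proposition~\ref{prop:well_pos_HJ}. Everything else is a transcription of Proposition~\ref{prop:ito_general:new} with the particular source term $f = \cH(\cdot,\rmD u) - \mathscr{F}$.
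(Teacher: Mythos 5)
Your proposal is correct and is essentially the same argument as the paper's one-line proof: the paper also derives the statement from Proposition~\ref{prop:ito_general:new} by taking $f_t := \cH(\cdot, \rmD u_t) - \mathscr{F}_t$ and $g := \mathscr{G}$, citing the proof of Proposition~\ref{prop:well_pos_HJ} for the fact that this $f$ lies in $L^2([0,T];H)$. You have simply spelled out the bookkeeping in more detail, including the (correct) observations about measurability and about the representative-dependence being harmless thanks to absolute continuity.
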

Note that in the above display we are using the convention explained in Remark \ref{rem:toruspb} to evaluate $\rmD u_r(\mu_r^\infty, \cdot)$ and $\alpha_r^i(\mu_r^\infty, \cdot)$ at $\xi_r^i$.
\begin{proof} The proof is a consequence of Proposition \ref{prop:ito_general:new},
    replacing $t \in [0,T] \mapsto f_t$
    by 
    $t \in [0,T] \mapsto 
    {\mathcal H}(\cdot, \rmD u_t) - {\mathscr F_t}$, which is indeed in $L^2([0,T]; H)$ (see the proof of Proposition \ref{prop:well_pos_HJ}).
    \end{proof}
We now have all the ingredients to state and prove a verification theorem for our mean-field control problem.

\begin{theorem}\label{thm: verif}  Let Assumption \ref{hp: control} be in force and $u$ be the unique solution of \eqref{eqn: HJB}; then  for every $t_0 \in [0,T)$ there exists a Borel subset $\tilde{O}_{t_0}$ of $\prob(\T^d)$, with $\cD(\tilde{O}_{t_0})=1$, such that
\begin{equation}\label{eq:verific_eq}
    u_{t_0}(m) = V(t_0, \emp^{-1}(m))\quad \text{ for every } m \in \tilde{O}_{t_0}.
\end{equation}
Moreover, if  $t_0 \in [0,T)$ and $m \in  \tilde{O}_{t_0}$, then the control 
    $\hat{\boldsymbol \alpha}=(\hat{\alpha}^i)_i$, with the 
    $\hat{\alpha}^i$'s being all equal to 
    the same mapping $\hat{\alpha} : [0,T] \times \prob(\T^d) \times \T^d \ni (t,\mu,x) \mapsto \hat{\alpha}_t(\mu,x) := -\nabla_p \HH (x, \rmD u_t(\mu,x))$, is optimal for $V(t_0, \emp^{-1}(m))$ and it is unique in the following sense:  if $\tilde{\boldsymbol{\alpha}} \in \mathcal{A}$ is also optimal for $V(t_0, \emp^{-1}(m))$, then,
for any $i \in {\mathbb N}_+$, for a.e. $r \in (t_0,T)$, 
\begin{equation*}
{\mathbf Q}^{t_0, \emp^{-1}(m), \tilde{\boldsymbol{\alpha}}}
\left( \left\{\tilde{\alpha}_r^i(\mu_r^\infty,\xi_r^i) = 
\hat{\alpha}_r(\mu_r^\infty,\xi_r^i)  \right\}
\right) = 1.
\end{equation*}    
\end{theorem}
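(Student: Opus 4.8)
The plan is to prove the verification theorem via the standard dynamic-programming comparison argument, adapted to the present infinite-dimensional, weak-solution setting. The core tool is the It\^o-type expansion of Proposition~\ref{prop:ito_general_HJB}, which holds on a $\cD$-full set $O$ (depending on $t_0$) and which is valid simultaneously for \emph{all} admissible controls $\boldsymbol{\alpha} \in \cA$ under the corresponding laws $\mathbf{Q}^{t_0,\emp^{-1}(m),\boldsymbol{\alpha}}$. The key algebraic fact is the Young-type inequality coming from the definition \eqref{eq:hdef} of $\HH$ as a Legendre transform: for every $x\in\T^d$, every $p\in\R^d$, and every $a\in A$,
\[
-\LL(x,a) - p\cdot a \le \HH(x,p),
\]
with equality if and only if $a = -\nabla_p\HH(x,p)$ (using strict convexity of $\LL$ in $a$, cf.~\cite[Theorem 26.3, Corollary 12.2.1]{Rockafellar70}). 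Applying this pointwise with $p = \rmD u_r(\mu_r^\infty,\xi_r^i)$ and $a = \alpha_r^i(\mu_r^\infty,\xi_r^i)$, weighting by $\varsigma_i$, summing over $i$, and integrating against $\mu_r^\infty$ (which amounts to $\sum_i \varsigma_i \,\delta_{\xi_r^i}$), we obtain
\[
-\int_{\T^d}\LL(x,\cdot)\de\mu_r^\infty - \sum_{i=1}^\infty \varsigma_i\, \rmD u_r(\mu_r^\infty,\xi_r^i)\cdot\alpha_r^i(\mu_r^\infty,\xi_r^i) \le \int_{\T^d}\HH(x,\rmD u_r(\mu_r^\infty,x))\de\mu_r^\infty(x).
\]

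First I would fix $t_0\in[0,T)$, take $O$ as in Proposition~\ref{prop:ito_general_HJB} applied with $\mathscr{F},\mathscr{G}$ and the Hamiltonian $\HH$ derived from $\LL$ (which satisfies Assumption~\ref{hp: hj} with constants depending only on $A$, as noted in the excerpt), fix $m\in O$, and fix an arbitrary $\boldsymbol{\alpha}\in\cA$. I would evaluate the expansion of Proposition~\ref{prop:ito_general_HJB} at $t=t_0$, take expectation under $\mathbf{Q}^{t_0,\emp^{-1}(m),\boldsymbol{\alpha}}$, and argue that the stochastic-integral term is a true martingale with zero expectation — this uses the same Girsanov/uniform-integrability estimate as in the proof of Theorem~\ref{thm:repr} (Cauchy--Schwarz against the $L^2$-bound on $\rmD u$ from Theorem~\ref{thm:pde} combined with the exponential moment bound on $1/z_T$, which holds because $A$, and hence every $\alpha^i$, is bounded). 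This yields
\[
u_{t_0}(m) = \mathbb{E}^{\mathbf{Q}^{t_0,\emp^{-1}(m),\boldsymbol{\alpha}}}\Big[\mathscr{G}(\mu_T^\infty) + \int_{t_0}^T\!\Big(\mathscr{F}_r(\mu_r^\infty) - \!\int_{\T^d}\!\HH(x,\rmD u_r(\mu_r^\infty,x))\de\mu_r^\infty(x) - \sum_i\varsigma_i\rmD u_r(\mu_r^\infty,\xi_r^i)\cdot\alpha_r^i(\mu_r^\infty,\xi_r^i)\Big)\de r\Big].
\]
Combining with the Young inequality above gives $u_{t_0}(m)\le J(t_0,\emp^{-1}(m),\boldsymbol{\alpha})$; taking the infimum over $\boldsymbol{\alpha}$ yields $u_{t_0}(m)\le V(t_0,\emp^{-1}(m))$.

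For the reverse inequality and optimality, I would plug in the candidate feedback $\hat\alpha_t(\mu,x) = -\nabla_p\HH(x,\rmD u_t(\mu,x))$, which is an admissible control: it is measurable in $(t,\mu,x)$ (composition of the Borel representative of $\rmD u$ with the continuous map $\nabla_p\HH$), bounded (by a constant depending only on $A$), and $A$-valued — for this last point I would invoke that $\nabla_p\HH(x,\cdot)$ lands in $-A$ when $A$ is compact convex with $0\in A$, via the Legendre duality $\hat a = -\nabla_p\HH(x,p) \Leftrightarrow p \in -\partial_a\LL(x,\hat a)$ and $\hat a\in A$ (alternatively, project onto $A$, but equality in Young's inequality already forces the argmax to lie in $A$). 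With $\boldsymbol{\alpha}=\hat{\boldsymbol\alpha}$, the Young inequality becomes an equality $\mu_r^\infty$-a.e., so $u_{t_0}(m) = J(t_0,\emp^{-1}(m),\hat{\boldsymbol\alpha}) \ge V(t_0,\emp^{-1}(m))$. This closes the chain and proves \eqref{eq:verific_eq}, with $\tilde O_{t_0}$ taken to be $O$ intersected with the $\cD$-full set on which $u_{t_0}(\cdot)$ is well-defined as a genuine function (a Borel representative). For uniqueness of the optimizer: if $\tilde{\boldsymbol\alpha}$ is also optimal, then running the same computation gives that the nonnegative quantity
\[
\mathbb{E}^{\mathbf{Q}^{t_0,\emp^{-1}(m),\tilde{\boldsymbol\alpha}}}\Big[\int_{t_0}^T\sum_i\varsigma_i\Big(\HH(\xi_r^i,\rmD u_r(\mu_r^\infty,\xi_r^i)) + \LL(\xi_r^i,\tilde\alpha_r^i(\mu_r^\infty,\xi_r^i)) + \rmD u_r(\mu_r^\infty,\xi_r^i)\cdot\tilde\alpha_r^i(\mu_r^\infty,\xi_r^i)\Big)\de r\Big]
\]
vanishes, hence the integrand is zero $\mathbf{Q}^{t_0,\emp^{-1}(m),\tilde{\boldsymbol\alpha}}\otimes\mathscr{L}^{[0,T]}$-a.e.; by strict convexity of $\LL$, equality in Young's inequality forces $\tilde\alpha_r^i(\mu_r^\infty,\xi_r^i) = -\nabla_p\HH(\xi_r^i,\rmD u_r(\mu_r^\infty,\xi_r^i)) = \hat\alpha_r(\mu_r^\infty,\xi_r^i)$ for a.e.~$r$ and each $i$, which is the claimed statement.

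The main obstacle I anticipate is \textbf{not} the comparison argument itself but the measurability/consistency bookkeeping: the It\^o expansion of Proposition~\ref{prop:ito_general_HJB} is asserted on a single $\cD$-full set $O$ valid for all $\boldsymbol{\alpha}\in\cA$ simultaneously, and one must make sure that the Borel representative of $\rmD u$ used to \emph{define} the feedback $\hat\alpha$ is the \emph{same} one appearing in that expansion — otherwise the equality case in Young's inequality could fail on a set that is null for $\overline{\cD}$ but not obviously null for $(\mu_r^\infty)_\sharp\mathbf{Q}^{t_0,\emp^{-1}(m),\hat{\boldsymbol\alpha}}$. This is handled by Lemma~\ref{lem:absolute:continuity} (absolute continuity of the time-marginals of $\mu^\infty$ under the drifted law w.r.t.~$\cD$), which guarantees that $\overline{\cD}$-a.e.\ statements about $\rmD u$ transfer to $\mathbf{Q}^{t_0,\emp^{-1}(m),\hat{\boldsymbol\alpha}}$-a.e.\ statements along the trajectories — but only for $m$ in a $\cD$-full set, and only because $\hat\alpha$ is bounded so that the Girsanov density is in every $L^p$. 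A secondary technical point is verifying the martingale property of the stochastic integral \emph{uniformly} enough; this is a verbatim repetition of the estimate in the first step of the proof of Theorem~\ref{thm:repr}, using boundedness of $A$, so I would simply cite it.
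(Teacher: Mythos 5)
Your proposal is correct and follows essentially the same route as the paper: apply the It\^o expansion of Proposition~\ref{prop:ito_general_HJB} on the $\cD$-full set $O$ (uniform over all controls), verify the martingale property of the stochastic integral by the Cauchy--Schwarz/Girsanov estimates from the proof of Theorem~\ref{thm:repr}, invoke the pointwise Legendre--Young inequality $\HH(x,p)+\LL(x,a)+p\cdot a\ge 0$ to get $u\le J$, plug in $\hat{\boldsymbol\alpha}$ for equality, and use strict convexity of $\LL(x,\cdot)$ (together with $\varsigma_i>0$) to force uniqueness of the optimizer. The only cosmetic difference is that the paper resolves the version-of-$\rmD u$ consistency you flag by building it directly into Proposition~\ref{prop:ito_general:new} rather than invoking Lemma~\ref{lem:absolute:continuity} separately, but this is bookkeeping and does not change the argument.
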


\begin{remark}
\label{rem:open-loop}
The following remarks are in order, regarding the shape of the optimal feedback function.
\begin{enumerate}[i)]
\item It is worth mentioning that the definition of the function 
$\hat{\alpha}$ depends in fact on the choice of a version of the mapping $(t,\mu,x) \mapsto
\rmD u_t(\mu,x)$, which is viewed as an element of $L^2([0,T] \times \prob(\T^d) \times \T^d,{\mathscr L}^{[0,T]} \otimes \overline{\mathcal D})$. 
Proposition 
\ref{prop:consistency:solutions}
says that the law of the particle system would 
remain the same (at least for 
${\mathcal D}$-almost every 
initial 
condition $m \in 
\prob(\T^d)$) if we used another version of this mapping. 

\item We also notice that the resulting form of the optimal feedback function $\hat{\alpha}$ is independent of 
the label of the player: at the optimum, all the agents play the same feedback function (but implemented at a different location, as it depends on the own state of the agent). This observation is consistent with the theory of mean-field control.

Consequently, the control problem \eqref{eqn: value} can be equivalently formulated by considering controls that are identical across particles. Therefore, 
    for any 
    $t_0 \in [0,T]$, and for 
    ${\mathcal D}$-a.e. $m \in {\mathcal P}({\mathbb T}^d)$, 
    \begin{equation*}
        V(t_0, \emp^{-1}(m))= \inf_{\boldsymbol{\alpha}\in\cA} J(t_0,\emp^{-1}(m),\boldsymbol{\alpha}) = \inf_{\boldsymbol{\alpha}\in\cA_u} J(t_0,\emp^{-1}(m),\boldsymbol{\alpha}),
    \end{equation*}where $\cA_u$ denotes  the set of controls $\boldsymbol{\alpha}= (\alpha^i)_i$ with the $\alpha^i$ being all equal to a single measurable function $\alpha\colon[0,T]\times\prob(\T^d)\times\T^d\to A$. Notice that,  for such an $\boldsymbol{\alpha}\in\cA_u$, the cost functional can be written as 
    \begin{equation*}
         J(t_0,\cursm,\alpha) =  \mathbb{E}^{\mathbf{Q}^{t_0, \cursm, \boldsymbol{ \alpha}}}\left[\int_{t_0}^T\left(\mathscr{F}_r(\mu_r^\infty) +  \int_{\T^d} \LL(x,\alpha_r(\mu^\infty_r,x) ) \de \mu_r^\infty(x) \right) \de r+ \mathscr{G}(\mu^\infty_T)\right].  
\end{equation*}

\item We strongly believe that  the proof of Theorem 
\ref{thm: verif}
that is achieved below 
    could be adapted, at least up to some extent, 
to accommodate open-loop controls. 
The result would be as follows. For an initial time $t_0 \in [0,T)$, a filtered probability space 
$(\Omega,{\mathcal F},{\mathbb P},\{{\mathcal F}_t\}_{t \in [0,T]})$,  equipped with a collection $(B^i)_i$
of independent $d$-dimensional $\{\cF_t\}_{t\in[t_0,T]}$-Brownian motions, 
for any collection $(\alpha^i)_i=((\alpha^i_r)_{r \in [t_0,T]})_i$ of $A$-valued $\{{\mathcal F}_t\}_{t \in [t_0,T]}$-progressively measurable processes,
for any
${\mathcal F}_{t_0}$-measurable 
random initial distributions $(\mbfs, \mbfx)$ with values in $T_o^\infty \times (\T^d)_o^\infty$ such that
\begin{equation*}
\mu_{t_0}^{\infty}= \sum_{i=1}^\infty s_i \delta_{x_i}
\sim {\mathcal D},
\end{equation*}
the conditional cost 
\begin{equation}
\mathbb{E}^{\mathbb P} \left[\int_{t_0}^T\left(\mathscr{F}_r\left(\mu_r^{\infty}
\right) +  \sum_{i=1}^{\infty} \varsigma_i \LL(X^i_r,\alpha^i_r )\right) \de r+ \mathscr{G}\left( \mu_T^{\infty}
\right) \, \vert \, {\mathcal F}_{t_0} \right]
\end{equation}
would be higher than $u_{t_0}(\mu_{t_0}^{\infty})$ with probability 1 under ${\mathbb P}$, 
where 
\begin{equation*}
\mu_{r}^{\infty}:= \sum_{i=1}^{\infty} s_i \delta_{X_{r}^i}, \quad r \in [t_0,T],
\end{equation*}
and 
\begin{equation*}
X_t^i = \iota(x_i) + 
\int_{t_0}^t \alpha_r^i \de r +
\sqrt{\frac{2}{s_i}}\left( B_t^i - B_{t_0}^i \right), \quad t \in [t_0,T],
\end{equation*}
with $\iota: \T^d \to \R^d$ the canonical projection and we are using the same convention as in \eqref{eq:project}.
Equivalently, our guess is that
\begin{equation}\label{eq:open:loop}
\underset{(\alpha^i)_i}{\rm essinf} \
\mathbb{E}^{\mathbb P} \left[\int_{t_0}^T\left(\mathscr{F}_r\left(\mu_r^{\infty}
\right) +  \sum_{i=1}^{\infty} \varsigma_i \LL(X^i_r,\alpha^i_r )\right) \de r+ \mathscr{G}\left( \mu_T^{\infty}
\right) \, \vert \, {\mathcal F}_{t_0} \right]
\geq u_{t_0}(\mu_{t_0}).
\end{equation}
Conditioning on ${\mathcal F}_{t_0}$
is here a way to fix the initial value 
$\mu_{t_0}^{\infty}$. As 
$\mu_{t_0}^{\infty}$ is 
assumed to follow the distribution ${\mathcal D}$, this is consistent with the fact that 
the result of 
Theorem 
\ref{thm: verif} is stated for $m$ in a   ${\mathcal D}$-full subset of ${\mathcal P}(\T^d)$. That said, Theorem 
\ref{thm: verif} is stronger. In comparison, it can be summarized in the following way: for 
${\mathcal D}$-a.e. $m \in {\mathcal P}(\T^d)$,
\begin{equation*}
\inf_{{\boldsymbol \alpha}} J(t_0,m,{\boldsymbol \alpha})\geq
 u_{t_0}(m),
 \end{equation*}
with equality when 
${\boldsymbol \alpha}=(\hat{\alpha})_i$; implicitly, the ${\mathcal D}$-full subset 
of ${\mathcal P}(\T^d)$ on which the inequality is true is independent of ${\boldsymbol \alpha}$, which explains why the essential infimum can be replaced by an infimum.

The proof of 
\eqref{eq:open:loop}
would rely on the same principle as the first step in the proof of 
Theorem 
\ref{thm: verif} below, this step itself being a consequence of the version of It\^o's formula stated in Proposition \ref{prop:ito_general_HJB}. In fact, one of the difficulties in the proof of It\^o's formula is to establish that the subset $O$ (on which the expansion holds true) is independent of ${\boldsymbol \alpha}$, but in order to derive 
\eqref{eq:open:loop}, 
a simpler version would be in fact enough: it would suffice to get the
It\^o expansion under ${\mathbb P}$, for a fixed ${\boldsymbol \alpha}$ (recalling that 
$\mu_{t_0}^{\infty} \sim_{\mathbb P} {\mathcal D}$), and then to take conditional expectation given ${\mathcal F}_{t_0}$. For this reason, the proof would be easier. To get the expansion for a fixed ${\boldsymbol \alpha}$, one could come back to the proof of Proposition \ref{prop:ito_general} and then expand  
$(u^n_t(\mu_t^{\infty}))_{t \in [t_0,T]}$ under the tilted probability measure
${\mathbb Q}$, given by 
\begin{equation*}
\frac{\de {\mathbb Q}}{\de {\mathbb P}}
:= \exp \left( 
- \sum_{i=1}^\infty 
\int_{t_0}^T \sqrt{\frac{s_i}2} \alpha_r^i \cdot \de B_r^i - \frac12 
\sum_{i=1}^\infty \int_{t_0}^T 
\frac{s_i}2 \vert \alpha_r^i \vert^2 \right). 
\end{equation*}
Details are left to the reader. 

We come back to the notion of open-loop controls in the framework of the finite-dimensional approximation addressed in Subsection \ref{subsubse:approximate:control}.

\item 
The case where the feedback functions are just required to depend on $t$ and $\mu_t^\infty$
is also very interesting
from a practical viewpoint. 
This 
corresponds to 
the framework used in the mean-field approach
of residual networks with a continuum of layers, see for instance \cite{EHanLi2019}.
While 
entering the details of this model would be out of the scope 
of this article, it is worth mentioning that the related HJB equation has the form 
\begin{equation*}
\begin{split}
&\partial_t u_t(\mu)  + \sup_{\nu}
 \left[ 
 \int_A \left[ 
 \int_{\R^d}
 b(x,a) \cdot  {\rm D} u_t(\mu,x,y) \de \mu(x,y)
 \right] 
 \de\nu(a)
 + {\mathscr K}(\nu)
 \right] = 0,
\quad  t \in (0,T),
\\
&u_t  \bigr |_{t=T}(\mu) = \mathscr{G}(\mu),
\end{split}
\end{equation*}
for $(t,\mu) \in [0,T] \times \prob(\R^d)$, where $b$ 
is a generic activation function 
taking as inputs the current state $x$ of an $n$-dimensional feature and the current state $a$ of a neuron (in a set $A$), ${\mathscr K}$ is a possible penalization function, and 
$\mathscr G$ is a  loss function, measuring the 
distance between the outputs of the network and the labels. 
The variable $\mu$ stands for the (initial) joint  distribution of the features (represented by the variable $x$) and the labels (represented by the variable $y$), so that the dimension $d$ is greater than $n$; the variable $\nu$ represents  
an instantaneous 
distribution of the neurons and is thus viewed as an element of $\prob(A)$. We refer to \cite{daudin2025genericitypolyaklojasiewiczinequalitiesentropic} for a complete description of this model. 

When the state space is the torus and a Laplacian is added, this equation is covered by Remark \ref{rml: H_general} (provided that the coefficients satisfy mild conditions). The fact that our results are stated without any convexity assumption is especially adapted to this situation.
\end{enumerate}

\end{remark}

\begin{proof}[Proof of Theorem 
\ref{thm: verif}.]
The proof is divided into two steps: the first establishes the upper bound 
in~\eqref{eq:verific_eq}, and the second establishes the lower bound.\\
    \smallskip
    \textit{First step: proving the upper bound} $ u_{t_0}(m) \leq V(t_0, \emp^{-1}(m))$.
    
    Back to the main expansion in the statement of Proposition \ref{prop:ito_general_HJB}, we first establish that the local martingale therein is in fact a true martingale (up to a possibly new choice of the set $O= O_{t_0}$ in the statement). To do so, we come back to the proof of Theorem 
    \ref{thm:repr}, and especially to  \eqref{eqn: unif_int}. 
    We note that the bound \eqref{eqn: bound_girsanov_exp} is, in the present context, independent of $\boldsymbol{\alpha}=(\alpha^i)_i$ since each 
    $\alpha^i$ is required to take values in the bounded set $A$. Moreover, the same bound holds not only when $\cursm=\emp_\sharp^{-1}(\mathcal{D})$ but also when $\cursm=\emp^{-1}(m)$, for any $m\in\mathcal{P}_o(\T^d)$. 
    Then,    we can repeat the computations carried out in the first step of the proof of Theorem \ref{thm:repr}, especially those achieved to handle \eqref{eqn: mg_est_1}
and \eqref{eqn: mg_est_2}, and  deduce that there exists a constant $C$ such that, for any ${\boldsymbol \alpha} \in {\mathcal A}$
and any 
$m \in {\mathcal P}_o({\mathbb T}^d)$, 
    \[ \mathbb{E}^{\mathbf Q^{t_0, \emp^{-1}(m), \boldsymbol{\alpha}}} \left [ \sup_{t \in [t_0, T]} \left |\sum_{i=1}^{\infty} \int_{t_0}^T \sqrt{2\varsigma_i} \rmD u_r(\mu_r^\infty, \xi_r^i) \cdot \de \beta_r^i \right |\right ] \le C \mathbb{E}^{\mathbf P^{t_0, \emp^{-1}(m)}} \left [ \sum_{i=1}^{\infty} \int_{t_0}^T \varsigma_i| \rmD u_r(\mu_r^\infty, \xi_r^i) |^2 \de r\right]. \]

We already know from \eqref{eq:thefirstbound} that the right-hand side above is finite when we substitute $\emp^{-1}(m)$ for $\emp_{\sharp}^{-1}(\mathcal{D})$.
In particular, we can find a subset 
$O'_{{t_0}}$, of full measure under ${\mathcal D}$, such that, for any 
${\boldsymbol \alpha} \in {\mathcal A}$
and any 
$m \in O'$, the left-hand side is finite. This shows in particular that,
for any 
${\boldsymbol \alpha} \in {\mathcal A}$ and 
any $m \in O'_{t_0}$, 
the process
\begin{equation*}
\left( \sum_{i=1}^{\infty} \int_{t_0}^t \sqrt{2\varsigma_i} \rmD u_r(\mu_r^\infty, \xi_r^i) \cdot \de \beta_r^i
\right)_{t \in [t_0,T]}
\end{equation*}
is ${\mathbf Q}^{t_0, \emp^{-1}(m), \boldsymbol{\alpha}}$-martingale.
This says that, for any 
${\boldsymbol \alpha} \in {\mathcal A}$
and any $m \in \tilde{O}_{t_0}:=O_{t_0} \cap O'_{t_0}$, 
the stochastic integral in the 
statement of Proposition 
\ref{prop:ito_general_HJB}
has zero expectation.
We obtain, for any 
${\boldsymbol \alpha} \in {\mathcal A}$ and any 
$m \in \tilde{O}_{t_0}$,
\begin{equation}
\label{eq:prop:5.6:upper:bound}
\begin{split}
&{\mathbb{E}}^{{\mathbf Q}^{t_0, \emp^{-1}(m), \boldsymbol{\alpha}}}
\left[ {\mathscr G}(\mu_T^\infty) 
+ \int_{t_0}^T 
\left( 
{\mathscr F}_r(\mu_r^\infty)
+
\sum_{i=1}^\infty 
\varsigma_i 
{\mathscr L}(\xi_r^i,\alpha_r^i(\mu_r^\infty,\xi_r^i)) \right) 
\de r
\right]
\\
&=u_{t_0}(m) 
+ 
{\mathbb{E}}^{{\mathbf Q}^{t_0, \emp^{-1}(m), \boldsymbol{\alpha}}}
\sum_{i=1}^\infty 
\int_{t_0}^T 
\varsigma_i  
\left[
 \HH(\xi_r^i,\rmD u_r(\mu_r^\infty,\xi_r^i)) \right.
 \\
 &\hspace{150pt} \left.
+
{\mathscr L}(\xi_r^i,\alpha_r^i(\mu_r^\infty,\xi_r^i))
+
\rmD u_r(\mu_r^\infty, \xi_r^i) \cdot \alpha_r^i(\mu_r^\infty, \xi_r^i) 
 \right] \de r.
\end{split}
\end{equation}
And then, recalling \eqref{eq:hdef}, we deduce that 
$J(t_0,m,{\boldsymbol \alpha}) \geq u_{t_0}(m)$. 
Since 
${\boldsymbol \alpha}$ is arbitrary, 
this implies 
$V(t_0,\emp^{-1}(m)) \geq u_{t_0}(m).$

We now study the case when
$J(t_0,m,{\boldsymbol \alpha}) \geq u_{t_0}(m)$
is in fact an equality, i.e., 
$J(t_0,m,{\boldsymbol \alpha}) = u_{t_0}(m)$. 
Recalling that 
$\emp^{-1}(m)$ belongs to  $T^\infty_o \times ({\mathbb T}^d)^\infty_o$ (since 
$m \in {\mathcal P}_o({\mathbb T}^d)$), we observe that all the weights $(\varsigma_i)_i$
are (strictly) positive. Back to 
\eqref{eq:prop:5.6:upper:bound}, this implies that, for any $i \in {\mathbb N}_+$, for a.e. 
$r \in [t_0,T]$, 
\begin{equation*}
{\mathbf Q}^{t_0, \emp^{-1}(m), \boldsymbol{\alpha}}
\left( \left\{
\HH(\xi_r^i,\rmD u_r(\mu_r^\infty,\xi_r^i)) 
+
{\mathscr L}(\xi_r^i,\alpha_r^i(\mu_r^\infty,\xi_r^i))
+
\rmD u_r(\mu_r^\infty, \xi_r^i) \cdot \alpha_r^i(\mu_r^\infty, \xi_r^i)
=0 \right\}
\right) = 1.
\end{equation*}
Since, for any $x \in {\mathbb T}^d$, ${\mathscr L}(x,\cdot)$ is strictly convex in $a \in A$, this implies, for any $i \in {\mathbb N}_+$, for a.e. $r \in [t_0,T]$, 
\begin{equation*}
{\mathbf Q}^{t_0, \emp^{-1}(m), \boldsymbol{\alpha}}
\left( \left\{\alpha_r^i(\mu_r^\infty,\xi_r^i) = 
\hat{\alpha}_r(\mu_r^\infty,\xi_r^i)  \right\}
\right) = 1.
\end{equation*}
This proves that 
there is at most one control
whose cost 
is equal to $u_{t_0}(m)$. 
\quad \\
\quad \\
\smallskip
    \textit{Second step: proving the lower bound 
    $u_{t_0}(m) \leq V(t_0,\emp^{-1}(m))$.}
    
 In order to complete the proof, it suffices to show that 
the control 
$\hat{\boldsymbol \alpha}:=(\hat{\alpha})_i$ (all the entries are the same), which obviously belongs to 
${\mathcal A}$, satisfies 
$u_{t_0}(m) = J(t_0,m,\hat{\boldsymbol \alpha})$, for 
$m$ in a full subset of ${\mathcal D}$. 
This is a consequence
of \eqref{eq:prop:5.6:upper:bound}, noticing that the right-hand side therein then reduces to $u_{t_0}(m).$
\end{proof}

\subsection{Finite-dimensional approximation}\label{sec:apprcontr}
\subsubsection{The approximate PDE}
In this subsection we construct a finite dimensional approximation of the PDE \eqref{eqn: HJB}. By the density in \eqref{eq:density} we can find sequences $(\eps_n)_n, (\mathscr{F}^n)_n, (\mathscr{G}^n)_n$ such that $\eps_n \in (0,1)$, $\mathscr{F}^n_t \in \hat{\mathfrak{Z}}^\infty_{\eps_n}$ for every $t \in [0,T]$, $\mathscr{G}^n \in \hat{\mathfrak{Z}}_{\eps_n}^\infty$ for every $n \in \N_+$, and $\mathscr{F}^n \to \mathscr{F}$ in $L^2([0,T]; H)$, $\mathscr{G}^n \to \mathscr{G}$ in $H$, and $\eps_n \downarrow 0$ as $n \to \infty$. Starting from $\HH$ as in \eqref{eq:hdef}, for every $n \in \N$, we define a function $\cH^n : \prob_o(\T^d) \times \R^d \to \R$ as
\[\cH^n(\mu,p) := 
\sum_{j=1}^{N(\eps_n, \mbfs)}s_j \HH(x_j,p), \quad \mu = \emp(\mbfs, \mbfx) \in\prob_o(\T^d), \, p \in \R^d,\]
where $N(\eps_n, \mbfs)$ is as in \eqref{eq:Neps:mbfs}.
With a small abuse of notation, we still denote by $\cH^n$ the map over $\prob_o(\T^d) \times L^2(\prob(\T^d)\times \T^d, \overline{\mathcal{D}}; \R^d)$ defined by 
\begin{equation}\label{eq:hn}
    \cH^n(\mu,\gamma):=\sum_{j=1}^{N(\eps_n, \mbfs)}s_j\HH(x_j,\gamma(\mu,x_j)), \quad \mu = \emp(\mbfs, \mbfx) \in\prob_o(\T^d), \, \gamma \in L^2(\prob(\T^d)\times \T^d, \overline{\mathcal{D}}; \R^d). 
\end{equation}
A first object we are interested in is the Hamilton-Jacobi equation associated with $\cH^n, \mathscr{F}^n$ and $\mathscr{G}^n$. 
\begin{proposition}\label{prop:existhn}  Let Assumption \ref{hp: control} be in force; for a given 
$n \in {\mathbb N}_+$, let $\cH^n$, $\mathscr{F}^n$, and $\mathscr{G}^n$ be as above. Then $\cH^n$ satisfies the conditions $\textrm{\rm a}^\star$\textrm{\rm )} and $\textrm{\rm b}^\star$\textrm{\rm )}
in Remark \ref{rml: H_general} with the same Lipschitz constant $L$ as the one of $\HH$ in Assumption \ref{hp: hj}. In particular, there exists a unique $u^n \in \rmC([0,T];H) \cap \mathrm{AC}_{loc}((0,T); H) \cap L^2([0,T]; H^{1,2})$ such that $u^n_t \in D(\boldsymbol{\Delta})$ for a.e.~$t \in (0,T)$ and
    \begin{equation}\label{eqn: HJB_n}
        \partial_t u^n_t + \boldsymbol{\Delta}u^n_t - \cH^n(\cdot, \rmD u^n_t) + \mathscr{F}^n_t = 0\quad \text{for a.e.~} t \in (0,T), \quad u_t^n  \bigr |_{t=T} = \mathscr{G}^n.  
    \end{equation}
\end{proposition}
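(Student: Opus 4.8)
The plan is to verify that $\cH^n$, as defined in \eqref{eq:hn}, satisfies conditions $\textrm{a}^\star$\textrm{)} and $\textrm{b}^\star$\textrm{)} of Remark \ref{rml: H_general}, and then to invoke Proposition \ref{prop:well_pos_HJ} (in the generalized form described in that remark) together with the assumption $\mathscr{F}^n \in L^2([0,T];H)$, $\mathscr{G}^n \in H$ to conclude existence and uniqueness. First I would check $\textrm{a}^\star$\textrm{)}: for $\mu = \emp(\mbfs,\mbfx) \in \prob_o(\T^d)$ we have $\cH^n(\mu,0) = \sum_{j=1}^{N(\eps_n,\mbfs)} s_j \HH(x_j,0)$, so by Jensen's inequality (using $\sum_{j=1}^{N(\eps_n,\mbfs)} s_j \le 1$) and then the Mecke-type identity or simply Definition \ref{def:dfm}, $|\cH^n(\mu,0)|^2 \le \sum_{j=1}^{N(\eps_n,\mbfs)} s_j |\HH(x_j,0)|^2 \le \int_{\T^d} |\HH(x,0)|^2 \de\mu(x)$; integrating against $\cD$ and using Assumption \ref{hp: hj}a) (which holds for $\HH$ as in \eqref{eq:hdef}, since $|\HH(x,0)| = |\inf_a \LL(x,a)| \le C$ by Assumption \ref{hp: control}b)) gives $\int_{\prob(\T^d)} |\cH^n(\mu,0)|^2 \de\cD(\mu) < \infty$.

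Next I would check $\textrm{b}^\star$\textrm{)}. For $\mu = \emp(\mbfs,\mbfx) \in \prob_o(\T^d)$ and $\gamma^1,\gamma^2 \in L^2(\prob(\T^d)\times\T^d,\overline{\cD};\R^d)$, using the uniform Lipschitz property of $\HH(x,\cdot)$ from Assumption \ref{hp: hj}b),
\begin{equation*}
|\cH^n(\mu,\gamma^1) - \cH^n(\mu,\gamma^2)| \le \sum_{j=1}^{N(\eps_n,\mbfs)} s_j |\HH(x_j,\gamma^1(\mu,x_j)) - \HH(x_j,\gamma^2(\mu,x_j))| \le L \sum_{j=1}^{N(\eps_n,\mbfs)} s_j |\gamma^1(\mu,x_j) - \gamma^2(\mu,x_j)|,
\end{equation*}
and since the atoms $x_1,\dots,x_{N(\eps_n,\mbfs)}$ carry masses $s_1,\dots,s_{N(\eps_n,\mbfs)}$ under $\mu$, the last sum is bounded by $L \int_{\T^d} |\gamma^1(\mu,x) - \gamma^2(\mu,x)| \de\mu(x)$, which is exactly condition $\textrm{b}^\star$\textrm{)} with the same constant $L$. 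One should also note in passing the measurability of $(\mu,\gamma) \mapsto \cH^n(\mu,\gamma)$ on $\prob_o(\T^d) \times L^2$, which follows from the Borel measurability of $\emp^{-1}$ and of $\mbfs \mapsto N(\eps_n,\mbfs)$ (see \eqref{eq:Neps:mbfs}), together with the joint measurability of $\HH$.

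With $\textrm{a}^\star$\textrm{)} and $\textrm{b}^\star$\textrm{)} verified and $\mathscr{F}^n \in L^2([0,T];H)$, $\mathscr{G}^n \in H$, Proposition \ref{prop:well_pos_HJ} (in the general form of Remark \ref{rml: H_general}) yields directly the existence and uniqueness of $u^n \in \rmC([0,T];H) \cap \mathrm{AC}_{loc}((0,T);H) \cap L^2([0,T];H^{1,2})$ with $u^n_t \in D(\boldsymbol{\Delta})$ a.e.\ solving \eqref{eqn: HJB_n}. I do not anticipate any genuine obstacle here: the proof is essentially bookkeeping, the only mildly delicate points being (i) making sure the restriction of the first argument of $\cH^n$ to $\prob_o(\T^d)$ is harmless — which is precisely the content of the parenthetical remark in Remark \ref{rml: H_general}, since $\prob_o(\T^d) \times \T^d$ is $\overline{\cD}$-full — and (ii) the Jensen step in $\textrm{a}^\star$\textrm{)}, where one must be slightly careful that $\sum_{j \le N(\eps_n,\mbfs)} s_j$ may be strictly less than $1$, which only helps the inequality.
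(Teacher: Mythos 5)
Your proposal is correct and follows essentially the same route as the paper's proof: verify conditions $\textrm{a}^\star)$ and $\textrm{b}^\star)$ for $\cH^n$ with the same constant $L$, then invoke Proposition \ref{prop:well_pos_HJ} via Remark \ref{rml: H_general}. The only cosmetic difference is in the $\textrm{a}^\star)$ estimate, where you apply Jensen directly with the sub-probability weights $(s_j)_{j\le N(\eps_n,\mbfs)}$, whereas the paper first dominates the truncated sum by the full integral $\int_{\T^d}|\HH(x,0)|\de\mu(x)$ and then squares; both are valid and immediately yield the required bound from Assumption \ref{hp: hj}a).
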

\begin{proof} 
As far 
    as
    $\text{a}^\star$)  is concerned, it follows from the following straightforward inequalities
    \begin{align*}
        \int_{\prob(\T^d)}\lvert \cH^n(\mu,0)\rvert^2 \de\mathcal{D}(\mu) &\leq \int_{\prob(\T^d)}\Bigg(\sum_{j=1}^{N(\eps_n, \mbfs(\mu))}s_j(\mu)\lvert\HH(x_j(\mu), 0)\rvert\Bigg)^2 \de \mathcal{D}(\mu)\\
        &\leq\int_{\prob(\T^d)}\Bigg(\int_{\T^d}\lvert \HH(x,0)\rvert \de\mu(x)\Bigg)^2\de\mathcal{D}(\mu)<\infty,
    \end{align*}
    where we have denoted by $(\mbfs(\mu), \mbfx(\mu))$ the inverse of $\emp$ applied to $\mu$, see \eqref{eq:phiinv}, and used that $\HH$ satisfies Assumption \ref{hp: hj} a).
    
    We now check  $\text{b}^\star$).
    For any $\mu = \emp(\mbfs, \mbfx)\in\prob_o(\T^d)$, $\gamma^1,\gamma^2\in L^2(\prob(\T^d)\times \T^d, \overline{\mathcal{D}}; \R^d)$, we have, 
    from the Lipschitz property of $\HH$ in Assumption \ref{hp: hj} b),
    \begin{align}
    \label{eq:regularity:Hn:C}
         \lvert \cH^n(\mu, \gamma^1) - \cH^n(\mu, \gamma^2)\rvert & \leq \sum_{j=1}^{N(\eps_n, \mbfs)}s_j\lvert \HH(x_j,\gamma^1(\mu,x_j)) - \HH(x_j,\gamma^2(\mu,x_j) )\rvert \\& \leq L\int_{\T^d}\lvert \gamma^1(\mu, x) - \gamma^2(\mu, x)\rvert \de\mu(x),
    \end{align}
    and so $\text{b}^\star$) holds. Since $\mathscr{F}^n \in L^2([0,T]; H)$ and $\mathscr{G}^n\in H$,
   the existence of a solution to \eqref{eqn: HJB_n}
    (as claimed in the
    statement) follows directly from Proposition \ref{prop:well_pos_HJ} and Remark \ref{rml: H_general}.
\end{proof}

The following stability result shows that the solutions to the PDE \eqref{eqn: HJB_n} converge to the one of the PDE \eqref{eqn: HJB}.

\begin{proposition}
\label{prop:convergence:nonlinear}Let Assumption \ref{hp: control} be in force, $(\cH^n)_n$, $(\mathscr{F}^n)_n$, $(\mathscr{G}^n)_n$ be as above, and $u^n$ be the solution of \eqref{eqn: HJB_n} for each $n \in {\mathbb N}_+$. Then,
for $u$ the solution of \eqref{eqn: HJB}, it holds
\begin{equation}\label{eq:to0}
\begin{split}
\sup_{t \in [0,T]} |u^n_t-u_t|_H^2 + &\int_0^T\|\rmD u_t^n - \rmD u_t \|^2 \de t\\
 &\leq 
C(L,T)  \left ( 
 |\mathscr{G}^n-\mathscr{G}|^2_H  +  \int_0^T|\mathscr{F}^n_t-\mathscr{F}_t|^2_H \de t
+ \mathcal{R}^n(\mathscr H, \rmD u, T) \right ) \to 0 \text{ as } n \to \infty,
    \end{split}
    \end{equation}
    where 
    \[
   \mathcal{R}^n(\mathscr H, \rmD u, T):= \int_0^T \int_{\prob(\T^d)}
   \sum_{i\geq N(\eps_n, \mbfs(\mu)) + 1}^{\infty} s_i(\mu) |\HH(x_i(\mu), \rmD u_r(x_i(\mu), \mu))|^2 \de \cD(\mu) \de r,
    \]
and $C(L,T):= (1+L^{-2})(1+ \rme^{T(1+2L^2)} + (1+2L^2)T\rme^{T(1+2L^2)})$.
In particular, the left-hand side of \eqref{eq:to0} tends to $0$ as $n$ tends to $\infty$.
\end{proposition}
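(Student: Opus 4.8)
The plan is to mimic the energy-estimate argument in the proof of Proposition~\ref{prop_stab_new}, adapting it to the semilinear term. I would set $w := u - u^n$; subtracting \eqref{eqn: HJB_n} from \eqref{eqn: HJB}, the function $w$ solves $\partial_t w_t + \boldsymbol{\Delta}w_t = \bigl(\cH(\cdot,\rmD u_t) - \cH^n(\cdot,\rmD u^n_t)\bigr) - (\mathscr{F}_t - \mathscr{F}^n_t)$ with $w_T = \mathscr{G} - \mathscr{G}^n$. Since both $u$ and $u^n$ lie in $\rmC([0,T];H)\cap\mathrm{AC}_{loc}((0,T);H)\cap L^2([0,T];H^{1,2})$ with $u_t,u^n_t\in D(\boldsymbol{\Delta})$ for a.e.\ $t$ (Propositions~\ref{prop:well_pos_HJ} and \ref{prop:existhn}), I may test this equation against $w$ on $[t,T]\times\prob(\T^d)$ and integrate the Laplacian term by parts via \eqref{eq:ibp0}, just as in Proposition~\ref{prop_stab_new}, to get
\[ \tfrac12 |w_t|_H^2 + \int_t^T \|\rmD w_r\|^2 \de r = \tfrac12 |\mathscr{G}-\mathscr{G}^n|_H^2 - \int_t^T \bigl( \cH(\cdot,\rmD u_r) - \cH^n(\cdot,\rmD u^n_r), w_r \bigr)_H \de r + \int_t^T (\mathscr{F}_r - \mathscr{F}^n_r, w_r)_H \de r. \]

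The only genuinely new ingredient is the handling of the Hamiltonian term, which I would split as $\cH(\cdot,\rmD u_r) - \cH^n(\cdot,\rmD u^n_r) = \bigl[\cH(\cdot,\rmD u_r) - \cH^n(\cdot,\rmD u_r)\bigr] + \bigl[\cH^n(\cdot,\rmD u_r) - \cH^n(\cdot,\rmD u^n_r)\bigr]$. The second bracket is controlled by the Lipschitz estimate $\mathrm{b}^{\star})$ of $\cH^n$ from Proposition~\ref{prop:existhn}, namely $|\cH^n(\mu,\rmD u_r) - \cH^n(\mu,\rmD u^n_r)| \le L\int_{\T^d}|\rmD w_r(\mu,x)|\de\mu(x)$; using Cauchy--Schwarz in $x$ under $\mu$, then in $\mu$ under $\cD$, then Young's inequality, this bracket contributes at most $\tfrac12\|\rmD w_r\|^2 + \tfrac{L^2}{2}|w_r|_H^2$. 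For the first (consistency) bracket I would use that $\cD$ concentrates on $\prob_o(\T^d)$ and that $\mu = \emp(\mbfs(\mu),\mbfx(\mu)) = \sum_{i\ge1}s_i(\mu)\delta_{x_i(\mu)}$ for $\cD$-a.e.\ $\mu$, so that comparing \eqref{eqn: int_hamiltonian} with \eqref{eq:hn} (with $\rmD u$ understood via its Borel representative, cf.\ the remark after Proposition~\ref{prop:pde}) yields
\[ \cH(\mu,\rmD u_r) - \cH^n(\mu,\rmD u_r) = \sum_{i\ge N(\eps_n,\mbfs(\mu))+1}^{\infty} s_i(\mu)\, \HH\bigl( x_i(\mu), \rmD u_r(\mu,x_i(\mu)) \bigr). \]
Since $\sum_{i\ge N(\eps_n,\mbfs)+1} s_i \le 1$, Jensen's inequality gives $|\cH(\mu,\rmD u_r) - \cH^n(\mu,\rmD u_r)|^2 \le \sum_{i\ge N(\eps_n,\mbfs)+1} s_i |\HH(x_i,\rmD u_r(\mu,x_i))|^2$, so that after a Young inequality the time integral of this bracket contributes $\tfrac12 \mathcal{R}^n(\mathscr{H},\rmD u,T) + \tfrac12 \int_t^T |w_r|_H^2 \de r$.

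Collecting these estimates together with a Young bound on the $\mathscr{F}$-term, absorbing the $\tfrac12\|\rmD w_r\|^2$-term into the left-hand side and rearranging, I would obtain an inequality of the form $|w_t|_H^2 + \int_t^T \|\rmD w_r\|^2 \de r \le \kappa \int_t^T |w_r|_H^2 \de r + |\mathscr{G}-\mathscr{G}^n|_H^2 + \int_0^T |\mathscr{F}_r-\mathscr{F}^n_r|_H^2 \de r + \mathcal{R}^n(\mathscr{H},\rmD u,T)$ for some $\kappa$ of order $1+L^2$; Grönwall's lemma (first for $t\mapsto|w_t|_H^2$, then reinserted to bound $\int_0^T\|\rmD w_r\|^2\de r$) then gives \eqref{eq:to0} with a constant of the stated form $C(L,T)$. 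It remains to show $\mathcal{R}^n(\mathscr{H},\rmD u,T)\to0$. By Remark~\ref{rmk: H_integral} and the $L$-Lipschitz property of $\HH$, the integrand of $\mathcal{R}^n$ is dominated, uniformly in $n$, by $(r,\mu)\mapsto 2\int_{\T^d}|\HH(x,0)|^2\de\mu(x) + 2L^2\int_{\T^d}|\rmD u_r(\mu,x)|^2\de\mu(x)$, which belongs to $L^1([0,T]\times\prob(\T^d),\mathscr{L}^{[0,T]}\otimes\cD)$ by Assumption~\ref{hp: hj}a) and $u\in L^2([0,T];H^{1,2})$. Moreover, since $\eps_n\downarrow0$ and each entry of $\mbfs(\mu)\in T^\infty_o$ is strictly positive, $N(\eps_n,\mbfs(\mu))\to\infty$ for $\cD$-a.e.\ $\mu$, so the tail $\sum_{i\ge N(\eps_n,\mbfs(\mu))+1}^{\infty} s_i(\mu)|\HH(x_i(\mu),\rmD u_r(\mu,x_i(\mu)))|^2$ converges to $0$ pointwise; dominated convergence concludes.

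I expect the main obstacle to be the two nonlinearity-specific points above: correctly identifying the consistency error $\cH-\cH^n$ with the tail series defining $\mathcal{R}^n$ (and using Jensen to remove the square from the sum), and producing the integrable dominating function needed to pass $\mathcal{R}^n\to0$ by dominated convergence. The energy estimate and the Grönwall conclusion are essentially a rerun of the proof of Proposition~\ref{prop_stab_new}.
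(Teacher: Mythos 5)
Your proof is correct and follows essentially the same route as the paper: split the nonlinearity as $[\cH-\cH^n](\cdot,\rmD u) + [\cH^n(\cdot,\rmD u)-\cH^n(\cdot,\rmD u^n)]$, absorb the Lipschitz piece into the gradient term on the left, identify the consistency piece with the tail series via Jensen's inequality to produce $\mathcal{R}^n$, Gr\"onwall, and finish by dominated convergence using the same $L^1$ dominant. The only difference is the order in which you apply Young's inequality (term by term on the pairing, rather than on the full pairing followed by a square splitting as in the paper), which leads to a constant of the same nature in $L$ and $T$ but not the exact one displayed; this is immaterial.
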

\begin{proof}
We use the same stability arguments as in the proof of Proposition \ref{prop_stab_new}. Writing the equation solved by $u-u^n$, testing it against $u-u^n$, integrating 
with respect to time on the interval $[t,T]$ for any $t \in [0,T]$, and integrating by parts the term driven by $\boldsymbol{\Delta}$ using \eqref{eq:ibp0}, we get
\begin{align}
    &\frac{1}{2} |u_t^n-u_t|_H^2 + \int_t^T \| \rmD u_r^n - \rmD u_r \|^2 \de r 
    \\
    &=  \frac{1}{2} |\mathscr{G}^n-\mathscr{G}|^2_H + \int_t^T ( \mathcal{H}(\cdot, \rmD u_r) - \mathcal{H}^n(\cdot, \rmD u^n_r) , u_r^n-u_r)_H  \de r  + \int_t^T (\mathscr{F}^n_r-\mathscr{F}_r, u_r^n-u_r)_H \de r \nonumber
    \\
    &  \le \frac{1}{2} |\mathscr{G}^n-\mathscr{G}|^2_H + \frac{1}{2} \int_t^T |u_r^n - u_r|^2_H \de r + L^2 \int_t^T |u_r^n-u_r|^2_H \de r + \frac{1}{2} \int_0^T |\mathscr{F}^n_r-\mathscr{F}_r|^2_H \de r\nonumber
    \\
    & \quad + \frac{1}{4L^2} \int_t^T  \left | \mathcal{H}(\cdot, \rmD u_r) - \mathcal{H}^n(\cdot, \rmD u^n_r) \right |^2_H \de r \nonumber
    \\
    & \le \frac{1}{2} |\mathscr{G}^n-\mathscr{G}|^2_H + (1/2+L^2) \int_t^T |u_r^n - u_r|^2_H \de r + \frac{1}{2} \int_0^T |\mathscr{F}^n_r-\mathscr{F}_r|^2_H \de r 
    \label{eq:proof:Proposition5.11:term1}
    \\
    & \quad + \frac{1}{2L^2} \int_t^T \left | \mathcal{H}(\cdot, \rmD u_r) - \mathcal{H}^n(\cdot, \rmD u_r) \right |^2_H\de r \label{eq:proof:Proposition5.11:term2}
    \\
    & \quad + \frac{1}{2L^2} \int_t^T  \left | \mathcal{H}^n(\cdot, \rmD u_r) - \mathcal{H}^n(\cdot, \rmD u^n_r) \right |^2_H \de r,
    \label{eq:proof:Proposition5.11:term3}
\end{align}
where we have used Young's inequality  and we recall that $L$ is the Lipschitz constant (depending solely on the set $A$) appearing in Assumption \ref{hp: hj} b) for $\HH$ as well as in condition $\text{b}^\star$) in Remark \ref{rml: H_general} for $\cH^n$, see the statement of Proposition \ref{prop:existhn}.

Next, we explain how to handle the last two terms 
\eqref{eq:proof:Proposition5.11:term2} and 
\eqref{eq:proof:Proposition5.11:term3}.
For \eqref{eq:proof:Proposition5.11:term3}, we get, using once again $\text{b}^\star$) in Remark \ref{rml: H_general} for $\cH^n$,
\begin{align}
    \frac{1}{2L^2} \int_t^T \left | \cH^n(\cdot, \rmD u_r) - \cH^n(\cdot, \rmD u^n_r) \right |^2_H \de r &=
    \frac{1}{2L^2} \int_t^T \int_{\prob(\T^d)} \left | \mathcal{H}^n(\mu, \rmD u_r) - \mathcal{H}^n(\mu, \rmD u^n_r) \right |^2 \de \mathcal{D}(\mu) \de r
    \nonumber
    \\
    & \le\frac{1}{2L^2} \int_t^T \int_{\prob(\T^d)} \left [ L\int_{\T^d} |\rmD u_r(x,\mu)-\rmD u_r^n(x,\mu) | \de \mu(x) \right ]^2 \de \mathcal{D}(\mu) \de r
    \nonumber
    \\
    & \le \frac{1}{2} \int_t^T \| \rmD u_r - \rmD u_r^n \|^2 \de r.
\label{eq:proof:Proposition5.11:term4}
\end{align}
For \eqref{eq:proof:Proposition5.11:term2}, we observe that we can write
\begin{equation}\label{eq:523}
    \begin{split}
    \frac{1}{2L^2} \int_t^T  |\mathcal{H}(\mu, \rmD u_r) &- \mathcal{H}^n(\mu, \rmD u_r)  |^2_H \de r  \\
    & = \frac{1}{2L^2} \int_t^T \int_{\prob(\T^d)} \left | \sum_{i=N(\eps_n, \mbfs(\mu))+1}^{\infty} s_i(\mu) \HH(x_i(\mu), \rmD u_r(x_i(\mu), \mu)) \right |^2 \de \mathcal{D}(\mu) \de r \\
    &  \le \frac{1}{2L^2}  \int_t^T \int_{\prob(\T^d)}\sum_{i= N(\eps_n, \mbfs(\mu)) + 1}^{\infty} s_i(\mu) |\HH(x_i(\mu), \rmD u_r(x_i(\mu), \mu))|^2 \de \cD(\mu) \de r
    \\
    & \le \frac{1}{2L^2} \mathcal{R}^n(\mathscr H, \rmD u, T).
    \end{split}
   \end{equation}
   We write $\mathcal{R}^n(\mathscr H, \rmD u, T)$ in the form 
   \begin{equation*}
\begin{split}
\mathcal{R}^n(\mathscr H, \rmD u, T)&= 
     \int_0^T \int_{\prob(\T^d)} r_n(r,\mu) \de\mathcal{D}(\mu) \de r
\\
{\rm with} \quad 
r_n(r,\mu) &:= \sum_{i= N(\eps_n, \mbfs(\mu)) + 1}^{\infty} s_i(\mu) |\HH(x_i(\mu), \rmD u_r(x_i(\mu), \mu))|^2.
    \end{split}
\end{equation*}
Next, we prove by means of the dominated convergence theorem that 
$\mathcal{R}^n(\mathscr H, \rmD u, T) \to 0$ as 
$n \rightarrow \infty$. Domination follows from the bound
\begin{align*}
    |r_n(r,\mu)| 
    \le \sum_{i=1}^{\infty} s_i(\mu) |\HH(x_i(\mu), \rmD u_r(x_i(\mu), \mu))|^2 
    =  \int_{\prob(\T^d)} | \mathscr{H}(x, \rmD u_r(x, \mu))|^2 \de \mu(x),
\end{align*}
the right-hand side, seen as a function of $(r,\mu)$, belonging, as a consequence of Remark \ref{rmk: H_integral} and the fact that $\mathscr{H}$ satisfies Assumption \ref{hp: hj}, to
the space $L^1([0,T] \times \prob(\T^d), \mathscr{L}^{[0,T]}\otimes \cD)$. The above bound proves that the sequence $(\vert r_n(r,\mu)\vert)_{n}$ is dominated by a common function in $L^1([0,T] \times \prob(\T^d), \mathscr{L}^{[0,T]}\otimes \cD)$. At the same time, it shows that 
$r_n(r,\mu) \to 0$ pointwise as $n \to \infty$, since it is bounded by the remainder 
of a convergent series. Therefore,
\begin{equation}
\label{eq:proof:Proposition5.11:term5}
\lim_{n \rightarrow \infty} \mathcal{R}^n(\mathscr H, \rmD u, T)
=0.
\end{equation}
Combining  \eqref{eq:proof:Proposition5.11:term1},
\eqref{eq:proof:Proposition5.11:term2},
\eqref{eq:proof:Proposition5.11:term3},
\eqref{eq:proof:Proposition5.11:term4}, and \eqref{eq:523}, we deduce that, for every $t \in [0,T]$, 
\begin{align*}
\frac{1}{2} &|u_t^n-u_t|_H^2 + \frac{1}{2} \int_t^T \| \rmD u_r^n - \rmD u_r \|^2 \de r 
\\
&\le \frac{1}{2} |\mathscr{G}^n-\mathscr{G}|^2_H + (1/2+L^2) \int_t^T |u_r^n - u_r|^2_H \de r + \frac{1}{2} \int_0^T |\mathscr{F}^n_r-\mathscr{F}_r|^2_H \de r + \frac{1}{2L^2} \mathcal{R}^n(\mathscr H, \rmD u, T). 
\end{align*}
The inequality in the statement follows then by the above inequality and a standard application of Gronwall's lemma. The convergence in the statement follows from \eqref{eq:proof:Proposition5.11:term5}.    
\end{proof}

\begin{remark}
   Questions related to the rate of convergence can be approached as in the linear case, see Remark \ref{rem:rate:convergence}. 
Since the integrand in the definition of ${\mathcal R}^n$ (in the statement of Proposition
\ref{prop:convergence:nonlinear}) grows like 
$\vert {\rmD} u_r\vert^2$, the difficulties are the same, especially those stemming from the poor integrability properties of the gradient of $u$.
\end{remark}

It is not difficult to see that an appropriate version of Proposition \ref{prop:ito_general_HJB} 
also holds in the framework of this subsection. We state the result in the following proposition, omitting the proof since it can be obtained by repeating exactly the argument used for proving Proposition \ref{prop:ito_general_HJB}.

\begin{proposition}\label{prop: ito_for_HJB_truncated}
Let Assumption \ref{hp: control} be in force. For $n \in {\mathbb N}_+$, let $\cH^n$, $\mathscr{F}^n$, $\mathscr{G}^n$ be as above, and $u^n$ be the (corresponding) unique solution of \eqref{eqn: HJB_n}.
Let
$t_0 \in [0,T)$ be an initial time. There exists a Borel subset $O$ of $\cP(\T^d)$, with ${\mathcal D}(O)=1$, such that, 
for any $m \in O$ and any $\boldsymbol{\alpha} \in \mathcal{A}$
\begin{align*}
    \mathscr{G}^n(\mu^\infty_T) & =  u_{t}^n(\mu^\infty_{t})  
    - \int_{t}^T \mathscr{F}^n_r(\mu^\infty_r) \de r   
    + \int_{t}^T \sum_{i=1}^{N(\eps_n, \mbfs(m))}\varsigma_i\HH( \xi_r^i,\rmD u_r^n(\mu_r^\infty,\xi_r^i))\de r 
    \\
    & \quad + \sum_{i=1}^\infty \int_t^T \varsigma_i \rmD u_r^n(\mu_r^\infty, \xi_r^i) \cdot \alpha_r^i(\mu_r^\infty, \xi_r^i) \de r \\
    & \quad +\sum_{i=1}^{\infty}\int^T_{t} \sqrt{2 \varsigma_i}\rmD  u_r^n(\mu^\infty_r, \xi^i_r) \cdot \de \beta^i_r,\quad t\in[t_0,T],\quad{\mathbf Q}^{t_0,\emp^{-1}(m),\boldsymbol{\alpha}}\text{-a.s.}
\end{align*}
\end{proposition}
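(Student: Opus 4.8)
The statement to prove is Proposition \ref{prop: ito_for_HJB_truncated}, which is an It\^o-type expansion for the solution $u^n$ of the truncated HJB equation \eqref{eqn: HJB_n} along the controlled particle system. As the text itself remarks, the argument is a verbatim adaptation of the proof of Proposition \ref{prop:ito_general_HJB}, which in turn rests on Proposition \ref{prop:ito_general:new}. Here is how I would organize it.

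\begin{proof}[Proof of Proposition \ref{prop: ito_for_HJB_truncated}]
The plan is to reduce the statement to Proposition \ref{prop:ito_general:new} exactly as in the proof of Proposition \ref{prop:ito_general_HJB}. First, observe that by Proposition \ref{prop:existhn} the map $\cH^n$ satisfies conditions $\textrm{a}^\star$) and $\textrm{b}^\star$) of Remark \ref{rml: H_general} with the same Lipschitz constant $L$ as $\HH$; in particular, arguing exactly as in the preliminary remark of the proof of Proposition \ref{prop:well_pos_HJ}, for the (fixed) solution $u^n \in L^2([0,T];H^{1,2})$ of \eqref{eqn: HJB_n} the map $t \mapsto \cH^n(\cdot, \rmD u^n_t)$ belongs to $L^2([0,T];H)$. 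Indeed, one bounds
\[
\int_0^T \int_{\prob(\T^d)} |\cH^n(\mu, \rmD u^n_t)|^2 \de\cD(\mu)\de t \leq 2L^2 \int_0^T \|\rmD u^n_t\|^2 \de t + 2T\int_{\prob(\T^d)} |\cH^n(\mu,0)|^2 \de\cD(\mu) < \infty,
\]
using $\textrm{b}^\star$) and $\textrm{a}^\star$) for $\cH^n$. Consequently, setting
\[
f_t := \cH^n(\cdot, \rmD u^n_t) - \mathscr{F}^n_t \in L^2([0,T];H), \qquad g := \mathscr{G}^n \in H,
\]
we see that $u^n$ solves $\partial_t u^n_t + \boldsymbol{\Delta} u^n_t = f_t$ for a.e.~$t \in (0,T)$ with $u^n_t|_{t=T} = g$, i.e.~$u^n$ is the unique solution to \eqref{eq:pdewithf} with $b = 0$, source term $f$, and final datum $g$.

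Now I apply Proposition \ref{prop:ito_general:new} with these $f$ and $g$: there is a Borel set $O \subset \cP(\T^d)$ with $\cD(O) = 1$ such that, for any $m \in O$ and any collection $(\alpha^i)_i = \boldsymbol{\alpha} \in \mathcal{A}$ of uniformly bounded measurable functions (the entries of any admissible control take values in the bounded set $A$, so they are uniformly bounded), ${\mathbf Q}^{t_0,\emp^{-1}(m),\boldsymbol{\alpha}}$-a.s.,
\begin{align*}
g(\mu^\infty_T) &= u^n_t(\mu^\infty_t) + \int_t^T f_r(\mu^\infty_r)\de r + \sum_{i=1}^\infty \int_t^T \varsigma_i \rmD u^n_r(\mu^\infty_r, \xi^i_r)\cdot\alpha^i_r(\mu^\infty_r,\xi^i_r)\de r \\
&\quad + \sum_{i=1}^\infty \int_t^T \sqrt{2\varsigma_i}\, \rmD u^n_r(\mu^\infty_r, \xi^i_r)\cdot\de\beta^i_r, \qquad t \in [t_0,T].
\end{align*}
It then remains only to substitute $g = \mathscr{G}^n$ and $f_r = \cH^n(\cdot, \rmD u^n_r) - \mathscr{F}^n_r$ and to unfold the definition \eqref{eq:hn} of $\cH^n$. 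For $m \in O$ (which we may in addition assume to lie in $\prob_o(\T^d)$, of full $\cD$-measure), write $(\mbfs(m), \mbfx(m)) = \emp^{-1}(m)$; under ${\mathbf Q}^{t_0,\emp^{-1}(m),\boldsymbol{\alpha}}$ one has $\mu^\infty_{t_0} = m$ a.s., and since the atom positions do not change their labels along the flow, $\mu^\infty_r = \emp(\mbfs(m), (\xi^j_r)_j)$ with $\varsigma_j = s_j(m)$ and $N(\eps_n, \mbfs(\mu^\infty_r)) = N(\eps_n, \mbfs(m))$ for every $r$. Hence
\[
\cH^n(\mu^\infty_r, \rmD u^n_r) = \sum_{i=1}^{N(\eps_n, \mbfs(m))} \varsigma_i\, \HH(\xi^i_r, \rmD u^n_r(\mu^\infty_r, \xi^i_r)),
\]
and rearranging the sign of $\int_t^T f_r(\mu^\infty_r)\de r$ gives precisely the asserted expansion. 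No step presents a genuine obstacle here; the only point requiring a little care is the bookkeeping in the last paragraph, namely that the truncation index appearing inside $\cH^n$ is constant along a trajectory started from $m$, which is exactly why we may freeze it to $N(\eps_n, \mbfs(m))$ in the final formula.
\end{proof}
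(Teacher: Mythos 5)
Your proof is correct and follows exactly the route the paper intends (the paper itself omits the argument with the remark that it repeats the proof of Proposition \ref{prop:ito_general_HJB}): you set $f_t := \cH^n(\cdot,\rmD u^n_t)-\mathscr{F}^n_t$, verify $f \in L^2([0,T];H)$ via Proposition \ref{prop:existhn}, apply Proposition \ref{prop:ito_general:new}, and then unfold $\cH^n(\mu^\infty_r,\rmD u^n_r)$ using \eqref{eq:hn}. The final bookkeeping observation — that the weights, and hence the truncation level $N(\eps_n,\mbfs(\mu^\infty_r))$, are constant in $r$ and equal to $N(\eps_n,\mbfs(m))$ under ${\mathbf Q}^{t_0,\emp^{-1}(m),\boldsymbol{\alpha}}$ — is exactly the point worth spelling out, and you do so correctly.
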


\subsubsection{The approximate control problem}  
\label{subsubse:approximate:control}
Here we discuss, similarly 
to Subsection \ref{ssec:finite_dim}, a finite-dimensional approximation of the mean-field control problem introduced in 
Subsection \ref{ssec:verification}. 
The purpose of this approximation is twofold:
(i) on the one hand, it aims to provide, within the context of this paper, a counterpart to previously established results on the approximation of a mean-field control problem by a finite-dimensional (in that case, exchangeable) control problem;
(ii) on the other hand, it also aims to give an interpretation of the partial differential equation \eqref{eqn: HJB_n} and to explain how the solution to the latter can be viewed as the value function of an auxiliary control problem.

Our first step is thus to introduce a suitable class of controls to connect the approximate PDE \eqref{eqn: HJB_n} to the system of particles \eqref{eqn: controlled_particles}
(taking advantage of the fact that 
the 
feedback functions $\alpha^i$
may differ from one index 
$i$ to another).
For every $n \in \N_+$, we define
\[ \mathcal{A}_n := \left \{(\alpha^i)_i \in \mathcal{A} : \alpha_t^i(\mu, x) = 0 \text{ for every } (t,\mu, x) \in [0,T] \times \prob(\T^d) \times \T^d \text{ and } i \in \N_+ \text{ s.t.~} N(\eps_n, \mbfs(\mu)) <i\right \}.\]
For 
an initial time $t_0\in[0,T)$, an initial distribution $\cursm \in \prob(T_o^\infty \times (\T^d)_o^\infty)$, and
an admissible control 
${\boldsymbol \alpha} \in {\mathcal A}_n$, the $n$-cost associated to $(t_0,\cursm,{\boldsymbol \alpha})$ is defined as
\begin{equation}\label{eqn: approx_cost}
J^n(t_0,\cursm,\boldsymbol{\alpha}):=   \mathbb{E}^{\mathbf{Q}^{t_0, \cursm, \boldsymbol{ \alpha}}}\left[\int_{t_0}^T\left(\mathscr{F}^n_r(\mu_r^\infty) +  \sum_{i=1}^{N(\varepsilon_n,(\varsigma_i)_i)
} \varsigma_i \LL(\xi^i_r,\alpha^i_r(\mu_r^\infty, \xi_r^i) )\right) \de r+ \mathscr{G}^n(\mu^\infty_T)\right]. 
\end{equation}
The corresponding value function is defined as
\begin{equation}\label{eqn: valuen}
    V^n(t_0,\cursm):= \inf_{\boldsymbol{\alpha}\in\cA_n} J^n(t_0,\cursm,\boldsymbol{\alpha}),\quad (t_0,\cursm)\in[0,T)\times \prob( T_o^\infty \times (\T^d)_o^\infty).
\end{equation}
The following result is the analogue of Theorem \ref{thm: verif} for the approximate PDE \eqref{eqn: HJB_n}.

\begin{theorem}\label{thm: verif_n} Let Assumption \ref{hp: control} be in force, $n \in \N_+$, and $u^n$ be the solution of \eqref{eqn: HJB_n}. Then for every $t_0 \in [0,T)$ there exists a Borel subset $\tilde{O}_{t_0}$ of $\prob(\T^d)$, with $\cD(\tilde{O}_{t_0})=1$, such that
\begin{equation}\label{eq:verific_eqn}
    u^n_{t_0}(m) = V^n(t_0, \emp^{-1}(m))\quad\text{ for all } m \in \tilde{O}_{t_0}.
\end{equation}
Moreover, if $t_0 \in [0,T)$ and $m \in \tilde{O}_{t_0}$, then the control 
    $\hat{\boldsymbol \alpha}^n=(\hat{\alpha}^{i,n})_i$, with the 
    $\hat{\alpha}^{i,n}$'s being given by 
\[ \hat{\alpha}^{i,n}_t(\mu,x) := -\nabla_p \HH (x, \rmD u_t^n(\mu,x)) \mathbf{1}_{i \le N(\eps_n, \mbfs(\mu))}, \quad (t,\mu,x) \in [0,T] \times \prob(\T^d) \times \T^d,\]
is optimal for $V^n(t_0, \emp^{-1}(m))$ and it is unique in the following sense: if $\tilde{\boldsymbol{\alpha}} \in \mathcal{A}_n$ is also optimal for $V^n(t_0, \emp^{-1}(m))$, then,
for any $i \in \N_+$, for a.e. $r \in (t_0,T)$, 
\begin{equation*}
{\mathbf Q}^{t_0, \emp^{-1}(m), \tilde{\boldsymbol{\alpha}}}
\left( \left\{\tilde{\alpha}_r^i(\mu_r^\infty,\xi_r^i) = 
\hat{\alpha}_r^{i,n}(\mu_r^\infty,\xi_r^i)  \right\}
\right) = 1.
\end{equation*}
\end{theorem}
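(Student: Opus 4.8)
The plan is to mirror exactly the two–step strategy used in the proof of Theorem \ref{thm: verif}, but now replacing every appeal to Proposition \ref{prop:ito_general_HJB} by its truncated counterpart Proposition \ref{prop: ito_for_HJB_truncated}, and every use of Assumption \ref{hp: hj}-type bounds for $\HH$ by the conditions $\text{a}^\star$) and $\text{b}^\star$) established for $\cH^n$ in Proposition \ref{prop:existhn}. The key observation that makes the $n$-dependent bookkeeping harmless is that for a control $\boldsymbol\alpha=(\alpha^i)_i\in\mathcal{A}_n$ and for $m$ with $(\mbfs,\mbfx)=\emp^{-1}(m)$, the integer $N(\eps_n,\mbfs(m))$ is deterministic once $m$ is fixed, and the particle system \eqref{eqn: controlled_particles} with such a control only drifts the first $N(\eps_n,\mbfs(m))$ coordinates, while the controlled cost \eqref{eqn: approx_cost} only involves those same coordinates; the tail particles $i>N(\eps_n,\mbfs(m))$ contribute nothing either to the cost or to the drift.

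First I would fix $n$, take $t_0\in[0,T)$, and let $O$ be the Borel full-$\cD$-measure set provided by Proposition \ref{prop: ito_for_HJB_truncated}. Then, exactly as in the first step of the proof of Theorem \ref{thm: verif}, I would upgrade the local martingale $\sum_i\int_t^T\sqrt{2\varsigma_i}\,\rmD u_r^n(\mu_r^\infty,\xi_r^i)\cdot\de\beta_r^i$ appearing in that expansion to a true $\mathbf Q^{t_0,\emp^{-1}(m),\boldsymbol\alpha}$-martingale. This uses the uniform bound \eqref{eqn: bound_girsanov_exp} on the inverse Girsanov density — which holds for $\cursm=\emp^{-1}(m)$ and is independent of $\boldsymbol\alpha$ because the controls take values in the bounded set $A$ — together with the $L^2$ bound \eqref{eq:thefirstbound} applied to $\rmD u^n$ (legitimate since $u^n\in L^2([0,T];H^{1,2})$ by Proposition \ref{prop:existhn}), intersecting $O$ with a further full-measure set $O'_{t_0}$. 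Taking expectations in the Proposition \ref{prop: ito_for_HJB_truncated} expansion kills the stochastic integral, leaving, for every $\boldsymbol\alpha\in\mathcal{A}_n$ and every $m\in\tilde O_{t_0}:=O\cap O'_{t_0}$,
\begin{equation*}
J^n(t_0,\emp^{-1}(m),\boldsymbol\alpha)=u^n_{t_0}(m)+\mathbb{E}^{\mathbf Q^{t_0,\emp^{-1}(m),\boldsymbol\alpha}}\!\left[\int_{t_0}^T\sum_{i=1}^{N(\eps_n,\mbfs(m))}\varsigma_i\Bigl(\HH(\xi_r^i,\rmD u_r^n)+\LL(\xi_r^i,\alpha_r^i)+\rmD u_r^n\cdot\alpha_r^i\Bigr)\de r\right],
\end{equation*}
where the sum runs only to $N(\eps_n,\mbfs(m))$ precisely because $\alpha^i_r\equiv0$ for larger $i$. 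By the Fenchel–Young inequality \eqref{eq:hdef}, each summand is $\ge0$, so $J^n\ge u^n_{t_0}(m)$ and hence $V^n(t_0,\emp^{-1}(m))\ge u^n_{t_0}(m)$. As in Theorem \ref{thm: verif}, strict positivity of the weights $\varsigma_i$ (valid since $m\in\prob_o(\T^d)$) together with strict convexity of $\LL(x,\cdot)$ forces any optimizer to satisfy $\alpha_r^i(\mu_r^\infty,\xi_r^i)=-\nabla_p\HH(\xi_r^i,\rmD u_r^n(\mu_r^\infty,\xi_r^i))$ $\mathbf Q$-a.s.\ for $i\le N(\eps_n,\mbfs(m))$, i.e.\ $\hat\alpha^{i,n}_r$; for $i>N(\eps_n,\mbfs(m))$ membership in $\mathcal{A}_n$ already forces $\alpha^i\equiv0=\hat\alpha^{i,n}$, which gives the claimed uniqueness. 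For the reverse inequality I would simply plug $\hat{\boldsymbol\alpha}^n$ into the identity above: it lies in $\mathcal{A}_n$ by construction, the indicator $\mathbf 1_{i\le N(\eps_n,\mbfs(\mu))}$ making it supported on the right coordinates, and it annihilates the bracket, yielding $J^n(t_0,\emp^{-1}(m),\hat{\boldsymbol\alpha}^n)=u^n_{t_0}(m)$, hence \eqref{eq:verific_eqn}.

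The only genuinely new point relative to Theorem \ref{thm: verif} — and the one I would be most careful about — is checking that the Proposition \ref{prop: ito_for_HJB_truncated} expansion is compatible with restricting to controls in $\mathcal{A}_n$ rather than all of $\mathcal{A}$: one must verify that, for $\boldsymbol\alpha\in\mathcal{A}_n$, the term $\int_t^T\sum_{i=1}^{N(\eps_n,\mbfs(m))}\varsigma_i\HH(\xi_r^i,\rmD u^n_r)\,\de r$ coming from $\cH^n$ in the expansion matches the truncated Hamiltonian built into the PDE \eqref{eqn: HJB_n}, and that the drift term $\sum_i\varsigma_i\rmD u^n_r\cdot\alpha_r^i$ collapses to a sum over $i\le N(\eps_n,\mbfs(m))$; both are immediate from the definition \eqref{eq:hn} of $\cH^n$ and the support condition defining $\mathcal{A}_n$, but they are what make the finite-dimensional reduction consistent. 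Beyond that, the proof is a line-by-line transcription of the argument for Theorem \ref{thm: verif}, and I would state it briefly, signalling which estimates are borrowed verbatim from the proofs of Theorem \ref{thm:repr} and Theorem \ref{thm: verif}.
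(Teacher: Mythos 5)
Your proposal is correct and follows exactly the same route the paper takes: the paper's own proof of Theorem \ref{thm: verif_n} is a one-line citation of the argument for Theorem \ref{thm: verif}, with Proposition \ref{prop: ito_for_HJB_truncated} replacing Proposition \ref{prop:ito_general_HJB} and the observation that for $\boldsymbol\alpha\in\mathcal{A}_n$ the drift sum $\sum_{i=1}^\infty\varsigma_i\rmD u_r^n\cdot\alpha_r^i$ collapses to the finite range $i\le N(\eps_n,\mbfs(m))$, which is precisely the point you flag as the only genuinely new step. You have simply written out in detail what the paper leaves implicit.
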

\begin{proof} The proof follows the same argument as the one of Theorem \ref{thm: verif}, using Proposition \ref{prop: ito_for_HJB_truncated} instead of  Proposition \ref{prop:ito_general_HJB}, also noting that 
\[ \sum_{i=1}^\infty \int_{t_0}^T \varsigma_i \rmD u_r^n(\mu_r^\infty, \xi_r^i) \cdot \alpha_r^i(\mu_r^\infty, \xi_r^i) \de r = \sum_{i=1}^{N(\eps_n,\mbfs(m))}\int_{t_0}^T \varsigma_i \rmD u_r^n(\mu_r^\infty, \xi_r^i) \cdot \alpha_r^i(\mu_r^\infty, \xi_r^i) \de r   \quad \mathbf{Q}^{t_0, \emp^{-1}(m), \boldsymbol{\alpha}}\text{-a.s.}\]
for every $\boldsymbol{\alpha} \in \mathcal{A}_n$, $t_0 \in [0,T)$, and $m \in \tilde{O}_{t_0}$.
    \end{proof}

\begin{remark}
\label{rem:N:dimension}
The following observations are in order:
\begin{enumerate}[1)]
\item Above, the threshold $N(\varepsilon_n,{\mbfs })$ is defined as a function of ${\mbfs}$, see \eqref{eq:Neps:mbfs}. Recalling that 
$N(\varepsilon,\mbfs) \leq \lceil 1/\varepsilon \rceil$, we could also use 
$\lceil 1/\varepsilon \rceil$ as a universal cap for the index $i$. The statement of Theorem 
\ref{thm: verif_n} would remain unchanged. 
\item Although the feedback functions $(\alpha^i)_i$ in ${\mathcal A}_n$ are identically zero for $i > N(\varepsilon_n, \mathbf{s}(\mu))$, the corresponding particle system \eqref{eqn: controlled_particles} remains infinite-dimensional, and each $\alpha^i$ is still defined on an infinite-dimensional space.
This leads us to consider, in the statement below, a truly finite-dimensional version of the control problem.
\end{enumerate}
\end{remark}

In the statement below, we use the same finite-dimensional functional spaces as in the statement of  Theorem \ref{thm:finite:dim:PDE}.

\begin{theorem} 
\label{thm:5.14:PDE:finite:dim}
Let Assumption \ref{hp: control} be in force, $n \in \N_+$, and $u^n$ be the solution of $\eqref{eqn: HJB_n}$. Then for every $t_0 \in [0,T)$ and $\cD$-a.e.~$m \in \prob(\T^d)$
\[ u^n_{t_0}(m) = k_{t_0}^{\mbfs(m)}(x_1(m), \dots, x_{N(\eps_n, \mbfs(m))}(m)),\]
where, for $\mbfs \in T_o^\infty$, $k^{\mbfs}$ is the unique solution, in the space
$\rmC^{0,1,N(\varepsilon_n,\mbfs)} \cap (\cap_{p > 1} W^{1,2,N(\varepsilon_n,\mbfs)}_p$), 
of the PDE, set on $[0,T] \times (\T^d)^{N(\eps_n, \mbfs)}$,
\begin{equation}\label{eq:HJBFD}
  \partial_t k_t + \sum_{i=1}^{N(\eps_n, \mbfs)} \frac{1}{s_i} \Delta_i k_t + \sum_{i=1}^{N(\eps_n, \mbfs)}  s_i \mathscr H(\cdot, s_i^{-1} \nabla_i k_t) + \mathscr{F}^n_t \circ \emp^{N(\eps_n, \mbfs)}(\mbfs, \cdot) =0 \quad \text{ for a.e. } t \in (0,T)   
\end{equation}
with final condition
\[ k_t \rvert_{t=T} = \mathscr{G}^n \circ \emp^{N(\eps_n, \mbfs)}(\mbfs, \cdot). \] 
\end{theorem}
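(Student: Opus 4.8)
The plan is to combine Theorem~\ref{thm: verif_n} with the finite-dimensional representation of solutions already developed for the linear problem, namely the mechanism behind Proposition~\ref{prop:cyl_solutions} and Theorem~\ref{thm:finite:dim:PDE}, adapted to the semilinear equation \eqref{eqn: HJB_n}. The key structural observation is that, since $\mathscr{F}^n$ and $\mathscr{G}^n$ belong to $\hat{\mathfrak{Z}}^\infty_{\eps_n}$ and $\cH^n(\mu,\cdot)$ depends on $\mu$ only through its atoms of mass $\ge \eps_n$ (more precisely, through $(s_i(\mu),x_i(\mu))_{i\le N(\eps_n,\mbfs(\mu))}$), the solution $u^n$ to \eqref{eqn: HJB_n} itself must have a fiber-wise cylindrical structure: on each fiber $\emp(\mbfs,(\T^d)^\infty_o)$ it should coincide with a function of the finitely many variables $(x_1,\dots,x_{N(\eps_n,\mbfs)})$ solving a parametrized semilinear parabolic PDE on $(\T^d)^{N(\eps_n,\mbfs)}$, where the parameter is $\mbfs$ (entering through $N(\eps_n,\mbfs)$, through the diffusion coefficients $1/s_i$, and through the weights $s_i$ multiplying $\mathscr H$ and the compositions $\mathscr F^n\circ\emp^{N(\eps_n,\mbfs)}(\mbfs,\cdot)$, $\mathscr G^n\circ\emp^{N(\eps_n,\mbfs)}(\mbfs,\cdot)$).

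First I would fix $\mbfs\in T_o^\infty$, set $N:=N(\eps_n,\mbfs)$, and consider the finite-dimensional PDE \eqref{eq:HJBFD} on $[0,T]\times(\T^d)^N$. Since $\mathscr H$ is convex, Lipschitz in $p$, and $|\mathscr H(x,0)|\le C$ (as recorded after \eqref{eq:hdef}), and since $\mathscr F^n_t\circ\emp^N(\mbfs,\cdot)$ and $\mathscr G^n\circ\emp^N(\mbfs,\cdot)$ are smooth (indeed, $\mathscr F^n,\mathscr G^n$ being cylindrical), the semilinear equation \eqref{eq:HJBFD}, viewed as posed on $[0,T]\times(\R^d)^N$ with periodic data, admits a unique solution $k^\mbfs$ in $\rmC^{0,1,N}\cap(\cap_{p>1}W^{1,2,N}_p)$; I would quote standard $L^p$ theory for quasilinear parabolic equations with bounded, measurable-in-time, Lipschitz-in-gradient nonlinearities (e.g.\ via a fixed-point argument on $W^{1,2,N}_p$ using the linear estimates of \cite{DELARUE20061712} exactly as in the proof of Theorem~\ref{thm:finite:dim:PDE}), noting that the gradient bound from the Lipschitz Hamiltonian plus a maximum principle controls $\|k^\mbfs\|_\infty$ uniformly. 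Then I would set $v^\mbfs_t(\mu):=k^\mbfs_t(x_1,\dots,x_N)$ for $\mu=\emp(\mbfs,\mbfx)$ (with $v^\mbfs\equiv0$ when $N=0$), observe, exactly as in Proposition~\ref{prop:cyl_solutions} via Lemma~\ref{lemma: rel_der_cil}, that $\rmD v^\mbfs_t(\mu,x_i)=s_i^{-1}\nabla_i k^\mbfs_t$ and $\boldsymbol\Delta v^\mbfs_t(\mu)=\sum_{i\le N}s_i^{-1}\Delta_i k^\mbfs_t$, so that $v^\mbfs$ solves \eqref{eqn: HJB_n} fiber-wise in the pointwise sense; in particular $\cH^n(\mu,\rmD v^\mbfs_t)=\sum_{i\le N}s_i\mathscr H(x_i,s_i^{-1}\nabla_i k^\mbfs_t)$ matches the nonlinear term of \eqref{eq:HJBFD}. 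I would then glue the fibers: define $u^\natural_t(\mu):=v^{\mbfs(\mu)}_t(\mu)$, establish joint measurability of $u^\natural,\rmD u^\natural,\boldsymbol\Delta u^\natural$ by the continuity-in-parameter argument of Proposition~\ref{prop:cyl_solutions} (the map $(r_1,\dots,r_k)\mapsto k^{r_1,\dots,r_k}$ and its relevant derivatives being continuous in $\rmC([0,T]\times(\T^d)^k)$ by Schauder/$L^p$ stability, and the resulting bounds on $\mbfs$ uniform since $s_i\ge\eps_n$ and $N\le\lceil1/\eps_n\rceil$), conclude that each $u^\natural_t\in\hat{\mathfrak{Z}}^\infty_{fw}$, hence $u^\natural\in\rmC([0,T];H)\cap\mathrm{AC}_{loc}((0,T);H)\cap L^2([0,T];H^{1,2})$ with $u^\natural_t\in D(\boldsymbol\Delta)$, and that $u^\natural$ solves \eqref{eqn: HJB_n} in the sense of Proposition~\ref{prop:existhn}. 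By the uniqueness part of Proposition~\ref{prop:existhn}, $u^\natural=u^n$, which gives $u^n_{t_0}(m)=k^{\mbfs(m)}_{t_0}(x_1(m),\dots,x_{N(\eps_n,\mbfs(m))}(m))$ for $\cD$-a.e.\ $m$ (in fact for all $m\in\prob_o(\T^d)$).

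An alternative, and perhaps shorter, route is purely probabilistic: Theorem~\ref{thm: verif_n} identifies $u^n_{t_0}(m)$ with the value $V^n(t_0,\emp^{-1}(m))$ of a control problem whose optimally-controlled dynamics, by the structure of $\mathcal A_n$, involves only the first $N(\eps_n,\mbfs(m))$ particles of \eqref{eqn: controlled_particles}; these form a genuinely finite-dimensional controlled diffusion on $(\T^d)^N$ with diffusion matrix $A_\mbfs$ and cost built from $\mathscr L$, $\mathscr F^n\circ\emp^N(\mbfs,\cdot)$, $\mathscr G^n\circ\emp^N(\mbfs,\cdot)$. The dynamic programming principle for this classical finite-dimensional control problem (with bounded controls in the compact convex set $A$, strictly convex $\mathscr L$) yields that its value function solves precisely the HJB equation \eqref{eq:HJBFD} in the $\rmC^{0,1,N}\cap W^{1,2,N}_p$ sense — one may invoke a verification theorem using Krylov's Itô formula \cite[Theorem 2.10.1]{KrylovBookControl} together with the $L^p$-solvability and gradient bounds for \eqref{eq:HJBFD}, mirroring the argument already used for Theorem~\ref{thm:finite:dim:PDE}. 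Either way, the main obstacle is the well-posedness and regularity of the \emph{semilinear} finite-dimensional PDE \eqref{eq:HJBFD} in the stated function spaces, uniformly in the parameter $\mbfs$ on $\{N(\eps_n,\mbfs)=k\}$: one needs the Lipschitz-in-gradient, bounded nonlinearity $s_i\mathscr H(x_i,s_i^{-1}\cdot)$ to be handled by a contraction in $W^{1,2,N}_p$ and the gradient estimate to be $\mbfs$-uniform (using $s_i\in[\eps_n,1]$), and then to upgrade the parameter-dependence to continuity so that the glued function is Borel; the semilinearity is a mild complication over the linear case of Theorem~\ref{thm:finite:dim:PDE}, but it is precisely where the real work lies, all the rest being a transcription of the fiber-wise machinery already in place.
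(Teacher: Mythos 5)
Your proposal offers two routes; only the second (probabilistic) one works as stated, and it is in fact the route the paper itself follows. The main (PDE, ``fiber-wise'') route has a genuine gap that you do not fully flag.

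The gap: to conclude via Proposition~\ref{prop:ident} and uniqueness in Proposition~\ref{prop:existhn}, you need the glued candidate $u^\natural_t(\mu) := v^{\mbfs(\mu)}_t(\mu)$ to belong to $\hat{\mathfrak{Z}}^\infty_{fw}$, and Definition~\ref{def:fwcyl} requires each fiber function $v^\mbfs_t$ to lie in $\hat{\mathfrak{Z}}^\infty_c$, i.e.\ to be a \emph{smooth} cylinder function. In the linear case of Proposition~\ref{prop:cyl_solutions} this holds because $h^\mbfs$ is a smooth solution of the heat equation with smooth data, so $v^\mbfs_t = h^\mbfs_t\circ(\hat{\mbfp}^\mbfs)^\trid$ is smooth. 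In your semilinear setting, $k^\mbfs$ solves \eqref{eq:HJBFD} with a Hamiltonian $\mathscr{H}$ that is only Lipschitz in $p$ (and $\nabla_p\mathscr{H}$ is merely bounded and measurable, not continuous unless $\mathscr{L}$ is more than strictly convex). The solution lies in $\rmC^{0,1,N}\cap(\cap_p W^{1,2,N}_p)$ but is not $\rmC^\infty$, so $v^\mbfs_t\notin\hat{\mathfrak{Z}}^\infty_c$, and the entire fiber-wise machinery (Lemmas~\ref{le:hasgrad}, \ref{lemma: rel_der_cil}, Proposition~\ref{prop:ident}) is no longer applicable as written. You would need to extend the fiber-wise class beyond smooth cylinders and reprove the integration-by-parts identity \eqref{eq:thelast} for it, which is a substantial piece of work the proposal does not supply. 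Bootstrapping to smooth $k^\mbfs$ via Schauder estimates, as you suggest, fails precisely because $\mathscr{H}$ has no modulus of continuity in $p$ beyond Lipschitz.

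Your ``alternative route'' is correct and coincides with the paper's proof: invoke Theorem~\ref{thm: verif_n} to write $u^n_{t_0}(m) = V^n(t_0,\emp^{-1}(m))$ for $\cD$-a.e.\ $m$; observe that for $\boldsymbol{\alpha}\in\mathcal{A}_n$ only the first $N(\eps_n,\mbfs)$ particles are actively controlled and that both cost and dynamics factor through $\emp^{N(\eps_n,\mbfs)}(\mbfs,\cdot)$, so the problem reduces to a genuine finite-dimensional stochastic control problem on $(\T^d)^{N}$; then apply a standard verification argument based on \cite[Theorem 2.1]{DELARUE20061712} for existence/regularity of $k^\mbfs$ in $\rmC^{0,1,N}\cap(\cap_p W^{1,2,N}_p)$ and Krylov's It\^o formula. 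The only detail you leave implicit, which the paper makes explicit (items (i) and (ii) in its proof), is the two-sided comparison between $V^n$ and the finite-dimensional value function: one needs to construct, from any $\ggamma^N\in\Gamma^N_c$, a control in $\mathcal{A}_n$ with the same cost (to get one inequality), and conversely to read off, from any $\boldsymbol{\alpha}\in\mathcal{A}_n$, an open-loop control $\ddelta^N\in\Gamma^N_o$ (to get the other). This back-and-forth is what justifies the statement ``the optimally-controlled dynamics involves only the first $N$ particles'' at the level of the infimum, not just at the optimum. If you replace your first route by the second with this detail filled in, the argument is complete.
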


Before presenting the proof of Theorem \ref{thm:5.14:PDE:finite:dim}, we explain its meaning from the perspective of stochastic control.
For a fixed 
$\mbfs \in T^\infty_o$, 
equation \eqref{eq:HJBFD} is indeed a finite-dimensional Hamilton-Jacobi-Bellman equation. It is associated 
with a stochastic control problem in finite dimension, which can be described as follows.
With the shorthand notation 
$N:=N(\varepsilon_n,\mbfs)$, 
and for an arbitrary filtered probability space $(\Omega,{\mathcal F},{\mathbb P},\{{\mathcal F}_t\}_{t \in [0,T]})$ endowed with a collection $\{B^i\}_{i=1}^N$ of independent $d$-dimensional $\{\cF_t\}_{t\in[t_0,T]}$-Brownian motions,  we consider the collection of $A^N$-valued $\{{\mathcal F}_t\}_{t \in [0,T]}$-progressively-measurable open-loop controls
\[ \Gamma^N_o := \left \{ \ddelta^N:=(\delta^1, \dots, \delta^N) : \delta^{i} : [0,T] \times \Omega \to A \text{ is progressively measurable } \forall \, i \in \{1, \dots, N\} \right \}.\]
For the
 same collection 
of weights 
$\mbfs \in T^\infty_o$,
and with
an initial condition 
$(t_0,\mbfx^N=(x_i)_{i=1}^{N}) \in
[0,T) \times (\T^d)^N$, and  a control ${\boldsymbol \delta}^N \in \Gamma^N_o$, we associate the cost 
\begin{equation}
\label{eq:JoN}
 {J}_o^{N, \mbfs}(t_0, \mbfx^N, \ddelta^N) := \mathbb{E}^{\mathbb Q} \left [ \int_{t_0}^T\left(\mathscr{F}^n_r \circ \emp^N(\mbfs, \mbfX^N_r) +  \sum_{i=1}^N s_i \LL(X_r^i,\delta^i_r) )\right) \de r+ \mathscr{G}^n\circ \emp^N(\mbfs, \mbfX^N_T)\right], 
\end{equation}
where $ \mbfX^N=\{(X^i_t)_{t \in [t_0,T]} \}_{i=1}^N$ is the stochastic process on $(\Omega,{\mathcal F},{\mathbb P},\{{\mathcal F}_t\}_{t \in [0,T]})$ defined by
\begin{equation}\label{eqn:gammasisto}
    \begin{cases}
        \de X^i_t & = \delta^i_t\de t + {\sqrt{\frac{2}{s_i}}}\de B^i_t,\quad t\in[t_0,T],\quad i\in\{1, \dots, N\},\\
        X^i_{t_0} & = \iota(x_i), \quad i\in\{1, \dots, N\}.
    \end{cases}
\end{equation} 
As a corollary of the proof of 
Theorem \ref{thm:5.14:PDE:finite:dim}, we will identify the value function with the solution of \eqref{eq:HJBFD}.
\begin{corollary}
\label{corol:open:loop}
Within the framework of 
Theorem \ref{thm:5.14:PDE:finite:dim}, 
the value function of the control problem 
\eqref{eq:JoN}--\eqref{eqn:gammasisto}
is the function $k^{\mbfs}$, i.e., 
\begin{equation}\label{eq:kolmohjbo}
 k_{t_0}^\mbfs(\mbfx^N) = \inf_{\ddelta^N \in \Gamma^N_o} {J}_{o}^{N, \mbfs}(t_0, \mbfx^N, \ddelta^N), \quad t_0 \in [0,T), \, \mbfx^N \in (\T^d)^N.  
\end{equation}
\end{corollary}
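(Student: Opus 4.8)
The plan is to establish Corollary~\ref{corol:open:loop} as a direct consequence of Theorem~\ref{thm:5.14:PDE:finite:dim} together with a classical finite-dimensional verification argument, exploiting the fact that \eqref{eq:HJBFD} is the Hamilton--Jacobi--Bellman equation associated with the control problem \eqref{eq:JoN}--\eqref{eqn:gammasisto}. First I would fix $\mbfs \in T^\infty_o$, write $N := N(\varepsilon_n, \mbfs)$, and recall from the statement of Theorem~\ref{thm:5.14:PDE:finite:dim} that $k^{\mbfs}$ is the unique solution of \eqref{eq:HJBFD} in $\rmC^{0,1,N} \cap (\cap_{p>1} W^{1,2,N}_p)$. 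The key point is to recognize that the nonlinear term $\sum_{i=1}^{N} s_i \mathscr{H}(\cdot, s_i^{-1}\nabla_i k_t)$ in \eqref{eq:HJBFD} arises, via \eqref{eq:hdef} and the change of variables $p \mapsto s_i^{-1}\nabla_i k_t$, as
\[
\sum_{i=1}^{N} s_i \mathscr{H}(x_i, s_i^{-1}\nabla_i k_t)
= \sup_{(a_1,\dots,a_N) \in A^N} \sum_{i=1}^N \left( - s_i \mathscr{L}(x_i,a_i) - \nabla_i k_t \cdot a_i \right),
\]
so that \eqref{eq:HJBFD} is precisely the dynamic programming equation for minimizing \eqref{eq:JoN} over controls in $\Gamma^N_o$.

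The core of the argument is then a two-sided verification for the finite-dimensional problem. For the lower bound, given an arbitrary filtered probability space with Brownian motions $\{B^i\}_{i=1}^N$ and a control $\ddelta^N \in \Gamma^N_o$, I would apply Krylov's version of It\^o's formula (see \cite[Theorem 2.10.1]{KrylovBookControl}, exactly as in the proof of Theorem~\ref{thm:finite:dim:PDE}) to the process $t \mapsto k^{\mbfs}_t(\mbfX^N_t)$, where $\mbfX^N$ solves \eqref{eqn:gammasisto}; this is licit because $k^{\mbfs} \in \cap_{p>1} W^{1,2,N}_p$ and the diffusion $\mbfX^N$ is non-degenerate with bounded drift. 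Integrating between $t_0$ and $T$, taking expectations (the stochastic integral term has zero mean since $\nabla_i k^{\mbfs}$ is bounded on $[0,T] \times (\T^d)^N$, being continuous on the compact $(\T^d)^N$ and controlled up to time $T$, while on $[0,T)$ it is jointly continuous), and using the pointwise inequality $-\nabla_i k^{\mbfs}_r \cdot \delta^i_r - s_i \mathscr{L}(X^i_r,\delta^i_r) \le s_i \mathscr{H}(X^i_r, s_i^{-1}\nabla_i k^{\mbfs}_r)$ together with the PDE \eqref{eq:HJBFD}, yields $k^{\mbfs}_{t_0}(\mbfx^N) \le {J}_o^{N,\mbfs}(t_0, \mbfx^N, \ddelta^N)$. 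For the matching upper bound, one chooses the feedback $\hat{\delta}^{i}_t := -\nabla_p \mathscr{H}(X^i_t, s_i^{-1}\nabla_i k^{\mbfs}_t(\mbfX^N_t))$; because $\mathscr{L}(x,\cdot)$ is strictly convex and lower semicontinuous (Assumption~\ref{hp: control}a)) and $p \mapsto \mathscr{H}(x,p)$ is $\rmC^1$ with bounded gradient, this $\hat{\ddelta}^N$ is an $A^N$-valued bounded measurable feedback realizing equality in the Fenchel--Young inequality; the resulting closed-loop SDE \eqref{eqn:gammasisto} has a bounded measurable drift, hence a (weak, or even strong by \cite{Veretennikov80}) solution, and substituting back gives ${J}_o^{N,\mbfs}(t_0, \mbfx^N, \hat{\ddelta}^N) = k^{\mbfs}_{t_0}(\mbfx^N)$. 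Combining the two bounds gives \eqref{eq:kolmohjbo}.

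Alternatively — and this is probably the shortest route, as the corollary is stated as ``a corollary of the proof'' of Theorem~\ref{thm:5.14:PDE:finite:dim} — I would extract the identification directly from that proof. Indeed, the proof of Theorem~\ref{thm:5.14:PDE:finite:dim} passes (via Theorem~\ref{thm: verif_n}) through the finite-particle controlled system \eqref{eqn: controlled_particles} restricted to its first $N = N(\varepsilon_n, \mbfs)$ coordinates, which under $\mathbf{Q}^{t_0, \emp^{-1}(m), \boldsymbol{\alpha}}$ is exactly a copy of \eqref{eqn:gammasisto} (with $\beta^1, \dots, \beta^N$ as the driving Brownian motions and $\delta^i_r = \alpha^i_r(\mu_r^\infty, \xi_r^i)$), and the cost \eqref{eqn: approx_cost} restricted accordingly is exactly ${J}_o^{N,\mbfs}$. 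Since Theorem~\ref{thm: verif_n} shows $u^n_{t_0}(m) = V^n(t_0, \emp^{-1}(m))$ and Theorem~\ref{thm:5.14:PDE:finite:dim} shows $u^n_{t_0}(m) = k^{\mbfs(m)}_{t_0}(x_1(m), \dots, x_N(m))$, and since the infimum in $V^n$ over closed-loop controls $\cA_n$ agrees with the infimum over open-loop controls $\Gamma^N_o$ (a standard fact for non-degenerate finite-dimensional controlled diffusions — closed-loop optimal feedbacks are admissible open-loop, and the lower bound from the HJB verification is insensitive to whether the control is closed- or open-loop), one obtains \eqref{eq:kolmohjbo} immediately. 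The main obstacle, in either route, is the careful justification that the class of controls over which one minimizes does not affect the value: one must argue that the It\^o--Krylov verification lower bound applies verbatim to any open-loop $\ddelta^N \in \Gamma^N_o$ on any probabilistic set-up, while the optimal feedback $\hat{\ddelta}^N$, though a priori closed-loop, generates an admissible open-loop control once the closed-loop SDE is solved; the measurability and boundedness of $\hat{\delta}^i$ follow from the regularity of $k^{\mbfs}$ and of $\nabla_p \mathscr{H}$ noted above, and well-posedness of the closed-loop system is exactly \cite{Veretennikov80}, as already invoked for \eqref{eqn: finite_particles_drift_trunc}.
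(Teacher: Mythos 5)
Your first route is exactly the paper's argument: expand $t \mapsto k^{\mbfs}_t(\mbfX^N_t)$ plus the accumulated running cost via the Krylov--It\^o formula, use the Fenchel--Young inequality built into \eqref{eq:hdef} together with the PDE \eqref{eq:HJBFD} to deduce the lower bound $J_o^{N,\mbfs}(t_0,\mbfx^N,\ddelta^N)\ge k_{t_0}^{\mbfs}(\mbfx^N)$ for every open-loop $\ddelta^N\in\Gamma_o^N$ (this is precisely \eqref{eq:proof:coroll:open}), and realize the matching upper bound through the feedback $\hat\gamma^i_t(\mbfx)=-\nabla_p\HH\left(x_i,s_i^{-1}\nabla_i k^{\mbfs}_t(\mbfx)\right)$, whose closed-loop system \eqref{eqn:gammasistc} is strongly well-posed by \cite{Veretennikov80} and thus generates an admissible open-loop control attaining the value. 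This is the same two-sided verification the paper carries out, from which it derives both Corollaries \ref{corol:open:loop} and \ref{corol:closed:loop} ``as a by-product.''

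Your ``alternative'' second route is, however, logically circular relative to the paper's structure: Corollaries \ref{corol:open:loop} and \ref{corol:closed:loop} are established \emph{inside} the proof of Theorem \ref{thm:5.14:PDE:finite:dim}, and the theorem is then completed by combining them with Theorem \ref{thm: verif_n} through items (i) and (ii) of that common proof. The conclusion of Theorem \ref{thm:5.14:PDE:finite:dim} is therefore not available as an input when proving the corollary. Moreover, the ``standard fact'' invoked in that route --- equality of the open-loop and closed-loop infima in finite dimension --- is exactly what the verification of your first route establishes, so it offers no shortcut. The first route is the correct one and is the paper's; as a minor point, the boundedness of $\nabla_i k^{\mbfs}$ up to $t=T$ is not a generic property of $\rmC^{0,1,N}$ (whose gradient is only assumed continuous on $[0,T)\times(\T^d)^N$), but it does hold here because the terminal datum $\mathscr{G}^n\circ\emp^{N}(\mbfs,\cdot)$ is smooth; alternatively one may avoid the issue altogether through a localization argument using the $\cap_{p>1}W^{1,2,N}_p$ regularity.
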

Consistently
with the control problem formulated in \eqref{eqn: approx_cost}, we also introduce a version of \eqref{eq:JoN}--\eqref{eqn:gammasisto} set over 
(Marokvian) closed-loop controls. Here, the set of closed-loop controls  is given by 
\[ \Gamma^N_c := \left \{ \ggamma^N:=(\gamma^1, \dots, \gamma^N) : \gamma^{i} : [0,T] \times (\T^d)^N \to A \text{ is measurable } \forall \, i \in \{1, \dots, N\} \right \},\]
and, for given
$t_0 \in [0,T)$
and
${\mathbf x}^N \in (\T^d)^N$,  
the cost function is
\[ {J}_c^{N, \mbfs}(t_0, \mbfx^N, \ggamma^N) := \mathbb{E}^{\mathbb Q} \left [ \int_{t_0}^T\left(\mathscr{F}^n_r \circ \emp^N(\mbfs, \mbfX^N_r) +  \sum_{i=1}^N s_i \LL(X_r^i,\gamma^i_r(\mbfX^N_r) )\right) \de r+ \mathscr{G}^n\circ \emp^N(\mbfs, \mbfX^N_T)\right], \]
where, on the space  $(\Omega,{\mathcal F},{\mathbb P},\{{\mathcal F}_t\}_{t \in [0,T]}, \{(B^i_t)_{t \in [t_0,T]} \}_{i=1}^N) $, 
$\mbfX^N$ is 
the solution to
\begin{equation}\label{eqn:gammasistc}
    \begin{cases}
        \de X^i_t & = \gamma^i_t(\mbfX^N_t)\de t + {\sqrt{\frac{2}{s_i}}}\de B^i_t,\quad t\in[t_0,T],\quad i\in\{1, \dots, N\},\\
        X^i_{t_0} & = \iota(x_i) , \quad i\in\{1, \dots, N\}.
    \end{cases}
\end{equation}
Note that, in the above equations, we are using the same convention as in Remark \ref{rem:toruspb} to evaluate $\gamma_r^i$ at $\mbfX^N_r \in (\T^d)^N$.
Note also that, even though the functions 
$\gamma^1,\cdots,\gamma^N$ are just assumed to be measurable (and bounded since they are valued in $A$), the system \eqref{eqn:gammasistc} is uniquely solvable in the strong sense, see \cite{Veretennikov80}. As such, it is also uniquely solvable in the weak sense, but there is no need here to deal with weak solutions; this makes a difference with the infinite dimensional setting 
introduced in Subsection \ref{ssec:verification}.

It is a standard result in the theory of stochastic control that the control problems defined respectively over open and closed controls share the same value function. We reestablish this fact below as a consequence of the verification argument used in the proof of Theorem \ref{thm:5.14:PDE:finite:dim}. In particular, we have 
the following analogue of Corollary 
\ref{corol:open:loop}.
\begin{corollary}
\label{corol:closed:loop}
Within the framework of 
Theorem \ref{thm:5.14:PDE:finite:dim}, 
the value function of the control problem 
\eqref{eq:JoN}--\eqref{eqn:gammasisto}
is the function $k^{\mbfs}$, i.e., 
\begin{equation}
\label{eq:kolmohjbc}
 k_{t_0}^\mbfs(\mbfx^N) = \inf_{\ggamma^N \in \Gamma^N_c} {J}_{c}^{N, \mbfs}(t_0, \mbfx^N, \ggamma^N), \quad t_0 \in [0,T), \, \mbfx^N \in (\T^d)^N.
\end{equation}
\end{corollary}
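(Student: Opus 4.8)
The plan is to prove Corollary \ref{corol:closed:loop} exactly as Corollary \ref{corol:open:loop}, but replacing the open-loop verification argument by its closed-loop counterpart; the entire content is contained in the proof of Theorem \ref{thm:5.14:PDE:finite:dim}, which itself is a finite-dimensional verification theorem built on Krylov's It\^o formula for Sobolev-class functions of a non-degenerate diffusion. Concretely, fix $\mbfs \in T^\infty_o$ and write $N := N(\varepsilon_n,\mbfs)$. Recall that $k^{\mbfs}$ is the unique solution of \eqref{eq:HJBFD} in $\rmC^{0,1,N} \cap (\cap_{p>1} W^{1,2,N}_p)$; existence and uniqueness in this class follow from \cite[Theorem 2.1]{DELARUE20061712} (the Hamiltonian $s_i \mathscr{H}(\cdot,s_i^{-1}\nabla_i(\cdot))$ is Lipschitz in the gradient variable, uniformly since $s_i \ge \varepsilon_n$ for $i \le N$, and the bounds \eqref{eq:JoN} pose no integrability obstacle once we note $|\mathscr{H}(x,0)| \le C$ from Assumption \ref{hp: control}b)). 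The optimal feedback is $\hat{\gamma}^{i}(t,\mbfx^N) := -\nabla_p \mathscr{H}(x_i, s_i^{-1}\nabla_i k^{\mbfs}_t(\mbfx^N))$, which is admissible because it takes values in $A$ (it is $\nabla_p \mathscr{H}$ evaluated somewhere, and $\mathscr{H}$ is the Legendre transform of a Lagrangian on $A$) and is measurable.

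The steps, in order, are as follows. First, for an arbitrary $\ggamma^N \in \Gamma^N_c$ and arbitrary $(t_0,\mbfx^N)$, consider the strong solution $\mbfX^N$ of \eqref{eqn:gammasistc} (strong well-posedness from \cite{Veretennikov80}, the drift being bounded and the diffusion constant and non-degenerate). Apply Krylov's It\^o formula \cite[Theorem 2.10.1]{KrylovBookControl} to $t \mapsto k^{\mbfs}_t(\mbfX^N_t)$; this is licit precisely because $k^{\mbfs} \in \cap_{p>1} W^{1,2,N}_p$ and $\mbfX^N$ is non-degenerate, so the process does not charge Lebesgue-null sets of $[0,T] \times (\T^d)^N$. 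Taking expectations and using that $k^{\mbfs}$ solves \eqref{eq:HJBFD}, the local-martingale term vanishing in expectation (which needs a brief uniform-integrability check, e.g.\ via the $L^p$-bound on $\nabla k^{\mbfs}$ together with the boundedness of the controls and diffusion coefficients, as was done around \eqref{eqn: unif_int}), one obtains
\[
J^{N,\mbfs}_c(t_0,\mbfx^N,\ggamma^N) = k^{\mbfs}_{t_0}(\mbfx^N) + \mathbb{E}^{\mathbb{Q}}\sum_{i=1}^N \int_{t_0}^T s_i \Bigl[ \mathscr{H}(X^i_r, s_i^{-1}\nabla_i k^{\mbfs}_r(\mbfX^N_r)) + \mathscr{L}(X^i_r,\gamma^i_r(\mbfX^N_r)) + s_i^{-1}\nabla_i k^{\mbfs}_r(\mbfX^N_r)\cdot \gamma^i_r(\mbfX^N_r) \Bigr]\de r,
\]
and the Young--Fenchel inequality $\mathscr{H}(x,p) + \mathscr{L}(x,a) + p \cdot a \ge 0$ (with equality iff $a = -\nabla_p \mathscr{H}(x,p)$, using strict convexity of $\mathscr{L}(x,\cdot)$, cf.\ \cite[Theorem 26.3]{Rockafellar70}) gives $J^{N,\mbfs}_c \ge k^{\mbfs}_{t_0}$. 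Second, running the same computation with $\ggamma^N = \hat{\ggamma}^N$ turns the bracket into $0$ identically, so $J^{N,\mbfs}_c(t_0,\mbfx^N,\hat{\ggamma}^N) = k^{\mbfs}_{t_0}(\mbfx^N)$; combining the two yields \eqref{eq:kolmohjbc}. Third, for Corollary \ref{corol:open:loop} one repeats verbatim with $\ddelta^N \in \Gamma^N_o$: the It\^o expansion is the same (Krylov's formula applies to $k^{\mbfs}_t(\mbfX^N_t)$ whatever the progressively measurable bounded drift), the inequality is the same, and the minimizer $\hat{\delta}^i_r := -\nabla_p\mathscr{H}(X^i_r, s_i^{-1}\nabla_i k^{\mbfs}_r(\mbfX^N_r))$ is now an open-loop process (a measurable function of $(r,\mbfX^N_r)$), so \eqref{eq:kolmohjbo} follows as well. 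As an aside, the inequality $\inf_{\Gamma^N_c} J^{N,\mbfs}_c = \inf_{\Gamma^N_o} J^{N,\mbfs}_o = k^{\mbfs}_{t_0}$ then records the classical equivalence of closed- and open-loop formulations.

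I expect the main obstacle to be the verification that the stochastic integral appearing in Krylov's It\^o expansion is a genuine martingale (not merely a local one), because $k^{\mbfs}$ lies only in the Sobolev class $\cap_{p>1} W^{1,2,N}_p$ and the gradient $\nabla k^{\mbfs}$ is a priori only $L^p$ in $(t,x)$, not bounded. However, on the torus with a fixed finite number $N$ of particles and a uniformly non-degenerate diffusion with constant coefficients $\sqrt{2/s_i}$ (bounded since $s_i \ge \varepsilon_n$), one has good a priori $L^p$-estimates on the transition density and hence on $\mathbb{E}\int_{t_0}^T |\nabla_i k^{\mbfs}_r(\mbfX^N_r)|^2 \de r$; together with Doob's / Burkholder--Davis--Gundy inequalities and the boundedness of the controls, this closes the argument, much as in the proof of \eqref{eqn: unif_int}. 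The only other minor point is checking admissibility of $\hat{\ggamma}^N$ (resp.\ $\hat{\ddelta}^N$): since $k^{\mbfs} \in \rmC^{0,1,N}$, the feedback $(t,\mbfx^N)\mapsto -\nabla_p\mathscr{H}(x_i, s_i^{-1}\nabla_i k^{\mbfs}_t(\mbfx^N))$ is jointly measurable and $A$-valued, so it belongs to $\Gamma^N_c$ (resp.\ its evaluation along $\mbfX^N$ belongs to $\Gamma^N_o$).
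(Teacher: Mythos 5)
Your proposal is correct and follows the same verification route as the paper: Krylov's It\^o formula applied to $k^{\mbfs}$ along the non-degenerate controlled diffusion, cancellation of the source term via the PDE, the Young--Fenchel inequality $\mathscr{H}(x,p)+\mathscr{L}(x,a)+p\cdot a\ge 0$ with equality at the feedback $a=-\nabla_p\mathscr{H}(x,p)$, and the martingale check via $L^p$-bounds on $\nabla k^{\mbfs}$. The only difference is bookkeeping: the paper runs the expansion once for open-loop controls $\ddelta^N\in\Gamma^N_o$ and records both Corollaries \ref{corol:open:loop} and \ref{corol:closed:loop} as by-products (the optimal open-loop control is automatically in feedback form, and any closed-loop control induces an open-loop one), whereas you write out the expansion twice, once for each class; the mathematics is identical.

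One remark that does not affect the validity of your argument but is worth recording: for your verification display to emerge from Krylov's formula, the bracket $s_i[\mathscr{H}+\mathscr{L}+s_i^{-1}\nabla_i k^{\mbfs}\cdot\gamma^i]$ forces the PDE to read $\partial_t k_t + \sum_i s_i^{-1}\Delta_i k_t - \sum_i s_i \mathscr{H}(\cdot,s_i^{-1}\nabla_i k_t)+\mathscr{F}^n_t\circ\emp^N(\mbfs,\cdot)=0$, i.e.\ with a \emph{minus} sign on the Hamiltonian term, consistent with the $-\cH^n$ in \eqref{eqn: HJB_n} of which \eqref{eq:HJBFD} is the finite-dimensional reduction (via Lemma \ref{lemma: rel_der_cil}, $\cH^n(\mu,\rmD u^n_t)=\sum_j s_j\HH(x_j,s_j^{-1}\nabla_j k^{\mbfs}_t)$). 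The published \eqref{eq:HJBFD} carries a $+$ sign there, which appears to be a typo that your computation has, perhaps unwittingly, corrected; keeping the $+$ sign would make the bracket $\mathscr{L}-\mathscr{H}+p\cdot a$, which has no definite sign and does not close the verification.
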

We stress once again that the dimension $N$ in \eqref{eq:kolmohjbc} depends on $\mbfs$. That said, as mentioned in the 
first item of Remark 
\ref{rem:N:dimension}, one could define another truncation in \eqref{eqn: approx_cost}, with $N$ therein being equal to $\lceil 1/\varepsilon_n \rceil$. With this definition, all the PDEs \eqref{eq:HJBFD} (that still depend on $\mbfs$) would be set over the same space. 

We now turn to the proof of the last three statements.
 
\begin{proof}[Proofs of Theorem \ref{thm:5.14:PDE:finite:dim} and Corollaries \ref{corol:open:loop} and \ref{corol:closed:loop}] 
Existence and uniqueness of a solution to the PDE
\eqref{eq:HJBFD}
(within the class specified in the statement of Theorem \ref{thm:5.14:PDE:finite:dim})
follow from 
\cite[Theorem 2.1]{DELARUE20061712}.
By the 
Krylov version of It\^o's formula, see 
\cite[Theorem 2.10.1]{KrylovBookControl}, 
one can expand, for 
a collection of weights 
$\mbfs \in T^\infty_{o}$,
an initial condition $(t_0,{\mathbf x}^N)$
(with $N:=N(\varepsilon_n,\mbfs)$)
and an open-loop control ${\boldsymbol \delta}^N$ (in $\Gamma^N_o$), the process 
\begin{equation*}
\left(k_t^{\mbfs}(\mbfX_t^N) + \int_{t_0}^t 
\left(\mathscr{F}^n_r \circ \emp^N(\mbfs, \mbfX^N_r) +  \sum_{i=1}^N s_i \LL(X_r^i,\delta^i_r) \right) \de r
\right)_{t \in [t_0,T]}.
\end{equation*}
Reproducing the computations carried out in the proofs of Theorems \ref{thm: verif} and \ref{thm: verif_n} (the setting is even simpler here since the underlying space is of finite dimension), we can prove that 
\begin{equation}
\label{eq:proof:coroll:open}
 {J}_{o}^{N, \mbfs}(t_0, \mbfx^N, \ddelta^N)
 \geq 
  k_{t_0}^\mbfs(\mbfx^N), 
  \end{equation}
  with equality if and only if, almost everywhere under 
  ${\mathscr L}^{[t_0,T]} \otimes {\mathbb P}$, 
\begin{equation*}
\delta_t^i =- \nabla_p
{\mathscr H}\left(X^i_t,s_i^{-1} \nabla_i k_t(\mbfX^N_t)
\right),
  \end{equation*}
  i.e. 
  $\mbfX^N$ is driven by the 
  closed-loop
control $\gamma^N \in \Gamma^N_c$ given by 
\begin{equation*}
\gamma_t^i(\mbfx)=-
\nabla_p
{\mathscr H}\left(x_i,s_i^{-1} \nabla_i k_t(\mbfx)
\right),
\end{equation*}
which proves, as a by-product, the two Corollaries \ref{corol:open:loop}
and
\ref{corol:closed:loop}.

Therefore, by Theorem \ref{thm: verif_n}, the
conclusion of Theorem \ref{thm:5.14:PDE:finite:dim} follows if we show that, for every $(\mbfs, \mbfx)\in T_o^\infty \times (\T^d)^\infty_o$ and every $t_0 \in [0,T)$, setting $N:= N(\eps_n, \mbfs)$ and $\mbfx^N := \pi_N(\mbfx)$, 
(recalling \eqref{eqn: valuen}
for the definition of $V^n$):
\begin{itemize}
    \item[(i)] the infimum in \eqref{eq:kolmohjbc} is larger than $V^n(t_0, (\mbfs, \mbfx))$;
    \item[(ii)] the infimum in \eqref{eq:kolmohjbo} is smaller than $V^n(t_0, (\mbfs, \mbfx))$.
\end{itemize}
To prove (i), we proceed as follows: let $\ggamma^N \in \Gamma^N_c$ be given and let us define
\[ \alpha_t^i (\mu, x):= \gamma_t^i\left(\pi_{N(\eps_n,\mbfs(\mu))} (\mbfx(\mu))\right) \mathbf{1}_{i \le N(\eps_n, \mbfs(\mu))} \quad (t, \mu, x) \in [0,T] \times \prob_o(\T^d) \times \T^d, \,\, i \in \{1, \dots, N\},\]
and $\alpha^i=0$ if $i >N$. We also set $\alpha^i(\mu,x)=0$ if $\mu \in \prob(\T^d) \setminus \prob_o(\T^d)$. Notice that $\alpha^i$ is Borel measurable, since $\prob_o(\T^d)$ is a Borel subset of $\prob(\T^d)$, see the discussion below \eqref{eq:Intro:Phi}.
For this ${\boldsymbol \alpha}$, we easily check that the
$N$ first coordinates
of the 
(weak) solution to \eqref{eqn: controlled_particles}
(constructed on 
some 
set-up 
$({\Omega},{\cF},{\Q},\{{\cF}_t\}_{t\in[t_0,T]}, (({\mbfs},{\mbfx}),  ({W}^i)_i))$) coincides with the solution 
to \eqref{eqn:gammasistc}, when constructed on the same set-up
(and when the initial conditions are the same). 
Since the law of the 
solution to 
\eqref{eqn:gammasistc} is independent of the probabilistic set-up, we deduce that 
\[ {J}_{c}^{N, \mbfs}(t_0, \mbfx^N, \ggamma^N) =J^n(t_0, (\mbfs, \mbfx), \boldsymbol{\alpha}) \ge V^n(t_0, (\mbfs, \mbfx)).\]
Taking the infimum over $\ggamma^N \in \Gamma^N_c$ in the above inequality, we conclude the proof of (i).\\
To prove (ii), we proceed in the opposite way: let $\boldsymbol{\alpha} \in \mathcal{A}_n $ be given and, on the filtered probability space $(\Xi, \cG, \mathbf{Q}^{t_0, (\mbfs, \mbfx), \boldsymbol{\alpha}}, \{ \cG_t\}_{t \in [0,T]})$ endowed with the collection $(\beta^i)_{i=1}^N$ of independent $d$-dimensional $\{\cG_t\}_{t\in[t_0,T]}$-Brownian motions, let us define the progressively measurable functions
\[ \delta^i_t := \alpha_t^i(\mu_t^\infty, \xi_t^i) \quad t \in [0,T], \, i \in \{1, \dots, N\}.\]
Then $\ddelta^N:=(\delta^1, \dots, \delta^N) \in \Gamma_o^N$ and the process $\mbfX^N:=(\xi^1, \dots, \xi^N)$ satisfy \eqref{eqn:gammasisto} (on the canonical set-up). Reproducing the proof of 
\eqref{eq:proof:coroll:open}, we deduce that
\[ 
k_{t_0}^{\mbfs}(\mbfx^N) \le {J}_{o}^{N, \mbfs}(t_0, \mbfx^N, \ddelta^N)=J^n(t_0, (\mbfs, \mbfx), \boldsymbol{\alpha}).\]
Taking the infimum over $\boldsymbol{\alpha} \in \mathcal{A}_n$ in the above inequality, we conclude the proof of (ii).
\end{proof}
\appendix
\section{L-derivative and its relation with the vector-valued gradient \texorpdfstring{$\rmD$}{D} }\label{app:lder}
We compare the notion of derivative discussed in Subsection \ref{sec:diffprob} with another popular definition, which is particularly relevant in the context of mean-field games and mean-field control. 

\begin{definition} We say that a continuous function $u:\prob(\T^d) \to \R$ is linearly differentiable at $\mu \in \prob(\T^d)$ if the limit
\begin{equation*}
    \rmD_F u(\mu,x):=\lim_{t\downarrow0}\frac{u((1-t)\mu + t\delta_x) - u(\mu)}{t}
\end{equation*}
exists and it is finite for every $x \in \T^d$. The real number above is called \emph{flat or linear functional derivative} of $u$ at $(\mu,x)$. The space of continuous functions which are everywhere linearly differentiable and whose flat derivative is everywhere continuous in $\prob(\T^d) \times \T^d$ is denoted by $\rmC^1_{\rmF}(\prob(\T^d))$.
\end{definition}
The flat derivative of $u \in \rmC^1_{\rmF}(\prob(\T^d))$ can be also characterized as the unique continuous function $G: \prob(\T^d) \times \T^d \to \R$ satisfying
\begin{equation}\label{eqn: flat_relation} u(\mu)-u(\nu) = \int_0^1\int_{\T^d} G(t\mu + (1-t)\nu, x) \de(\mu - \nu)(x)\de t, \quad \int_{\T^d}G(\mu,x)\de\mu(x) = 0
\end{equation}
for every $\mu, \nu \in \prob(\T^d)$ (see ~\cite[Chapter 2.2]{cardaliaguetdelaruelasrylions}).
\begin{definition} We say that $u: \prob(\T^d) \to \R$ is L-differentiable at $\mu \in \prob(\T^d)$ if $u$ is linearly differentiable at $\mu$ and the gradient
\[
\rmD_L u(\mu,x):=\nabla \rmD_F(\mu,x)
\]
exists for every $x \in \T^d$, where $\nabla$ denotes the gradient with respect to the $x$-variable. The vector above is called \emph{L-derivative of} $u$ at $(\mu,x)$. The space of continuous functions which are everywhere L-differentiable and whose L-derivative is everywhere continuous (and so bounded) is denoted by $\rmC^1(\prob(\T^d))$.
\end{definition}
    \begin{remark}
    The L-derivative introduced here is strongly related, at least in some cases, to the one introduced by P.-L. Lions in \cite{lions_course} (see also \cite[Chapter 5.2]{carmonadelarue1}) through the so-called lifting approach. For our purposes, it is enough to note that if a function $u$ belongs to $\rmC^1(\prob(\T^d))$, then it is differentiable in the sense of Lions, and the two derivatives coincide (see \cite[Proposition 5.84]{carmonadelarue1}). The converse may also be true under further assumptions on the Fréchet derivative of the lifting of $u$ (see \cite[Proposition 5.51]{carmonadelarue1}).
    \end{remark}
    \begin{remark}\label{rem:lips}
        It is easy to see that if $u\in\rmC^1(\prob(\T^d))$, then it is $W_2$-Lipschitz with Lipschitz constant $\norm{\rmD_Lu}_\infty$. Indeed, since $\rmD_L u$ is bounded, we have that for any $\mu\in\prob(\T^d)$ the map $x\mapsto\rmD_F u(\mu,x)$ is Lipschitz with Lipschitz constant bounded by $\norm{\rmD_L u}_\infty$. Then, by \eqref{eqn: flat_relation} and the dual representation of the $1$-Wasserstein distance $W_1$, see e.g.~\cite[Theorem 1.14]{Villani03}, and the inequality $W_1 \le W_2$, it holds
        \begin{equation*}
            \lvert u(\mu) - u (\nu)\rvert\leq \norm{\rmD_L u}_\infty\sup_{\phi\colon\Lip \phi \leq1}\int_{\T^d}\phi\de(\mu-\nu) =\norm{\rmD_L u}_\infty W_1(\mu,\nu) \leq\norm{\rmD_L u}_\infty W_2(\mu,\nu),
        \end{equation*}
        where we have denoted by $\Lip \phi$ the global Lipschitz constant of $\phi$.
    \end{remark}
    It is clear that if $u \in \mathfrak{Z}^\infty$ then $u \in \rmC^1(\prob(\T^d))$ and $\boldnabla u = \rmD_L u$.
\begin{proposition}\label{prop:approx}
    Let $u\in\rmC^1(\prob(\T^d))$. Then for every Borel probability measure $\mm$ on $\prob(\T^d)$ there exists a sequence of cylinder functions $(u_n)_n\subset \mathfrak{Z}^\infty$ such that
    \begin{equation}\label{eq:conv}
		u_n \to u \text{ in } L^2(\prob(\T^d), \mm), \quad \boldnabla u_n \to \rmD_L u \text{ in } L^2(\prob(\T^d) \times \T^d , \bmm; \R^d),
    \end{equation}
    where $\bmm$ is the measure over $\prob(\T^d)\times\T^d$ defined by \eqref{eq:bmm} with $\mm$
 in place of $\cD$.
 \end{proposition}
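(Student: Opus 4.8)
The statement to prove is that any $u \in \rmC^1(\prob(\T^d))$ can be approximated by smooth cylinder functions $u_n \in \mathfrak{Z}^\infty$, in the sense that $u_n \to u$ in $L^2(\prob(\T^d),\mm)$ and $\boldnabla u_n \to \rmD_L u$ in $L^2(\prob(\T^d)\times\T^d,\bmm;\R^d)$, for an arbitrary Borel probability measure $\mm$. The natural strategy is a mollification argument on the ``internal'' functions used to build cylinder functions: approximate the flat derivative $x \mapsto \rmD_F u(\mu,x)$ (or rather the map $\mu \mapsto u(\mu)$ itself) by finite linear combinations of test functions $f^\trid$ and Fourier-type truncations, and then verify that the gradients of these truncations converge to $\rmD_L u$ in the required $L^2$ sense.

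First I would recall that, by Remark~\ref{rem:lips}, $u$ is $W_2$-Lipschitz with Lipschitz constant $\|\rmD_L u\|_\infty$, hence $u \in \Lip_b(\prob(\T^d), W_2)$ and in particular $u \in L^2(\prob(\T^d),\mm)$ (the space $\prob(\T^d)$ being compact). The plan is then to use the characterization \eqref{eqn: flat_relation} of the flat derivative together with a convolution/regularization procedure on the torus. Concretely, fix a smooth approximate identity $(\rho_\delta)_\delta$ on $\T^d$ and consider the family of functions $\mu \mapsto u_\delta(\mu)$ obtained by convolving, in a suitable sense, the flat derivative in its spatial variable; equivalently, one can build $u_\delta$ directly from $u$ via $u_\delta(\mu) := \int_{\T^d} u(\tau_y{}_\sharp\mu)\,\rho_\delta(y)\,\de y$, where $\tau_y$ is translation by $y$ on $\T^d$. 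Since $u$ is $W_2$-Lipschitz and translations are $W_2$-contractions (in fact isometries on $\T^d$), one gets $|u_\delta(\mu)-u(\mu)| \le \|\rmD_L u\|_\infty \int |y|\rho_\delta(y)\de y \to 0$ uniformly in $\mu$, so convergence of the functions is immediate. The key computation is that $u_\delta$ is L-differentiable with $\rmD_L u_\delta(\mu,x) = \int_{\T^d}\rmD_L u(\tau_y{}_\sharp\mu, x-y)\rho_\delta(y)\de y$, and since $\rmD_L u$ is bounded and (jointly) continuous, this converges uniformly to $\rmD_L u(\mu,x)$ by uniform continuity on the compact set $\prob(\T^d)\times\T^d$. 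Thus the $u_\delta$ already have the desired convergence properties; the remaining task is to approximate each (still non-cylindrical) $u_\delta$ by genuine elements of $\mathfrak{Z}^\infty$.

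For the second approximation step, the plan is to expand $u_\delta$ in a Fourier-type series in the ``measure'' variable. Since $u_\delta$ is now smooth in a spatial sense and $W_2$-Lipschitz, one can first truncate: replace $u_\delta(\mu)$ by a function depending only on finitely many moments $\langle f_1,\mu\rangle,\dots,\langle f_k,\mu\rangle$ against a finite collection of smooth test functions $f_j\in\rmC^\infty(\T^d)$ (e.g.\ real/imaginary parts of exponentials with bounded frequency), using the fact that, on the compact space $(\prob(\T^d),W_2)$, such moment maps separate points and generate the topology. Composing with a smooth cut-off $F\in\rmC^\infty_c(\R^k;\R)$ that equals the identity on the relevant compact region, one obtains a candidate $u_n\in\mathfrak{Z}^\infty$. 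One then verifies uniform convergence $u_n\to u_\delta$ and, via the explicit gradient formula \eqref{eq:l:TridHorGrad:1}, that $\boldnabla u_n\to\rmD_L u_\delta$ in $L^\infty$ (hence in $L^2(\bmm)$). A diagonal argument over $\delta$ and $n$ then produces the required sequence in $\mathfrak{Z}^\infty$ satisfying \eqref{eq:conv}.

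**Main obstacle.** The delicate point is the Fourier/moment truncation in the measure variable: one must show that $u_\delta$, which \emph{a priori} depends on $\mu$ through the full infinite-dimensional object, can be approximated uniformly---together with its L-derivative---by functions of finitely many linear statistics $\langle f_j,\mu\rangle$. The uniform convergence of the \emph{functions} follows from a Stone--Weierstrass type argument on the compact metric space $(\prob(\T^d),W_2)$; the subtlety is the simultaneous control of the L-derivatives, which requires choosing the approximation so that differentiating the Fourier truncation in the spatial variable commutes, in the limit, with the truncation procedure. The smoothing by $(\rho_\delta)$ performed in the first step is precisely what makes this possible, since it transfers spatial regularity onto $\rmD_L u_\delta$ uniformly in $\mu$; without it, the L-derivative of $u$ is merely bounded and continuous, and controlling it under truncation would be problematic. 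An alternative, perhaps cleaner route for this step is to invoke directly the density result in Theorem~\ref{thm:mainold} applied to $u_\delta$ (which is $W_2$-Lipschitz and smooth), upgrading the resulting $H^{1,2}$-convergence to the pointwise/$L^2(\bmm)$ statement using the uniform bounds on $\rmD_L u_\delta$; I would keep both options in mind and use whichever gives the cleanest control of $\bmm$-convergence of gradients.
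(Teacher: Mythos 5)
Your route is genuinely different from the paper's and has a gap precisely where you flag the ``main obstacle.'' The paper avoids mollification entirely: it sets $h_n(x_1,\dots,x_n):=u\bigl(\tfrac1n\sum_i\delta_{x_i}\bigr)$ and $v_n(\mu):=\int_{(\T^d)^n}h_n\,\de\mu^{\otimes n}$, so that $v_n$ is a degree-$n$ polynomial in $\mu$ whose L-derivative admits the explicit representation $\rmD_L v_n(\mu,x)=\int_{(\T^d)^{n-1}}\rmD_L u\bigl(\tfrac1n\sum_{i<n}\delta_{x_i}+\tfrac1n\delta_x,\,x\bigr)\de\mu^{\otimes(n-1)}$, with automatic uniform bounds $\|v_n\|_\infty\le\|u\|_\infty$ and $\|\rmD_L v_n\|_\infty\le\|\rmD_L u\|_\infty$; pointwise-everywhere convergence of both $v_n$ and $\rmD_L v_n$ then follows from the law of large numbers and continuity. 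The second step only requires $\rmC^1((\T^d)^n)$-approximation of the symmetric function $h_n$ by symmetric (trigonometric) polynomials, which controls both the function value \emph{and} the gradient of the resulting cylinder function simultaneously, because the $\rmC^1$-norm on the \emph{finite-dimensional} space $(\T^d)^n$ already dominates both. A dominated-convergence diagonal argument finishes for arbitrary $\mm$.

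Your fix for Step 2 does not work. Writing $u_\delta(\mu)=\int u(\tau_y{}_\sharp\mu)\rho_\delta(y)\de y$ gives, by chain rule, $\rmD_L u_\delta(\mu,x)=\int\rmD_L u(\tau_y{}_\sharp\mu,\,x+y)\rho_\delta(y)\de y$; the translation $\tau_y{}_\sharp\mu$ in the \emph{measure} slot means this is not a genuine convolution in $x$, so you cannot throw spatial derivatives onto $\rho_\delta$ without also differentiating $y\mapsto u(\tau_y{}_\sharp\mu)$, which brings back an L-derivative rather than gaining an order of smoothness. Moreover, even if $\rmD_L u_\delta(\mu,\cdot)$ were smooth in $x$, this would not resolve the obstacle: the L-derivative of a cylinder function $F\circ\mbff^\trid$ is a \emph{finite sum} of terms of the form (scalar in $\mu$)$\,\times\,$(smooth vector field in $x$), and sup-norm closeness of $u_n$ to $u_\delta$ via Stone--Weierstrass says nothing about closeness of these separable gradients to the genuinely joint field $\rmD_L u_\delta(\mu,x)$. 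The constraint you need is on the $\mu$-dependence of the gradient, which spatial regularity does not touch; the empirical-measure construction of the paper is the device that converts the problem into a finite-dimensional $\rmC^1$-approximation where the two kinds of control come together.

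Your alternative route through Theorem~\ref{thm:mainold} is also not available here: that theorem (and the identification $\rmD=\boldnabla$ via closability) is stated for the Dirichlet--Ferguson measure $\cD$ only, whereas the proposition must hold for \emph{every} Borel probability measure $\mm$. Even after upgrading the Cheeger-energy convergence to strong $L^2(\overline{\cD})$ convergence of the gradients via Proposition~\ref{prop:vectorgrad}, you would obtain at best $\overline{\cD}$-a.e.\ subsequential convergence; since $\bmm$ need not be absolutely continuous with respect to $\overline{\cD}$, the uniform $L^\infty$ bound on the gradients cannot transport this to an $L^2(\bmm)$ statement. The pointwise-\emph{everywhere} convergence of $v_n$ and $\rmD_L v_n$ delivered by the empirical-measure polynomials is exactly what is needed so that a single approximating sequence works for all $\mm$ simultaneously.
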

\begin{proof}
    We divide the proof into three claims.\\
	\textit{Claim 1}. We define, for every $n \in \N_+$, the functions $h_n: (\T^d)^n \to \R$ and $v_n : \prob(\T^d) \to \R$ as
 \begin{align*}
 h_n(x_1, \dots, x_n)&:= u \left ( \frac{1}{n} \sum_{i=1}^n \delta_{x_i} \right ), \quad &&(x_1, \dots, x_n) \in (\T^d)^n,\\
 v_n(\mu)&:= \int_{(\T^d)^n} h_n \de \mu^{\otimes n}, \quad &&\mu \in \prob(\T^d),
\end{align*}
where $\mu^{\otimes n}$ denotes the measure on $(\T^d)^n$ obtained as the product of $n$ copies of $\mu$. Then $(v_n)_n \subset \rmC^1(\prob(\T^d))$, $(h_n)_n \subset \rmC^1((\T^d)^n)$, $\|v_n\|_\infty \le \|u\|_\infty$, $\|\rmD_L v_n\|_\infty \le \|\rmD_L u\|_\infty$ and the following formulas hold
\begin{align*}\label{eqn: der_Phi_n}
    \nabla_j h_n(x_1, \dots, x_n) & = \frac{1}{n}\rmD_L u\left(\frac{1}{n} \sum_{i=1}^n \delta_{x_i},x_j\right), \quad &&(x_1, \dots, x_n) \in (\T^d)^n,\\
    \rmD_L v_n(\mu, x) & = \int_{\T^{d(n-1)}} \rmD_L u \left ( \frac{1}{n} \sum_{i=1}^{n-1} \delta_{x_i} + \frac{1}{n}\delta_x, x \right ) \de \mu^{\otimes (n-1)}(x_1, \dots, x_{n-1}), \quad &&(\mu, x) \in \prob(\T^d) \times \T^d,
\end{align*}
where $\nabla_j$ denotes the gradient w.r.t.~the $j$-th variable. Moreover,
\begin{equation}\label{eq:conv1}
\lim_{n \to \infty} v_n(\mu) = u(\mu), \quad \lim_{n \to \infty} \rmD_L v_n(\mu, x) = \rmD_L u(\mu, x) \quad \text{ for every } (\mu, x) \in \prob(\T^d)\times \T^d.  
\end{equation}
\emph{Proof of claim 1}. The fact that $(h_n)_n \subset \rmC^1((\T^d)^n)$ and the formula for $ \nabla_j h_n$ can be found for instance in \cite[Proposition 5.35]{carmonadelarue1}. Then, it is not difficult to check (see, e.g.,~\cite[proof of Theorem 4.4]{cossomartini}) that $v_n$ is L-differentiable and that 
\begin{align*} \rmD_L v_n(\mu, x) &=  n \int_{\R^{d(n-1)}} \nabla_n h_n (\cdot, x) \de \mu^{\otimes (n-1)} \\
&= \int_{\R^{d(n-1)}} \rmD_L u \left ( \frac{1}{n} \sum_{i=1}^{n-1} \delta_{x_i} + \frac{1}{n}\delta_x, x \right ) \de \mu^{\otimes (n-1)}(x_1, \dots, x_{n-1}), \quad (\mu, x) \in \prob(\T^d) \times \T^d.
\end{align*}
This, together with the definition of $v_n$, gives the desired bounds and the fact that $(v_n)_n \subset \rmC^1(\prob(\T^d))$. The convergence in \eqref{eq:conv1} follows by the law of large numbers and the continuity of $u$ and $\rmD_L u$. This concludes the proof of the first claim.\\

\textit{Claim 2}. Let $n \in \N_+$ be fixed and let $v_n$ be as in claim 1. Then, there exists a sequence of polynomials $(p^n_k)_k$ in $(\T^d)^n$ satisfying 
\begin{equation}\label{eq:simm}
p_k^n(x_{\sigma(1)}, \dots, x_{\sigma(n)}) = p_k^n(x_1, \dots, x_n) \text{ for every } (x_1, \dots, x_n) \in (\T^d)^n
\end{equation}
and for every permutation $\sigma$ of $\{1, \dots, n\}$, such that, setting
\[
w_k^n(\mu):= \int_{\R^{dn}} p_k^n \de \mu^{\otimes n}, \quad \mu \in \prob(\T^d),
\]
we have that $w_k^n$ is $L$-differentiable at every $\mu \in \prob(\T^d)$, there exists a constant $C=C(\|u\|_\infty, \|\rmD_L u\|_\infty)>0$ such that $\|w_k^n\|_\infty \le C$, $\|\rmD_L w_k^n\|_\infty \le C$ for every $k \in \N$, and
\begin{equation}\label{eq:conv2}
\lim_{k \to \infty} w_k^n(\mu) = v_n(\mu), \quad \lim_{k \to \infty} \rmD_L w_k^n(\mu, x) = \rmD_L v_n (\mu, x) \quad \text{ for every } (\mu, x) \in \prob(\T^d)\times \T^d.  
\end{equation}
\emph{Proof of claim 2}.
We take any sequence of polynomials $(p_k^n)_k$ in $(\T^d)^n$ satisfying \eqref{eq:simm} such that
\begin{equation}\label{eq:convpol}
\norm{h_n - p_{k}^n}_{\rmC^1((\T^d)^n)} \to 0 \quad \text{ as } k \to \infty.
\end{equation}
Up to a unrelabeled subsequence, we can assume that $\|p_k^n\|_{C^1((\T^d)^n)} \le 2 \|h_n\|_\infty$ for every $k \in \N_+$. We have (again by e.g.,~\cite[proof of Theorem 4.4]{cossomartini}) that $w_n^k$ is L-differentiable at every $\mu \in \prob(\T^d)$ and, since $p_k^n$  satisfies \eqref{eq:simm}, it also holds
\[
    \rmD_L w_k^n (\mu,x) = n \int_{\T^{d(n-1)}} \nabla_n p_k^n (\cdot, x) \de \mu^{\otimes (n-1)}, \quad (\mu, x) \in \prob(\T^d) \times \T^d,
\]
where $\nabla_n$ denotes the gradient w.r.t.~the $n$-th variable. We clearly have $\|w_k^n\|_\infty \le 2\|u\|_\infty$ and 
\[ |\rmD_L w_k^n(\mu, x)| \le n \|\nabla_n p_k\|_{C(K^n)} \le 2n \|\nabla_n h_n\|_\infty \le 2\|\rmD_L v_n\|_\infty\le 2\|\rmD_L u\|_\infty , \]
thanks to the estimates obtained in claim 1.
This proves the bounds. Using the dominated convergence theorem and \eqref{eq:convpol} we deduce both the convergences. This concludes the proof of the second claim.
\\
\emph{Claim 3}. Conclusion.\\
\emph{Proof of claim 3}. Let $\mm$ be any Borel probability measure on $\prob(\T^d)$. By claim 1, we can find a sequence $(v_n)_n \subset \rmC^1(\prob(\R^d))$ of the form 
\[ v_n(\mu) = \int_{(\T^d)^n} h_n \de \mu^{\otimes n}, \quad \mu \in \prob(\T^d), \]
for functions $h_n \in \rmC^1((\T^d)^n)$ satisfying \eqref{eq:simm}, such that
\[ v_n \to u \text{ in } L^2(\prob(\T^d), \mm), \quad \rmD_L v_n \to \rmD_L u \text{ in } L^2(\prob(\T^d) \times \T^d, \bmm) \text{ as } n \to \infty.\]
By claim 2, we can find a sequence of polynomials $(p_{n,k})_k$ in $(\T^d)^n$ such that, defining
\[ u_{n,k}(\mu):= \int_{(\T^d)^n} p_{n,k} \de \mu^{\otimes n}, \quad \mu \in \prob(\T^d),\]
then $ u_{n,k}\in \mathfrak{Z}^\infty$, it is L-differentiable at every $\mu \in \prob(\T^d)$, $\| u_{n,k}\|_\infty \le 2\|u\|_\infty$, $\|\rmD_L u_{n,k}\|_\infty \le 2\|\rmD_L u\|_\infty$ and
\[ u_{n,k}(\mu) \to v_n(\mu), \quad \rmD_L u_{n,k} (\mu, x) \to \rmD_L v_n (\mu, x) \quad \text{ for every } (\mu, x) \in \prob(\T^d) \times \T^d\]
as $k \to \infty$. 
Moreover, by dominated convergence, $u_{n,k}\to v_n$ in $L^2(\prob(\T^d),\mm)$ and $\rmD_L u_{n,k}\to\rmD_L v_n$ in $L^2(\prob(\T^d)\times\T^d,\bmm)$ as $k\to\infty$. Then, a diagonal argument in $L^2(\prob(\T^d), \mm) \times L^2(\prob(\T^d) \times \R^d, \bmm)$ concludes the proof of the third claim.
\end{proof}

\begin{proposition} \label{prop:equiv_der}
Let $\cD$ be the Dirichlet--Ferguson measure on $\prob(\T^d)$. Then 
\[ \rmC^1(\prob(\T^d)) \subset H^{1,2}(\prob(\T^d), W_2, \cD)\]
and,
\[ \rmD_L u = \rmD u  \quad \text{ for every } u \in \rmC^1(\prob(\T^d)),\]
where $\rmD$ is as in Proposition \ref{prop:vectorgrad}.
\end{proposition}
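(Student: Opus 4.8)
The strategy is to use the approximation result of Proposition~\ref{prop:approx} to transfer the statement from the class $\rmC^1(\prob(\T^d))$ to the class $\mathfrak{Z}^\infty$, for which the inclusion in $H^{1,2}$ and the identification of the gradient are already known. Fix $u \in \rmC^1(\prob(\T^d))$. Apply Proposition~\ref{prop:approx} with $\mm = \cD$: this yields a sequence $(u_n)_n \subset \mathfrak{Z}^\infty$ with $u_n \to u$ in $H = L^2(\prob(\T^d), \cD)$ and $\boldnabla u_n \to \rmD_L u$ in $L^2(\prob(\T^d) \times \T^d, \overline{\cD}; \R^d)$. Since each $u_n \in \mathfrak{Z}^\infty$, we know from Proposition~\ref{prop:vectorgrad} that $\rmD u_n = \boldnabla u_n$; in particular, Proposition~\ref{prop:introwss} and the definition of $\CE$ give $\CE(u_n) = \pCE(u_n) = \int_{\prob(\T^d)} (\lip u_n)^2 \de \cD = \|\boldnabla u_n\|^2$.

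\textbf{Closing the gradient.} Because $(\boldnabla u_n)_n$ converges in $L^2(\prob(\T^d) \times \T^d, \overline{\cD}; \R^d)$, the sequence $(\|\boldnabla u_n\|^2)_n$ is bounded, hence $(\CE(u_n))_n$ is bounded. Combined with $u_n \to u$ in $H$ and the lower semicontinuity of $\CE$ with respect to $H$-convergence (which follows from its definition as an $L^2$-relaxation, \eqref{eq:relaxintroche}), we deduce $\CE(u) \le \liminf_n \CE(u_n) < \infty$, so $u \in H^{1,2}$. It then remains to identify $\rmD u$. Here one invokes the closability of the form underlying $\CE$ together with the characterization of $\rmD$ from Proposition~\ref{prop:vectorgrad}: the map $v \mapsto \rmD v$ is the $L^2(\overline{\cD})$-closure of the densely defined operator $\boldnabla$ on $\mathfrak{Z}^\infty$, in the sense that if $v_n \to v$ in $H$ with $\pCE(v_n) \to \CE(v)$ then $\boldnabla v_n \to \rmD v$. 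So one needs to upgrade the convergence $u_n \to u$, $\boldnabla u_n \to \rmD_L u$ to the statement that also $\pCE(u_n) = \|\boldnabla u_n\|^2 \to \CE(u) = \|\rmD u\|^2$. Since $\|\boldnabla u_n\|^2 \to \|\rmD_L u\|^2$ by strong $L^2$-convergence, it suffices to show $\CE(u) = \|\rmD_L u\|^2$; the inequality $\CE(u) \le \|\rmD_L u\|^2$ is the lower semicontinuity argument above, and the reverse inequality $\CE(u) \ge \|\rmD_L u\|^2$ follows from the fact that, for any sequence realizing the relaxation, the weak $L^2$-limit of gradients must equal $\rmD_L u$ (identified through integration by parts against cylinder test functions $\hat v \in \hat{\mathfrak{Z}}_c^\infty$, using \eqref{eq:ibp0}), and then lower semicontinuity of the norm under weak convergence. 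Once $\pCE(u_n) \to \CE(u)$ is established, Proposition~\ref{prop:vectorgrad} applied to the approximating sequence $(u_n)_n$ gives $\boldnabla u_n \to \rmD u$ in $L^2(\overline{\cD})$; comparing with $\boldnabla u_n \to \rmD_L u$ and uniqueness of $L^2$-limits yields $\rmD u = \rmD_L u$.

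\textbf{Main obstacle.} The delicate point is the equality $\CE(u) = \|\rmD_L u\|^2$ — i.e., that the approximating sequence $(u_n)_n$ produced by Proposition~\ref{prop:approx} is not merely an $H$-approximation with bounded energy, but an \emph{optimal} one in the sense of \eqref{eq:apprpr}. One clean way around this is to avoid proving optimality directly and instead argue as follows: lower semicontinuity gives $\CE(u) \le \liminf_n \pCE(u_n) = \|\rmD_L u\|^2$, so $u \in H^{1,2}$ and $\|\rmD u\| \le \|\rmD_L u\|$. To get the matching bound, take \emph{any} sequence $(w_n)_n \subset \mathfrak{Z}^\infty$ realizing \eqref{eq:apprpr} for $u$ (which exists by Theorem~\ref{thm:mainold}), so $w_n \to u$ in $H$ and $\boldnabla w_n \to \rmD u$ in $L^2(\overline{\cD})$. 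Using the bilinearity of $\CE$, the integration-by-parts identity \eqref{eq:ibp0}, and the continuity of $u$, test the convergence against the approximants $u_n$: one computes $\langle \rmD u, \rmD_L u\rangle = \lim_n \lim_m \langle \boldnabla w_n, \boldnabla u_m\rangle = \lim_n \langle \boldnabla w_n, \rmD_L u\rangle$, and a further identification (passing the inner product through the weak limit, again via test functions in $\hat{\mathfrak{Z}}_c^\infty$ and \eqref{eq:ibp0}, exploiting that $\rmD_L u$ is continuous and bounded) shows this equals $\|\rmD u\|^2$; since also $\langle \rmD u, \rmD_L u\rangle \le \|\rmD u\|\,\|\rmD_L u\|$, one concludes $\|\rmD_L u\| \le \|\rmD u\|$, hence equality of norms and, by the Cauchy--Schwarz equality case, $\rmD u = \rmD_L u$. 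The bookkeeping in interchanging the two limits and justifying that weak-$L^2$ limits of cylinder gradients are correctly paired with $\rmD_L u$ is where all the care is needed; everything else is routine.
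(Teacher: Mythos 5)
Your approach is in the same spirit as the paper's: produce the approximating sequence $(u_n)_n$ from Proposition~\ref{prop:approx} and a second sequence $(w_n)_n$ from Theorem~\ref{thm:mainold}, then compare. The paper, however, closes the comparison in one line by applying closability of $(\mathcal{E},\mathfrak{Z}^\infty)$ directly to the difference $z_n := u_n - w_n$: one has $z_n \to 0$ in $H$ and $\boldnabla z_n \to \rmD_L u - \rmD u$ in $L^2(\overline{\cD})$, and closability forces this limit to vanish. Your attempt to replace this with a norm/Cauchy--Schwarz argument introduces a genuine logical error.

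Specifically, you claim that establishing $\langle \rmD u, \rmD_L u\rangle = \|\rmD u\|^2$ together with Cauchy--Schwarz yields $\|\rmD_L u\| \le \|\rmD u\|$. It does not: from $\|\rmD u\|^2 = \langle \rmD u, \rmD_L u\rangle \le \|\rmD u\|\,\|\rmD_L u\|$ you recover only $\|\rmD u\| \le \|\rmD_L u\|$, the same direction you already have from lower semicontinuity, so equality of norms is never obtained. (In an abstract Hilbert space, $v=(1,0)$, $w=(1,1)$ satisfy $\langle v,w\rangle=\|v\|^2$ and $\|v\|\le\|w\|$ but $v\ne w$.) To get the reverse inequality along these lines you would need the symmetric identity $\langle \rmD_L u, \rmD_L u - \rmD u\rangle = 0$ as well, and then conclude $\|\rmD_L u - \rmD u\|^2 = 0$ by subtraction. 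Moreover, the ``further identification'' you wave at is not routine bookkeeping: the claim $\langle \rmD u, \rmD_L u - \rmD u\rangle = 0$ is exactly the closability statement for the sequence $z_m = u_m - w_m$, so each of these orthogonality identities is itself an application of closability. There is no real gain in avoiding it; you would do better to invoke closability on the difference sequence directly, as the paper does.

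Your argument for $u \in H^{1,2}$ via lower semicontinuity of $\CE$ is valid, but the paper's observation is shorter: by Remark~\ref{rem:lips}, $\rmC^1(\prob(\T^d)) \subset \Lip_b(\prob(\T^d), W_2)$, and bounded Lipschitz functions lie in $H^{1,2}$ immediately from the definition of the Cheeger energy (take the constant approximating sequence).
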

\begin{proof} Since, by Remark \ref{rem:lips}, we have that $ \rmC^1(\prob(\T^d)) \subset \Lip_b(\prob(\T^d), W_2)$ and, by definition,  $\Lip_b(\prob(\T^d), W_2) \subset H^{1,2}(\prob(\T^d), W_2, \cD)$, we get the first inclusion in the statement. Let $u \in\rmC^1(\prob(\T^d))$ and recall that closability holds (see Subsection \ref{sec:geo}); by Proposition \ref{prop:approx}, we can find a sequence of cylinder functions $(u_n)_n\subset \mathfrak{Z}^\infty$ satisfying \eqref{eq:conv} for $u$. Let also  $(u_n')_n \subset \mathfrak{Z}^\infty$ be as in \eqref{eq:apprpr} and let us define $z_n:= u_n-u_n'$; we have that $z_n \to 0$ in $L^2(\prob(\T^d), \cD)$ and $\boldnabla z_n = \boldnabla u_n - \boldnabla u_n' \to \rmD_L u- \rmD_\mm u$ in $L^2(\prob(\T^d) \times \T^d, \overline{\cD}; \R^d)$. By definition of closability (cf.~\eqref{eq:closab} and \cite[Definition 5.6]{FSS22}), we deduce that it must be $\rmD_L u = \rmD u$. 
\end{proof}
\begin{remark}
    It is easy to see that the inclusion in Proposition \ref{prop:equiv_der} holds not only for $\cD$, but for every Borel probability measure $\mm$ on $\prob(\T^d)$. In particular  $\rmC^1(\prob(\T^d)) \subset H^{1,2}(\prob(\T^d), W_2, \mm)$, where a precise definition of $H^{1,2}(\prob(\T^d), W_2, \mm)$ can be found in \cite{FSS22}. On the other hand, the equivalence between $\rmD_Lu$ and $\rmD u$ is strongly related to the choice of $\cD$ and the closability of the quadratic form associated.
\end{remark}

\section{Explicit approximation 
of continuous functions on \texorpdfstring{${\mathcal P}(\T^d)$}{P}
}
\label{app:B}

The purpose of this section is to provide a quantitative and explicit method for approximating continuous functions on $\prob(\T^d)$ (or more generally, functions on $ [0,T] \times \prob(\T^d)\times \T^d$), in contrast 
with Proposition
\ref{prop:approx_trunc}
which relies on a non-explicit approach. 

Throughout, we make use of the equivalence 
relation introduced, for a given $\varepsilon \in (0,1]$, in Definition
\ref{def:epsilon:equivalence:proba:measures}, and of the related 
class $\AA_\epsilon^1$
of functions defined on the subsequent quotient space. The same discussion and the same results can be also done when $\AA_\epsilon^1$ is replaced by $\AA_\epsilon^m$ for any $m \in \N_+$, but, for the sake of simplicity, we work here with the simpler choice $m=1$. For
a continuous function $\fru\colon[0,T] \times\prob(\T^d)\times\T^d\to\R$, we look for a family $(\fru^\varepsilon)_{\varepsilon \in (0,1]}$
that  converges 
to $\fru$ in a \textit{suitable} sense as $\eps \downarrow 0$.
For a given $\eps \in (0,1]$, we define the mapping $T_\eps\colon\prob(\T^d)\to \prob^{pa}(\T^d)$, recalling that the latter is the subset of $\prob(\T^d)$ of purely atomic measures, by
\begin{equation*}
    T_\eps(\mu):=
    \begin{cases}
        \frac{1}{\mu(\{x\in\T^d\colon\mu_x>\eps\})} \mu\rvert_{\{x\in\T^d\colon\mu_x>\eps\}},\quad&\text{ if } \mu(\{x\in\T^d\colon\mu_x>\eps\})>0,\\
        \delta_0,\quad&\text{ if } \mu(\{x\in\T^d\colon\mu_x>\eps\})=0.
    \end{cases}
\end{equation*}
In words, 
$T_{\varepsilon}(\mu)$ is the conditional probability of $\mu$ given the subset of its atoms with mass greater than $\eps$.
Notice in particular that, if $\mu = \sum_{j=1}^\infty s_{j}\delta_{x_j}$, with $(x_j)_{j}\subset\T^d$
and $0 < s_{j+1} \le s_{j}$ for every $j \in\N_+$, then
$$T_\eps(\mu) = \frac{\sum_{j=1}^{N^{\mu,\eps}}s_{j}\delta_{x_j}}{\sum_{j=1}^{N^{\mu,\eps}}s_{j}},$$
under the additional notation $N^{\mu,\eps}:=\max\{j\in\N_+\colon s_{j}\geq \eps\}$.
Also, it is easy to see that if $\mu$ is purely atomic, then  $T_\eps(\mu)\to\mu$ weakly as $\eps\downarrow0$.
The rate of convergence can be computed explicitly, using the total variation distance, here denoted by 
$d_{\rm TV}$:
\begin{equation}
\label{eq:
dTV:Tepsilon:mu-mu}
\begin{split}
d_{\rm TV}
\left( T_\varepsilon(\mu),\mu \right) 
&= 
2 \sup_{B \in {\mathcal B}(\T^d)}
\left\vert 
[T_\varepsilon(\mu)](B) - \mu(B)
\right\vert
\\
&\leq 2 \sum_{j=1}^{N^{\mu,\varepsilon}}
\Bigl\vert s_j - \frac{s_j}{
\sum_{i=1}^{N^{\mu,\varepsilon}}
s_i
} \Bigr\vert+ 
2 \sum_{j \geq N^{\mu,\varepsilon}+1}
s_j
\leq 4 \sum_{j \geq N^{\mu,\varepsilon}+1}
s_j.
\end{split}
\end{equation}
Thus, given $\fru\colon [0,T] \times \prob(\T^d)\times\T^d\to\R$, we can define its approximation $\fru^\eps\colon [0,T] \times \prob(\T^d)\times\T^d\to\R$ as
\[\fru_{t}^\eps(\mu,x):=\fru_{t}(T_\eps(\mu),x),\quad (t, \mu,x)\in  [0,T] \times  \prob(\T^d)\times\T^d.\]
Notice that,
if $\mu$ and $\nu$ are $\eps$-compatible as in in Definition
\ref{def:epsilon:equivalence:proba:measures} for a given $\eps \in (0,1]$, then by definition of $T_\eps$, it holds $\fru_t^\eps(\mu,x) = \fru_t^\eps(\nu,x)$ for any $(t,x)\in [0,T] \times \T^d$. This implies that $\fru^\eps \in\AA_\eps^1$.
Moreover, by construction, we also have $\norm{\fru^\eps}_\infty\leq \norm{\fru}_\infty$ for any $\eps \in (0,1]$. The convergence of $\fru^\eps$ to $\fru$ is immediate since $\fru$ is continuous, and, by dominated convergence, the convergence also  holds
in $L^2([0,T] \times \prob(\T^d) \times \T^d,\mathcal{Q})$, for any Borel probability measure $\mathcal{Q}$ on $[0,T] \times \prob(\T^d) \times \T^d$ whose marginal on $\prob(\T^d)$ is concentrated on $\prob^{pa}(\T^d)$. 
The rate of convergence can be computed under ${\mathscr L}^{[0,T]} \otimes \overline{\mathcal D}$, using the modulus of continuity (in the $\mu$-variable) of $\fru$:
\begin{equation*}
\omega(\delta) : =
\sup_{t \in [0,T], x \in {\mathbb T}^d}
\sup_{\mu,\nu \in \prob^{pa}{(\T^d)} : 
d_{\textrm{\rm TV}
(\mu,\nu) \leq \delta}} \vert \fru(t,\mu,x) - 
\fru(t,\nu,x) \vert, 
\quad \delta \in [0,2]. 
\end{equation*}
Since $[0,T] \times \prob(\T^d) \times \T^d$
is compact and
$\fru$ is continuous
(on the product space, equipped with the product topology) and because the square of the $W_2$ distance (that metricizes the weak topology on $\prob(\T^d)$)
is dominated by the total variation distance, it holds that 
\begin{equation*}
\lim_{\delta \rightarrow 0} \omega(\delta) =0.
\end{equation*}
And then,  
\begin{equation*}
\int_0^T \int_{\prob(\T^d)}
\left[ \int_{\T^d}
\left\vert 
\fru^{\varepsilon}_t(\mu,x) - \fru_t(\mu,x) \right\vert^2 \de \mu(x) 
\right] \de {\mathcal D}(\mu)  \de t
\leq 
T \int_{\prob(\T^d)}
\omega^2 
\left( d_{\textrm{\rm TV}}(T_\varepsilon(\mu),\mu) 
\right) 
\de {\mathcal D}(\mu).
\end{equation*} 
Consider now any concave function 
$\vartheta : [0,2] \rightarrow {\mathbb R}$ that dominates 
$T \omega^2$ 
(e.g., if $\omega$ is linear, then $\vartheta$ is also linear; if $\omega$ is $1/2$-H\"older, then 
$\vartheta$ becomes linear). Deduce that 
\begin{equation}
\label{eq:modulus:continuity:in:app:B}
\int_0^T \int_{\prob(\T^d)}
\left[ \int_{\T^d}
\left\vert 
\fru^{\varepsilon}_t(\mu,x) - \fru_t(\mu,x) \right\vert^2 \de \mu(x) 
\right] \de {\mathcal D}(\mu)  \de t
\leq 
\vartheta 
\left( 
\int_{\prob(\T^d)}
d_{\textrm{\rm TV}}(T_\varepsilon(\mu),\mu)  
\de {\mathcal D}(\mu)\right).
\end{equation} 
The remaining question is thus to estimate 
$\int_{\prob(\T^d)}
d_{\textrm{\rm TV}}(T_\varepsilon(\mu),\mu)  
\de {\mathcal D}(\mu)$. To do so, consider a Poisson--Dirichlet distributed sequence $(S_i)_{i \geq 1}$, constructed on some probability space 
$(\Omega,{\mathcal F}, 
{\mathbb P})$. Back to 
\eqref{eq:
dTV:Tepsilon:mu-mu}, the right-hand side therein can be rewritten (up to a factor 4) as  
\begin{equation*}
{\mathbb E} \left[ 
\sum_{i \geq 1} S_i {\mathbf 1}_{\{ 
S_i > \varepsilon\}}
\right]. 
\end{equation*}
We use notion of biased-pick sampling, see for instance \cite[Subsection 2.4]{Feng}. We call
$I$ an ${\mathbb N}_+$-valued random variable such that ${\mathbb P}(\{I=i\} \vert (S_j)_j)=S_i$, for 
all $i \in {\mathbb N}_+$. The above expectation can be reformulated as
\begin{equation*}
{\mathbb E} \left[ 
\sum_{i \geq 1} S_i {\mathbf 1}_{\{ 
S_i < \varepsilon\}}
\right] 
= {\mathbb E}\left[ {\mathbf 1}_{\{S_I < \varepsilon\}}\right].
\end{equation*}
By \cite[Theorem 2.7]{Feng}, we know that the law of $S_I$ is exactly the law of $Y_1$ in 
Remark \ref{rem:Poisson-Dirichlet:rep}. 
Therefore, 
\begin{equation*}
{\mathbb E} \left[ 
\sum_{i \geq 1} S_i {\mathbf 1}_{\{ 
S_i < \varepsilon\}}
\right] 
= {\mathbb E}\left[ {\mathbf 1}_{\{S_I < \varepsilon\}}\right] = 
{\mathbb P}(\{Y_1 < \varepsilon\}) = \eps.
\end{equation*}
Back to 
\eqref{eq:modulus:continuity:in:app:B}, we obtain 
\begin{equation}
\label{eq:modulus:continuity:in:app:B:2}
\int_0^T \int_{\prob(\T^d)}
\left[ \int_{\T^d}
\left\vert 
\fru^{\varepsilon}_t(\mu,x) - \fru_t(\mu,x) \right\vert^2 \de \mu(x) 
\right] \de {\mathcal D}(\mu)  \de t
\leq 
\vartheta(4\eps),
\end{equation} 
where $\vartheta$ is extended to the interval $(2,\infty)$
by letting $\omega(r) = \omega(2)$ for $r >2$. 

\mysubsubsection{Acknowledgments.} Part of this research was carried out during a visit of G. Sodini to the Laboratoire J.~A.~Dieudonné in Nice, whose hospitality is gratefully acknowledged. This research was funded in part by the Austrian Science Fund (FWF) project \href{https://doi.org/10.55776/F65}{10.55776/F65}. F. Delarue and M. Martini acknowledge the financial support of the European Research Council (ERC) under the European Union’s Horizon Europe research and innovation program (ELISA project, Grant agreement No. 101054746). Views and opinions expressed are however those of the authors only and do not necessarily reflect those of the European Union or the European Research Council Executive Agency. Neither the European Union nor the granting authority can be held responsible for them. M. Martini was
a member of INdAM-GNAMPA when this work was conceived.

\end{document}